\crefname{prop}{Proposition}{Propositions}
\Crefname{prop}{Proposition}{Propositions}
\crefname{cor}{Corollary}{Corollaries}
\Crefname{cor}{Corollary}{Corollaries}
\crefname{lem}{Lemma}{Lemmas}
\Crefname{lem}{Lemma}{Lemmas}
\numberwithin{equation}{section}
\theoremstyle{plain}
\newtheorem{prop}{Proposition}[section]
\newtheorem{cor}{Corollary}[section]
\newtheorem{lem}{Lemma}[section]
\theoremstyle{remark}
\DeclareMathOperator{\Var}{\mathbf{Var}}
\DeclareMathOperator{\Tr}{\mathbf{Tr}}
\DeclareMathOperator{\Span}{\mathbf{span}}
\newcommand{\B}[1]{\mathds{#1}}
\newcommand{\C}[1]{\mathcal{#1}}
\newcommand{\ds}{\displaystyle}
\renewcommand{\mathbb}{\mathds}
\newcommand{\myeq}[1]{{\rm (\ref{#1})} on page \pageref{#1}}
\begin{document}

\begin{frontmatter}
\title{Robust dimension-free Gram operator estimates}
\runtitle{Robust dimension-free Gram operator estimates}

\begin{aug}

\author{\fnms{Ilaria} \snm{Giulini}\thanksref{t1}\ead[label=e1]{ilaria.giulini@me.com}}

\thankstext{t1}{The results presented in this paper 
were obtained while the author was preparing her PhD under the 
supervision of Olivier Catoni at the D\'epartement de Math\'ematiques et Applications, \'Ecole Normale Sup\'erieure, Paris, with the financial support of the 
R\'egion \^Ile de France.}

\address{{\sc INRIA} Saclay\\
\printead{e1}}

\runauthor{I. Giulini}

\end{aug}

\begin{abstract}
In this paper we investigate the question of estimating the Gram operator by a robust estimator
from an i.i.d. sample in a separable Hilbert space and we present uniform bounds that hold under weak moment assumptions.
The approach consists in first obtaining non-asymptotic dimension-free bounds in finite-dimensional spaces using some PAC-Bayesian 
inequalities related to Gaussian perturbations of the parameter and then in generalizing the results in a separable Hilbert space. 
We show both from a theoretical point of view and with the help of some simulations that such a robust estimator 
improves the behavior of the classical empirical one in the case of heavy tail data distributions. 
\end{abstract}


\begin{keyword}
\kwd{PAC-Bayesian learning}
\kwd{Gram operator}
\kwd{dimension-free bounds}
\kwd{robust estimation}
\end{keyword}

\end{frontmatter}

\tableofcontents 

\section{Introduction}

Many algorithms, such as spectral clustering, kernel principal component analysis or more generally kernel-based methods, 
are based on estimating eigenvalues and eigenvectors
of integral operators defined by a kernel function, from a given random sample.
To set the context from a statistical point of view, let $\mu \in \mathcal M_+^1(\mathcal X)$ be an unknown probability distribution on a compact space $\mathcal X$ 
and let $k$ be a kernel on $\mathcal X$. 
The goal is to estimate the integral operator 
\[
L_k f(x) = \int k(x,z) \ f(z) \ \mathrm d \mu(z) 
\]
from an i.i.d. random sample drawn according to $\mu.$ \\[1mm] 
A first study on the relationship between the spectral properties of a kernel matrix and the corresponding integral operator can be found in \cite{KolGin} for the case of a symmetric square integrable kernel $k.$ 
They prove that the ordered spectrum of the kernel matrix $K_{ij}=\frac{1}{n} k(X_i, X_j)$ converges to the ordered spectrum of the kernel integral operator
$L_k$.
Connections between this empirical matrix and its continuous counterpart have been subject of much research, 
for example 
in the framework of kernel-PCA 
\citep[see][]{StWCK02, StWCK05, ZBB} 
and spectral clustering \citep[see][]{vLBB}. 
In  \cite{RBDV}, the authors study the connection between the spectral properties of the empirical kernel matrix $ K_{ij}$ 
and those of the corresponding integral operator $L_k$ 
by introducing two extension operators on the (same) reproducing kernel Hilbert space defined by $k$, 
that have the same 
spectrum (and related eigenfunctions) as $K$ and $L_k$ respectively. 
In such a way they overcome the difficulty of dealing with objects ($ K$ and $L_k$) 
operating in different spaces.  \\[1mm]
The integral operator $L_k$ is related to the Gram operator 
\[
\mathcal G v = \int \langle v , \phi(z) \rangle_{\mathcal K} \ \phi(z) \ \mathrm d \mu(z), \quad v \in \mathcal K,
\]
where $\mathcal K$ denotes the reproducing kernel Hilbert space defined by the kernel $k$ and $\phi$ the corresponding feature map. 
\\[1mm]
The main objective of this paper is to estimate Gram operators on (infinite-dimensional) Hilbert spaces. 
Some bounds on the deviation of the empirical Gram operator from the true Gram operator in separable Hilbert spaces can be found in 
\cite{KoltLou1}
in the case of Gaussian random vectors.\\[1mm]
Let us introduce some notation. 
We denote 
by $\mathcal H$ a separable Hilbert space and 
by $\mathrm P \in  \mathcal M_+^1(\mathcal H)$ a (possibly unknown) probability distribution on $\mathcal H$. 
Remark that with the above notation $\mathrm P = \mu \circ \phi^{-1}.$ 
Our goal is to estimate the Gram operator $\mathcal G: \mathcal H \to \mathcal H$ defined as
\[
\mathcal G v = \int \langle v , z \rangle_{\mathcal H} \ z \ \mathrm d \mathrm P(z)
\]
from an i.i.d. sample drawn according to $\mathrm P.$ 
Our approach consists in first considering the finite-dimensional setting where $X$ is a random vector in $\mathbb R^d$
and then in generalizing the results to the infinite-dimensional case of separable Hilbert spaces. 
To be able to go from finite to infinite dimension we will establish dimension-free inequalities. 
To be more precise, we consider the related problem of estimating the quadratic form 
\[
N(\theta) = \langle \mathcal G \theta, \theta\rangle_{\mathcal H}, \qquad \theta \in \mathcal H
\]
which rewrites explicitly as
\[
N(\theta)= \int  \langle \theta , z \rangle_{\mathcal H}^2 \ \mathrm d \mathrm P(z).
\]
In the finite-dimensional setting we construct an estimator of the quadratic form $N(\theta)$ 
and we provide non-asymptotic dimension-free bounds for the approximation error that hold under weak moment assumptions. 
Note that similar techniques have also been used in \cite{Catoni16} in the finite-dimensional setting. 
However, the results presented here are not comparable to those 
in \cite{Catoni16} as the complexity terms appearing 
in the bounds are not the same. 
As a result, in \cite{Catoni16} the bounds on the approximation error 
depend explicitly on the dimension of the ambient space, and grow to $+ \infty$ 
with it.
Observe that in the finite-dimensional case the quadratic form $N(\theta)$ 
can be seen as the quadratic form associated with the Gram matrix
\[
G=\int xx^{\top} \ \mathrm d\mathrm P(x).
\]
Observe also that the study of the Gram matrix is of interest in the case of a non-centered criterion 
and that it coincides, in the case of centered data (i.e. $\mathbb E[X]=0$), with the study of the covariance matrix 
\[
\Sigma = \mathbb E\left[(X -  \mathbb E[X]) (X -  \mathbb E[X])^{\top}\right].
\]
Many theoretical results have been published on the estimation of covariance matrices, e.g. \cite{Rud}, \cite{RVer}, \cite{JTr}. 
These results follow from the study of random matrix theory and use as an estimator of $G$ 
the matrix obtained by 
replacing the unknown probability distribution $\mathrm P$ with the sample distribution $\frac{1}{n} \sum_{i=1}^n \delta_{X_i}.$
In \cite{Rud} the non-commutative Khintchine inequality is used to obtain bounds on the sample covariance matrix of a bounded random vector. 
Non-asymptotic results are obtained in  \cite{RVer} as a consequence of the analysis of random matrices with independent rows, while
in \cite{JTr} 
the author uses an extension of the Bernstein inequality to matrices.
However, such an empirical estimator becomes less efficient when the data have a long tail distribution. 
In \cite{SMin} the author presents a different estimator based on the geometrical median which is more robust than the classical empirical one. 
Another way to construct a robust estimator is to use rank-based coefficients (as the Kendall's tau) but this requires strong hypotheses on the distribution.
\\[1mm]
We first present a way to construct a robust estimator of the Gram matrix $G$ in finite dimension and then 
we extend the results to the infinite-dimensional case. 
Note that the bounds we propose are not formulated in terms of matrix norm, 
and provide instead bounds for $\bigl\lvert \theta^{\top} (\widehat{G} - G) \theta
\bigr\rvert$
depending on the direction $\theta$, whereas a bound on the operator 
norm, for example, would provide a single bound for $ \sup_{\theta \in \mathds{S}_1} \bigl\lvert \theta^{\top} (\widehat{G} 
- G ) \theta \bigr\rvert$.

\vskip5mm
\noindent
The paper is organized as follows. 
Section \ref{sec1} deals with the finite-dimensional case. 
We provide a new robust estimator of the Gram matrix $G$ 
and we use a PAC-Bayesian approach to obtain non-asymptotic dimension-independent bounds of its approximation error. 
In section \ref{sec2} we extend the results to the infinite-dimensional case, taking advantage of the fact that they are independent of the dimension of the ambient space. 
In section \ref{sec_emp} we propose some empirical results to show the performance of our estimator.
In Appendix \ref{emp_G} we compare from a theoretical point of view the behavior of our robust estimator to the one of the classical empirical estimator.
Finally in Appendix \ref{sec3} we extend the results to estimate the expectation of a symmetric random matrix
and we consider the problem of estimating the covariance matrix in the case 
when the expectation is unknown.

\section{The finite-dimensional setting}\label{sec1}

Let $\mathrm P\in \mathcal M_+^1(\mathbb R^d)$ be an unknown probability distribution on $\mathbb R^d$ 
and let $X\in \mathbb R^d$ be a random vector of law $\mathrm P.$
We denote by $\mathbb E$ the expectation with respect to $\mathrm P$. 
Our goal is to estimate 
the quadratic form 
\[
N(\theta) = \mathbb E [\langle \theta, X \rangle^2], \quad \theta \in \mathbb R^d,
\]
(that computes the energy in the direction $\theta$) 
from an i.i.d. sample $X_1, \dots, X_n \in \mathbb R^d$ drawn according to $\mathrm P.$
Observe that $N(\theta)$ can be seen as the quadratic form associated to the Gram matrix
\[
G=\mathbb E[XX^{\top}].
\]
Indeed to recover the Gram matrix $G$ from the above quadratic form it is sufficient to use the polarization identity
\begin{align*}
G_{ij} = e_i^{\top} G e_j & = \frac{1}{4} \left[ (e_i+e_j)^{\top} G (e_i+e_j) - (e_i-e_j)^{\top} G (e_i-e_j) \right]\\
& =  \frac{1}{4} \left[ N(e_i+e_j) - N(e_i-e_j) \right]
\end{align*}
where $\{e_i\}_{i=1}^d$ is the canonical basis of $\mathbb R^d.$

\vskip2mm
\noindent
A classical empirical estimator of the quadratic form $N(\theta)$ is 
\[
\bar N(\theta) = \frac{1}{n} \sum_{i=1}^n \langle \theta, X_i \rangle^2
\]
obtained by replacing the unknown probability distribution $\mathrm P$ with the sample distribution. 
However, as shown in \cite{Catoni12}, if the distribution of $\langle \theta, X \rangle^2$ has a heavy tail for some values of $\theta$, 
the quality of the approximation provided by the classical empirical estimator can be improved, 
using some $M$-estimator with a suitable smooth influence function and 
a scale parameter depending on the sample size. 
Thus in order to construct a robust estimator for $N$, we consider, for any $\theta \in \mathbb R^d$ and any $\lambda>0$, 
\begin{equation}\label{r_lambda}
r_{\lambda}(\theta)= \frac{1}{n} \sum_{i=1}^n \psi\left( \langle \theta, X_i \rangle^2 - \lambda \right),
\end{equation}
where the function $\psi: \mathbb R \to \mathbb R$, defined as
\begin{equation}\label{defnpsi}\psi(t)= \begin{cases} \log(2) & \text{if } t\geq 1, \\
-\log\left( 1-t+\frac{t^2}{2}\right) & \text{if } 0\leq t\leq 1,\\
-\psi(-t) & \text{if } t\leq 0,
 \end{cases}\end{equation}
 
\noindent 
is symmetric non-decreasing, bounded, and satisfies
\[
-\log\left( 1-t+\frac{t^2}{2}\right) \leq \psi (t) \leq \log\left( 1+t+\frac{t^2}{2}\right), \qquad t\in \mathbb R.
\]

\vskip 1mm
\noindent
Introduce
\begin{equation}\label{hatalpha}
 \widehat \alpha_\lambda(\theta) = \sup\{ \alpha\in\mathbb R_+ \ | \ r_{\lambda}(\alpha\theta)\leq0\}.
\end{equation}
In order to simplify notation, in the following we omit the dependence on $\lambda$ and we write
$\widehat \alpha$ instead of $ \widehat \alpha_\lambda$.\\
Observe that, since
the function $\alpha \mapsto r_{\lambda}(\alpha\theta)$ is continuous, 
 $r_{\lambda}(\widehat \alpha(\theta) \theta)=0$ as soon as $\widehat \alpha(\theta)<+\infty.$
Moreover, since the function $\psi$ is close to the identity in a neighborhood of the origin, 
\[
0 = r_{\lambda}(\widehat{\alpha}(\theta) \theta) \simeq  
\widehat{\alpha}(\theta)^2 \bar{N}(\theta) -\lambda
\]
and therefore it is natural to consider as an estimator of $N(\theta)$ 
a quantity related to $\lambda/ \widehat\alpha(\theta)^2$, for a suitable value of $\lambda.$ 
We consider the family of (robust) estimators
\begin{equation}\label{tildeN}
\widetilde N_{\lambda}(\theta) = \frac{\lambda}{ \widehat\alpha(\theta)^2}
\end{equation}
and we observe that, since $\widehat\alpha(\theta)$ is homogeneous of degree $-1$ in $\theta$, 
\[
\widetilde N_{\lambda}(\theta) = \|\theta\|^2 \widetilde N_{\lambda}\left({\theta}/{ \|\theta\|}\right).
\]

\vskip1mm
\noindent
In the following we will use a PAC-Bayesian approach linked to Gaussian perturbations of the parameter $\theta$ to 
first construct a confidence region for $N(\theta)$
and then define and study a robust estimator by choosing a suitable value 
$\widehat{\lambda}$ for the parameter $\lambda$. \\[1mm]
Given $\theta \in \mathbb R^d$, we consider the family of Gaussian perturbations $\pi_{\theta}\sim \mathcal N(\theta, \beta^{-1} \mathrm I)$ 
of mean $\theta$ and covariance matrix $\beta^{-1} \mathrm I$ where $\beta>0$ is a free parameter.\\
Let
$\Lambda \subset \bigl( \mathbb{R}_+ \setminus \{ 0 \} \bigr)^2$ be a finite set
of possible values of the couple of parameters $(\lambda, \beta)$
and $|\Lambda|$ its cardinality. 
Let us introduce
\begin{align}\label{skappa}
s_4 = \mathbb E \left[ \| X\|^4\right]^{1/4} \quad \text{and} \quad
\kappa  = \sup_{\substack{\theta \in \mathbb{R}^d \\ \mathbb E [ \langle \theta, X \rangle^2 ] > 0}} \frac{\displaystyle\mathbb E \bigl[ \langle \theta , X \rangle^4 \bigr]}{\mathbb E \bigl[ \langle \theta, X \rangle^2 \bigr]^2}
\end{align}
assuming that these two quantities are finite.
{\footnote {As it will be explained later, it is sufficient to know upper bounds for these quantities since the following results still hold true replacing $s_4$ and $\kappa$ by upper bounds.}} We will prove later on that 
\[ 
s_4^2 
\leq \kappa^{1/2} \B{E} \bigl( \lVert X \rVert^2 \bigr) = \kappa^{1/2} 
\Tr(G).
\]  
Note that in the case where the probability distribution $\mathrm P$ is Gaussian,
$\kappa=3$.\\[1mm]
For any $(\lambda, \beta) \in \Lambda$ and $\epsilon>0$, we put
\begin{equation}
\label{c_notat}
\begin{aligned}
\xi & =  \frac{\kappa \lambda}{2}, \\
\mu & =  \lambda(\kappa-1) + 
\frac{(2 + c) \kappa^{1/2}s_4^2}{\beta},\\ 
\gamma & =  \frac{\lambda}{2}(\kappa-1) + \frac{(2+c) \kappa^{1/2}s_4^2}{
\beta} +
\frac{(2 + 3c) s_4^4}{2 \beta^2 \lambda} + 
\frac{\log(\lvert \Lambda \rvert /  \epsilon  )}{n\lambda},\\
\delta & = \frac{\beta }{2n\lambda}, 
\end{aligned}
\end{equation}
where
\begin{equation}\label{defc}
c= \frac{15}{8 \log(2)(\sqrt{2}-1)} \exp \left( \frac{1 + 2 \sqrt{2}}{2} \right)\leq 44.3.
\end{equation}

\begin{prop}\label{prop0} With probability at least $1-2\epsilon$, for any $\theta \in \mathbb{R}^d$, any $(\lambda, \beta) \in \Lambda$, 
\[
\Phi_{\theta,-} \biggl( \frac{\lambda}{\widehat{\alpha}(\theta)^2} \biggr) \leq N(\theta) \leq  \Phi_{\theta,+}^{-1} \biggl( \frac{\lambda}{\widehat{\alpha}(\theta)^2}\biggr)
\]
where $\Phi_{\theta,-}$ and $\Phi_{\theta,+}$ are non-decreasing functions defined as
\begin{align*}
\Phi_{\theta,-}(t) & = t \left( 1 - \frac{\gamma + \delta \lambda \lVert \theta \rVert^2 / t}{1 + \mu - \gamma - \delta \lambda \lVert \theta \rVert^2 / t} \right) 
\mathds{1} \Bigg[ \xi - \mu + 2 \gamma + 2 \delta \lambda \lVert \theta \rVert^2 / t < 1 \Bigg]\\
\Phi_{\theta,+}(t) & =  t \, \left( 1 + \frac{\gamma + \delta \lambda \lVert \theta \rVert^2 / t}{1 - \mu - \gamma - 2 \delta \lambda \lVert\theta \rVert^2 / t }\right)^{-1}  
\mathds{1} \Bigg[ \xi+ \mu+\gamma + 2 \delta \lambda \lVert \theta\rVert^2 / t < 1 \Bigg],
\end{align*}
and where $\Phi_{\theta, +}^{-1} (u) = \sup \bigl\{ t \in \mathds{R}_+ \, : \, 
\Phi_{\theta, +}(t) \leq u \bigr\}$.  
\end{prop}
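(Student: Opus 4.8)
The plan is to obtain, via a PAC-Bayesian argument applied to the bounded influence function $\psi$, a high-probability two-sided control of the perturbed empirical criterion $r_\lambda(\alpha\theta)$ in terms of $N(\theta)$ and the complexity quantities in \eqref{c_notat}, and then to invert these bounds at the zero $\widehat\alpha(\theta)$ of $\alpha\mapsto r_\lambda(\alpha\theta)$. First I would fix $(\lambda,\beta)\in\Lambda$ and, for each $\theta$, integrate $r_\lambda$ against the Gaussian perturbation $\pi_\theta\sim\mathcal N(\theta,\beta^{-1}\mathrm I)$. Using the elementary bracketing $-\log(1-t+t^2/2)\le\psi(t)\le\log(1+t+t^2/2)$ and the exponential moment inequality $\log\mathbb E[e^Z]\ge\mathbb E[Z]-\ldots$, a Chernoff/Markov step on $\exp\bigl(n\int r_\lambda(v)\,d\pi_\theta(v)\bigr)$ combined with the standard PAC-Bayesian change-of-measure (Donsker–Varadhan / Legendre duality for the KL divergence) yields, with probability $1-\epsilon$, a bound valid simultaneously for all $\theta$ of the form
\[
\int r_\lambda(v)\,d\pi_\theta(v)\ \le\ (\text{quadratic expression in }\lambda,\ \mathbb E[\langle\theta,X\rangle^2],\ \text{perturbation moments})\ +\ \frac{\log(|\Lambda|/\epsilon)}{n\lambda},
\]
and a matching lower bound with probability $1-\epsilon$. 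The union bound over $\Lambda$ costs the $\log|\Lambda|$ term.

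Next I would compute the Gaussian integrals explicitly. If $v\sim\mathcal N(\theta,\beta^{-1}\mathrm I)$ then $\mathbb E_{\pi_\theta}\langle v,X\rangle^2=\langle\theta,X\rangle^2+\beta^{-1}\|X\|^2$, and higher moments bring in $\mathbb E[\langle\theta,X\rangle^4]$, $\mathbb E[\langle\theta,X\rangle^2\|X\|^2]$ and $\mathbb E[\|X\|^4]$, which is exactly where $\kappa$ and $s_4$ enter: using the definitions in \eqref{skappa} one bounds $\mathbb E[\langle\theta,X\rangle^4]\le\kappa\,N(\theta)^2$ and $\mathbb E[\langle\theta,X\rangle^2\|X\|^2]\le\kappa^{1/2}s_4^2\,N(\theta)$ (Cauchy–Schwarz), and $\mathbb E[\|X\|^4]=s_4^4$. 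Collecting terms, the polynomial in $\lambda$ produced by the $\log(1\pm t+t^2/2)$ bracketing rearranges into precisely the coefficients $\xi=\kappa\lambda/2$ (from the $t^2/2$ term acting on $N(\theta)$), $\mu$ and $\gamma$ (from the linear-in-$\lambda$ cross terms and the perturbation corrections), and the residual $\delta\lambda\|\theta\|^2/t$ term (from the $\beta^{-1}\|X\|^2$ shift in the perturbed direction, which after dividing through by $N(\theta)=t$ produces the $\|\theta\|^2/t$ dependence). The indicator functions $\mathds 1[\xi\mp\mu+\ldots<1]$ simply record the region where the geometric-series inversion of the quadratic remainder is legitimate (i.e.\ where the denominator stays positive).

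Finally I would specialize to $\alpha=\widehat\alpha(\theta)$. Since $\psi$ is bounded and $\alpha\mapsto r_\lambda(\alpha\theta)$ is continuous with $r_\lambda(0)=\psi(-\lambda)<0$, whenever $\widehat\alpha(\theta)<+\infty$ we have $r_\lambda(\widehat\alpha(\theta)\theta)=0$; substituting into the two-sided PAC-Bayesian bound (with $\theta$ replaced by $\widehat\alpha(\theta)\theta$, so that $\mathbb E[\langle\widehat\alpha\theta,X\rangle^2]=\widehat\alpha(\theta)^2N(\theta)$ and $\|\widehat\alpha\theta\|^2=\widehat\alpha(\theta)^2\|\theta\|^2$) and multiplying by $\widehat\alpha(\theta)^{-2}$ shows that $\lambda/\widehat\alpha(\theta)^2$ lies between $\Phi_{\theta,+}(N(\theta))$ and $\Phi_{\theta,-}^{-1}$-type expressions evaluated at $N(\theta)$; reading these inequalities as bounds on $N(\theta)$ and using monotonicity of $\Phi_{\theta,\pm}$ gives the claimed sandwich $\Phi_{\theta,-}(\lambda/\widehat\alpha(\theta)^2)\le N(\theta)\le\Phi_{\theta,+}^{-1}(\lambda/\widehat\alpha(\theta)^2)$; the case $\widehat\alpha(\theta)=+\infty$ corresponds to $\lambda/\widehat\alpha(\theta)^2=0$ and is handled separately (the lower bound is trivial, the upper bound follows from the definition of $\Phi_{\theta,+}^{-1}$ as a supremum). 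I expect the main obstacle to be the bookkeeping in the second step: controlling the Gaussian-perturbed fourth-moment cross terms uniformly in $\theta$ while keeping the dependence only through $\kappa$, $s_4$, and $\|\theta\|^2/t$, and checking that the numerical constant $c$ in \eqref{defc} is large enough to absorb all the $\psi$-bracketing and exponential-moment slack — this is what forces the exact shapes of $\mu$, $\gamma$, and the validity indicators, and is the delicate part of matching the statement verbatim.
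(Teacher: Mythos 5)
Your overall architecture (Gaussian perturbation, PAC--Bayes with posteriors $\pi_\theta$, Cauchy--Schwarz to introduce $\kappa$ and $s_4$, then inversion at the zero of $\alpha\mapsto r_\lambda(\alpha\theta)$) is the right one, but there is a genuine gap at the centre of the argument. The PAC--Bayesian change of measure controls the Gibbs average $\int r_\lambda(v)\,\mathrm d\pi_\theta(v)$, which is exactly the object your first display bounds; however, $\widehat\alpha(\theta)$ is defined through the value $r_\lambda(\widehat\alpha(\theta)\theta)=0$ at the \emph{centre} of the perturbation, not through the perturbed average, and nothing in your sketch relates the two. The paper resolves this with a reverse-Jensen device that your proposal does not contain: it replaces $\psi$ by a concavified majorant $\chi$ with $\psi\le\chi\le\log(1+z+z^2/2)$ and $\inf\chi''=-1/4$, so that $\psi\bigl(\int h\,\mathrm d\rho\bigr)\le\int\chi(h)\,\mathrm d\rho+\min\{\log 4,\tfrac18\Var(h\,\mathrm d\rho)\}$; it then computes the Gaussian variance $\Var[(m+W)^2]=4m^2\sigma^2+2\sigma^4$ explicitly, pushes the variance correction back inside the Gaussian integral via the symmetrization $\min\{a,bm^2+c\}\le 2\,\mathbb E[\min\{a,b(m+W)^2+c\}]$, and folds the resulting $\min$ term into a single logarithm (this last step is where the constant $c$ of \eqref{defc} is generated). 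Without some version of this, the exponential-moment/Chernoff step cannot be applied to an integrand that dominates $\psi(\langle\theta,x\rangle^2-\lambda)$ pointwise, and the substitution $r_\lambda(\widehat\alpha(\theta)\theta)=0$ in the final step is not licensed by the high-probability event you constructed. (Relatedly, your attribution of the $\delta\lambda\lVert\theta\rVert^2/t$ term to the $\beta^{-1}\lVert X\rVert^2$ shift is incorrect: that term is the Kullback term $\mathcal K(\pi_\theta,\pi_0)/n=\beta\lVert\theta\rVert^2/(2n)$; the shift contributes to $\mu$ and $\gamma$.)

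A second, smaller gap is in the inversion. The two inequalities are not obtained by a symmetric ``substitute and solve'' at $\widehat\alpha(\theta)$. The upper bound on $N(\theta)$ uses that $r_\lambda(\alpha\theta)\le 0$ for \emph{all} $\alpha\in[0,\widehat\alpha(\theta)]$, so that the interval $[-1,\widehat\alpha(\theta)^2N(\theta)/\lambda-1]$ avoids the open interval between the two roots of a quadratic $p_\theta$ minorizing $r_\lambda/\lambda$; since the smaller root satisfies $z_{-1}\ge 0>-1$, connectedness forces $\widehat\alpha(\theta)^2N(\theta)/\lambda-1\le z_{-1}$. The lower bound instead uses $r_\lambda(\widehat\alpha(\theta)\theta)=0$ together with the \emph{upper} quadratic bound and a different root-location computation, and the paper explicitly warns that this is ``not a mere update of signs.'' Your sketch treats both directions as a direct substitution plus monotonicity, which skips the root-separation arguments and the verification that the indicator conditions in $\Phi_{\theta,\pm}$ coincide with the cases where the quadratics actually have the required root configuration.
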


\vskip 2mm
\noindent
For the proof we refer to section \ref{proof_prop0}.\\[1mm]
Observe that since those functions depend on $\theta$ only through $\|\theta\|$, if $\theta$ is such that $\|\theta\|=1$ it is natural to omit the dependence of $\theta$ 
and write 
$\Phi_-$ and $\Phi_+$. 
In the following we will omit the dependence of $\theta$ of the functions defined in Proposition \ref{prop0}, 
so that we write $\Phi_-$ and $\Phi_+$ instead of $\Phi_{\theta,-}$ and $\Phi_{\theta,+}$.

\begin{prop}\label{prop1} Let $\sigma \in \mathbb{R}_+$ be any energy level. We consider the set 
\[
\Gamma = \left\{ (\lambda, \beta, t)\in\Lambda \times \mathbb{R}_+ \ | \  
\xi + \mu + \gamma + 2 \frac{\delta \lambda }{ \max \{t, \sigma \} }< 1 \right\}
\]
and the bound 
\begin{equation}\label{defBlb}
B_{\lambda, \beta}(t) = \begin{cases} \displaystyle\frac{\gamma + \lambda \delta / \max \{ t , \sigma \} }{ 1 - \mu - \gamma - 2 \lambda \delta / \max \{ t,  \sigma \}} & (\lambda, \beta, t) \in \Gamma \\
+ \infty & \text{ otherwise.}
\end{cases}
\end{equation}
With probability at least $1 - 2 \epsilon$, 
for any $\theta \in \mathbb{R}^d$, any $(\lambda, \beta) \in \Lambda$,
\[ 
\Biggl\lvert \,  \frac{\max \bigl\{ N(\theta), \sigma \lVert \theta \rVert^2 \bigr\}}{\max \bigl\{ \widetilde N_{\lambda}(\theta), \sigma \lVert \theta \rVert^2 \bigr\}} - 1 \Bigg\rvert 
\leq  B_{\lambda, \beta} \left[  \lVert \theta \rVert^{-2} \widetilde N_{\lambda}(\theta)\right].
\] 
\end{prop}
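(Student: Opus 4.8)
The plan is to derive this proposition directly from Proposition \ref{prop0} by a change-of-variable argument, tracking the single scalar quantity $u = \lVert\theta\rVert^{-2}\widetilde N_\lambda(\theta) = \lambda/(\lVert\theta\rVert^2\widehat\alpha(\theta)^2)$. By homogeneity (noted after \eqref{tildeN} and after Proposition \ref{prop0}), it suffices to treat the case $\lVert\theta\rVert = 1$, so that $\widetilde N_\lambda(\theta) = \lambda/\widehat\alpha(\theta)^2 = u$ and the functions $\Phi_\pm$ carry no $\theta$-dependence beyond the value $u$ at which they are evaluated; the general case follows by replacing $\theta$ with $\theta/\lVert\theta\rVert$ and multiplying through by $\lVert\theta\rVert^2$. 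First I would record that on the event of Proposition \ref{prop0} (probability at least $1-2\epsilon$), for all $\theta$ and all $(\lambda,\beta)\in\Lambda$ simultaneously,
\[
\Phi_-(u) \leq N(\theta) \leq \Phi_+^{-1}(u), \qquad u = \widetilde N_\lambda(\theta).
\]

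Next I would rewrite these two inequalities as a two-sided bound on the ratio $N(\theta)/\widetilde N_\lambda(\theta)$ after truncation at the energy level $\sigma$. The key observation is that the indicator arguments and the fractional terms in $\Phi_{\theta,\pm}$ involve $\delta\lambda\lVert\theta\rVert^2/t$ evaluated at $t = N(\theta)$ or $t$ near $\widetilde N_\lambda(\theta)$; introducing $\max\{t,\sigma\}$ in the denominator (as in the definition of $\Gamma$ and $B_{\lambda,\beta}$) is exactly what makes these arguments monotone and bounded in a way that lets one pass from $\Phi_-(u) \le N(\theta)$ to a lower bound on $\max\{N(\theta),\sigma\}/\max\{u,\sigma\}$, and from $N(\theta)\le \Phi_+^{-1}(u)$ (equivalently $\Phi_+(N(\theta))\le u$, using the definition of $\Phi_+^{-1}$ as a sup and the monotonicity of $\Phi_+$) to an upper bound on the same ratio. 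Concretely, from $\Phi_+(N(\theta)) \le u$ and the formula for $\Phi_{\theta,+}$ one gets $N(\theta)\bigl(1 + (\gamma + \lambda\delta/\max\{N(\theta),\sigma\})/(1-\mu-\gamma-2\lambda\delta/\max\{N(\theta),\sigma\})\bigr)^{-1}\le u$ provided the indicator is active, which after rearranging yields $N(\theta)/\max\{u,\sigma\} - 1 \le B_{\lambda,\beta}(\dots)$; the lower side is handled symmetrically from $\Phi_-(u)\le N(\theta)$. One then checks that the indicator condition $\xi + \mu + \gamma + 2\delta\lambda/\max\{t,\sigma\} < 1$ appearing in the definition of $\Gamma$ dominates both indicator conditions in $\Phi_{\theta,\pm}$ (since $\mu - 2\gamma \le \mu$ and $\xi$ is common), so that outside $\Gamma$ the bound $B_{\lambda,\beta} = +\infty$ makes the claimed inequality vacuously true, and inside $\Gamma$ the denominators are positive and all manipulations are legitimate.

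The last step is to reconcile the argument at which $B_{\lambda,\beta}$ is evaluated. Proposition \ref{prop0} gives control with the fractional terms evaluated at $t = N(\theta)$ on one side and at $t = \widetilde N_\lambda(\theta)$ on the other, whereas the statement wants everything evaluated at $\lVert\theta\rVert^{-2}\widetilde N_\lambda(\theta) = u$. Here one uses that $B_{\lambda,\beta}(t)$ is a non-increasing function of $\max\{t,\sigma\}$ (larger $\max\{t,\sigma\}$ makes both numerator and the correction to the denominator smaller), together with the fact that, once we know $|N(\theta)/\max\{u,\sigma\} - 1|$ is bounded, $N(\theta)$ and $u$ are comparable, so $\max\{N(\theta),\sigma\}$ and $\max\{u,\sigma\}$ are comparable and replacing one by the other only changes $B$ by a controlled amount — or, more cleanly, one argues that if the desired inequality failed then the weaker inequality coming from Proposition \ref{prop0} would also fail, a contradiction. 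I expect the main obstacle to be precisely this bookkeeping: showing rigorously that evaluating $B_{\lambda,\beta}$ at $u$ rather than at $N(\theta)$ is legitimate, i.e. that the self-referential bound "closes up". This is a standard fixed-point-type argument (the bound on the ratio, once assumed, justifies the comparison needed to prove it), but making the monotonicity directions and the treatment of the $\max\{\cdot,\sigma\}$ truncation precise is where the care is needed. Everything else is routine algebra on the explicit expressions for $\Phi_{\theta,\pm}$ and the constants in \eqref{c_notat}.
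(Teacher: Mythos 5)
Your proposal follows essentially the same route as the paper: both deduce the result from Proposition \ref{prop0} by showing $\Phi_+\bigl(\max\{z,\sigma\}\bigr) = \max\{z,\sigma\}\bigl(1+B_{\lambda,\beta}(z)\bigr)^{-1}$ and $\Phi_-\bigl(\max\{z,\sigma\}\bigr) \geq \max\{z,\sigma\}\bigl(1-B_{\lambda,\beta}(z)\bigr)$ (using $\Phi_{\pm}(\sigma)\leq\sigma$ to commute the truncation with $\Phi_{\pm}$) and then exploiting the monotonicity of $B_{\lambda,\beta}$. The ``reconciliation'' you flag as the main obstacle is resolved by exactly that monotonicity and nothing more: the paper absorbs it into the computation $\Phi_+^{-1}\bigl(\max\{z,\sigma\}\bigr) \leq \max\{z,\sigma\}\bigl(1+B_{\lambda,\beta}(z)\bigr)$, using that $B_{\lambda,\beta}$ is non-increasing in its argument, so no fixed-point or comparability argument is needed.
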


\begin{proof}
We observe that, for any $z,t, \sigma \in \mathbb R_+$, 
if $\Phi_+(z)  \leq t$ then 
\[
\Phi_+ \bigl( \max \{ z, \sigma \} \bigr) \leq \max \{ t, \sigma \}
\]
since it is clear from the definition of $\Phi_+$ 
that $\Phi_+(\sigma) \leq \sigma$. Similarly if  $\Phi_-(z)  \leq t$, then 
\[
\Phi_- \bigl( \max \{ z, \sigma \} \bigr) \leq \max \{ t, \sigma \}.
\] 
Thus, according to the definition of $B_{\lambda, \beta}$ in equation \eqref{defBlb}, we get
\begin{align}
\label{phi+eq}
\Phi_+\bigl( \max \{ z, \sigma \} \bigr) & = \max \{ z, \sigma \} \Big( 1 + B_{\lambda, \beta} (z) \Big)^{-1} \\   
\label{phi-eq}
\Phi_-\bigl( \max \{ z, \sigma \} \bigr) & \geq \max \{ z, \sigma \} \Big( 1 - B_{\lambda, \beta}(z) \Big).
\end{align}
Therefore
\begin{align*}
\Phi_+^{-1}\bigl( \max \{ z, \sigma \} \bigr)  
& = \sup \bigl\{ t \, : \, \Phi_+(t) \leq \max \{ z, \sigma \} \bigr\} \\
& = \sup \bigl\{ t \geq z \, : \, \Phi_{+} (t) \leq \max \{ z, \sigma \} 
\bigr\} \\ 
& = \sup \bigl\{ t \geq z \, : \, \bigl( 1 + B_{\lambda, \beta}(t) \bigr)^{-1} 
\max \{ z, \sigma \} \bigr\} \\ 
& \leq \sup \bigl\{ t \geq z \, : \, \bigl( 1 + B_{\lambda, \beta}(z) 
\bigr)^{-1} t \leq \max \{ z, \sigma \} \bigr\} \\ 
& = \max \{ z, \sigma \} \Bigl( 1 + B_{\lambda, \beta}(z) \Bigr), 
\end{align*}
where we have have used the fact that $B_{\lambda, \beta}$ is 
non-decreasing. In view of these inequalities, the present proposition 
is a consequence of the previous one. 
\end{proof}

\vskip 2mm
\noindent
From now on we fix the finite set $\Lambda$ of all possible values of the couple $(\lambda, \beta)$ 
as
\begin{equation}\label{defLambda}
\Lambda = \bigl\{ (\lambda_j, \beta_j) \ | \ 0 \leq j < K \bigr\}, 
\end{equation}
where $\displaystyle K  = 1 + \left\lceil a^{-1} \log \biggl( \frac{n}{72(2+c) \kappa^{1/2}} \biggr) \right\rceil$, with $a>0$ and $c$ defined in equation \eqref{defc}, and 
\begin{align*}
\lambda_j & = \sqrt{ \frac{2}{n (\kappa - 1)} \Bigg(\frac{(2 + 3c)}{4(2 + c) \kappa^{1/2} \exp (-j a) }  + \log(K / \epsilon) \Bigg)} \\ 
\beta_j & = \sqrt{2 (2 + c) \kappa^{1/2} s_4^4 n \exp \bigl[  - (j-1/2) a \bigr]}.
\end{align*}
We put
\[ 
(\widehat{\lambda}, \widehat{\beta}) = \arg \min_{(\lambda, \beta)  \in \Lambda} B_{\lambda, \beta} \Bigg[ \lVert \theta \rVert^{-2} \widetilde{N}_{\lambda} (\theta) \Bigg]
\] 
and we define our robust estimator as
\begin{equation}\label{defhatn}
\widehat{N} (\theta) = \widetilde{N}_{\widehat{\lambda}}(\theta).
\end{equation}

\begin{prop} \label{prop2}
Let us fix a threshold $\sigma \leq s_4^2$ and set the value of the 
parameter $a$ used to define $\Lambda$ to $1/2$. 
Introduce 
\[
\zeta_* (t) = \sqrt{ 2.032 (\kappa-1) \Bigg( \frac{0.73 \ \mathbf{Tr}(G)}{t} 
+ \log(K) + \log(\epsilon^{-1}) \Bigg)} + \sqrt{ \frac{98.5 \, \kappa 
\mathbf{Tr}(G)}{ t}}, \quad t \in \mathbb R_+,
\]
where $\mathbf{Tr}(G) = \mathbb E\left[\|X\|^2 \right]$ denotes the trace of the Gram matrix, and
\[ 
B_*(t) = \begin{cases} 
\displaystyle\frac{n^{-1/2} \zeta_*(\max \{ t, \sigma \} )}{1 - 4 \, n^{-1/2} \zeta_*( \max \{ t, \sigma \} )} &  \bigl[ 6 + (\kappa-1)^{-1} \bigr] \zeta_*( \max \{ t, \sigma \} ) \leq \sqrt{n} \\ 
+ \infty & \text{ otherwise.}
\end{cases} 
\] 
With probability at least $1 - 2 \epsilon$, 
for any $\theta \in \mathbb{R}^d$, 
\[ 
\left\lvert \, \frac{\max \{ N(\theta), \sigma \lVert \theta \rVert^2 \}}{\max \{ \widehat{N}(\theta), \sigma \lVert \theta \rVert^2 \}} - 1 \, \right\rvert \leq B_* \Bigg[ \lVert \theta \rVert^{-2} N(\theta) \Bigg].
\] 
\end{prop}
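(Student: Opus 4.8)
The plan is to obtain Proposition~\ref{prop2} by specializing Proposition~\ref{prop1} to the explicit grid $\Lambda$ of \eqref{defLambda} and then bounding, for one well-chosen grid point, the quantity $B_{\lambda,\beta}$ that appears there. \textbf{First}, I would reduce the statement to the grid: since by \eqref{defhatn} the pair $(\widehat\lambda,\widehat\beta)$ minimizes $(\lambda,\beta)\mapsto B_{\lambda,\beta}\bigl[\lVert\theta\rVert^{-2}\widetilde N_\lambda(\theta)\bigr]$ over $\Lambda$ and $\widehat N(\theta)=\widetilde N_{\widehat\lambda}(\theta)$, Proposition~\ref{prop1} yields, on a single event of probability at least $1-2\epsilon$ and simultaneously for every $\theta$,
\[
\left\lvert\,\frac{\max\{N(\theta),\sigma\lVert\theta\rVert^2\}}{\max\{\widehat N(\theta),\sigma\lVert\theta\rVert^2\}}-1\,\right\rvert\;\le\;\min_{0\le j<K}B_{\lambda_j,\beta_j}\bigl[\lVert\theta\rVert^{-2}\widetilde N_{\lambda_j}(\theta)\bigr].
\]
It then suffices to exhibit, for each fixed $\theta$, one index $j^\star=j^\star(\theta)\in\{0,\dots,K-1\}$ whose term on the right is at most $B_*\bigl[\lVert\theta\rVert^{-2}N(\theta)\bigr]$.

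\textbf{Second}, I would choose that index. Set $r=\max\{\lVert\theta\rVert^{-2}N(\theta),\sigma\}$ and note $r\le\kappa^{1/2}\mathbf{Tr}(G)$, since $\lVert\theta\rVert^{-2}N(\theta)\le\mathbf{Tr}(G)$ and $\sigma\le s_4^2\le\kappa^{1/2}\mathbf{Tr}(G)$ (the last inequality being the one announced just after \eqref{skappa}). Reading off \eqref{c_notat} with $(\lambda,\beta)=(\lambda_j,\beta_j)$, the quantities $\xi_j,\mu_j,\gamma_j$ and $\delta_j\lambda_j/r=\beta_j/(2nr)$ are governed by two families of monomials in $e^{-ja}$: one carrying a factor $e^{+ja/2}$ (coming from $\lambda_j$ and from $s_4^2/\beta_j$, increasing in $j$), the other carrying a factor $e^{-ja/2}$ (coming from $1/\lambda_j$ and from $\beta_j/(2nr)$, decreasing in $j$). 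I would take $j^\star$ to be the integer that balances these two, i.e. such that $e^{-j^\star a}$ equals, up to a factor in $[1,e^{a}]$, an absolute-constant multiple of $\mathbf{Tr}(G)/(n\,r)$, clamped to $j^\star=0$ when $r$ is too large. The only delicate point here is the range check $0\le j^\star<K$: the lower end is immediate, and the upper end $j^\star<K$ holds precisely because of the side condition $\bigl[6+(\kappa-1)^{-1}\bigr]\zeta_*(\max\{t,\sigma\})\le\sqrt n$ built into $B_*$ together with the value of $K$ in \eqref{defLambda} --- when that condition fails, $B_*=+\infty$ and there is nothing to prove.

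\textbf{Third}, I would carry out the explicit substitution. Plugging $\lambda_{j^\star},\beta_{j^\star}$ into \eqref{c_notat} and using the identity $\tfrac{n(\kappa-1)}{2}\lambda_j^2-\log(K/\epsilon)=\tfrac{2+3c}{4(2+c)\kappa^{1/2}}\,e^{ja}$ (which is how $\lambda_j$ is defined), the three terms of $\gamma_j$ other than $(2+c)\kappa^{1/2}s_4^2/\beta_j$ collapse to $(1+e^{-a/2})\tfrac{\kappa-1}{2}\lambda_{j^\star}+(1-e^{-a/2})\tfrac{\log(K/\epsilon)}{n\lambda_{j^\star}}$, while $(2+c)\kappa^{1/2}s_4^2/\beta_{j^\star}$ (a common term of $\mu_j$ and $\gamma_j$) and $\beta_{j^\star}/(2nr)$ are explicit scalar multiples of $e^{+j^\star a/2}$ and $e^{-j^\star a/2}$ respectively. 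Inserting the choice of $j^\star$ then balances these powers of $e^{\pm j^\star a/2}$, and with $a=1/2$, $s_4^2\le\kappa^{1/2}\mathbf{Tr}(G)$ and $c\le44.3$ from \eqref{defc}, each of $\xi_{j^\star},\mu_{j^\star},\gamma_{j^\star},\beta_{j^\star}/(2nr)$ is bounded by $n^{-1/2}\zeta_*(r)$ up to the absolute constants absorbed into $\zeta_*$; this is exactly where the numbers $2.032(\kappa-1)$, $0.73\,\mathbf{Tr}(G)$ and $98.5\,\kappa\,\mathbf{Tr}(G)$ are produced, the $(\kappa-1)$-term collecting the $\lambda_{j^\star}$ contributions and the $\kappa$-term the $s_4^2/\beta_{j^\star}$ and $\beta_{j^\star}/(nr)$ contributions, the rounding factor $e^{a/2}=e^{1/4}$ accounting for constants such as $2.032\approx\tfrac12(1+e^{-1/4})^2e^{1/4}$.

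\textbf{Finally}, I would pass from $\widetilde N_{\lambda_{j^\star}}(\theta)$ to $N(\theta)$ inside the argument and conclude. Both $t\mapsto B_{\lambda,\beta}(t)$ of \eqref{defBlb} and $t\mapsto\zeta_*(t)$ depend on $t$ only through $\max\{t,\sigma\}$ and are non-increasing in it (a one-line monotonicity check for $B_{\lambda,\beta}$), so it is enough to bound $\max\{\lVert\theta\rVert^{-2}\widetilde N_{\lambda_{j^\star}}(\theta),\sigma\}$ from below by an absolute-constant multiple $c_0 r$ of $r$. That follows from a short bootstrap: applying Proposition~\ref{prop1} to the single pair $(\lambda_{j^\star},\beta_{j^\star})$, the bound just proved shows its right-hand side is $<1$, which forces $\max\{\lVert\theta\rVert^{-2}\widetilde N_{\lambda_{j^\star}}(\theta),\sigma\}\ge c_0 r$. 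Substituting this lower bound into $B_{\lambda_{j^\star},\beta_{j^\star}}$ of \eqref{defBlb} (recalling $\delta_{j^\star}\lambda_{j^\star}=\beta_{j^\star}/(2n)$) and using the Step-3 estimates brings the right-hand side to $n^{-1/2}\zeta_*(r)\big/\bigl(1-4n^{-1/2}\zeta_*(r)\bigr)=B_*(r)=B_*\bigl[\lVert\theta\rVert^{-2}N(\theta)\bigr]$, as desired; the ``$4$'' in the denominator of $B_*$ is precisely the sum, in units of $n^{-1/2}\zeta_*(r)$, of $\mu_{j^\star},\gamma_{j^\star}$ and $\beta_{j^\star}/(nc_0 r)$. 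The hard part is entirely in the two middle steps --- pinning the integer $j^\star$ inside $\{0,\dots,K-1\}$ under the sole hypothesis $\bigl[6+(\kappa-1)^{-1}\bigr]\zeta_*(\max\{t,\sigma\})\le\sqrt n$, and pushing all the absolute constants (with $a=1/2$, $c\le44.3$, and the ceiling in the definition of $K$) through the geometric-grid balancing so that they land exactly on $2.032$, $0.73$, $98.5$ and $4$; the final bootstrap is routine but must be arranged so as not to be circular.
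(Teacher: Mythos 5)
Your overall route is the paper's own: reduce to the grid via Proposition \ref{prop1} and the argmin definition of $(\widehat\lambda,\widehat\beta)$, exhibit one near-optimal grid index, push the explicit constants through with $a=1/2$ and $s_4^2\le\kappa^{1/2}\mathbf{Tr}(G)$, and pass from $\widetilde N_{\lambda}(\theta)$ to $N(\theta)$ inside the argument of $B_{\lambda,\beta}$. Your last ``bootstrap'' step is, in substance, the paper's Lemma \ref{lem1.14}: the circularity you worry about is avoided because Proposition \ref{prop0} gives the deterministic lower bound $\max\{\lVert\theta\rVert^{-2}\widetilde N_{\lambda}(\theta),\sigma\}\ge \max\{t,\sigma\}/(1+B_{\lambda,\beta}(t))$ with $t=\lVert\theta\rVert^{-2}N(\theta)$, and $B_{\lambda,\beta}(t)\le 1$ is checked from the same deterministic estimates on $\gamma,\mu,\delta\lambda/\max\{t,\sigma\}$ that you compute in your third step, not from the probabilistic inequality; this is exactly where the denominator $1-4n^{-1/2}\zeta_*$ of $B_*$ comes from.

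There is, however, one concrete error in your second step. The correct balancing of the two monomial families is $e^{-j^\star a}\approx \max\{t,\sigma\}/s_4^2$ (equivalently, $\beta_{j^\star}$ should match the optimizer $\beta_*=\sqrt{2(2+c)\kappa^{1/2}s_4^2\max\{t,\sigma\}\,n}$ and $\lambda_{j^\star}$ should match $\lambda_*$ of \eqref{eq1.33}), not $e^{-j^\star a}\propto \mathbf{Tr}(G)/(n\,r)$ as you wrote: equating $(2+c)\kappa^{1/2}s_4^2/\beta_j$ with $\beta_j/(2nr)$ gives $\beta_j^2=2(2+c)\kappa^{1/2}s_4^2 r\,n$, i.e. $e^{-(j-1/2)a}=r/s_4^2$, and the two expressions differ by a factor of order $n(r/s_4^2)^2$, which is not an absolute constant. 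With the index as you specified it, the $e^{\pm j^\star a/2}$ powers would not cancel and the constants $2.032$, $0.73$, $98.5$ would not come out. The range check also rests on this corrected matching: $j^\star\ge 0$ because $\max\{t,\sigma\}\le s_4^2$ (this is where both hypotheses $\sigma\le s_4^2$ and $\lVert\theta\rVert^{-2}N(\theta)\le\mathbf{Tr}(G)\le s_4^2$ enter, and why the paper phrases the intermediate bound with $\min\{t,s_4^2\}$), and $j^\star<K$ because the finiteness condition in $B_*$ forces $\max\{t,\sigma\}/s_4^2\ge 72(2+c)\kappa^{1/2}/n\ge e^{-a(K-1)}$. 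With that correction your plan coincides with the paper's proof.
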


\vskip 2mm
\noindent
For the proof we refer to section \ref{proof_prop2}.
Remark that equation \eqref{eq:precise} of the proof provides a bound for any choice of the parameter $a > 0$ 
and that we report here only the numerical value of the bound when 
$a = 1/2$ for the sake of simplicity.
\\[1mm]
Assuming any reasonable bound on the sample size we can bound the logarithmic factor $\log \log(n)$ hidden in $\log(K)$ with a relatively small constant. 
In particular, if we choose $n \leq 10^{20}$, we get $\log(K)\leq 4.35$.\\[1mm]
In order to provide a more friendly version of the above bound, we introduce here the $\mathcal O$ notation, 
where $A = \mathcal O (B)$ means that there exists a numerical constant $\tau$ such that $A\leq \tau B$. 
Thus, the result stated in Proposition \ref{prop2} becomes, with probability at least $1 - 2 \epsilon$, 
for any $\theta \in \mathbb{R}^d$, 
\[
\left\lvert \, \frac{\max \{ N(\theta), \sigma \lVert \theta \rVert^2 \}}{\max \{ \widehat{N}(\theta), \sigma \lVert \theta \rVert^2 \}} - 1 \, \right\rvert 
\leq \mathcal O \left( \sqrt{\frac{\kappa}{n} \left( \frac{\mathbf{Tr}(G)}{ \max\{ \|\theta\|^{-2} N(\theta), \sigma\}} + \log \bigl( 
\log(n) / \epsilon \bigr) \right)} \; \right).
\]

\vskip1mm
\noindent
Observe that the bound $B_*$, 
unlike the bound provided in \cite{Catoni16}, 
does not depend explicitly on the dimension $d$ 
of the ambient space.  
More specifically, the dimension has been replaced by the effective dimension coming from the entropy term of the PAC-Bayesian bound
\[\mathbf{Tr}(G)/ \max\big\{ \|\theta\|^{-2} N(\theta), \sigma \big\}.
\] 
To get an intuition of why this entropy term replaces the dimension, it is sufficient to consider the case where
the energy $N$ is uniformly distributed, so that $N(\theta) = N_*\geq \sigma$ for any $\theta \in \mathds R^d$ with $\|\theta\|=1$. In this case indeed
\[
\frac{\mathbf{Tr}(G)}{ \max\{ \|\theta\|^{-2} N(\theta), \sigma\}}
= \frac{\sum_{i=1}^d N(v_i)}{N_*} = d
\]
where $v_i$ denotes an orthonormal basis of eigenvectors of $G$. In summary, 
when the Gram matrix $G$ to be estimated is proportional to the identity 
matrix, then our bound coincides (up to some moderate increase in the 
constants) with the bound proved in \cite{Catoni16}, but when the eigenvalues 
of $G$ are decreasing, then our bound balances the complexity 
in a different way and is looser when $\theta$ is in the span of 
eigenvectors with low eigenvalues, but tighter when $\theta$ 
is in the span of eigenvectors with high eigenvalues.\\[1mm]
Let us also remark that 
the variance $\Var \bigl( \langle \theta, X \rangle^2 \bigr)$ of $\langle \theta, X \rangle^2$ is related to $\kappa$ by the relation 
\[ 
\kappa = 1 + \sup \biggl\{ \frac{\Var \bigl( \langle \theta, X \rangle^2 
\bigr)}{\mathds{E} \bigl( \langle \theta, X \rangle^2 \bigr)^2} \, : \, 
\theta \in \mathds{R}^d, \mathds{E} \bigl( \langle \theta, X \rangle^2 \bigr) 
> 0 \biggr\}. 
\] 
Moreover, we do not need to know the exact values of $\kappa$ and $\mathbf{Tr}(G) = \mathds{E} \bigl( \lVert X \rVert^2 \bigr)$ 
to compute the estimator and evaluate the bound, 
it is sufficient to know upper bounds instead. 
Indeed, if we use those upper bounds to define our estimator, the above result is still true with $\kappa$ and $\mathbf{Tr}(G)$ replaced by their upper bounds. \\[1mm]
We also observe that in order to have a meaningful (finite) bound
we can choose the threshold $\sigma$ such that
\begin{equation}
\label{eq:13}
8 \zeta_*(\sigma) \leq \sqrt n
\end{equation}
so that $B_*(t) <+\infty$ for any $t \in \mathbb R_+$, assuming that $\kappa \geq 3/2$.  
More precisely, using the inequality $(\sqrt{a} + \sqrt{b} )^2 \leq 
2(a+b)$, we see that equation \eqref{eq:13} holds when  
\[
\sigma = \frac{ 100 \, \kappa \mathbf{Tr}(G)}{n/128 - 4.35 -  \log \bigl( \epsilon^{-1} \bigr)}.
\]
With this choice the threshold $\sigma$ decays to zero at speed $1/n$ 
as the sample size grows to infinity. 

\vskip5mm
\noindent
Remark that the estimator $\hat N$ is not necessarily a quadratic form. 
We conclude this section by introducing and studying a quadratic estimator of $N,$ 
that is an estimator of the form 
$\theta^{\top} Q \theta$, where $Q$ is an estimate 
of the Gram matrix $G$.
\\[1mm]
We observe that Proposition \ref{prop0} provides a confidence region for $N(\theta)$.
Define
\[
B_-(\theta) =  \max_{(\lambda,\beta) \in \Lambda}  \Phi_- \bigl(\widetilde N_{\lambda}(\theta) \bigr) \quad 
\text{and} \quad 
B_+(\theta) = \min_{(\lambda,\beta) \in \Lambda} \Phi_+^{-1} \bigl( \widetilde N_{\lambda}(\theta) \bigr)
\]
where we recall that $\widetilde N_{\lambda}(\theta) = \frac{\displaystyle\lambda}{\displaystyle\widehat{\alpha}(\theta)^2}$ and $\Lambda$ is defined in equation \eqref{defLambda}.
According to Proposition \ref{prop0}, with probability at least $1-2\epsilon$, for any $\theta \in \mathbb R^d$, 
\begin{equation}\label{opt_ci}
B_-(\theta) \leq N(\theta) \leq B_+(\theta) .
\end{equation}
From a theoretical point of view we can consider as an estimator of $N$ any quadratic form belonging to 
the confidence interval $\left[ B_-(\theta), B_+(\theta)\right]$ 
for any $\theta$. Such a quadratic form exists with probability at least 
$1 - 2 \epsilon$ according to equation \eqref{opt_ci}. 
However, from an algorithmic point of view, 
we would like to impose these constraints only 
for a finite number of directions $\theta$. 
In particular, in the following
we are going to study the properties of a symmetric matrix $Q$ that satisfies
$\mathbf{Tr}(Q^2) \leq \mathbf{Tr}(G^2)$ and
\[
B_-(\theta)  \leq \theta^{\top}Q\theta \leq B_+(\theta), \quad \theta \in \Theta_{\delta}, 
\]
where $\Theta_{\delta}$ is any finite $\delta$-net of the unit sphere $\mathbb{S}_d 
= \bigl\{ \theta \in \mathbb{R}^d, \lVert \theta \rVert = 1 \bigr\} $, 
meaning that 
\[
\sup_{\theta \in \mathbb{S}_d} \min_{\xi \in \Theta_{\delta}} 
\lVert \theta - \xi \rVert \leq \delta. 
\]
The matrix $Q$ 
can be computed using a convex optimization algorithm as described in (section 1.2.4 of)
\cite{Giulini2015}.\\[1mm]
Note that a more straightforward choice would have been 
to set 
\begin{equation}
\label{eq:2.15}
Q_{i,j} = \frac{1}{4} \left[ \widehat N( e_i + e_j) - \widehat N(e_i- e_j) \right]
\end{equation}
where $\{e_i\}_{i=1}^d$ denotes the canonical basis of $\mathds{R}^d$. 
Unfortunately this simple choice is not adequate to  
control the approximation error independently of the dimension $d$ in all directions. 
To get a dimension-free bound we need an estimator that behaves well in a far larger set of directions $\Theta_\delta$ than the $d^2$ directions $e_i\pm e_j$.
\\[1mm]
From now on let $\sigma \in ]0,s_4^2]$ be a threshold such that $8\zeta_*(\sigma)\leq \sqrt n$.
The next proposition provides the analogous for the quadratic form 
$ \theta^{\top} Q \, \theta$ of the dimension-free bound 
presented in Proposition \ref{prop2} for $\widehat N(\theta)$. 

\begin{prop}\label{prop3}
With the same notation as in Proposition \ref{prop2}, with probability at least $1-2\epsilon,$ for any $\theta \in \mathbb S_d$,
\begin{align*}
\Bigl\lvert \max \{ \theta^{\top} Q \, \theta, \sigma \}  - 
\max \{ N(\theta), \sigma \}  \Bigr\rvert 
& \leq 2 \max \bigl\{ N(\theta), \sigma \bigr\} B_* \bigl( 
N(\theta)\bigr) + 5 \delta \sqrt{\mathbf{Tr}(G^2)}, \\
\Bigl\lvert \max \{ \theta^{\top} Q \, \theta, \sigma \} - 
\max \{ N(\theta), \sigma \}  \Bigr\rvert 
& \leq 2 \max \bigl\{ \theta^{\top} Q \, \theta, \sigma \bigr\} B_* \bigl( \min \{ 
\theta^{\top} Q \, \theta, s_4^2 \bigr\} \bigr) + 5 \delta \sqrt{\mathbf{Tr}(G^2)}.
\end{align*}
\end{prop}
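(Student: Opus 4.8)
The plan is to prove the two bounds first for the finitely many directions of the net $\Theta_\delta$, by combining the confidence region \eqref{opt_ci} with the relative estimates behind Propositions~\ref{prop1}–\ref{prop2}, and then to transfer them to an arbitrary $\theta\in\mathbb{S}_d$ by a net argument that uses the constraint $\mathbf{Tr}(Q^2)\le\mathbf{Tr}(G^2)$. I work throughout on the event of probability at least $1-2\epsilon$ on which Proposition~\ref{prop0} holds; since \eqref{opt_ci} and Proposition~\ref{prop2} are consequences of it, they hold on the same event and no union bound is lost. I also use repeatedly that, for $\theta\in\mathbb{S}_d$, $N(\theta)=\langle G\theta,\theta\rangle\le\mathbf{Tr}(G)\le s_4^2$, so that $\min\{N(\theta),s_4^2\}=N(\theta)$: this is why the first bound needs no truncation, whereas in the second one the truncation matters only in directions where $\theta^\top Q\,\theta$ overshoots $s_4^2$.

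For $\xi\in\Theta_\delta$, both $\xi^\top Q\,\xi$ (by the defining constraint on $Q$) and $N(\xi)$ (by \eqref{opt_ci}) lie in $[B_-(\xi),B_+(\xi)]$, so
\[
\bigl\lvert\,\max\{\xi^\top Q\,\xi,\sigma\}-\max\{N(\xi),\sigma\}\,\bigr\rvert\ \le\ \max\{B_+(\xi),\sigma\}-\max\{B_-(\xi),\sigma\}.
\]
Writing $(\widehat\lambda,\widehat\beta)$ for the pair defining $\widehat N(\xi)=\widetilde N_{\widehat\lambda}(\xi)$ in \eqref{defhatn}, one has $B_+(\xi)\le\Phi_+^{-1}(\widehat N(\xi))$ and $B_-(\xi)\ge\Phi_-(\widehat N(\xi))$, and then the manipulations in the proof of Proposition~\ref{prop1} (built on \eqref{phi+eq}, \eqref{phi-eq} and on the fact that $\Phi_\pm$ fix the level $\sigma$ from below) give
\[
\max\{B_+(\xi),\sigma\}\le\max\{\widehat N(\xi),\sigma\}\bigl(1+B_{\widehat\lambda,\widehat\beta}(\widehat N(\xi))\bigr),\qquad \max\{B_-(\xi),\sigma\}\ge\max\{\widehat N(\xi),\sigma\}\bigl(1-B_{\widehat\lambda,\widehat\beta}(\widehat N(\xi))\bigr).
\]
Hence the width above is at most $2\max\{\widehat N(\xi),\sigma\}\,B_{\widehat\lambda,\widehat\beta}(\widehat N(\xi))$; and since $\max\{N(\xi),\sigma\}\ge\max\{B_-(\xi),\sigma\}$, the second inequality also gives $\max\{\widehat N(\xi),\sigma\}\le\max\{N(\xi),\sigma\}/(1-B_{\widehat\lambda,\widehat\beta}(\widehat N(\xi)))$. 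Combining, the width is at most $2\max\{N(\xi),\sigma\}\,B_{\widehat\lambda,\widehat\beta}(\widehat N(\xi))/(1-B_{\widehat\lambda,\widehat\beta}(\widehat N(\xi)))$. The proof of Proposition~\ref{prop2} bounds $B_{\widehat\lambda,\widehat\beta}(\widehat N(\xi))=\min_{(\lambda,\beta)\in\Lambda}B_{\lambda,\beta}[\widetilde N_\lambda(\xi)]$ by a quantity $\widetilde B=\widetilde B(N(\xi))$ whose value is tuned so that $\widetilde B/(1-\widetilde B)=B_*(N(\xi))$ — this is precisely the role of the factor $4$ in the denominator of $B_*$ — and since $x\mapsto x/(1-x)$ is increasing, we conclude that for every $\xi\in\Theta_\delta$,
\[
\bigl\lvert\,\max\{\xi^\top Q\,\xi,\sigma\}-\max\{N(\xi),\sigma\}\,\bigr\rvert\ \le\ 2\max\{N(\xi),\sigma\}\,B_*(N(\xi)).
\]

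To pass to an arbitrary $\theta\in\mathbb{S}_d$, pick $\xi\in\Theta_\delta$ with $\lVert\theta-\xi\rVert\le\delta$. From $\theta^\top Q\,\theta-\xi^\top Q\,\xi=(\theta-\xi)^\top Q(\theta+\xi)$, $N(\theta)-N(\xi)=(\theta-\xi)^\top G(\theta+\xi)$, $\lVert\theta+\xi\rVert\le2$, and the domination of the operator norm by the Hilbert–Schmidt norm together with $\mathbf{Tr}(Q^2)\le\mathbf{Tr}(G^2)$, we obtain $\lvert\theta^\top Q\,\theta-\xi^\top Q\,\xi\rvert\le2\delta\sqrt{\mathbf{Tr}(G^2)}$ and $\lvert N(\theta)-N(\xi)\rvert\le2\delta\sqrt{\mathbf{Tr}(G^2)}$; since $x\mapsto\max\{x,\sigma\}$ is $1$-Lipschitz, a triangle inequality with the net bound yields
\[
\bigl\lvert\,\max\{\theta^\top Q\,\theta,\sigma\}-\max\{N(\theta),\sigma\}\,\bigr\rvert\ \le\ 4\delta\sqrt{\mathbf{Tr}(G^2)}+2\max\{N(\xi),\sigma\}\,B_*(N(\xi)).
\]
It remains to replace $N(\xi)$ by $N(\theta)$ in the last term. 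A direct computation from the definition of $\zeta_*$ shows that the elasticity $-t\,\zeta_*'(t)/\zeta_*(t)$ is at most $1/2$ for $t\ge\sigma$, and the hypothesis $8\,\zeta_*(\sigma)\le\sqrt n$ gives $4n^{-1/2}\zeta_*(\max\{t,\sigma\})\le1/2$; together these make $t\mapsto\max\{t,\sigma\}\,B_*(t)$ nondecreasing with derivative at most $\tfrac14$, so that $\max\{N(\xi),\sigma\}B_*(N(\xi))\le\max\{N(\theta),\sigma\}B_*(N(\theta))+\tfrac12\delta\sqrt{\mathbf{Tr}(G^2)}$ and the first inequality follows with the constant $5$. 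The second inequality is obtained by the parallel argument in which the relative width of $[B_-(\xi),B_+(\xi)]$ is measured against $\max\{\xi^\top Q\,\xi,\sigma\}$ rather than $\max\{N(\xi),\sigma\}$ (using $\max\{\xi^\top Q\,\xi,\sigma\}\ge\max\{B_-(\xi),\sigma\}$), the roles of $N$ and $\theta^\top Q\,\theta$ being interchanged; the truncation $\min\{\theta^\top Q\,\theta,s_4^2\}$ then enters because $B_*$ is nonincreasing and $N(\theta)\le s_4^2$, so that replacing $\theta^\top Q\,\theta$ by $\min\{\theta^\top Q\,\theta,s_4^2\}$ inside $B_*$ only weakens the bound while keeping its argument in the range controlled above.

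The step I expect to be the main obstacle is precisely this passage from the net to the sphere with the sharp constant $2$ and only an additive $\delta\sqrt{\mathbf{Tr}(G^2)}$-type loss: it forces one to carry the self-improved factor $B_{\widehat\lambda,\widehat\beta}/(1-B_{\widehat\lambda,\widehat\beta})$ rather than $B_{\widehat\lambda,\widehat\beta}$ itself (which is exactly what the slack in the definition of $B_*$ is for), and it requires the monotonicity and Lipschitz regularity of $t\mapsto\max\{t,\sigma\}B_*(t)$, for which the standing assumption $8\,\zeta_*(\sigma)\le\sqrt n$ is genuinely needed. The remaining ingredients — the confidence interval \eqref{opt_ci}, the $\Phi_\pm$ identities, and the bookkeeping around $\Lambda$ — are already available in the proofs of Propositions~\ref{prop0}–\ref{prop2}.
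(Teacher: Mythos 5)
Your overall architecture --- containment of both $\xi^{\top} Q \, \xi$ and $N(\xi)$ in $[B_-(\xi), B_+(\xi)]$ on the net, the $2\delta\sqrt{\mathbf{Tr}(G^2)}$ perturbation bounds from $\mathbf{Tr}(Q^2)\leq\mathbf{Tr}(G^2)$, and the transfer to $\mathbb{S}_d$ via the monotonicity and the derivative bound $F' \leq B_* \leq 1/4$ for $F(t)=\max\{t,\sigma\}B_*(t)$ --- matches the paper's (which packages these steps into equation \eqref{pf1} and Corollary \ref{lem1A}), and your accounting of the additive terms is consistent with the constant $5$. The gap is in the width bound on the net. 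Your chain yields
\[
\max\{B_+(\xi),\sigma\}-\max\{B_-(\xi),\sigma\}\ \leq\ 2\max\{N(\xi),\sigma\}\,
\frac{B_{\widehat\lambda,\widehat\beta}\bigl(\widehat N(\xi)\bigr)}{1-B_{\widehat\lambda,\widehat\beta}\bigl(\widehat N(\xi)\bigr)},
\]
and you then assert that the intermediate quantity $\widetilde B$ of the proof of Proposition \ref{prop2} is ``tuned so that $\widetilde B/(1-\widetilde B)=B_*$''. That is not what the construction gives. Writing $x=n^{-1/2}\zeta_*$, each of the two inequalities of Lemma \ref{lem1.14} costs one upgrade from $B_{\lambda,\beta}=\frac{\gamma+\tau}{1-\mu-\gamma-2\tau}$ to $\widetilde B_{\lambda,\beta}=\frac{\gamma+\tau}{1-\mu-2\gamma-4\tau}\leq\frac{x}{1-4x}=B_*$: the first absorbs the change of argument from $N(\xi)$ to $\widetilde N_{\lambda}(\xi)$, the second absorbs one application of $y\mapsto y/(1-y)$. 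Your chain needs both at once, since $B_{\widehat\lambda,\widehat\beta}$ is evaluated at the estimator-side argument $\widehat N(\xi)$ \emph{and} an extra $1/(1-\cdot)$ appears when converting $\max\{\widehat N(\xi),\sigma\}$ into $\max\{N(\xi),\sigma\}$. Composing them gives at best $\frac{\gamma+\tau}{1-\mu-3\gamma-5\tau}\leq\frac{x}{1-5x}$, which exceeds $B_*=\frac{x}{1-4x}$ by a factor of up to $(1-4x)/(1-5x)\leq 4/3$ when $x\leq 1/8$. So as written your net bound carries the constant $8/3$ rather than $2$, and the stated inequality is not reached.

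The paper avoids this in Lemma \ref{lem2A} by distinguishing two cases according to whether $\max\{\Phi_+(b-\eta),\sigma\}\leq\max\{a+\eta,\sigma\}$. In the only case where the factor $(1+B)/(1-B)$ arises, that very inequality combined with the fact that $B_{\lambda,\beta}(t)$ is non-increasing in $\max\{t,\sigma\}$ lets every occurrence of $B_{\lambda,\beta}$ be evaluated directly at the $N$-side argument $a+\eta$; only the second inequality of Lemma \ref{lem1.14} is then needed, a single round of slack suffices, and one obtains exactly $2\widetilde B_{\lambda,\beta}(a+\eta)\leq 2B_*$. To repair your argument you must either import this case distinction (decide, before choosing the argument at which $B_{\lambda,\beta}$ is evaluated, whether the estimator-side maximum exceeds the $N$-side one), or accept a constant larger than $2$ in the proposition. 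The remaining ingredients of your write-up are correct.
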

Remark that the parameter $\delta$ of the net $\Theta_{\delta}$ governs 
the computation cost of $Q$. Thus, we can in theory (that is if we 
have an arbitrarily fast computer at our disposal), take $\delta$ 
as close to zero as we want. 
\begin{proof}
Since for any $\theta \in  \mathbb S_d$ there is $\xi \in \Theta_{\delta}$ such that $\|\theta- \xi \|\leq \delta,$ we have
\begin{multline}\label{bxitheta}
\bigl\lvert \theta^{\top} Q \, \theta - \xi^{\top} Q \, \xi 
\bigr\rvert =  \bigl( \theta + \xi \bigr)^{\top} 
Q \, \bigl( \theta - \xi \bigr) \\ \leq \lVert \theta + \xi \rVert \, 
\lVert Q \rVert_{\infty} \, \lVert \theta - 
\xi \rVert \leq 2 \delta \sqrt{ \mathbf{Tr}(Q^2) } 
\leq 2 \delta \sqrt{ \mathbf{Tr}(G^2)}. 
\end{multline}
Let us put $\eta =  2 \delta \sqrt{ \mathbf{Tr}(G^2)}.$
We observe that, with probability at least $1-2\epsilon,$
\begin{equation}\label{pf1}
\begin{aligned}
\Phi_- \circ \Phi_+ \bigl( \theta^{\top} Q \, \theta - \eta \bigr) 
& \leq N(\theta) + \eta, \\ 
\Phi_- \circ \Phi_+ \bigl( N( \theta) - \eta \bigr) & \leq 
\theta^{\top} Q \, \theta + \eta,
\end{aligned}
\end{equation}
where $\Phi_+$ and $\Phi_-$ are defined in Proposition \ref{prop0} and depend on $\theta$ only through $\|\theta\|$.
Indeed, in the event of probability at least $1-2\epsilon$ described in equation \eqref{opt_ci},
\[
\theta^{\top} Q \, \theta \leq \Phi_+^{-1} \bigl( \widetilde{N}_{\lambda}
(\xi) \bigr) + \eta \leq \Phi_+^{-1} \circ \Phi_-^{-1} \bigl( 
N(\xi) \bigr) + \eta
 \leq \Phi_+^{-1} \circ \Phi_-^{-1} \bigl( N(\theta) + \eta \bigr) + \eta,
\]
since equation \eqref{bxitheta} also holds for $N$, and in the same way we get
\[ 
\theta^{\top} Q \, \theta \geq \Phi_- \bigl( \widetilde{N}_{\lambda} (\xi) \bigr) 
- \eta \geq \Phi_- \circ \Phi_+ \bigl( N(\xi) \bigr) - \eta 
\geq \Phi_- \circ \Phi_+ \bigl( N(\theta) - \eta \bigr) - \eta
\] 
which proves equation \eqref{pf1}. 
We conclude the proof 
using Corollary \ref{lem1A} in section \ref{appx} 
\end{proof}

\noindent
Note that the estimated matrix $Q$ of the previous proposition is 
not necessarily positive semi-definite. We can remedy that 
shortcoming by considering instead its positive part $Q_+$
(obtained by taking the positive part of its eigenvalues in 
the framework of functional calculus on symmetric matrices). 

\vskip2mm
\noindent
Based on the fact that $\theta^{\top} Q \theta \geq B_-(\theta) \geq 0$ 
on $\Theta_{\delta}$ and on equation \eqref{bxitheta}, stating that 
for any $\theta \in  \mathbb S_d$, 
\[
\bigl\lvert \theta^{\top} Q \, \theta - \xi^{\top} Q \, \xi \bigr\rvert 
\leq 2 \delta \sqrt{ \mathbf{Tr}(G^2)}
\]
where $\xi\in \Theta_{\delta}$ is such that $\|\theta- \xi \|\leq \delta$,
we can see that we do not loose much when replacing $Q$ by $Q_+$.  
\begin{prop}\label{prop4}
With probability at least $1 - 2 \epsilon$, 
for any $\theta \in \mathbb{S}_d$, 
\begin{align*}
\Bigl\lvert \max \{ \theta^{\top} Q_+ \theta, \sigma \}  - 
\max \{ N(\theta), \sigma \}  \Bigr\rvert 
& \leq 2 \max \bigl\{ N(\theta), \sigma \bigr\} B_* \bigl( 
N(\theta)\bigr) + 7 \delta \sqrt{\mathbf{Tr}(G^2)}, \\
\Bigl\lvert \max \{ \theta^{\top} Q_+ \theta, \sigma \} - 
\max \{ N(\theta), \sigma \}  \Bigr\rvert 
& \leq 2 \max \bigl\{ \theta^{\top} Q_+ \theta, \sigma \bigr\} B_* \bigl( \min \{ 
\theta^{\top} Q_+ \theta, s_4^2 \bigr\} \bigr) \\
&\hskip45mm + 7 \delta \sqrt{\mathbf{Tr}(G^2)}, 
\end{align*}
where $B_*$ is defined in Proposition \ref{prop2}.
\end{prop}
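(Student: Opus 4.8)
\textbf{Proof proposal for Proposition \ref{prop4}.}
The plan is to reduce Proposition \ref{prop4} to Proposition \ref{prop3} by a perturbation argument controlling the effect of replacing $Q$ by its positive part $Q_+$. The key observation is that $Q$ and $Q_+$ differ only through the negative eigenvalues of $Q$, and on the net $\Theta_\delta$ the quadratic form $\xi^\top Q\,\xi$ is squeezed between $B_-(\xi) \geq 0$ and $B_+(\xi)$, so the negative part of $Q$ cannot be too large in the directions of $\Theta_\delta$. First I would make this precise: writing $Q_- = Q_+ - Q$ for the (positive semi-definite) negative part, I want a bound of the form $\lVert Q_- \rVert_\infty \lesssim \delta \sqrt{\mathbf{Tr}(G^2)}$, or more robustly a bound on $\xi^\top Q_- \xi$ uniformly over $\xi \in \mathbb{S}_d$. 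This should follow from the fact that for any unit eigenvector $u$ of $Q$ with negative eigenvalue $-s$, one can pick $\xi \in \Theta_\delta$ with $\lVert u - \xi\rVert \leq \delta$, and then $0 \leq \xi^\top Q\,\xi = u^\top Q\, u + (\xi+u)^\top Q(\xi - u) \leq -s + 2\delta\sqrt{\mathbf{Tr}(Q^2)} \leq -s + 2\delta\sqrt{\mathbf{Tr}(G^2)}$ by the same computation as in \eqref{bxitheta}, hence $s \leq 2\delta\sqrt{\mathbf{Tr}(G^2)}$ for every negative eigenvalue, giving $0 \leq \xi^\top Q_- \xi \leq \lVert Q_-\rVert_\infty \leq 2\delta\sqrt{\mathbf{Tr}(G^2)}$ for all $\xi\in\mathbb{S}_d$.

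Next I would transfer this to an arbitrary unit vector $\theta$: combining the eigenvalue bound above with \eqref{bxitheta} applied to $Q$ (and the analogous trivial bound for $Q_+$, since $\mathbf{Tr}(Q_+^2) \leq \mathbf{Tr}(Q^2) \leq \mathbf{Tr}(G^2)$), we obtain
\[
\bigl\lvert \theta^\top Q_+ \theta - \theta^\top Q\, \theta \bigr\rvert
= \theta^\top Q_- \theta \leq 2\delta\sqrt{\mathbf{Tr}(G^2)}.
\]
Then for any fixed threshold-truncation I use the elementary inequality
$\bigl\lvert \max\{a,\sigma\} - \max\{b,\sigma\}\bigr\rvert \leq \lvert a - b\rvert$
to pass the perturbation through the outer $\max\{\cdot,\sigma\}$, so that
\[
\Bigl\lvert \max\{\theta^\top Q_+\theta,\sigma\} - \max\{\theta^\top Q\,\theta,\sigma\}\Bigr\rvert
\leq 2\delta\sqrt{\mathbf{Tr}(G^2)}.
\]
Adding this to the first inequality of Proposition \ref{prop3} via the triangle inequality immediately yields the first claimed bound, with the constant $5$ increased to $7$.

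For the second inequality the argument is the same in spirit but needs slightly more care because the right-hand side of the second bound in Proposition \ref{prop3} involves $\theta^\top Q\,\theta$ itself (through both the leading $\max\{\theta^\top Q\,\theta,\sigma\}$ factor and the argument $\min\{\theta^\top Q\,\theta, s_4^2\}$ of $B_*$), and we want the final statement in terms of $\theta^\top Q_+\theta$. Here I would use monotonicity: $\theta^\top Q\,\theta \leq \theta^\top Q_+\theta$, the function $B_*$ is non-decreasing in its argument (being built from the non-decreasing $\zeta_*$ composed with $\max\{\cdot,\sigma\}$), and $t \mapsto \max\{t,\sigma\}$ is non-decreasing, so replacing $\theta^\top Q\,\theta$ by $\theta^\top Q_+\theta$ on the right-hand side only enlarges the bound. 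Combining with the $2\delta\sqrt{\mathbf{Tr}(G^2)}$ perturbation estimate through the outer $\max\{\cdot,\sigma\}$ and the triangle inequality then produces the second inequality, again at the cost of replacing $5$ by $7$ to absorb the extra $2\delta\sqrt{\mathbf{Tr}(G^2)}$ term.

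I expect the main obstacle to be the bookkeeping in the second inequality: one must be careful that the monotonicity substitution $\theta^\top Q\,\theta \mapsto \theta^\top Q_+\theta$ is applied in the correct direction in every place it occurs (leading factor, $B_*$ argument, and implicitly in the domain condition defining where $B_*$ is finite), and that the constant accounting is tight enough to land exactly on $7\delta\sqrt{\mathbf{Tr}(G^2)}$ rather than something larger. The eigenvalue-truncation estimate itself is the conceptually essential step but is short; everything else is triangle inequalities and monotonicity, so no new machinery beyond Proposition \ref{prop3} and \eqref{bxitheta} is required.
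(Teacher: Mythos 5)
Your overall strategy is the same as the paper's: bound $\lVert Q_- \rVert_{\infty}$ by $\eta = 2\delta\sqrt{\mathbf{Tr}(G^2)}$ using the constraint $\xi^{\top}Q\,\xi \geq B_-(\xi) \geq 0$ on the net together with equation \eqref{bxitheta}, pass the resulting perturbation through $\max\{\cdot,\sigma\}$, and add it to Proposition \ref{prop3} by the triangle inequality. Your eigenvector-by-eigenvector version of the $\lVert Q_-\rVert_{\infty}$ bound is a slightly longer route to the same estimate (the paper simply notes that $\inf_{\theta \in \mathbb{S}_d}\theta^{\top}Q\,\theta \geq -\eta$ and that $\lVert Q_-\rVert_{\infty}$ equals the negative of this infimum), but it is correct.

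There is, however, one genuine error in your justification of the second inequality. You claim that $B_*$ is non-decreasing "being built from the non-decreasing $\zeta_*$". In fact $\zeta_*(t)$ contains the terms $0.73\,\mathbf{Tr}(G)/t$ and $98.5\,\kappa\,\mathbf{Tr}(G)/t$, so $\zeta_*$ is non-increasing in $t$, and consequently $B_*$ is non-increasing as well. Replacing $\theta^{\top}Q\,\theta$ by the larger quantity $\theta^{\top}Q_+\theta$ therefore decreases the factor $B_*(\min\{\cdot, s_4^2\})$ while increasing the factor $\max\{\cdot,\sigma\}$, and the two effects pull in opposite directions; your stated reason does not settle which one wins. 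The step you actually need is the monotonicity of the product $t \mapsto \max\{t,\sigma\}\, B_*(\min\{t,s_4^2\})$, which is true but not obvious: it is exactly the content of Lemma \ref{lem0A}, whose proof exploits the specific $t^{-1/2}$-type decay of $\zeta_*$ together with the standing assumption $8\zeta_*(\sigma) \leq \sqrt{n}$ to show that the logarithmic derivative of $u\,g(u)/(1-g(u))$ is non-negative. With that lemma invoked in place of your monotonicity claim, the rest of your argument (including the constant accounting $5 + 2 = 7$) goes through.
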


\begin{proof}
Let us put as before $\eta =  2 \delta \sqrt{ \mathbf{Tr}(G^2)}.$ For any 
$\theta \in \mathbb{S}_d$, there is $\xi \in \Theta_{\delta}$ such that 
$\lVert \theta - \xi \rVert \leq \delta$, so that, according to equation \eqref{bxitheta}, 
\[ 
\theta^{\top} Q \theta \geq \xi^{\top} Q \xi - \eta 
\geq - \eta.
\] 
Then we deduce that 
\[ 
\lVert Q_- \rVert_{\infty} = \sup_{\theta \in \mathbb{S}_d} \theta^{\top} 
Q_- \theta = - \inf_{\theta \in \mathbb{S}_d} \theta^{\top} Q \theta \leq \eta.
\] 
Therefore, for any $\theta \in \mathbb{S}_d$, 
\[ 
\Bigl\lvert \max \{ \theta^{\top} Q \theta, \sigma \}  - 
\max \{ \theta^{\top} Q_+ \theta , \sigma \} \Bigr\rvert 
\leq 
\Bigg\lvert \theta^{\top} Q \theta   - 
\theta^{\top} Q_+ \theta \Bigg\rvert = \theta^{\top} Q_- \theta \leq \eta. 
\] 
Combining the above equation with Proposition \ref{prop3} we conclude the proof.
\end{proof}
\noindent Since for any $a, b \in \mathds{R}_+$, $a - b - \sigma \leq \max \{a, \sigma\} - \max\{b, \sigma\}$, 
we obtain as a consequence
\begin{cor}
With probability at least $1 - 2 \epsilon$, 
for any $\theta \in \mathbb{S}_d$, 
\begin{align*}
\Bigl\lvert \theta^{\top} Q_+ \theta  - 
 N(\theta) \Bigr\rvert 
& \leq 2 \max \bigl\{ N(\theta), \sigma \bigr\} B_* \bigl( 
N(\theta)\bigr) + 7 \delta \sqrt{\mathbf{Tr}(G^2)} +\sigma, \\
\Bigl\lvert \theta^{\top} Q_+ \theta - 
 N(\theta) \Bigr\rvert 
& \leq 2 \max \bigl\{ \theta^{\top} Q_+ \theta, \sigma \bigr\} B_* \bigl( \min \{ 
\theta^{\top} Q_+ \theta, s_4^2 \bigr\} \bigr) \\
&\hskip45mm + 7 \delta \sqrt{\mathbf{Tr}(G^2)}+ \sigma.
\end{align*}
\end{cor}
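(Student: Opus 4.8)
The plan is to obtain the corollary as a purely deterministic consequence of Proposition~\ref{prop4}, by stripping off the outer truncation $\max\{\cdot,\sigma\}$ at the price of one extra additive $\sigma$. The elementary fact I would use is that for every $a,b\in\mathbb{R}_+$,
\[
a - b - \sigma \;\leq\; \max\{a,\sigma\} - \max\{b,\sigma\} \;\leq\; a - b + \sigma,
\]
the left inequality being the one quoted just before the statement, and the right one following from it by exchanging $a$ and $b$ and multiplying by $-1$. Proving this is a short case analysis over the four sign patterns of $a-\sigma$ and $b-\sigma$, using only $a,b\ge 0$. It immediately yields
\[
\bigl\lvert a - b \bigr\rvert \;\leq\; \bigl\lvert \max\{a,\sigma\} - \max\{b,\sigma\} \bigr\rvert + \sigma .
\]

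Next I would apply this with $a = \theta^{\top} Q_+ \theta$ and $b = N(\theta)$ for $\theta \in \mathbb{S}_d$. Both quantities are nonnegative: $N(\theta) = \mathbb{E}\bigl[\langle \theta, X\rangle^2\bigr]\ge 0$ by definition, and $\theta^{\top} Q_+ \theta \ge 0$ because $Q_+$ is the positive part of the symmetric matrix $Q$, hence positive semi-definite. Therefore, for every $\theta \in \mathbb{S}_d$,
\[
\bigl\lvert \theta^{\top} Q_+ \theta - N(\theta) \bigr\rvert \;\leq\; \bigl\lvert \max\{\theta^{\top} Q_+ \theta, \sigma\} - \max\{N(\theta),\sigma\} \bigr\rvert + \sigma .
\]

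Finally I would invoke Proposition~\ref{prop4}: on the event of probability at least $1-2\epsilon$ described there, each of its two upper bounds can be substituted for $\bigl\lvert \max\{\theta^{\top} Q_+ \theta, \sigma\} - \max\{N(\theta),\sigma\}\bigr\rvert$ in the last display, and absorbing the leftover $\sigma$ gives exactly the two displayed inequalities of the corollary; since everything after Proposition~\ref{prop4} is deterministic, the probability bound is inherited verbatim. There is essentially no obstacle here — the only point requiring a moment's care is the two-sided elementary inequality above and the remark that $\theta^{\top} Q_+ \theta \ge 0$, which together explain why a single $\sigma$ (rather than $2\sigma$) is enough.
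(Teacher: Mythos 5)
Your proposal is correct and matches the paper's own derivation: the corollary is obtained from Proposition~\ref{prop4} via the elementary inequality $a-b-\sigma \leq \max\{a,\sigma\}-\max\{b,\sigma\}$ (stated just before the corollary), symmetrized to give $\lvert a-b\rvert \leq \lvert \max\{a,\sigma\}-\max\{b,\sigma\}\rvert + \sigma$ for $a,b\geq 0$. You merely spell out the symmetrization and the nonnegativity of $\theta^{\top}Q_+\theta$ and $N(\theta)$, which the paper leaves implicit.
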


\vskip2mm
\noindent
To conclude we mention that it is possible to get similar dimension-free bounds under light tail hypotheses for the  
classical empirical estimator 
\[
\bar G = \frac{1}{n} \sum_{i=1}^n X_i X_i^{\top}.
\]  
For more details we refer to section \ref{emp_G}.

\section{The infinite-dimensional setting}\label{sec2}
In this section we extend the results obtained in the previous section to the infinite-dimensional setting. \\[1mm]
Let $\mathcal H$ be a separable Hilbert space and let $\mathrm P\in \mathcal M_+^1(\mathcal H)$ be an unknown probability distribution on $\mathcal H.$
We consider the Gram operator $\mathcal G: \mathcal H \to \mathcal H$ defined by 
\[
\mathcal G \theta = \int \langle \theta, v \rangle_{\mathcal H} v \ \mathrm d \mathrm P(v)
\]
and we assume $\mathbf{Tr}(\mathcal G) = \mathbb{E} \bigl( \lVert X \rVert_{\mathcal H}^2 \bigr) < +\infty,$ where $X\in \mathcal H$ denotes a random vector with law $\mathrm P.$
In analogy to the previous section we denote by $N$ the quadratic form associated with 
the Gram operator 
$\mathcal G$ so that
\[
N(\theta) =\langle \mathcal G \theta, \theta \rangle_{\mathcal H}= \int \langle \theta, v \rangle_{\mathcal H}^2\ \mathrm d \mathrm P(v), \quad \theta \in \mathcal H.
\]
We consider $(\mathcal H_k)_k$ an increasing sequence of subspaces of $\mathcal H$ of finite dimensions such that  $\overline{\cup_k \mathcal H_k }= \mathcal H$
. For instance if $ \{ e_i, i \in \B{N}^* \}$ is an orthonormal 
basis of $\C{H}$, we can take 
$ \C{H}_k = \Span \{ e_1, \dots, e_k \}$. To give a more concrete example, 
in the case when $\C{H} = \B{L}^2([0,1])$, we can take for $e_i$ the 
Fourier basis. 
In this example, the orthogonal projector $\Pi_k$ on $\C{H}_k$ 
is given by 
\[ 
\Pi_k v = \sum_{i=1}^k \langle v, e_i \rangle_{\C{H}} e_i, \qquad v \in \C{H}.
\] 
We denote by $N_k$ the quadratic form in $\mathcal H_k$ associated with 
the probability distribution of $\Pi_k X$, so that 
\[
N_k(\theta) = \B{E} \bigl( \langle \theta, \Pi_k X \rangle^2 \bigr) 
= N(\theta), 
\qquad \theta \in \C{H}_k. 
\]
Remark that for any $\theta \in \C{H}$, 
\[
N_k\left(\Pi_k\theta\right) = N \left(\Pi_k\theta\right) .
\]
In the following we consider an i.i.d. sample of size $n$ in $\mathcal H$ drawn according to $\mathrm P.$
According to Proposition \ref{prop0}, the event 
\[ 
\mathcal{A}_k = \biggl\{ \forall {\theta} \in \mathcal{H}_k, \ \forall (\lambda, 
\beta) \in \Lambda, \; \Phi_{\theta, -} 
\biggl( \frac{\lambda}{\widehat{\alpha}(\theta)^2} \biggr) \leq 
N(\theta) \leq \Phi^{-1}_{\theta, +} \biggl( 
\frac{\lambda}{\widehat{\alpha}(\theta)^2} \biggr)  \biggr\}
\]  
is such that $\mathrm{P}^{\otimes n} \bigl( \mathcal{A}_k \bigr) \geq 1 - 2 \epsilon$. 
Since $\mathcal{A}_{k+1} \subset \mathcal{A}_k$, by the continuity of measure, 
\[ \mathrm{P}^{\otimes n} \biggl( \bigcap_{k \in \mathbb{N}} \mathcal{A}_k \biggr) \geq 
1 - 2 \epsilon.\]
This means that with probability at least $1 - 2 \epsilon$, for any 
$\theta \in \bigcup_{k} \mathcal{H}_k$ and any $(\lambda, \beta) \in \Lambda$, 
\[
\Phi_{\theta, -} 
\biggl( \frac{\lambda}{\widehat{\alpha}(\theta)^2} \biggr) \leq 
N(\theta) \leq \Phi^{-1}_{\theta, +} \biggl( 
\frac{\lambda}{\widehat{\alpha}(\theta)^2} \biggr). 
\]
Consequently, since
$\displaystyle N(\theta) = \lim_{k \rightarrow + \infty} N \bigl( \Pi_k(\theta) \bigr)$,  for any $\theta \in \mathcal{H}$, the following result holds. 
\begin{prop}
\label{prop_Hspace} 
With probability at least $1 - 2 \epsilon$, for any $\theta \in \mathcal{H}$, 
\[B_-(\theta) \leq N(\theta) \leq B_+(\theta)\]
where
\begin{align*}
 B_-(\theta) & = \lim \sup_{k \rightarrow + \infty} \max_{(\lambda, \beta) \in \Lambda} 
 \Phi_{\Pi_k\theta, -} \biggl( \frac{\lambda}{\widehat{\alpha}(\Pi_k\theta)^2} \biggr), \\
 B_+(\theta) & = \lim \inf_{k \rightarrow + \infty}  \min_{(\lambda, \beta) \in \Lambda} 
 \Phi_{\Pi_k\theta, +}^{-1} \biggl( \frac{\lambda}{\widehat{\alpha}(\Pi_k\theta)^2} \biggr).
 \end{align*}
\end{prop}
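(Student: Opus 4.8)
The plan is to transfer the finite-dimensional confidence region of Proposition~\ref{prop0}, applied inside each finite-dimensional subspace $\mathcal{H}_k$, up to the whole space $\mathcal{H}$ by an approximation argument. The statement is essentially a soft-analysis wrap-up of what has already been established in the paragraphs preceding it, so the proof is short.

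\textbf{Step 1: reduce to a single good event.} For each $k$, applying Proposition~\ref{prop0} to the random vector $\Pi_k X$ in the finite-dimensional space $\mathcal{H}_k$ gives $\mathrm{P}^{\otimes n}(\mathcal{A}_k) \geq 1 - 2\epsilon$, where $\mathcal{A}_k$ is the event displayed just above the statement. Since the subspaces are nested one has $\mathcal{A}_{k+1} \subseteq \mathcal{A}_k$, so by continuity of measure from above the intersection $\mathcal{A} := \bigcap_{k} \mathcal{A}_k$ still has probability at least $1 - 2\epsilon$; from now on I work on $\mathcal{A}$. On this event, for every $k$, every $\psi \in \mathcal{H}_k$ and every $(\lambda, \beta) \in \Lambda$,
\[
\Phi_{\psi,-}\!\left( \frac{\lambda}{\widehat{\alpha}(\psi)^2} \right) \leq N(\psi) \leq \Phi_{\psi,+}^{-1}\!\left( \frac{\lambda}{\widehat{\alpha}(\psi)^2} \right).
\]

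\textbf{Step 2: specialize to projections and optimize over $\Lambda$.} Fix $\theta \in \mathcal{H}$; the case $\theta = 0$ is trivial, so assume $\theta \neq 0$, whence $\Pi_k\theta \neq 0$ for all large $k$ (the finitely many small indices do not affect the $\limsup$ and $\liminf$ taken below). Taking $\psi = \Pi_k\theta \in \mathcal{H}_k$ in Step~1, and noting that the displayed bound holds for each $(\lambda,\beta) \in \Lambda$, one gets for every such $k$
\[
\max_{(\lambda,\beta)\in\Lambda} \Phi_{\Pi_k\theta,-}\!\left( \frac{\lambda}{\widehat{\alpha}(\Pi_k\theta)^2} \right) \leq N(\Pi_k\theta) \leq \min_{(\lambda,\beta)\in\Lambda} \Phi_{\Pi_k\theta,+}^{-1}\!\left( \frac{\lambda}{\widehat{\alpha}(\Pi_k\theta)^2} \right).
\]

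\textbf{Step 3: let $k \to \infty$.} Since $\Pi_k X \to X$ in $\mathcal{H}$ for every realization of $X$, and $\langle \theta, \Pi_k X \rangle^2 \leq \lVert \theta \rVert^2 \lVert X \rVert^2$ with $\mathbb{E}\bigl(\lVert X \rVert^2\bigr) = \mathbf{Tr}(\mathcal{G}) < +\infty$, dominated convergence gives $N(\Pi_k\theta) = \mathbb{E}\bigl[\langle \theta, \Pi_k X \rangle^2\bigr] \to \mathbb{E}\bigl[\langle \theta, X \rangle^2\bigr] = N(\theta)$. Taking the $\limsup$ over $k$ in the left inequality of Step~2 and the $\liminf$ in the right inequality, and reading off the definitions of $B_-(\theta)$ and $B_+(\theta)$, I obtain $B_-(\theta) \leq N(\theta) \leq B_+(\theta)$ on the event $\mathcal{A}$, which is the desired conclusion.

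I do not anticipate a genuine obstacle. The only two points that require a little care are that the relevant events $\mathcal{A}_k$ are \emph{decreasing} in $k$ (so one must invoke continuity of measure from above rather than below), and the justification that $N(\Pi_k\theta) \to N(\theta)$ — where the standing assumption $\mathbf{Tr}(\mathcal{G}) < +\infty$ is exactly what supplies an integrable dominating function.
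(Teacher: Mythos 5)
Your proposal is correct and follows essentially the same route as the paper: apply Proposition~\ref{prop0} in each $\mathcal{H}_k$, use the nesting $\mathcal{A}_{k+1} \subseteq \mathcal{A}_k$ and continuity of measure from above to get a single event of probability at least $1-2\epsilon$, and then pass to the limit using $N(\Pi_k\theta) \to N(\theta)$. The only difference is that you spell out the dominated-convergence justification of $N(\Pi_k\theta) \to N(\theta)$, which the paper simply asserts.
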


\vskip2mm
\noindent 
If we do not want to go to the limit, we can use the explicit 
bound
\begin{multline*}
\bigl\lvert N(\theta) - N(\Pi_k(\theta)) \bigr\rvert 
= 
\bigl\lvert \langle  \theta + \Pi_k(\theta), 
\mathcal G \bigl( \theta - \Pi_k(\theta) \bigr) \rangle_{\mathcal H} \rvert 
\\ \leq 2 \lVert \theta \rVert_{\mathcal H} \lVert \mathcal G \rVert_{\infty} 
\lVert \theta - \Pi_k(\theta) \rVert_{\mathcal H} 
\leq 2 \lVert \theta \rVert_{\mathcal H} \mathbf{Tr}(\mathcal G) 
\lVert 
\theta - \Pi_k(\theta) \rVert_{\mathcal H} \\
= 
2 \lVert \theta \rVert_{\mathcal H} \mathbb{E} \bigl( \lVert X \rVert_{\mathcal H}^2 \bigr)
\lVert \theta - \Pi_k(\theta) \rVert_{\mathcal H}.
\end{multline*}
This bound depends on $\lVert \theta - \Pi_k \theta \rVert_{\mathcal H}$.
We will see in the following another bound that goes uniformly 
to zero for any $\theta \in \mathbb{S}_{\mathcal{H}}$ when $k$ tends to infinity.
In the same way, 
proceeding as already done in the previous section we state the analogous 
of Proposition \ref{prop2}. \\[1mm]
Let 
\[ 
\kappa  \geq  \sup_{\substack{\theta \in \mathcal H \\ \mathbb 
E ( \langle \theta, X \rangle_{\mathcal H}^2 ) > 0}} 
\frac{ \mathbb E \bigl( \langle \theta , X \rangle_{\mathcal H}^4 
\bigr)}{\mathbb E \bigl( \langle \theta, X \rangle_{\mathcal H}^2 
\bigr)^2} \qquad \text{ and } \qquad s_4 \geq \mathbb{E} \bigl( \lVert X \rVert_{\mathcal H}^4 
\bigr)^{1/4}
\] 
be known constants and put
\[
 K  = 1 + \left\lceil 2 \log \biggl( \frac{n}{72(2+c) \kappa^{1/2}} \biggr) \right\rceil
\]
where $\displaystyle c= \frac{15}{8 \log(2)(\sqrt{2}-1)} \exp \left( \frac{1 + 2 \sqrt{2}}{2} \right)$
as in equation \eqref{defc}. 
Define
\[
\zeta_* (t) = \sqrt{ 2.032 (\kappa-1) \Bigg( \frac{0.73 \mathbf{Tr}(\mathcal G)}{
t} + \log(K)+ \log(\epsilon^{-1}) \Bigg)}
+ \sqrt{ \frac{98.5\kappa\mathbf{Tr}(\mathcal G)}{ t}} 
\]
and consider, 
according to equation \eqref{defhatn}, the estimators  
\[
\widehat{N}_k(\theta) = \widetilde N_{\widehat \lambda} 
(\theta), \qquad \theta \in 
\mathcal{H}_k.
\]
For any $\theta \in \mathcal{H}$, define $\widehat{N}_{\mathcal H}(\theta)$ by choosing 
any limit point of $\widehat{N}_k \bigl( \Pi_k\theta \bigr)$, 
such as for example $\lim \inf_{k \rightarrow \infty} \widehat{N}_k \bigl( 
\Pi_k \theta \bigr)$. 

\begin{prop}\label{prop1.40fr}
Define the bound
\[ 
B_*(t) = 
\frac{n^{-1/2} \zeta_*(\max \{ t, \sigma \} )}{1 - 4 \, n^{-1/2} 
\zeta_*( \max \{ t, \sigma \} )}, 
\] 
where $\sigma \in ] 0, s_4^2]$ is some energy level 
such that 
\[ 
\bigl[ 6 + (\kappa-1)^{-1} \bigr] \zeta_*( \sigma ) \leq \sqrt{n}. 
\]  
With probability at least $1 - 2 \epsilon$, 
for any $\theta \in \mathcal H$, 
\[ 
\left\lvert \, \frac{\max \{ N(\theta), \sigma \lVert \theta \rVert_{\mathcal H}^2 \}}{\max \{ \widehat{N}_{\mathcal H}(\theta), \sigma \lVert \theta \rVert_{\mathcal H}^2 \}} - 1 \, \right\rvert \leq B_* \Bigg[ \lVert\theta \rVert_{\mathcal H}^{-2} N(\theta) \Bigg].
\] 
\end{prop}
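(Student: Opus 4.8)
\textbf{The plan} is to deduce the bound from its finite-dimensional counterpart, Proposition~\ref{prop2}, applied inside each subspace $\mathcal{H}_k$, and then to let $k\to\infty$. The first point to settle is that Proposition~\ref{prop2} is available in $\mathcal{H}_k$ \emph{with the global constants}. Identifying $\mathcal{H}_k$ with $\mathbb{R}^{\dim\mathcal{H}_k}$, the vector $\Pi_k X$ replaces $X$; since $\langle\theta,\Pi_k X\rangle = \langle\Pi_k\theta,X\rangle = \langle\theta,X\rangle$ for $\theta\in\mathcal{H}_k$, the $\kappa$-constant of $\Pi_k X$, the quantity $\mathbb{E}(\lVert\Pi_k X\rVert^4)^{1/4}$, and $\mathbf{Tr}(\mathcal{G}_k)=\mathbb{E}(\lVert\Pi_k X\rVert^2)$ are all dominated by the global $\kappa$, $s_4$ and $\mathbf{Tr}(\mathcal{G})$. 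By the remarks following \eqref{skappa} and Proposition~\ref{prop2}, the finite-dimensional results are unchanged when $\kappa$, $s_4$ and $\mathbf{Tr}(\mathcal{G})$ are replaced by upper bounds; hence, using the same set $\Lambda$ of \eqref{defLambda} (built from the global constants, with $a=1/2$), the estimator $\widehat{N}_k$ is well defined and Proposition~\ref{prop2} holds in $\mathcal{H}_k$ with the $\zeta_*$ and $B_*$ of the present statement. Moreover the hypothesis $\bigl[6+(\kappa-1)^{-1}\bigr]\zeta_*(\sigma)\leq\sqrt n$, together with the fact that $\zeta_*$ is non-increasing, forces the finite branch of the $B_*$ of Proposition~\ref{prop2} to be active for every $t$, so it coincides with the $B_*$ defined here. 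Finally, for $\theta\in\mathcal{H}_k$ the quantities $r_\lambda$, $\widehat{\alpha}(\theta)$, $\widetilde{N}_\lambda(\theta)$ and the minimiser $\widehat{\lambda}$ depend only on $\langle\theta,X_i\rangle=\langle\theta,\Pi_k X_i\rangle$, so $\widehat{N}_k(\Pi_k\theta)$ is unambiguous.

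\textbf{Next} I would introduce a single good event valid across all $k$. Let $\mathcal{A}_k$ be the event of Proposition~\ref{prop0} restricted to $\mathcal{H}_k$; as recalled before Proposition~\ref{prop_Hspace}, $\mathrm{P}^{\otimes n}(\mathcal{A}_k)\geq 1-2\epsilon$, the $\mathcal{A}_k$ are non-increasing, and $\mathrm{P}^{\otimes n}\bigl(\bigcap_k\mathcal{A}_k\bigr)\geq 1-2\epsilon$. On $\bigcap_k\mathcal{A}_k$ the conclusion of Proposition~\ref{prop0} holds, for every $k$, simultaneously for all $\theta\in\mathcal{H}_k$ and all $(\lambda,\beta)\in\Lambda$; running the deterministic part of the passage from Proposition~\ref{prop0} to Proposition~\ref{prop2} (through Proposition~\ref{prop1} and the optimisation over $(\lambda,\beta)\in\Lambda$, cf.\ section~\ref{proof_prop2}) then gives, on this event, for every $k$ and every $\theta\in\mathcal{H}_k$,
\[
\left\lvert \frac{\max\{N(\theta),\sigma\lVert\theta\rVert^2\}}{\max\{\widehat{N}_k(\theta),\sigma\lVert\theta\rVert^2\}} - 1\right\rvert \leq B_*\bigl[\lVert\theta\rVert^{-2}N(\theta)\bigr].
\]
Fixing $\vartheta\in\mathcal{H}$ with $\vartheta\neq 0$ (the case $\vartheta=0$ being trivial), we apply this to $\theta=\Pi_k\vartheta$ for every $k$ large enough that $\Pi_k\vartheta\neq 0$.

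\textbf{Finally} I would pass to the limit. As $k\to\infty$, $\Pi_k\vartheta\to\vartheta$, so $\lVert\Pi_k\vartheta\rVert\to\lVert\vartheta\rVert$ and, as already noted before Proposition~\ref{prop_Hspace}, $N(\Pi_k\vartheta)\to N(\vartheta)$. The hypothesis on $\sigma$ and the monotonicity of $\zeta_*$ yield $4n^{-1/2}\zeta_*(\max\{t,\sigma\})\leq 2/3$ for all $t$, whence the denominator of $B_*$ is $\geq 1/3$, $B_*\leq 1/2<1$, $B_*$ is continuous, and $B_*\bigl[\lVert\Pi_k\vartheta\rVert^{-2}N(\Pi_k\vartheta)\bigr]\to B_*\bigl[\lVert\vartheta\rVert^{-2}N(\vartheta)\bigr]$. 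Rewriting the displayed inequality as the two-sided trapping
\[
\frac{\max\{N(\Pi_k\vartheta),\sigma\lVert\Pi_k\vartheta\rVert^2\}}{1+B_*[\lVert\Pi_k\vartheta\rVert^{-2}N(\Pi_k\vartheta)]} \leq \max\{\widehat{N}_k(\Pi_k\vartheta),\sigma\lVert\Pi_k\vartheta\rVert^2\} \leq \frac{\max\{N(\Pi_k\vartheta),\sigma\lVert\Pi_k\vartheta\rVert^2\}}{1-B_*[\lVert\Pi_k\vartheta\rVert^{-2}N(\Pi_k\vartheta)]},
\]
and letting $k\to\infty$ along a subsequence realising the chosen limit point $\widehat{N}_{\mathcal{H}}(\vartheta)$ (for instance $\liminf_k\widehat{N}_k(\Pi_k\vartheta)$), all four $\max$'s converge and the outer quantities tend to $\max\{N(\vartheta),\sigma\lVert\vartheta\rVert^2\}/\bigl(1\pm B_*[\lVert\vartheta\rVert^{-2}N(\vartheta)]\bigr)$, which is exactly the asserted bound.

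\textbf{The main obstacle} is this last limiting step: because $\widehat{N}_{\mathcal{H}}$ is defined only as a limit point of $\widehat{N}_k(\Pi_k\vartheta)$, one cannot pass to the limit directly inside $\lvert\,\cdot/\cdot-1\rvert$; the inequality has to be recast first as a trapping of $\max\{\widehat{N}_k(\Pi_k\vartheta),\sigma\lVert\Pi_k\vartheta\rVert^2\}$ between two explicitly convergent bounds, which is legitimate only once one has checked — from the condition on $\sigma$ and the monotonicity of $\zeta_*$ — that $B_*<1$ with denominator bounded away from $0$ uniformly in $k$. Everything else (the inheritance of the constants by $\Pi_k X$, the deterministic chain from Proposition~\ref{prop0} to the Proposition~\ref{prop2}-type bound, and the continuity statements) is routine.
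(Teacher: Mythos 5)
Your proposal is correct and follows essentially the same route as the paper: the paper's own proof is a two-line remark invoking $\lim_{k\to\infty} N(\Pi_k\theta) = N(\theta)$ and the continuity of $B_*$, relying on the common event $\bigcap_k \mathcal{A}_k$ and the finite-dimensional bound in each $\mathcal{H}_k$ set up just before the statement. Your version simply spells out the details the paper leaves implicit (inheritance of the global constants $\kappa$, $s_4$, $\mathbf{Tr}(\mathcal{G})$ by $\Pi_k X$, and the two-sided trapping needed to pass to the limit along a subsequence realising the limit point $\widehat{N}_{\mathcal{H}}(\theta)$), all of which is sound.
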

\begin{proof}
This is a consequence of the fact that $\displaystyle\lim_{k\to +\infty} N \bigl( \Pi_k(\theta) 
\bigr) = N(\theta)$ and of the continuity of $B_*$. 
\end{proof}

\vskip2mm
\noindent
As already discussed at the end of Proposition \ref{prop2}, any reasonable bound on the sample size $n$ allows bounding the logarithmic factor $\log(K)$ by a relatively small constant. 
In particular, assuming that $n \leq 10^{20}$, we get $\log(K)\leq 4.35$.

\vskip2mm
\noindent
In the following we construct an estimator of the Gram operator $\mathcal G$.
Let $X_1, \dots, X_n \in \mathcal H$ be an i.i.d. sample drawn according to $\mathrm P.$ 
Define $V = \Span \bigl\{ X_1, \dots, X_n \bigr\}$ and 
\[
V_k = \Span \bigl\{ \Pi_k X_1, \dots, \Pi_k X_n \bigr\} = \Pi_k(V).
\]
Let $\Theta_\delta$ be a $\delta$-net of $\mathbb{S}_{\mathcal{H}} \cap 
V_k$ (where $\mathbb S_{\mathcal H}$ denotes the unit sphere in $\mathcal H$) 
and remark that $\Theta_\delta$ is finite because $\dim(V_k) \leq n  
< + \infty$. 
We compute a linear operator 
$\widehat{\mathcal{G}}_k : V_k \rightarrow V_k$ with minimal Hilbert Schmidt 
norm --- so that 
$\mathbf{Tr}(\widehat{\mathcal{G}}_k^2) \leq \mathbf{Tr}(\mathcal{G}^2 )$ ---
satisfying  
\[
\max_{(\lambda, \beta) \in \Lambda} 
\Phi_- \bigl( \widetilde{N}_{\lambda} (\theta) \bigr) 
\leq \langle \widehat{\mathcal{G}}_k \theta, \theta
\rangle_{\mathcal H} \leq \min_{(\lambda, \beta) \in \Lambda} 
\Phi_+^{-1} \bigl( \widetilde{N}_{\lambda}(\theta) \bigr), 
\qquad \theta \in \Theta_{\delta}.  
\] 
Observe that $\widehat{\mathcal{G}}_k $ plays the same role as the symmetric matrix $Q$ in the finite-dimensional setting.
We consider as an estimator of $\mathcal{G}$ the operator 
\begin{equation}
\label{hilbert_Q}
\mathcal{Q} = \widehat{\mathcal{G}}_k \circ \Pi_{V_k},
\end{equation}
where $\Pi_{V_k}$ is the orthogonal projector on $V_k$. 
Let us decompose 
$\mathcal Q$ in its positive and negative parts and write $\mathcal Q = \mathcal Q_+ - \mathcal Q_-$. 
\begin{prop}\label{propq}
For any threshold $\sigma \in \mathbb{R}_+$ such that $\sigma \leq s_4^2$ 
and $8 \zeta_*(\sigma) \leq \sqrt{n}$,
with probability at least $1 - 2 \epsilon$, 
for any $\theta \in \mathbb{S}_{\mathcal{H}}$ and for any $k$, 
\begin{align*}
\bigl\lvert \max \bigl\{ \langle \theta, \mathcal{Q}_+ \theta \rangle_{\mathcal H}, \sigma \bigr\} & - \max \bigl\{ \langle \Pi_k \theta, \mathcal{G} \Pi_k \theta \rangle_{\mathcal H} , \sigma \bigr\}  \bigr\rvert \\
& \leq 2 \max \bigl\{ \langle \Pi_k \theta, \mathcal{G} \Pi_k \theta \rangle_{\mathcal H}, \sigma \bigr\} B_* \bigl( \langle \Pi_k \theta, \mathcal{G} \Pi_k \theta \rangle_{\mathcal H} \bigr) + 7 \delta \sqrt{ \mathbf{Tr}(\mathcal{G}^2)} \\[1.5mm]  
\bigl\lvert \max \bigl\{ \langle \theta, \mathcal{Q}_+ \theta \rangle_{\mathcal H}, \sigma \bigr\} & - \max \bigl\{ \langle \Pi_k \theta, \mathcal{G} \Pi_k \theta \rangle_{\mathcal H} , \sigma \bigr\} \bigr\rvert \\
& \leq 2 \max \bigl\{ \langle \theta, \mathcal{Q}_+\theta \rangle_{\mathcal H}, \sigma \bigr\} B_* \bigl( 
\min \{ \langle \theta, \mathcal{Q}_+ \theta \rangle_{\mathcal H}, s_4^2 \} \bigr)  + 7 \delta \sqrt{ \mathbf{Tr}(\mathcal{G}^2)}.
\end{align*}
\end{prop}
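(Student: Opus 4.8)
The plan is to deduce the statement from the finite-dimensional Propositions~\ref{prop3} and~\ref{prop4}, applied verbatim inside the finite-dimensional subspace $V_k$, exploiting the fact that every bound in Section~\ref{sec1} is dimension-free. I would work on the event $\bigcap_k \mathcal{A}_k$, which has probability at least $1 - 2\epsilon$ because $\mathcal{A}_{k+1} \subseteq \mathcal{A}_k$ and by continuity of measure, exactly as in the derivation of Proposition~\ref{prop_Hspace}. On this event the confidence region $B_-(\theta) \leq N(\theta) \leq B_+(\theta)$ holds simultaneously for all $\theta \in \mathcal{H}$, hence in particular for all $\theta \in V_k \subseteq \mathcal{H}_k$; this is what makes the constraints defining $\widehat{\mathcal{G}}_k$ on the net $\Theta_\delta \subseteq \mathbb{S}_{\mathcal{H}} \cap V_k$ compatible with $N$, and it also shows that the compression $\Pi_{V_k} \mathcal{G} \, \Pi_{V_k}$ is a feasible competitor, so that the minimal Hilbert--Schmidt solution satisfies $\mathbf{Tr}(\widehat{\mathcal{G}}_k^2) \leq \mathbf{Tr}(\mathcal{G}^2)$.

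The first step is the reduction to finite dimension. Since $\mathcal{Q} = \widehat{\mathcal{G}}_k \circ \Pi_{V_k}$ has range inside $V_k$ and $\widehat{\mathcal{G}}_k$ is self-adjoint on $V_k$, the operator $\mathcal{Q}$ is self-adjoint on $\mathcal{H}$; its positive and negative parts $\mathcal{Q}_\pm$ again have range in $V_k$ and factor as $\langle \theta, \mathcal{Q}_\pm \theta \rangle_{\mathcal{H}} = \langle \Pi_{V_k} \theta, \widehat{\mathcal{G}}_{k,\pm} \Pi_{V_k} \theta \rangle_{\mathcal{H}}$, where $\widehat{\mathcal{G}}_{k,\pm}$ is the positive/negative part of $\widehat{\mathcal{G}}_k$ computed inside $V_k$. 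Moreover $\langle \theta, \mathcal{Q}_+ \theta \rangle_{\mathcal{H}} = \langle \Pi_k \theta, \mathcal{Q}_+ \Pi_k \theta \rangle_{\mathcal{H}}$ since $\Pi_{V_k} \Pi_k = \Pi_{V_k}$ and $\mathcal{H}_k^\perp \subseteq V_k^\perp$, so both sides of the claimed inequality are determined by $\Pi_k\theta$ and the whole estimate takes place in finite dimension, with $\widehat{\mathcal{G}}_k$ playing the role of the matrix $Q$ of Section~\ref{sec1} and $\Theta_\delta$ the role of the $\delta$-net of its unit sphere.

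The second step is a line-by-line transcription of the proofs of Propositions~\ref{prop3} and~\ref{prop4}. For $\theta$ in the unit sphere of $V_k$ (the general case of $\mathbb{S}_{\mathcal{H}}$ reducing to this by the factorization above), pick $\xi \in \Theta_\delta$ with $\lVert \theta - \xi \rVert_{\mathcal{H}} \leq \delta$; as in~\eqref{bxitheta},
\[
\bigl\lvert \langle \theta, \widehat{\mathcal{G}}_k \theta \rangle_{\mathcal{H}} - \langle \xi, \widehat{\mathcal{G}}_k \xi \rangle_{\mathcal{H}} \bigr\rvert \leq 2 \delta \sqrt{\mathbf{Tr}(\widehat{\mathcal{G}}_k^2)} \leq 2 \delta \sqrt{\mathbf{Tr}(\mathcal{G}^2)} =: \eta,
\]
and the same inequality holds with $N$ in place of $\theta \mapsto \langle \theta, \widehat{\mathcal{G}}_k \theta \rangle_{\mathcal{H}}$ because $\lVert \mathcal{G} \rVert_\infty \leq \sqrt{\mathbf{Tr}(\mathcal{G}^2)}$. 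On the good event the constraints $\langle \widehat{\mathcal{G}}_k \xi, \xi \rangle_{\mathcal{H}} \in [B_-(\xi), B_+(\xi)]$ and $N(\xi) \in [B_-(\xi), B_+(\xi)]$ yield, exactly as in Proposition~\ref{prop3}, $\Phi_- \circ \Phi_+\bigl( \langle \theta, \widehat{\mathcal{G}}_k \theta \rangle_{\mathcal{H}} - \eta \bigr) \leq N(\theta) + \eta$ and $\Phi_- \circ \Phi_+\bigl( N(\theta) - \eta \bigr) \leq \langle \theta, \widehat{\mathcal{G}}_k \theta \rangle_{\mathcal{H}} + \eta$, and Corollary~\ref{lem1A} turns this pair into the announced bound for $\langle \theta, \widehat{\mathcal{G}}_k \theta \rangle_{\mathcal{H}}$ with constant $5 \delta \sqrt{\mathbf{Tr}(\mathcal{G}^2)}$. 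Finally, since $\langle \widehat{\mathcal{G}}_k \xi, \xi \rangle_{\mathcal{H}} \geq B_-(\xi) \geq 0$ on $\Theta_\delta$, the net inequality gives $\langle \theta, \widehat{\mathcal{G}}_k \theta \rangle_{\mathcal{H}} \geq - \eta$ on the unit sphere of $V_k$, hence $\lVert \widehat{\mathcal{G}}_{k,-} \rVert_\infty \leq \eta$ and $\bigl\lvert \langle \theta, \mathcal{Q} \theta \rangle_{\mathcal{H}} - \langle \theta, \mathcal{Q}_+ \theta \rangle_{\mathcal{H}} \bigr\rvert \leq \eta$; combining as in Proposition~\ref{prop4} replaces $5\delta\sqrt{\mathbf{Tr}(\mathcal{G}^2)}$ by $7\delta\sqrt{\mathbf{Tr}(\mathcal{G}^2)}$ and, using the monotonicity of $B_*$ as in Propositions~\ref{prop2}--\ref{prop3}, gives both displayed inequalities.

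I do not expect a genuinely hard analytic step here: every inequality used is the dimension-free one already established in Section~\ref{sec1}, and the passage $k \to \infty$ is not even needed for this statement (only for Proposition~\ref{prop1.40fr}). The point requiring the most care is the bookkeeping of the reduction --- verifying that $\mathcal{Q}$, $\mathcal{Q}_+$ and $\mathcal{Q}_-$ all factor through $\Pi_{V_k}$, so that the infinite-dimensional ambient space never enters the estimate, and that $\bigcap_k \mathcal{A}_k$ is the correct event on which to argue so that the conclusion holds simultaneously for every $\theta \in \mathbb{S}_{\mathcal{H}}$ and every $k$.
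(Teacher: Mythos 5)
Your overall architecture --- working on $\bigcap_k \mathcal{A}_k$, reducing to the two inequalities $\Phi_-\circ\Phi_+(\,\cdot\,-\eta)\leq \,\cdot\,+\eta$, invoking Corollary \ref{lem1A}, and handling the negative part as in Proposition \ref{prop4} --- matches the paper, but there is a genuine gap in the reduction step. You claim that the general case $\theta\in\mathbb{S}_{\mathcal{H}}$ reduces to $\theta$ in the unit sphere of $V_k$ ``by the factorization above.'' The factorization only shows that the \emph{estimator} side $\langle\theta,\mathcal{Q}_+\theta\rangle_{\mathcal H} = \langle\Pi_{V_k}\theta, \widehat{\mathcal{G}}_{k,+}\Pi_{V_k}\theta\rangle_{\mathcal H}$ is a function of $\Pi_{V_k}\theta$; the \emph{target} side $N(\Pi_k\theta) = \langle\Pi_k\theta,\mathcal{G}\,\Pi_k\theta\rangle_{\mathcal H}$ is not, because $\Pi_k\theta$ in general has a nontrivial component $(\Pi_k-\Pi_{V_k})\theta$ in $\mathcal{H}_k\cap V_k^{\perp}$ (recall $\dim V_k\leq n$ while $\dim\mathcal{H}_k=k$ can be much larger, and $N$ does not vanish on $\mathcal{H}_k\cap V_k^{\perp}$ since a fresh draw $\Pi_k X$ need not lie in $V_k$). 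Consequently, for such $\theta$ there is no point of $\Theta_\delta\subset\mathbb{S}_{\mathcal{H}}\cap V_k$ within $\delta$ of $\Pi_k\theta$, and the announced line-by-line transcription of Proposition \ref{prop3} breaks at its very first step.

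The paper's proof is built precisely to close this hole: it decomposes $\Pi_k\theta=\Pi_{V_k}\theta+(\Pi_k-\Pi_{V_k})\theta$, picks $\xi\in\Theta_\delta$ closest to $\lVert\Pi_{V_k}\theta\rVert_{\mathcal H}^{-1}\Pi_{V_k}\theta$, and compares $\widetilde{N}_{\lambda}$ and $N$ at the auxiliary vector $\lVert\Pi_{V_k}\theta\rVert_{\mathcal H}\,\xi+(\Pi_k-\Pi_{V_k})\theta\in\mathcal{H}_k$, which lies in the unit ball and is within $\delta$ of $\Pi_k\theta$. Transferring the net constraint on $\widehat{\mathcal{G}}_k$ from the unit vector $\xi$ to this rescaled vector requires the sub-homogeneity $\Phi_+(at)\leq a\,\Phi_+(t)$, hence $a\,\Phi_+^{-1}(t)\leq\Phi_+^{-1}(at)$, for $a=\lVert\Pi_{V_k}\theta\rVert_{\mathcal H}^{2}\in[0,1]$ --- an ingredient absent from your argument. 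Once this step is supplied, the remainder of your outline (Corollary \ref{lem1A}, then the $\mathcal{Q}_-$ bound giving $7\delta\sqrt{\Tr(\mathcal{G}^2)}$) goes through as you describe.
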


\vskip2mm
\noindent
For the proof we refer to section \ref{proof_propq}.

\vskip2mm
\noindent
Let us consider $\{ p_i\}_{i = 1}^{+ \infty}$ an orthonormal basis of eigenvectors of $\mathcal{G}$ such that the corresponding 
sequence of eigenvalues $\{ \lambda_i, i = 1, \dots, + \infty \}$ 
is non-increasing.

\begin{prop}
\label{prop1.41fm} 
Consider some threshold $\sigma \in \mathbb{R}_+$ such that 
$\sigma \leq s_4^2$ and $8 \zeta_*(\sigma) \leq \sqrt{n}$. 
With probability at least $1 - 2 \epsilon$, for any $\theta \in \mathbb{S}_{\mathcal{H}}$ and
for any $k$,
\begin{align*}
\bigl\lvert \max \bigl\{ \langle \theta, \mathcal{Q}_+ \theta \rangle_{\mathcal H}, \sigma \bigr\} - \max \bigl\{ \langle \theta, \mathcal{G} \theta \rangle_{\mathcal H} , \sigma \bigr\} \bigr\rvert 
& \leq 2 \max \bigl\{ \langle \theta, \mathcal{G} \theta \rangle_{\mathcal H}, \sigma \bigr\} B_* \bigl( \langle \theta, \mathcal{G} \theta \rangle_{\mathcal H} \bigr) \\ 
& \qquad + 7 \delta \sqrt{ \mathbf{Tr}(\mathcal{G}^2)} + 3 \nu_k \\  
\bigl\lvert \max \bigl\{ \langle \theta, \mathcal{Q}_+ \theta \rangle_{\mathcal H}, \sigma \bigr\} - \max \bigl\{ \langle \theta, \mathcal{G} \theta \rangle_{\mathcal H} , \sigma \bigr\} \bigr\rvert 
& \leq 2 \max \bigl\{ \langle \theta, \mathcal{Q}_+\theta \rangle_{\mathcal H}, \sigma \bigr\} B_* \bigl( \min \{ \langle \theta, \mathcal{Q}_+ \theta \rangle_{\mathcal H}, s_4^2 \} \bigr) \\ 
& \qquad + 7 \delta \sqrt{ \mathbf{Tr}(\mathcal{G}^2)} + 2 \nu_k,  
\end{align*}
where 
\begin{multline*}
\nu_k = \inf_{m =1, \dots, + \infty} \Bigg( \sum_{i=1}^{m-1} 
\lambda_i \lVert p_i - \Pi_k p_i \rVert_{\mathcal H} + \lambda_m / 2 \Bigg)
\\ \leq  
\inf_{m =1, \dots, + \infty} \Bigg( \sum_{i=1}^{m-1} 
\lambda_i \lVert p_i - \Pi_k p_i \rVert_{\mathcal H} + \mathbf{Tr}(\mathcal{G}) / (2m) \Bigg)
\underset{k \rightarrow + \infty}{\longrightarrow} 0. 
\end{multline*}
\end{prop}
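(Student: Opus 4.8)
The argument would proceed by reduction to Proposition \ref{propq}: on the event of probability at least $1-2\epsilon$ furnished there (a single event, valid simultaneously for all $k$), one already controls $\langle\theta,\mathcal{Q}_+\theta\rangle_{\mathcal H}$ in terms of $\langle\Pi_k\theta,\mathcal G\Pi_k\theta\rangle_{\mathcal H}=N(\Pi_k\theta)$. All the probabilistic content is thus in place, and what remains is the deterministic task of replacing $N(\Pi_k\theta)$ by $N(\theta)$ in the two inequalities of Proposition \ref{propq}, at the cost of $3\nu_k$ and $2\nu_k$ respectively.

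The first ingredient is the projection estimate: for every $\theta\in\mathbb{S}_{\mathcal H}$ and every $k$,
\[
\bigl\lvert N(\theta)-N(\Pi_k\theta)\bigr\rvert=\bigl\lvert\langle\theta,(\mathcal G-\Pi_k\mathcal G\Pi_k)\theta\rangle_{\mathcal H}\bigr\rvert\leq 2\nu_k .
\]
Fixing $m$ and decomposing $\mathcal G=\mathcal G_{<m}+\mathcal G_{\geq m}$ with $\mathcal G_{<m}=\sum_{i<m}\lambda_i\,p_i\otimes p_i$, one splits the difference into a head and a tail. Since $\Pi_k$ is self-adjoint, $\mathcal G_{<m}-\Pi_k\mathcal G_{<m}\Pi_k=(I-\Pi_k)\mathcal G_{<m}+\Pi_k\mathcal G_{<m}(I-\Pi_k)$, and each summand has operator norm at most $\sum_{i<m}\lambda_i\lVert p_i-\Pi_k p_i\rVert_{\mathcal H}$ (bound the trace norm of the rank-one pieces $\lambda_i\,(I-\Pi_k)p_i\otimes p_i$ and $\lambda_i\,\Pi_k p_i\otimes(I-\Pi_k)p_i$); for the tail, $0\leq\langle\theta,\mathcal G_{\geq m}\theta\rangle_{\mathcal H},\langle\Pi_k\theta,\mathcal G_{\geq m}\Pi_k\theta\rangle_{\mathcal H}\leq\lambda_m$ since $\lVert\mathcal G_{\geq m}\rVert_\infty=\lambda_m$ and $\lVert\theta\rVert_{\mathcal H}=1$, $\lVert\Pi_k\theta\rVert_{\mathcal H}\leq 1$. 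Adding gives $\lvert N(\theta)-N(\Pi_k\theta)\rvert\leq 2\sum_{i<m}\lambda_i\lVert p_i-\Pi_k p_i\rVert_{\mathcal H}+\lambda_m$, and optimizing over $m$ yields $2\nu_k$. The same bound, together with $\lambda_m\leq m^{-1}\sum_{i\leq m}\lambda_i\leq m^{-1}\mathbf{Tr}(\mathcal G)$ and $\lVert p_i-\Pi_k p_i\rVert_{\mathcal H}\to 0$ for each fixed $i$ (because $\overline{\cup_k\mathcal H_k}=\mathcal H$), gives the stated upper estimate for $\nu_k$ and the convergence $\nu_k\to 0$ (send $m$, then $k$, to infinity).

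For the second inequality of the proposition this is essentially all that is needed: its right-hand side involves only $\langle\theta,\mathcal{Q}_+\theta\rangle_{\mathcal H}$, which does not depend on $k$, so the bound of Proposition \ref{propq}, combined with the triangle inequality, $\lvert\max\{a,\sigma\}-\max\{b,\sigma\}\rvert\leq\lvert a-b\rvert$, and the projection estimate, directly produces the extra summand $2\nu_k$. For the first inequality one must in addition move $N(\Pi_k\theta)$ to $N(\theta)$ inside the nonlinear term $\max\{N(\Pi_k\theta),\sigma\}\,B_*(N(\Pi_k\theta))$. Here the hypothesis $8\zeta_*(\sigma)\leq\sqrt n$ is what saves the day: it forces $B_*\leq 1/4$ and makes $t\mapsto\max\{t,\sigma\}\,B_*(t)$ monotone with slope controlled by $n^{-1/2}\zeta_*(\sigma)$, so that a perturbation of size $2\nu_k$ in its argument changes its value by only a controlled fraction of $\nu_k$; absorbing this gives the $+\,3\nu_k$ bound. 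This transfer is exactly the elementary (but bookkeeping-heavy) manipulation packaged as Corollary \ref{lem1A} in Section \ref{appx} and already used in the proof of Proposition \ref{prop3}, so I would invoke it rather than redo it.

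The main obstacle is therefore not probabilistic --- that is delegated entirely to Proposition \ref{propq} --- but the accounting of constants in the transfer step: one has to verify that carrying $N(\Pi_k\theta)$ to $N(\theta)$ through the double truncation $\max\{\cdot,\sigma\}$ and $\min\{\cdot,s_4^2\}$ and through the ratio defining $B_*$ costs no more than the stated $\nu_k$, uniformly in $\theta\in\mathbb{S}_{\mathcal H}$ and in $k$. The monotonicity of every function in sight and the uniform smallness $B_*\leq 1/4$ (which is precisely where the condition on $\sigma$ enters) are what make the estimate close; a crude two-sided propagation of the $2\nu_k$ projection error is slightly too lossy, and it is the one-sided monotonicity of $t\mapsto tB_*(t)$ --- so that undershooting in $N(\Pi_k\theta)$ is harmless on the upper bound --- that recovers the constant, which is again the content of the appendix corollary.
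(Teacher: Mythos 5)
Your proposal is correct and follows essentially the same route as the paper: a triangle inequality reducing to Proposition \ref{propq}, together with the projection estimate $\lvert N(\theta)-N(\Pi_k\theta)\rvert\leq 2\nu_k$ obtained from the head/tail split in the eigenbasis of $\mathcal G$ (your operator-norm decomposition of the head is just a repackaging of the paper's term-by-term computation). You are in fact more explicit than the paper about why the first inequality costs $3\nu_k$ rather than $2\nu_k$ --- namely the extra $2\nu_k\cdot B_*\leq\nu_k/2$ incurred when shifting the argument of $t\mapsto\max\{t,\sigma\}B_*(t)$, which the paper leaves implicit.
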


\begin{proof}
It is enough to observe that
\begin{multline*}
\bigl\lvert \max \bigl\{ \langle \theta, \mathcal{Q}_+ \theta \rangle_{\mathcal H}, \sigma \bigr\} - \max \bigl\{ \langle \theta, \mathcal{G} \theta \rangle_{\mathcal H} , \sigma \bigr\} \bigr\rvert \\
\leq \bigl\lvert \max \bigl\{ \langle \theta, \mathcal{Q}_+ \theta \rangle_{\mathcal H}, 
\sigma \bigr\} - \max \bigl\{ \langle \Pi_k \theta, \mathcal{G} \Pi_k \theta \rangle_{\mathcal H} 
, \sigma \bigr\}  \bigr\rvert  + \bigl\lvert \langle \theta, \mathcal{G} \theta \rangle_{\mathcal H} - 
\langle \Pi_k \theta, \mathcal{G} \Pi_k \theta \rangle_{\mathcal H} 
\bigr\rvert
\end{multline*}
and, for any $\theta \in \mathbb{S}_{\mathcal{H}}$,  we have
\begin{multline*}
\bigl\lvert \langle \theta, \mathcal{G} \theta \rangle_{\mathcal H} - 
\langle \Pi_k \theta, \mathcal{G} \Pi_k \theta \rangle_{\mathcal H} 
\bigr\rvert = \Bigg\lvert \sum_{i=1}^{+ \infty} 
\bigl( \langle \Pi_k \theta, p_i \rangle_{\mathcal{H}}^2 - \langle 
\theta, p_i \rangle_{\mathcal{H}}^2 \bigr) \lambda_i \Bigg\rvert
\\ =
\Bigg\lvert \sum_{i=1}^{+ \infty} 
\bigl( \langle \theta, \Pi_k p_i \rangle_{\mathcal{H}}^2 - \langle 
\theta, p_i \rangle_{\mathcal{H}}^2 \bigr) \lambda_i \Bigg\rvert
\\ = \Bigg\lvert \sum_{i=1}^{m-1} \langle \theta, \Pi_k p_i + p_i \rangle_{\mathcal{H}} 
\langle \theta, p_i - \Pi_k p_i \rangle_{\mathcal{H}} \lambda_i \Bigg\rvert 
+ \Bigg\lvert \sum_{i=m}^{+ \infty} \bigl( 
\langle \theta, \Pi_k p_i \rangle^2_{\mathcal{H}} - 
\langle \theta, p_i \rangle^2_{\mathcal{H}} \bigr) \lambda_i \Biggr\rvert
\\ 
\leq \sum_{i=1}^{m-1} 2 \lambda_i \lVert p_i - \Pi_k p_i \rVert_{
\mathcal{H}} + \max \Biggl\{ \sum_{i=m}^{+ \infty} \lambda_i \langle 
\Pi_k \theta, p_i \rangle_{\mathcal{H}}^2, \sum_{i=m}^{+\infty} \lambda_i 
\langle \theta, p_i \rangle_{\mathcal{H}}^2 \Biggr\}
\\
\leq \inf_{m =1, \dots, +\infty} \Bigg( 
\sum_{i=1}^{m-1} 2 \lambda_i \lVert p_i - \Pi_k p_i \rVert_{\mathcal{H}} 
+ \lambda_m \Bigg). 
\end{multline*}
Indeed, 
$\displaystyle \sum_{i=m}^{+ \infty} \langle \Pi_k \theta, p_i 
\rangle_{\mathcal{H}}^2 \leq \lVert \Pi_k \theta \rVert_{\mathcal{H}}^2 
\leq \lVert \theta \rVert^2 \leq 1, $ so that 
\[
\sum_{i=m}^{+ \infty} \lambda_i \langle \Pi_k \theta, p_i \rangle_{\mathcal{H}}^2 
\leq \bigl( \sup_{i \geq m} \lambda_i \bigr) \Biggl( 
\sum_{i=m}^{+ \infty} \langle \Pi_k \theta, p_i \rangle^2_{\mathcal{H}} 
\Biggr) \leq \lambda_m, 
\]
and in the same way $ \displaystyle 
\sum_{i=m}^{+ \infty} \lambda_i 
\langle \theta, p_i \rangle_{\mathcal{H}}^2 \leq \lambda_m$.
\end{proof}

\vskip 2mm
\noindent
Remark that we can use this result to bound $\bigl\lvert \langle \theta, 
(\mathcal{G} - \mathcal{Q}_+) 
\theta \rangle_{\mathcal H} \bigr\rvert$, using the inequality
\[ 
\bigl\lvert \langle \theta, (\mathcal{G} - \mathcal{Q}_+) 
\theta \rangle_{\mathcal H} \bigr\rvert \leq 
\bigl\lvert \max \bigl\{ \langle \theta, \mathcal{Q}_+ \theta \rangle_{\mathcal H}, 
\sigma \bigr\} - \max \bigl\{ \langle \theta, \mathcal{G} \theta \rangle_{\mathcal H} 
, \sigma \bigr\} \bigr\rvert + \sigma.  
\] 

\section{Empirical results}\label{sec_emp}
We present some empirical results 
that show the performance of our robust estimator. 
We use here a simplified 
construction that does not follow exactly the 
definition of the 
estimator $Q$, but exhibits the same kind of behaviour. 
We have simplified the construction in two ways. 
First, we do less intensive computations 
by using more directions than in 
equation \myeq{eq:2.15} but less than specified in the $\delta$-net 
$\Theta_{\delta}$ required by the theory.  
More precisely, we estimate repeatedly 
using equation \myeq{eq:2.15} in an eigen-basis of the 
previous iterate of the estimation.  
Second, we do not use the theoretical value of $\lambda$, 
that is necessarily pessimistic 
for the sake of mathematical correctness. We use instead the 
optimal constant for estimating $\mathds{E} \bigl( \langle 
\theta, X \rangle^2 \bigr)$ in a single direction, 
as given in \cite{Catoni12}.\\[2mm]

\vskip 2mm
\noindent
Let $X_1, \dots, X_n \in \mathbb R^d$ be  a sample drawn according to the probability distribution $\mathrm P$ and let $\lambda>0.$  
Let $p\in \mathbb R^n$ and define $S(p, \lambda)$ 
as the solution of 
\[
\sum_{i=1}^n \psi\left[ \lambda\left( S(p, \lambda)^{-1} p_i^2 - 1\right)\right]=0.
\]
In practice we compute $S(p,\lambda)$ using the Newton algorithm. 
We observe that, when $p_i = \langle \theta, X_i\rangle$ and $\lambda$ is suitably chosen, 
$S(p, \lambda)$ is an approximation of the estimator $\widehat N(\theta)$ of the quadratic form $N(\theta)$
and more precisely, in this case, $S(p, \lambda)$ is exactly $\widetilde N_{\lambda}(\theta)$ defined in equation \eqref{tildeN}.\\[1mm]
Define $S(p)$ as the 
solution obtained when the parameter $\lambda$ is set to 
\[
\lambda = m\  \sqrt {\frac{1}{v} \left[ \frac{2}{n} \log(\epsilon^{-1}) \left(1-  \frac{2}{n} \log(\epsilon^{-1})  \right)  \right]}
\] 
where $\displaystyle m = \frac{1}{n}\sum_{i=1}^n p_i^2$, 
$\displaystyle v = \frac{1}{n-1} 
\sum_{i=1}^n \left( p_i^2 - m \right)^2$ and $ \epsilon = 0.1$.\\[1mm]
\vskip 2mm
\noindent
Let $\bar{\lambda}_1\geq \dots \geq \bar{\lambda}_d \geq 0$ be the 
eigenvalues of the empirical Gram matrix $\bar{G}$,
that will be our starting point, and let $u_1, \dots, u_d$ be a 
corresponding orthonormal basis of eigenvectors. 
We decompose the empirical Gram matrix as
\[
\bar{G} = U D U^{\top}
\]
where $U$ is the orthogonal matrix whose columns are the eigenvectors of 
$\bar{G}$ and $D$ is the diagonal matrix 
$D = \mathrm{diag}(\bar{\lambda}_1,\dots,\bar{\lambda}_d )$. 
Based on the polarization formula,
\[
u_i^{\top} G u_j = \frac{1}{4} \left[ N(u_i+u_j) - N(u_i-u_j) \right], \quad i,j =1, \dots, d,
\]
we revise iteratively our estimation of $G$ by estimating 
$N(u_i+\sigma u_j)$, with $\sigma \in \{+1, -1\}$, by  
\[
S \Bigl( \langle u_i+\sigma u_j, X_{\ell} \rangle \ , \ 1 \leq \ell \leq n  
\Bigr).
\]
More precisely, for any $n \times d$ matrix $W$, we define $C(W)$ as the $d \times d$ matrix with entries
\[
C(W)_{i, j} =  \frac{1}{4} \Bigg[S\Bigg(\bigl(  W_{\ell, i}+ 
W_{\ell, j} \bigr) \ | \ 1\leq \ell \leq n \Bigg) - S \Bigg(\bigl(  
W_{\ell, i}- W_{\ell, j} \bigr) \ | \ 1 \leq \ell \leq n \Bigg) \Bigg].
\]
Let $Y$ be the matrix whose $\ell$-th row is the vector $X_\ell$, so that 
\[
(YU)_{\ell, k} = \langle u_k , X_{\ell} \rangle, \quad 1\leq  \ell \leq n, \ 1\leq k \leq d.
\] 
We update the Gram matrix estimate to 
\[
Q_0 = U C(YU) U^{\top}. 
\]
Then we iterate the update scheme, decomposing 
$Q_0$ as 
\[
Q_0 = O_0 D_0 O_0^{\top},
\]
where $O_0 O_0^{\top} =O_0^{\top} O_0= \mathrm I$ and $D_0$ is a diagonal matrix and computing 
\[
Q_1 = O_0 C(YO_0) O_0^{\top}. 
\]
The inductive update step is more generally the following. 
Assuming we have constructed $Q_k$, we decompose it as
\[
Q_k = O_k D_k O_k^{\top}
\]
where $O_k O_k^{\top} =O_k^{\top} O_k= \mathrm I$ and $D_k$ is a diagonal matrix
and we define the new updated estimator of $G$ as 
\[
Q_{k+1} = O_k C(YO_k) O_k^{\top}.
\]
We stop this iterative estimation scheme when $\| Q_k - Q_{k-1} \|_F$ falls under a given threshold. 
In the following numerical experiment we more simply performed four updates.
We take as our robust estimator of $G$ the last update $Q_k$.

\vskip 5mm
\noindent
We now present an example of the performance of this estimator,
for some i.i.d. sample of size $n = 100$ in $\mathbb R^{10}$ drawn according to the Gaussian mixture distribution
\[
\mathrm P = (1 - \alpha) \, \mathcal{N}(0,M_1) + \alpha \, \mathcal{N}(0, 16 \; \mathbf{I}),
\]
where $\alpha= 0.05$ and
\[
M_1 =
\left[ 
\begin{array}{ll:lll}
2 & 1 & \multicolumn{3}{c}{\multirow{2}{*}{
\Large 0}
} \\
1 & 1 &  & & \\ 
\hdashline 
\multicolumn{2}{c:}{ 
\multirow{3}{*}{
\Large 0} }  
& 0.01 & & \raisebox{-10pt}[0pt][0pt]{\large 0} \\ 
 &  &  & \ddots & \\ 
 &  &  \hspace{10pt}\raisebox{8pt}[0pt][0pt]{\large 0} & & 0.01 
\end{array}
\right]. 
\]
The Gram matrix of $P$ is equal to 
\[
G = (1 - \alpha) M_1 + 16 \, \alpha \, \mathbf{I} = 
\left[ 
\begin{array}{ll:lll}
2.7 & 0.95 & \multicolumn{3}{c}{\multirow{2}{*}{\Large 0}} \\
0.95 & 1.75 &  & & \\ 
\hdashline \multicolumn{2}{c:}{\multirow{3}{*}{\Large 0}}  & 0.8095 & & \raisebox{-10pt}[0pt][0pt]{\large 0} \\ 
 &  &  & \ddots & \\ 
 &  &  \hspace{10pt}\raisebox{8pt}[0pt][0pt]{\large 0} & & 0.8095 
\end{array}
\right]. 
\]

\vskip2mm
\noindent
This example illustrates a favorable situation where the 
performance of the robust estimator is particularly striking
when compared to the empirical Gram matrix. 
As it can be seen on the expression of the sample distribution as 
well as on the configuration plots below, this is a situation 
of intermittent high variance : the sample is a mixture of 
a rare high variance signal and a frequent low variance more structured signal. \\[1mm]
We tested the algorithm on 500 different samples, of size $n = 100$ each, drawn according to the Gaussian mixture distribution defined above. Random sample configurations are presented in figure \ref{fig1}. \\[1mm]
Figure \ref{fig2} shows that the robust estimator $Q$ 
significantly improves the error in terms of square of the 
Frobenius norm when compared to the empirical estimator $\bar G$. 
The red solid line represents the empirical quantile function 
of the errors of the robust estimator, 
whereas the blue dotted line represents the quantiles of   
$\| \bar G - G\|^2_F$. \\This quantile function is obtained 
by sorting the 500 empirical errors in increasing order.\\[1mm]
We also represented in \vref{fig2} the boxplots of the 
distributions of $\lVert \bar{G} - G \rVert^2_{F}$ and 
$\lVert Q - G \rVert^2_{F}$ on $500$ statistical experiments.
(The boxplots show the first, second and third quartiles, 
with whiskers extending to the most extreme data point 
within 1.5 of interquartile range. Further away extreme data points 
are printed individually).  

\noindent
The mean of the square distances $\| Q - G\|^2_F$ on 
500 trials is $5.6 \pm 0.4$, 
where the indicated mean estimator and confidence interval is the 
non-asymptotic confidence interval given by Proposition  
2.4 of \cite{Catoni12} at confidence level $0.99$. 
In the case of the empirical estimator, the mean is $15.5 \pm 2$. 
The empirical standard deviations accross 500 trials 
of $\lVert Q - G \rVert^2_{F}$ 
and $\lVert \bar{G} - G \rVert^2_{F}$ respectively 
are close to $2$ and $10$. 
So we see that in this case the robust estimator reliably 
decreases the expected error by a factor larger than $2$ and also produces 
errors with a much smaller standard deviation from sample to sample. \\[1mm]
In Appendix \ref{emp_G} we show  
from a theoretical point of view that
the two estimators $Q$ and $\bar G$ behave in a similar way in light tail situations (meaning that in this case their predictions are the same).

\begin{figure}[htbp]
\begin{center}
\includegraphics[height=50mm]{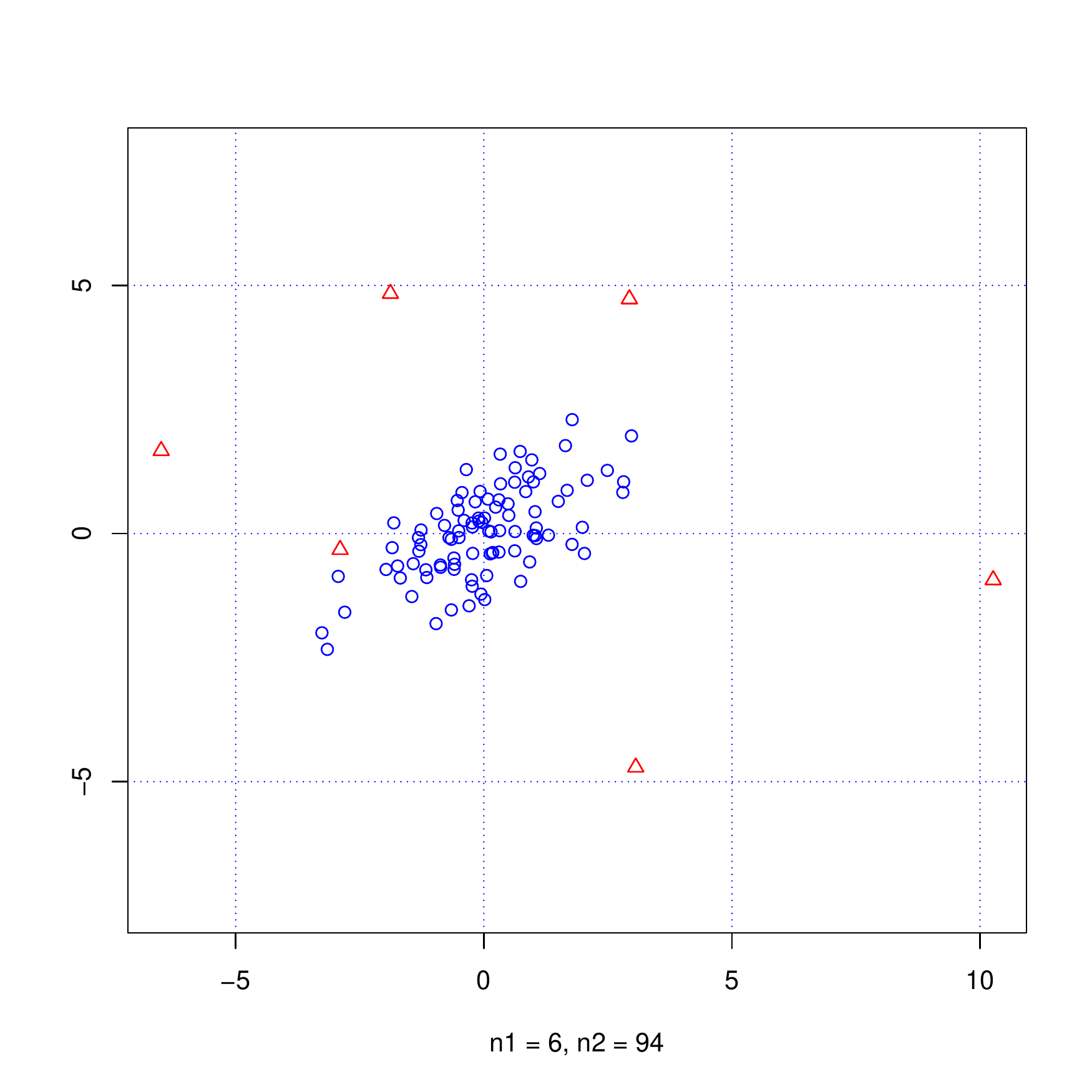}
\includegraphics[height=50mm]{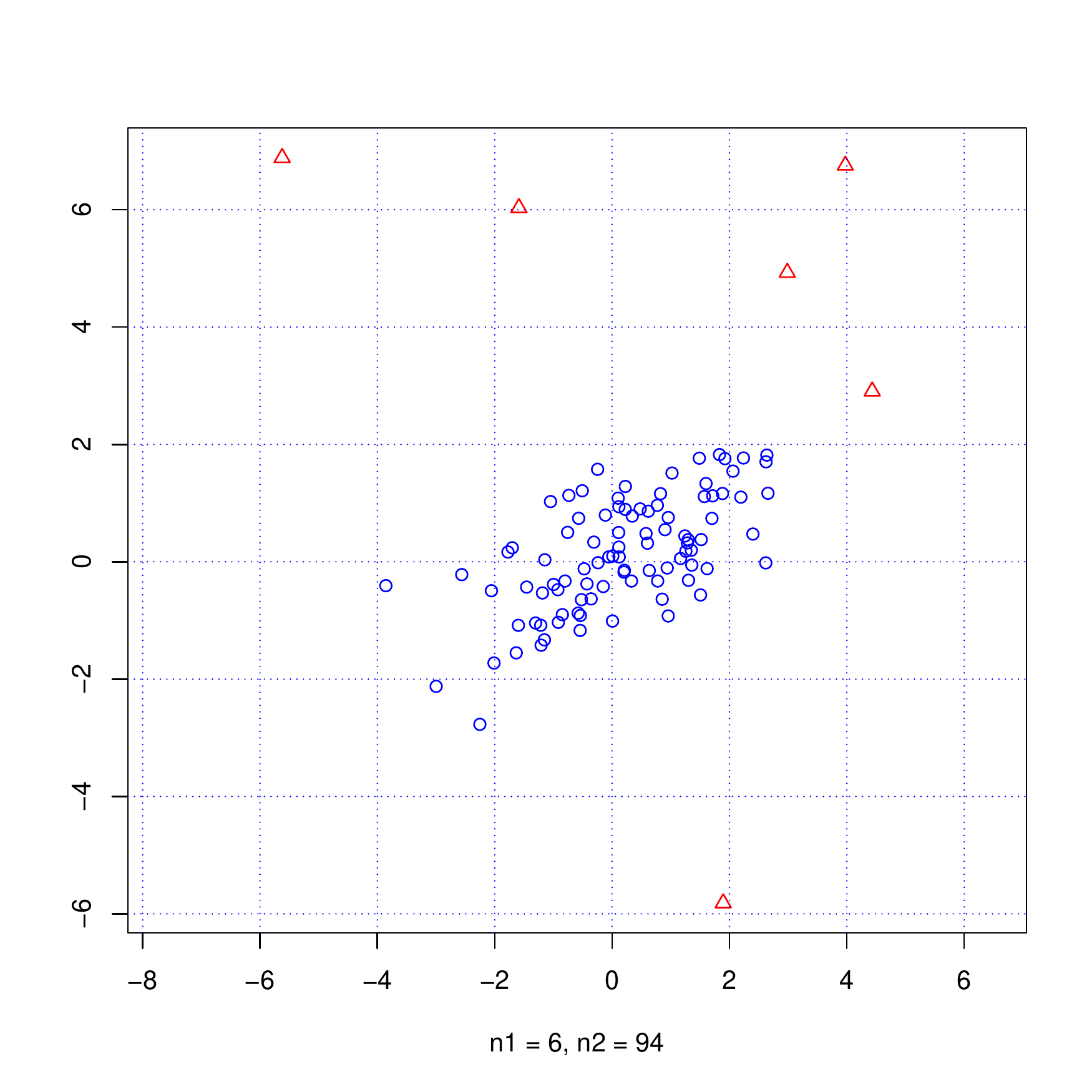}
\includegraphics[height=50mm]{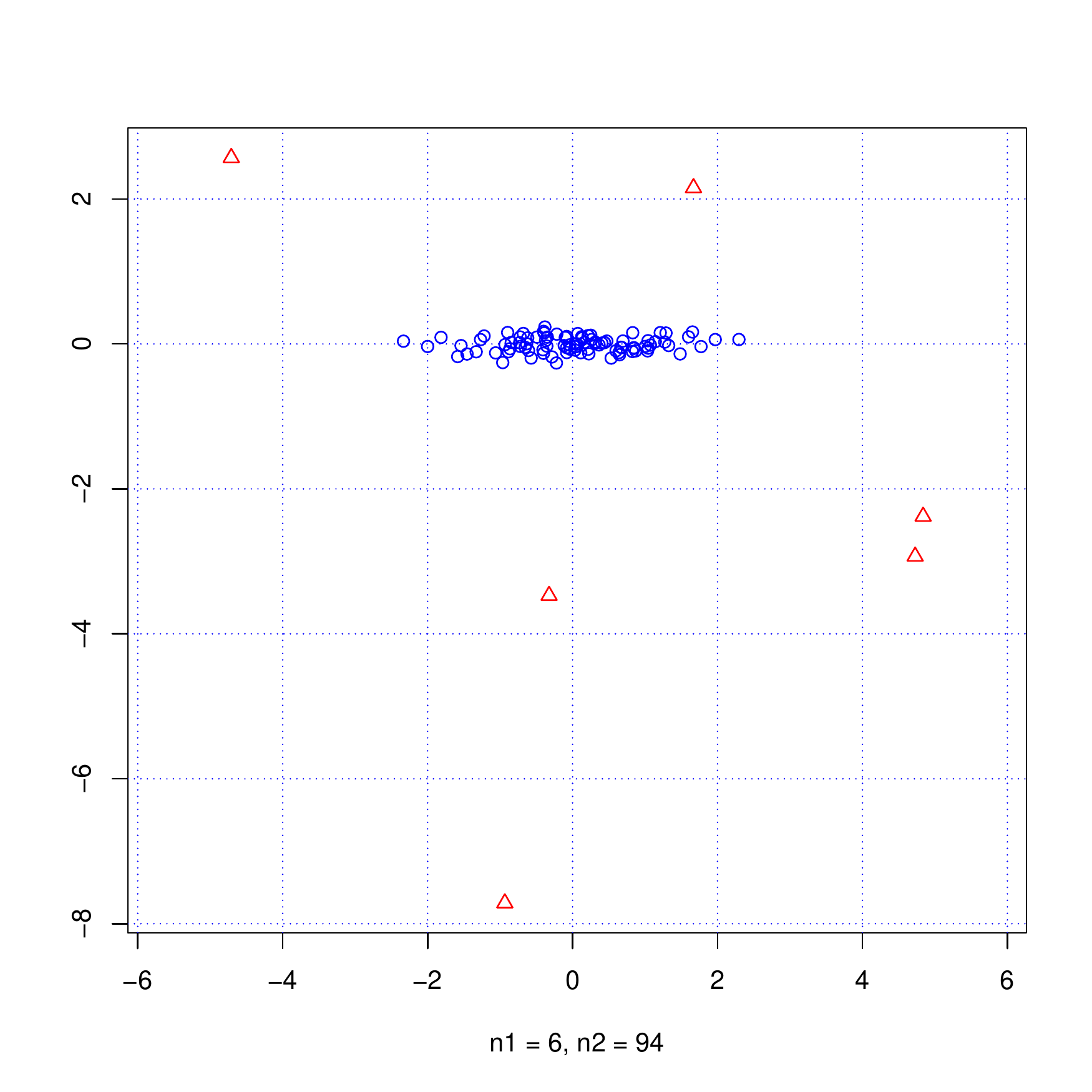}
\includegraphics[height=50mm]{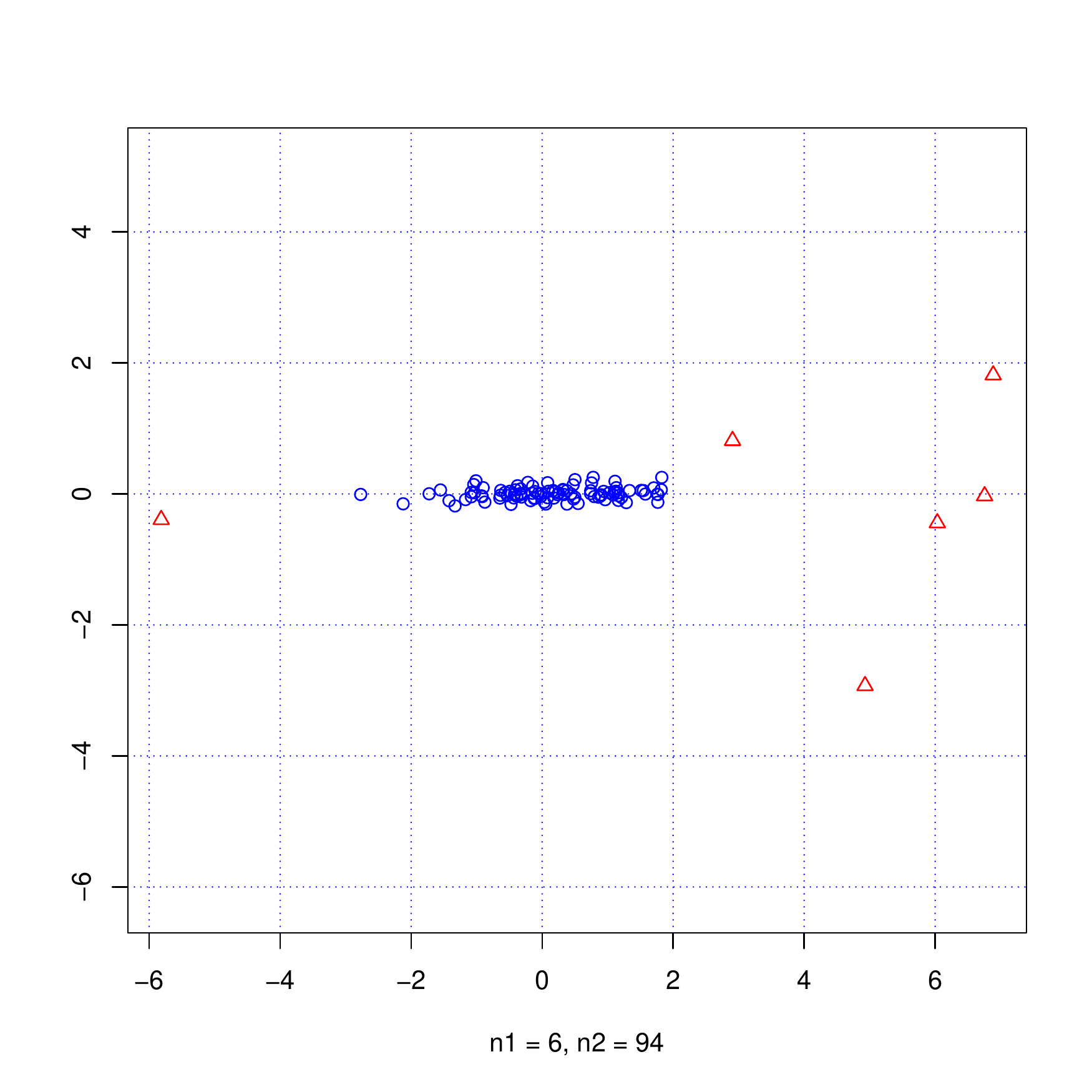}
\caption{ Two data samples projected onto the two first coordinates 
(above) and the second and third coordinates (below).
Blue circles are drawn from the most frequent distribution 
and red triangles from the less frequent one.}
\label{fig1}
\end{center}
\end{figure}

\begin{figure}[htbp]
\begin{center}
\includegraphics[height=50mm]{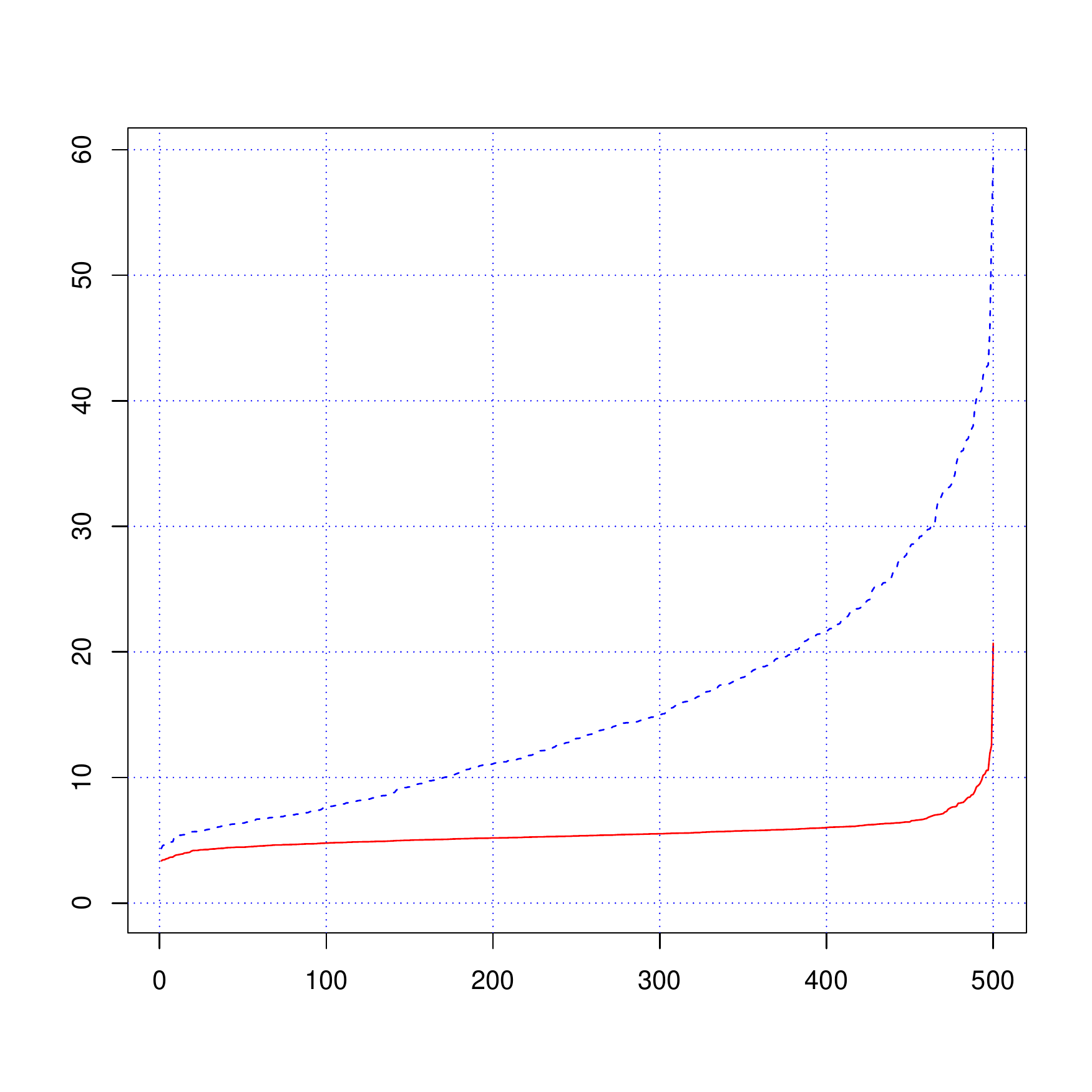} 
\includegraphics[height=50mm]{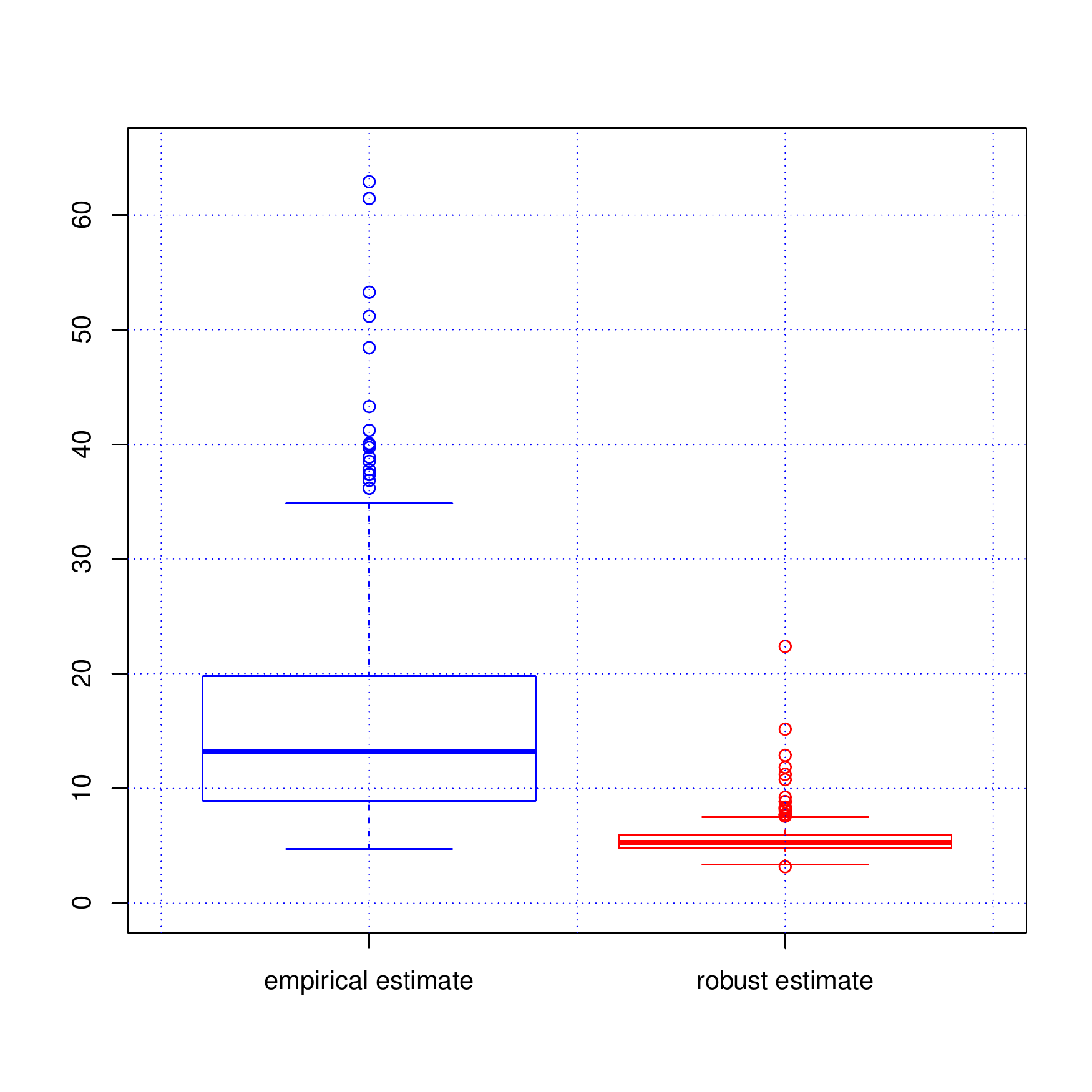} 
\caption{ The solid red line represents the empirical quantile 
function of the square distances $\| Q - G\|_F^2$ in 
500 statistical experiments,
the dotted blue line represents the empirical quantile function 
of the square distances $\| \bar G - G\|_F^2$. The corresponding boxplots
are also displayed on the right figure. 
}
\label{fig2}
\end{center}
\end{figure}

\newpage
\section{The classical empirical estimator}\label{emp_G}
The main goal of section \ref{sec1} is to estimate the Gram matrix $G = \mathbb E \left( XX^{\top} \right)$, where $X\in \mathbb R^d$ is a random vector of unknown law $\mathrm P \in \mathcal M_+^1(\mathbb R^d)$, 
from an i.i.d. sample $X_1, \dots, X_n\in \mathbb R^d$ drawn according to $\mathrm P.$ 
We have constructed a robust estimator of the Gram matrix and in section \ref{sec_emp} we have shown empirically its performance in the case of a Gaussian mixture distribution.
In this section we show from a theoretical point of view that 
the classical empirical estimator 
\[
\bar G = \frac{1}{n} \sum_{i=1}^n X_i X_i^{\top}
\]
behaves similarly to our robust estimator 
in light tail situations, while it may perform worse otherwise.\\[1mm]
As already done in section \ref{sec1}, we consider the quadratic form 
\[
N(\theta) = \theta^{\top} G \theta = \mathbb E \left( \langle \theta, X \rangle^2 \right)
\]
and we denote by 
\[
\bar N(\theta) = \theta^{\top} \bar G \theta = \frac{1}{n} \sum_{i=1}^n \langle \theta, X_i \rangle^2
\]
the quadratic form associated to the empirical Gram matrix $\bar G$. 
According to the notation introduced in section \ref{sec1}, let $a>0$ and let 
\[ 
 K  = 1 + \left\lceil a^{-1} \log \biggl( \frac{n}{72(2+c) \kappa^{1/2}} \biggr) \right\rceil
\]
where $\displaystyle\kappa = \sup_{\substack{
\theta \in \mathbb{R}^d \\ 
\mathbb E ( \langle \theta, X \rangle^2 ) > 0
}} \frac{\mathbb E \bigl( \langle \theta , X \rangle^4 \bigr)}{
\mathbb E \bigl( \langle \theta, X \rangle^2 \bigr)^2}$ 
and $\displaystyle c= \frac{15}{8 \log(2)(\sqrt{2}-1)} \exp \left( \frac{1 + 2 \sqrt{2}}{2} \right)$.
Let us put
\begin{equation}
\label{eq2.5}
R = \max_{i=1,\dots,n} \|X_i\|
\end{equation}
and let us introduce
\[
\tau_* (t) = \frac{\lambda_*(t)^2 \exp(a/2) R^4}{3 \max \{ t, \sigma \}^2}, \quad t \in \mathbb R_+,
\]
where $\lambda_*$ is defined in equation \eqref{eq1.33} as 
\[
\lambda_*(t)= \sqrt{ \frac{2}{n (\kappa - 1)} \Biggl(  
\frac{(2 + 3c) \mathbb{E} \bigl( \lVert X \rVert^4 \bigr)^{1/2}}{4 (2 + c) 
\kappa^{1/2} \max \{ t, \sigma \}} + \log(K / \epsilon)
\Biggr)}. 
\]
At the end of the section we mention some assumptions under which 
it is possible to give a non-random bound for $R$. 

\vskip1mm
\noindent
The following proposition, compared with the result
obtained for the robust estimator $\widehat N( \theta)$, presented in 
Proposition \ref{prop2},
shows that the different behavior of the two estimators $\widehat N$
and $\bar N$ can appear only for heavy tail data distributions. 

\begin{prop}\label{prop2.8eg}
Consider any threshold $\sigma \in \mathbb{R}_+$ such that $\sigma \leq 
\mathbb{E} \bigl( \lVert X \rVert^4 \bigr)^{1/2}$. 
With probability at least $1 - 2 \epsilon$, for any $\theta \in \mathbb{S}_d$, 
\[
\biggl\lvert \, \frac{\max \{ \bar{N}(\theta), \sigma \}}{
\max \{ N(\theta), \sigma  \}} - 1 \biggr\rvert \leq 
B_* \bigl( N(\theta) \bigr) + \frac{ \tau_*\bigl( N(\theta) \bigr)}{ 
\bigl[ 1 - \tau_* \bigl( N(\theta) \bigr) \bigr]_+ 
\bigl[ 1 - B_* \bigl( N(\theta) \bigr) \bigr]_+}.
\]
where $B_*$ is defined in Proposition \ref{prop2}.
\end{prop}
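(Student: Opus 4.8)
The plan is to compare $\bar N(\theta)$ with $N(\theta)$ by routing through the robust estimator $\widehat N(\theta) = \widetilde N_{\widehat\lambda}(\theta)$ of \eqref{tildeN}: the deviation of $\widehat N(\theta)$ from $N(\theta)$ is already under control via Proposition \ref{prop2} (this contributes the $B_*$ term), while the deviation of $\bar N(\theta)$ from $\widehat N(\theta)$ is small because the influence function $\psi$ of \eqref{defnpsi} is close to the identity near the origin (this will contribute the $\tau_*$ term). Throughout I would work on the event of probability at least $1-2\epsilon$ furnished by Proposition \ref{prop2}, on which, for every $\theta \in \mathbb{S}_d$, one has $\bigl\lvert \max\{N(\theta),\sigma\}/\max\{\widehat N(\theta),\sigma\} - 1\bigr\rvert \le B_*(N(\theta))$, and on which — reading off the quantitative steps in the proof of Proposition \ref{prop2} — the minimizing parameter $\widehat\lambda$ and the value $\widehat N(\theta)$ are calibrated to the direction $\theta$: because the grid \eqref{defLambda} is geometric with ratio $\exp(a)$ in the scale $s_4^2 / \max\{N(\theta),\sigma\}$, the minimizer $\widehat\lambda$ satisfies $\widehat\lambda^2 \le \exp(a/2)\,\lambda_*(N(\theta))^2$, and $\widehat N(\theta)$ and $\bar N(\theta)$ are both comparable to $\max\{N(\theta),\sigma\}$.

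For the deviation of $\bar N(\theta)$ from $\widehat N(\theta)$, the main input is the quasi-linearity of $\psi$: the two-sided bounds displayed just after \eqref{defnpsi} give $\lvert \psi(t) - t\rvert \le \lvert t\rvert^3/3$ whenever $\lvert t\rvert \le 1$. Writing $p_i = \langle\theta,X_i\rangle$ and choosing $\alpha$ so that $\alpha^2 \bar N(\theta) = \widehat\lambda$ — the value that would make $\widehat\alpha(\theta)$ of \eqref{hatalpha} exact if $\psi$ were the identity — one has $\sum_i (\alpha^2 p_i^2 - \widehat\lambda) = 0$, hence, with $t_i = \widehat\lambda(\bar N(\theta)^{-1}p_i^2 - 1)$,
\[
\bigl\lvert r_{\widehat\lambda}(\alpha\theta)\bigr\rvert = \Bigl\lvert \tfrac1n \sum_i \bigl(\psi(t_i) - t_i\bigr)\Bigr\rvert \le \frac{\widehat\lambda^3}{3n}\sum_i \bigl\lvert \bar N(\theta)^{-1}p_i^2 - 1\bigr\rvert^3 \le \frac{\widehat\lambda^3}{3}\Bigl( R^4 \bar N(\theta)^{-2} + 1\Bigr),
\]
using $r_{\widehat\lambda}$ from \eqref{r_lambda}, $p_i^2 \le R^2$ for $\theta\in\mathbb S_d$ with $R$ as in \eqref{eq2.5}, $\sum_i p_i^6 \le R^4\sum_i p_i^2 = n R^4\bar N(\theta)$, and $\lvert a-1\rvert^3 \le a^3 + 1$; the cubic estimate is legitimate because $\lvert t_i\rvert\le 1$ whenever the right-hand side of the proposition is finite, and otherwise there is nothing to prove. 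Since $\alpha\mapsto r_{\widehat\lambda}(\alpha\theta)$ is nonincreasing with zero at $\widehat\alpha(\theta)$, and its rate of decrease near that zero is of order $\alpha\bar N(\theta)$ (as in the analysis behind Proposition \ref{prop0}), the smallness of $r_{\widehat\lambda}(\alpha\theta)$ forces $\widehat\alpha(\theta)$ close to $\alpha$, hence $\widehat N(\theta) = \widehat\lambda/\widehat\alpha(\theta)^2$ close to $\widehat\lambda/\alpha^2 = \bar N(\theta)$, with a relative error of order $\widehat\lambda^2 R^4 \bar N(\theta)^{-2}$. Feeding in the calibration of the first paragraph and the elementary observation (used already in the proof of Proposition \ref{prop1}) that truncating numerator and denominator at $\sigma$ cannot increase a relative error, this yields $\bigl\lvert \max\{\bar N(\theta),\sigma\}/\max\{\widehat N(\theta),\sigma\} - 1\bigr\rvert \le \tau_*(N(\theta))/[1-\tau_*(N(\theta))]_+$, the denominator absorbing the self-referential correction coming from the $\bar N(\theta)$ that occurs both as the quantity of interest and inside the $t_i$.

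It then remains to compose the two estimates through the identity $\max\{\bar N(\theta),\sigma\}/\max\{N(\theta),\sigma\} = \bigl(\max\{\bar N(\theta),\sigma\}/\max\{\widehat N(\theta),\sigma\}\bigr)\cdot\bigl(\max\{\widehat N(\theta),\sigma\}/\max\{N(\theta),\sigma\}\bigr)$ and to keep track of the signs and of the positive parts $[1-\tau_*]_+$ and $[1-B_*]_+$ appearing in the denominators, which produces the announced bound $B_*(N(\theta)) + \tau_*(N(\theta))\big/\bigl([1-\tau_*(N(\theta))]_+[1-B_*(N(\theta))]_+\bigr)$. I expect the main obstacle to be the calibration step of the first paragraph: one must extract from the proof of Proposition \ref{prop2} both that the minimizing $\widehat\lambda$ is comparable to $\lambda_*(N(\theta))$ — this is essential, since the only a priori bound $\widehat\lambda \le \lambda_{K-1}$ read off \eqref{defLambda} does not decay with $n$ and would make the $\tau_*$ term useless — and that $\widehat N(\theta)$ is comparable to $\max\{N(\theta),\sigma\}$, all while carefully handling the $\sigma$-truncations so that the degenerate cases $\bar N(\theta) < \sigma$ or $N(\theta) < \sigma$ are covered. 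The remaining steps — the quasi-linearity estimate for $\psi$, the cubic bound on $\sum_i\lvert\bar N(\theta)^{-1}p_i^2 - 1\rvert^3$, and the final composition — are routine.
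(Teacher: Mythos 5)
Your overall strategy---attribute the $B_*$ term to Proposition \ref{prop2} and the $\tau_*$ term to the cubic remainder of $\psi$, then compose the two relative errors---is the right one, and you correctly flag the calibration of $\lambda$ as the crux. But that is precisely where the gap is. You route the comparison through the empirical minimizer $\widehat{\lambda}$ and assert that $\widehat{\lambda}^2 \leq e^{a/2} \lambda_*(N(\theta))^2$ can be ``read off'' the proof of Proposition \ref{prop2}. It cannot: that proof only compares the \emph{value} of $B_{\lambda,\beta}$ at the minimizer to its value at one good grid point, and says nothing about the \emph{location} of the minimizer; pinning down $\widehat{\lambda}$ would require lower-bounding $B_{\lambda,\beta}\bigl[\widetilde{N}_{\lambda}(\theta)\bigr]$ at all non-optimal grid points, with a random argument, which is genuinely new work. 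The paper sidesteps this entirely: since the statement does not involve $\widehat{N}$, it applies the event of Proposition \ref{prop0}---which is uniform over $(\lambda,\beta) \in \Lambda$---at the \emph{deterministic} grid point $(\lambda_{\widehat{\jmath}}, \beta_{\widehat{\jmath}})$ closest to the oracle $(\lambda_*, \beta_*)$ of equation \eqref{eq1.33}, for which $\lambda_{\widehat{\jmath}} \leq e^{a/4} \lambda_*(N(\theta))$ holds by construction of the grid \eqref{defLambda}; this is exactly where the factor $e^{a/2}$ in $\tau_*$ comes from.

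Two further steps would fail as written. First, your inversion---from $\lvert r_{\widehat{\lambda}}(\alpha\theta) \rvert$ small to $\widehat{\alpha}(\theta) \approx \alpha$---needs a lower bound on the slope of $\alpha \mapsto r_{\lambda}(\alpha\theta)$ near its zero (a map which is nondecreasing, not nonincreasing); that slope involves $\psi'(\alpha^2 p_i^2 - \lambda)$, which vanishes whenever $\lvert \alpha^2 p_i^2 - \lambda \rvert \geq 1$, so ``of order $\alpha \bar{N}(\theta)$'' is not automatic. Likewise your appeal to $\lvert t_i \rvert \leq 1$ is unjustified, since $p_i^2 \leq R^2$ may well exceed $\bar{N}(\theta)$ even when the right-hand side of the proposition is finite. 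The paper avoids both problems by never inverting: setting $g = \mathrm{id} - \psi$, one has the \emph{global} bounds $0 \leq g'(z) \leq z^2$, hence $g(z) \leq z_+^3/3$ for all $z$, and
\[
\frac{\bar{N}(\theta)}{\max\{\widetilde{N}_{\lambda}(\theta), \sigma\}} - 1
\;\leq\; \frac{1}{n\lambda} \sum_{i=1}^n g\Bigl[ \lambda \Bigl( \langle \theta, X_i \rangle^2 \max\{\widetilde{N}_{\lambda}(\theta),\sigma\}^{-1} - 1 \Bigr) \Bigr],
\]
because the discarded term $\frac{1}{n}\sum_i \psi[\cdots]$ is $\leq r_{\lambda}\bigl(\widehat{\alpha}(\theta)\theta\bigr) \leq 0$ by monotonicity of $\psi$. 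Second, this produces $\tau$ evaluated at the random point $\widetilde{N}_{\lambda}(\theta)$ (or, in your version, $\bar{N}(\theta)$), whereas the statement requires $\tau_*\bigl(N(\theta)\bigr)$; the paper converts one into the other by running the argument with the auxiliary deterministic threshold $\sigma' = \Phi_-^{-1}\bigl(\max\{N(\theta),\sigma\}\bigr) \geq \max\{N(\theta), \sigma, \widetilde{N}_{\lambda}(\theta)\}$, a step your proposal does not supply and which cannot be absorbed into the denominator $[1 - \tau_*]_+$ alone.
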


\vskip2mm
\noindent
For the proof we refer to section \ref{sec65pf}.\\[1mm]
Observe that, also in this case, the bound does not depend
explicitly on the dimension $d$ of the ambient space and thus the result can be extended
to any infinite-dimensional Hilbert space.

\vskip 5mm
\noindent
We continue this section by stating assumptions under which it is possible to give a non-random bound for $R,$ defined in equation \eqref{eq2.5}.

\vskip 2mm
\noindent
Assume that, for some exponent $p \geq 1$ and some 
positive constants $\alpha$ and $\eta$,
\[ 
\mathbb E \biggl[ \exp \biggl( \frac{\alpha}{2} \biggl( \frac{\lVert X \rVert^{2/p}}{ \mathbf{Tr}(G)^{1/p}} - 1 - \eta^{2/p} \biggr) \biggr) \biggr] \leq 1.
\] 
In this case, with probability at least $1 - \epsilon$, 
\begin{equation}\label{eqR_notes}
R \leq \mathbf{Tr}(G)^{1/2} \bigg( 1 + \eta^{2/p} + 2 \alpha^{-1} \log \bigl( n / \epsilon \bigr) \biggr)^{p/2},
\end{equation}
where we recall that 
$
\mathbf{Tr}(G) = \mathbb E \bigl[ \lVert X \rVert^2 \bigr].
$\\[1mm] 
To give a point of comparison, in the centered Gaussian case where $X \sim \mathcal{N}(0, G)$ is a Gaussian vector, we have, for any $\alpha \in ]0, \lambda_1^{-1} \mathbf{Tr}(G) [,$
\[
\mathbb E \Biggl[ \exp \Biggl[ \frac{\alpha}{2} \Biggl( \frac{\lVert X \rVert^2}{
\mathbf{Tr}(G)}  + \frac{1}{\alpha} 
\sum_{i=1}^d \log \biggl( 1 - \frac{\alpha \lambda_i}{\mathbf{Tr}(G)} \biggr) 
\Biggr) \Biggr] \Biggr] = 1, 
\]
where $\lambda_1 \geq \cdots \geq \lambda_d$ are the eigenvalues 
of $G.$ 
Therefore, with probability at least $1 - \epsilon$,
\[ 
R \leq \mathbf{Tr}(G)^{1/2} \Biggl( - \frac{1}{\alpha} \sum_{i=1}^d \log 
\biggl( 1 - \frac{\alpha \lambda_i}{\mathbf{Tr}(G)} \biggr) + 
\frac{2 \log (n / \epsilon)}{\alpha} \Biggr)^{1/2} . 
\] 
We can then consider the optimal value of $\alpha$ in the right-hand side 
of the previous equation, to establish that with probability at least $1 - 
\epsilon$, 
\begin{align*}
R & \leq \inf_{\alpha \in ]0, \Tr(G) / \lambda_1 [} 
\Tr(G)^{1/2} \Biggl( - \frac{1}{\alpha} \sum_{i=1}^d \log \biggl( 
1 - \frac{\alpha \lambda_i}{\Tr(G)} \biggr) + \frac{2 \log (n / \epsilon)}{
\alpha} \Biggr)^{1/2}  \\ 
& \leq 
\inf_{\alpha \in ]0, \Tr(G) / \lambda_1 [} 
\Tr(G)^{1/2} \Biggl( \sum_{i=1}^d 
\frac{\lambda_i }{\Tr(G) - \alpha \lambda_i } 
+ \frac{2 \log (n / \epsilon)}{
\alpha} \Biggr)^{1/2}
\\ & \leq 
\inf_{\alpha \in ]0, \Tr(G) / \lambda_1 [} 
\Biggl( 
\frac{\Tr(G)^2 }{\Tr(G) - \alpha \lambda_1} 
+ \frac{2 \Tr(G) \log (n / \epsilon)}{
\alpha} \Biggr)^{1/2}
\\ & \leq 
\inf_{\alpha \in ]0, \Tr(G) / \lambda_1 [} 
\Biggl( 
\Tr(G) + \frac{\alpha \lambda_1 \Tr(G)}{\Tr(G)  - \alpha \lambda_1} + 
\frac{2 \log(n / \epsilon) \lambda_1 ( \Tr(G)  - \alpha 
\lambda_1)}{\alpha \lambda_1 } \\ & \qquad \qquad  
+ 2 \lambda_1 \log(n / \epsilon)    
\Biggr)^{1/2} \\ 
& = \Bigl( \Tr(G) + 2 \sqrt{2 \log(n / \epsilon) \lambda_1 \Tr(G)} + 
2 \lambda_1 \log(n / \epsilon) \Bigr)^{1/2} = \sqrt{\Tr(G)} + \sqrt{
2 \lambda_1 \log(n / \epsilon)}.   
\end{align*}

\vskip 3mm
\noindent
In order to replace hypothesis \eqref{eqR_notes} by some polynomial assumptions it is convenient to replace $R$ with 
\[
\widetilde{R} = \bigg( \frac{1}{n} \sum_{i=1}^n \lVert X_i \rVert^6 \biggr)^{1/6}.
\]
Indeed, by the Bienaym\'e Chebyshev inequality, we get that,
with probability at least $1 - \epsilon$, 
\[ 
\widetilde{R} \leq \Biggl( \mathbb E \bigl[ \lVert X \rVert^6 \bigr]
+ \biggl( \frac{ \mathbb E \bigl[ \lVert X \rVert^{12} \bigr]}{n \epsilon}
\biggr)^{1/2} \Biggr)^{1/6}
\leq \Bigl( 1 + (n \epsilon)^{-1/2} \Bigr)^{1/6} \,  \mathbb E 
\Bigl[ \lVert X \rVert^{12} \Bigr]^{1/12}
\] 
and hence, with probability at least $1 - n^{-1}$, 
\[ 
\widetilde{R} \leq 2^{1/6} \, \mathbb E \Bigl[ \lVert X \rVert^{12} \Bigr]^{1/12}.
\] 

\vskip 2mm
\noindent
We can prove an analogue of Proposition 
\ref{prop2.8eg}, where $\widetilde R$ plays the role 
of $R$.  
\begin{prop}\label{prop52empg}
Let $0 < \sigma \leq \mathbb{E} \bigl( \lVert X \rVert^4 \bigr)^{1/2}$ and let us put 
\[
\zeta_*(t) = \frac{\lambda_*(t)^2 \  \exp(a/2)\ \widetilde{R}^6}{3 \max \{ 
t, \sigma \}^3},  \quad t \in \mathbb R_+. 
\]
With probability at least $1 - 2 \epsilon$, for any 
$\theta \in \mathbb{S}_d$, 
\begin{multline*}
\biggl\lvert \, \frac{\max \{ \bar{N}(\theta), \sigma \}}{\max \{
N(\theta), \sigma \}} - 1 \biggr\rvert \leq 
B_* \bigl( N(\theta) \bigr) + \frac{ \zeta_* \bigl( N(\theta) \bigr)}{
\bigl[ 1 - 
B_* \bigl( N(\theta) \bigr) \bigr]_+} 
\\ \leq \mathcal{O} \Biggl( \sqrt{ \frac{\kappa}{n} \biggl( 
\frac{\Tr(G)}{\max \{ N(\theta), \sigma \}} 
+ \log \bigl( \log(n) / \epsilon \bigr) \biggr)}  \; \Biggr) 
\\ + \mathcal{O} \Biggl( \frac{\mathds{E} \bigl( \lVert X \rVert^{12} 
\bigr)^{1/2}}{n \kappa \bigl( 1 + (n \epsilon)^{-1/2} 
\bigr) \bigl( \max \{ N(\theta), \sigma \} \bigr)^3}
\biggl( \frac{\Tr(G)}{\max \{ N(\theta), \sigma\}} + \log \bigl( 
\log(n) / \epsilon \bigr) \biggr) \Biggr). 
\end{multline*}
\end{prop}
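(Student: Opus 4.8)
\noindent
The plan is to factor the relative error of $\bar N$ through the robust estimator $\widehat N$, writing
\[
\frac{\bar N(\theta)}{N(\theta)} \;=\; \frac{\bar N(\theta)}{\widehat N(\theta)}\cdot\frac{\widehat N(\theta)}{N(\theta)},
\]
controlling the second factor by Proposition \ref{prop2} (which produces the $B_*$ term) and the first by a new, essentially deterministic, estimate (which produces the $\zeta_*$ term). First I would place myself once and for all on the event of probability at least $1-2\epsilon$ supplied by Proposition \ref{prop2} (equivalently Proposition \ref{prop0}); on it, for every $\theta\in\mathbb{S}_d$ one has $\widehat\alpha(\theta)<+\infty$ and the quantities $\widehat\alpha(\theta)$, $\widetilde N_{\widehat\lambda}(\theta)$ are two-sidedly comparable to $N(\theta)$ up to the regularization by $\sigma$. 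Since $\widetilde R$ is a deterministic function of the sample, all subsequent estimates take place on this single event and the probability is unchanged; only the $\mathcal{O}$-form requires one extra, harmless, replacement of $\widetilde R$ by a population quantity at the very end.

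\noindent
The heart of the matter is the bound on $\bigl\lvert\bar N(\theta)/\widehat N(\theta)-1\bigr\rvert$. Recall $\widehat N(\theta)=\widetilde N_{\widehat\lambda}(\theta)=\widehat\lambda/\widehat\alpha(\theta)^2$ and that, by definition of $\widehat\alpha$ and continuity, $\frac1n\sum_{i=1}^n\psi\bigl(\widehat\alpha(\theta)^2\langle\theta,X_i\rangle^2-\widehat\lambda\bigr)=0$. Writing $t_i=\widehat\alpha(\theta)^2\langle\theta,X_i\rangle^2-\widehat\lambda$, this rearranges to
\[
\widehat\alpha(\theta)^2\,\bar N(\theta)-\widehat\lambda \;=\; \frac1n\sum_{i=1}^n\bigl(t_i-\psi(t_i)\bigr).
\]
The decisive analytic fact about the influence function $\psi$ of \eqref{defnpsi}, and the reason for its particular shape, is that its quadratic Taylor coefficient at $0$ vanishes, so that $\lvert\psi(t)-t\rvert\le c'\lvert t\rvert^3$ for a numerical constant $c'$ (this is the origin of the constant $c$ of \eqref{defc} entering $\zeta_*$). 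Combining this with $\lvert t_i\rvert^3\le 4\bigl(\widehat\alpha(\theta)^6\langle\theta,X_i\rangle^6+\widehat\lambda^3\bigr)$, with $\langle\theta,X_i\rangle^2\le\lVert X_i\rVert^2$ (since $\lVert\theta\rVert=1$), with the power-mean inequality $\frac1n\sum_i\langle\theta,X_i\rangle^6\le\frac1n\sum_i\lVert X_i\rVert^6=\widetilde R^6$, and, after a short self-consistency step using $\widehat\lambda=\widehat\alpha(\theta)^2\widetilde N_{\widehat\lambda}(\theta)$ together with the deterministic bound $\bar N(\theta)\le\frac1n\sum_i\lVert X_i\rVert^2\le\widetilde R^2$, absorbing the term $\widehat\lambda^3$ into $\widehat\alpha(\theta)^6\widetilde R^6$, I would obtain $\bigl\lvert\widehat\alpha(\theta)^2\bar N(\theta)-\widehat\lambda\bigr\rvert\le C\,\widehat\alpha(\theta)^6\widetilde R^6$. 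Dividing by $\widehat\lambda=\widehat\alpha(\theta)^2\widehat N(\theta)$ and using $\widehat\alpha(\theta)^2\simeq\widehat\lambda/N(\theta)$ and $\widehat N(\theta)\simeq N(\theta)$ (Proposition \ref{prop2}), the relative error $\bigl\lvert\bar N(\theta)/\widehat N(\theta)-1\bigr\rvert$ comes out of order $\widehat\lambda^2\widetilde R^6/N(\theta)^3$; since the optimizing grid point obeys $\widehat\lambda^2\le\exp(a/2)\,\lambda_*(N(\theta))^2$ up to constants (by the geometric spacing of $\Lambda$) and the $\max\{\,\cdot\,,\sigma\}$ regularization handles small $N(\theta)$ exactly as in the proof of Proposition \ref{prop2}, this is $\zeta_*(N(\theta))$ up to the stated numerical factor.

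\noindent
It then remains to compose the two multiplicative bounds $\bigl\lvert\bar N(\theta)/\widehat N(\theta)-1\bigr\rvert\le\zeta_*(N(\theta))$ and $\bigl\lvert\max\{N(\theta),\sigma\}/\max\{\widehat N(\theta),\sigma\}-1\bigr\rvert\le B_*(N(\theta))$, passing through the regularized quantities $\max\{\,\cdot\,,\sigma\}$; this is the same elementary bookkeeping as in the proof of Proposition \ref{prop1}, and gives the first displayed inequality. For the $\mathcal{O}$-form, the $B_*$-contribution is precisely the $\mathcal{O}$-expression recorded right after Proposition \ref{prop2}; for the $\zeta_*$-contribution one inserts $\lambda_*(t)^2=\mathcal{O}\bigl((n\kappa)^{-1}(\mathbf{Tr}(G)/\max\{t,\sigma\}+\log(K/\epsilon))\bigr)$, uses $\log K=\mathcal{O}(\log\log n)$ and the fact that $\bigl[1-B_*(N(\theta))\bigr]_+$ is bounded below by an absolute constant whenever the bound is informative, and finally replaces $\widetilde R^6$ by its high-probability upper bound in terms of $\mathbb{E}\bigl(\lVert X\rVert^{12}\bigr)^{1/2}$ established in the displays just before the statement.

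\noindent
The step I expect to be the real obstacle is the deterministic estimate of the second paragraph: making the cubic control of $\psi$ quantitatively correct with an admissible constant, and, above all, tracking how $\widehat\alpha(\theta)$, $\widetilde N_{\widehat\lambda}(\theta)$ and the optimizing index $\widehat\lambda=\widehat\lambda(\theta)$ relate to $N(\theta)$ and to $\lambda_*(N(\theta))$. This is exactly the bookkeeping already carried out in the proofs of Propositions \ref{prop2} and \ref{prop2.8eg} (of which the present statement is the $\widetilde R$-analogue), and the whole point is to redo it with $\widetilde R$ in place of $R$, the crude bound $\langle\theta,X_i\rangle^2\le R^2$ being traded for the power-mean inequalities $\frac1n\sum_i\lVert X_i\rVert^2\le\bigl(\frac1n\sum_i\lVert X_i\rVert^6\bigr)^{1/3}$ and $\frac1n\sum_i\langle\theta,X_i\rangle^6\le\widetilde R^6$; everything else is routine.
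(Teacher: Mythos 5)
Your proposal follows essentially the same route as the paper: both rest on the root equation $\frac1n\sum_i\psi\bigl(\widehat\alpha(\theta)^2\langle\theta,X_i\rangle^2-\lambda\bigr)=0$, the cubic control of $g(z)=z-\psi(z)$, the replacement of the crude bound $\langle\theta,X_i\rangle^2\le R^2$ by the power-mean estimate $\frac1n\sum_i\langle\theta,X_i\rangle^6\le\widetilde R^6$, and the composition with the $\Phi_\pm$ confidence bounds of Proposition \ref{prop0} exactly as in Steps 2--4 of the proof of Proposition \ref{prop2.8eg}. The only substantive differences are cosmetic and cost you constants rather than correctness: the paper uses the one-sided bounds $g(z)\le z_+^3/3$ and $(z^2-1)_+\le z^2$ (treating the lower deviation separately via $g(\lambda)\le\lambda^3/3$), which preserves the exact factor $1/3$ in $\zeta_*$, whereas your symmetric split $\lvert t_i\rvert^3\le 4\bigl(\widehat\alpha^6\langle\theta,X_i\rangle^6+\widehat\lambda^{\,3}\bigr)$ followed by absorption of $\widehat\lambda^{\,3}$ recovers the first displayed inequality only up to a larger numerical constant, and the constant $c$ of \eqref{defc} enters $\zeta_*$ only through $\lambda_*$ (via the Gaussian-smoothing step), not through the cubic remainder of $\psi$.
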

\vskip2mm
\noindent
For the proof we refer to section \ref{pf111}.

\section{Generalization}\label{sec3}
In this section we come back to the finite-dimensional framework and we 
consider the problem of estimating the expectation of a symmetric random matrix. 
We will use these results to estimate the covariance matrix in the case of unknown expectation. 

\subsection{Symmetric random matrix}

Let $A\in M_d(\mathbb R)$ be a symmetric random matrix of size $d.$ As already observed for the Gram matrix, the expectation of $A$ can be recovered via the polarization identity from the quadratic form
\[
N_A(\theta)=\theta^{\top} \B{E} ( A ) \, \theta, \qquad \theta \in \mathbb R^d,
\]
where the expectation is taken with respect to the unknown probability distribution of $A$ on the space of symmetric matrices of size $d.$
Observe that, if we decompose $A$ in its positive and negative parts
\[
A= A_+-A_-,
\]
where $A_+$ and $A_-$ are defined by keeping respectively the 
positive and negative parts of the eigenvalues of $A$, 
in the framework of functional calculus on symmetric matrices, 
the quadratic form $N_A$ rewrites as
\begin{align*}
N_A(\theta) & = \mathbb E \left[ \theta^{\top}A_+\theta\right] - \mathbb E \left[ \theta^{\top}A_-\theta\right] = N_{A_+}(\theta) - N_{A_-}(\theta). 
\end{align*}
Thus in the following we will consider the case of a symmetric positive semi-definite random matrix of size $d.$\\[2mm]
From now on let $A\in M_d(\mathbb R)$ be a symmetric positive semi-definite random matrix of size $d$ 
and let $\mathrm P$ be a probability distribution on the space of symmetric positive semi-definite random matrices of size $d.$
Our goal is to estimate 
\[
N(\theta) = \mathbb E \left[ \theta^{\top}A\theta\right], \quad \theta \in \mathbb R^d,
\]
from an i.i.d. sample $A_1, \dots, A_n \in M_d(\mathbb R)$ of symmetric positive semi-definite matrices drawn according to $\mathrm P.$
We observe that the quadratic form $N(\theta)$ rewrites as
\[
N(\theta)=\mathbb E\left[ \| A^{1/2} \theta\|^2 \right]
\]
where $A^{1/2}$ denotes the square root of $A.$ \\[1mm]
The construction of the (robust) estimator $\widehat N(\theta)$ follows the one already done in the case of the Gram matrix with the necessary adjustments. 
For any $\lambda>0$ and for any $\theta \in \mathbb R^d,$ we consider the empirical criterion 
\[
r_{\lambda} (\theta)  = \frac{1}{n} \sum_{i=1}^n \psi \left( \| A_i^{1/2} \theta\|^2-\lambda\right),
\]
where the influence function $\psi$ is defined as in equation \eqref{defnpsi}, 
and we perturb the parameter $\theta$ with the Gaussian perturbation $\pi_{\theta} \sim \mathcal N(\theta, \beta^{-1}\mathrm I)$ 
of mean $\theta$ and covariance matrix $ \beta^{-1}\mathrm I,$ 
where $\beta>0$ is a free real parameter. 
We consider the family of estimators
\[
\widetilde N(\theta) = \frac{\lambda}{ \widehat\alpha(\theta)^2}
\]
where $\widehat \alpha(\theta) = \sup\{ \alpha\in\mathbb R_+ \ | \ r_{\lambda}(\alpha\theta)\leq0\}$. 
Let us put
\begin{equation}\label{skappa_A}
s_4= \mathbb E[\|A\|_{\infty}^2]^{1/4} \quad \text{and} \quad 
\kappa = \sup_{\substack{\theta\in \mathbb R^d\\ \mathbb E [ \lVert A^{1/2} \theta \rVert^2 ]>0}} \frac{\displaystyle\mathbb E  \bigl[ \lVert A^{1/2} \theta \rVert^4 \bigr]}{ \displaystyle\mathbb E  \bigl[ \lVert A^{1/2} \theta \rVert^2 \bigr]^{2}},
\end{equation}
where $\lVert A \rVert_{\infty}$ is the operator norm, that is in this context 
of symmetric positive semi-definite matrices equal to the largest 
eigenvalue of $A$. 
The finite set $\Lambda \subset \bigl( \mathbb{R}_+ \setminus \{ 0 \} \bigr)^2$ of possible values of the couple of parameters $(\lambda, \beta)$ is defined as
\[
\Lambda = \bigl\{ (\lambda_j, \beta_j) \ | \ 0 \leq j < K \bigr\}, 
\]
where 
\begin{equation}\label{K-cov}
K = 1 + \left\lceil a^{-1} \log \biggl( \frac{n}{72(2+c) \kappa^{1/2}} \biggr) \right\rceil,
\end{equation}
for some real positive parameter $a$ to be chosen later on, and
\begin{align*}
\lambda_j & = \sqrt{ \frac{2}{(\kappa -1) n} \biggl( 
\frac{(2 + 3c) \mathbb{E} \bigl( \mathbf{Tr}(A^2) \bigr)}{4(2+c) \kappa^{1/2} 
\mathbb{E} \bigl( \lVert A \rVert_{\infty}^2 \bigr)} \exp(ja) + 
\log \bigl( K / \epsilon \bigr)}, \\
\beta_j & = \sqrt{ 2(2+c) \kappa^{1/2} \mathbb{E} \bigl( \lVert A \rVert_{\infty}^2
\bigr) \exp \bigl[ - (j-1/2))a \bigr]}.
\end{align*}
Note that in the case of the Gram matrix, the picture is simplified by 
the fact that 
$\lVert X X^{\top} \rVert_{\infty}^2 = \Tr \bigl[(X X^{\top})^2 \bigr]$, 
whereas here we have to take into account the fact that the operator norm 
and the Frobenius norm of $A$ are different when the rank of $A$ 
is larger than one.\\
We recall that $c$ is defined in equation \eqref{defc} as $
\displaystyle c= \frac{15}{8 \log(2)(\sqrt{2}-1)} \exp \left( \frac{1 + 2 \sqrt{2}}{2} \right)$.\\[1mm]
According to equation \eqref{defBlb}, let 
\[
B_{\lambda, \beta}(t) = \begin{cases} \displaystyle\frac{\gamma + \lambda \delta / \max \{ t , \sigma \} }{ 1 - \mu - \gamma - 2 \lambda \delta / \max \{ t,  \sigma \}}, & (\lambda, \beta, t) \in \Gamma,  \\
+ \infty, & \text{ otherwise,} 
\end{cases}
\]
and put $\displaystyle (\widehat{\lambda}, \widehat{\beta}) = \arg \min_{(\lambda, \beta)  \in \Lambda} B_{\lambda, \beta} \Bigl[ \lVert \theta \rVert^{-2} \widetilde{N}_{\lambda} (\theta) \Bigr]$.
Define the estimator $\widehat N$ as
\begin{equation}\label{hatN_A}
\widehat{N} (\theta) = \widetilde{N}_{\widehat{\lambda}}(\theta).
\end{equation}

\begin{prop}\label{props1}
Let $\sigma \in]0,s_4^2]$
be some energy level. With probability at least $1 - 2 \epsilon$, 
for any $\theta \in \mathbb{R}^d$, 
\[ 
\left\lvert \, \frac{\max \{ N(\theta), \sigma \lVert \theta \rVert^2 \}}{\max \{ \widehat{N}(\theta), \sigma \lVert \theta \rVert^2 \}} - 1 \, \right\rvert \leq B_* \Bigl[ \lVert \theta \rVert^{-2} N(\theta) \Bigr],
\] 
where $B_*$ is defined as 
\[ 
B_*(t) = \begin{cases} 
\displaystyle\frac{n^{-1/2} \zeta_*(\max 
\{ t, \sigma \} )}{1 - 4 \, n^{-1/2} \zeta_*( 
\max \{ t, \sigma \} )}, &  
\bigl[ 6 + (\kappa-1)^{-1} \bigr] \zeta_*( 
\max \{ t, \sigma \} ) \leq \sqrt{n}, \\ 
+ \infty, & \text{ otherwise,}
\end{cases} 
\] 
and
\begin{multline*}
\zeta_* (t) = \sqrt{ 2.032 (\kappa-1) \Biggl( \frac{0.73 \mathbb E [\mathbf{Tr}(A^2)]}{ \kappa^{1/2} \mathbb E  \bigl[ \lVert A \rVert_{\infty}^2\bigr]^{1/2} t} 
+ \log(K) + \log(\epsilon^{-1})  \Biggr)} \\ + \sqrt{ \frac{98.5 \kappa^{1/2} \mathbb E \bigl[ \lVert A \rVert_{\infty}^2 \bigr]^{1/2}}{ t}}.
\end{multline*}
\end{prop}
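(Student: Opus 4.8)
The plan is to repeat, with the obvious substitutions, the chain of arguments that produced Propositions \ref{prop0}, \ref{prop1} and \ref{prop2}: everywhere $\langle\theta,X\rangle^2$ is replaced by $\lVert A^{1/2}\theta\rVert^2 = \langle\theta, A\theta\rangle$, and $s_4$, $\kappa$ are the quantities defined in equation \eqref{skappa_A}. I would first establish the analogue of Proposition \ref{prop0}, namely a PAC-Bayesian confidence region $\Phi_{\theta,-}(\lambda/\widehat\alpha(\theta)^2) \le N(\theta) \le \Phi_{\theta,+}^{-1}(\lambda/\widehat\alpha(\theta)^2)$ attached to the Gaussian perturbations $\pi_\theta \sim \mathcal{N}(\theta, \beta^{-1}\mathrm{I})$. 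As in the Gram case, the inequalities $\exp(\pm\psi(x)) \le 1 \pm x + x^2/2$ for the influence function \eqref{defnpsi}, combined with a Chernoff/Legendre-transform step applied uniformly over $\theta$ and over the finite grid $\Lambda$ (costing the entropy term $\log(\lvert\Lambda\rvert/\epsilon)$ together with the Kullback--Leibler term $\tfrac\beta2\lVert\theta\rVert^2$ between $\pi_\theta$ and the centred prior), reduce the whole matter to controlling the first two moments of $\lVert A^{1/2}\theta'\rVert^2 - \lambda$ under $\mathrm{P}$ and then under $\pi_\theta$; this gives non-decreasing functions $\Phi_{\theta,\pm}$ of exactly the form displayed in Proposition \ref{prop0}, and Proposition \ref{prop1} --- whose proof only uses the monotonicity of $\Phi_\pm$ and $B_{\lambda,\beta}$ --- then carries over verbatim.

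The single genuinely new computation is the Gaussian integration over the perturbation. Writing $\theta' = \theta + W$ with $W \sim \mathcal{N}(0,\beta^{-1}\mathrm{I})$ we have
\[
\lVert A^{1/2}\theta'\rVert^2 = \lVert A^{1/2}\theta\rVert^2 + 2\langle A\theta, W\rangle + W^{\top} A W,
\]
and whereas in the Gram case $A = XX^{\top}$ has rank one --- so $W^{\top}AW = \langle X, W\rangle^2$ is a one-dimensional $\chi^2$ and $\lVert XX^{\top}\rVert_\infty^2 = \mathbf{Tr}\bigl[(XX^{\top})^2\bigr] = \lVert X\rVert^4$ all coincide --- here $W^{\top}AW$ is a genuine quadratic form. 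Integrating it against $\pi_\theta$: the linear term contributes a variance $\sim \beta^{-1}\lVert A\theta\rVert^2 \le \beta^{-1}\lVert A\rVert_\infty\langle\theta, A\theta\rangle$, which after taking $\mathbb{E}_{\mathrm{P}}$ and a Cauchy--Schwarz step involving $\kappa$ brings in the quantity $\mathbb{E}[\lVert A\rVert_\infty^2]^{1/2} = s_4^2$; the quadratic term contributes a variance $\sim \beta^{-2}\mathbf{Tr}(A^2)$, which brings in $\mathbb{E}[\mathbf{Tr}(A^2)]$; all higher-order contributions are dominated through $\mathbf{Tr}(A^k) \le \lVert A\rVert_\infty^{k-2}\mathbf{Tr}(A^2)$. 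Consequently the constants of equation \eqref{c_notat} acquire their matrix form: $\mu$ picks up the term $(2+c)\kappa^{1/2}s_4^2/\beta$ with $s_4^2 = \mathbb{E}[\lVert A\rVert_\infty^2]^{1/2}$, $\gamma$ picks up that term together with $(2+3c)\mathbb{E}[\mathbf{Tr}(A^2)]/(2\beta^2\lambda)$, while $\xi$ and $\delta = \beta/(2n\lambda)$ are unchanged; in the rank-one situation $\mathbf{Tr}(A^2) = \lVert A\rVert_\infty^2$ and everything collapses back to the Gram expressions. This is the only point where the fact that the operator norm and the Frobenius norm of $A$ differ plays a role.

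Finally I would optimise $B_{\lambda,\beta}\bigl[\lVert\theta\rVert^{-2}\widetilde N_\lambda(\theta)\bigr]$ over the geometric grid $\Lambda = \{(\lambda_j,\beta_j): 0\le j<K\}$ of equation \eqref{K-cov}, exactly as in the proof of Proposition \ref{prop2}: the $\lambda_j$ now carry the effective-rank ratio $\mathbb{E}[\mathbf{Tr}(A^2)]/\mathbb{E}[\lVert A\rVert_\infty^2]$, $\beta_j$ is tuned to balance the term $\kappa^{1/2}s_4^2/\beta$ against $\beta/(2nt)$, and the geometric spacing $e^{ja}$ ensures that for every value of $\max\{\lVert\theta\rVert^{-2}N(\theta), \sigma\}$ some index $j$ nearly realises the minimum over $(\lambda,\beta)$ of the trade-off between the contributions $\lambda(\kappa-1)$, $\mathbb{E}[\mathbf{Tr}(A^2)]/(\beta^2\lambda)$ and $\log(K/\epsilon)/(n\lambda)$ to $\gamma$; carrying out this minimisation (taking $a = 1/2$ for the stated numerical constants, as in Proposition \ref{prop2}) yields the two square-root pieces of $\zeta_*$, the hypothesis $\bigl[6+(\kappa-1)^{-1}\bigr]\zeta_*(\max\{t,\sigma\}) \le \sqrt n$ disposes of the $\Gamma$-constraint and the $+\infty$ branch of $B_{\lambda,\beta}$, and replacing $\widetilde N_\lambda(\theta)$ by $N(\theta)$ inside the argument of $B_*$ uses the monotonicity of $B_*$ together with the two-sided estimate of Proposition \ref{prop1}. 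I expect the main obstacle to be precisely the Gaussian moment computation of the previous paragraph: one must keep the operator-norm and the Frobenius-norm contributions of $A$ separate at every stage so that the ambient dimension $d$ never enters the constants and is correctly replaced by the effective rank $\mathbb{E}[\mathbf{Tr}(A^2)]/\bigl(\kappa^{1/2}\mathbb{E}[\lVert A\rVert_\infty^2]^{1/2}\max\{t,\sigma\}\bigr)$, and one must check that the grid $\Lambda$ indeed covers the whole relevant range of energies; the remainder is a faithful transcription of the Gram-matrix case.
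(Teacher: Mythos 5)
Your proposal is correct and follows essentially the same route as the paper: the paper's proof of Proposition \ref{props1} likewise reduces everything to the Gram-matrix argument, with the one genuinely new ingredient being exactly the Gaussian moment computation you isolate, namely $\mathbf{Var}\bigl[\lVert A^{1/2}\theta'\rVert^2\,\mathrm{d}\pi_\theta(\theta')\bigr] = \tfrac{4}{\beta}\lVert A\theta\rVert^2 + \tfrac{2}{\beta^2}\mathbf{Tr}(A^2)$ together with the bound $\mathbb{E}\bigl[\lVert A\theta\rVert^2\bigr] \le \mathbb{E}\bigl[\lVert A\rVert_\infty^2\bigr]^{1/2}\kappa^{1/2}N(\theta)$, which is precisely how the operator-norm and Frobenius-norm contributions are kept separate. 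The grid optimisation and the passage from $\widetilde N_\lambda$ to $N$ are then carried over verbatim from Proposition \ref{prop2}, as you indicate.
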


\vskip2mm
\noindent
As already discuss at the end of Proposition \ref{prop2}, if $a=1/2$ and $n\leq 10^{20}$, we can bound the logarithmic factor $\log(K)$ with the (small) constant $4.35$. 

\vskip2mm
\noindent
For the proof we refer to section \ref{pfsym1}.

\vskip2mm
\noindent
Remark that to obtain the above result we have used the fact that
\[
\mathbb E  \bigl[ \lVert A \theta \rVert^2 \bigr] \leq \mathbb E  \bigl[ \lVert A \rVert_{\infty}^2 \bigr]^{1/2} \kappa^{1/2} N(\theta). 
\]
However, if we use any upper bound of the form
\[
\mathbb E  \bigl[ \lVert A \theta \rVert^2 \bigr] \leq f(\mathbb E[A]) N(\theta)
\]
Propositon \ref{props1} holds replacing $\mathbb E  \bigl[ \lVert A \rVert_{\infty}^2 \bigr]^{1/2} \kappa^{1/2}$ with $f(\mathbb E[A])$ in the definition of $\zeta_*$.
Similarly we can replace $\mathbb E [\mathbf{Tr}(A^2)]$ by an upper bound.\\[1mm]
We observe in particular that
\[
\frac{\mathbb E  \bigl[ \lVert A \rVert_{\infty}^2 \bigr]^{1/2}}{\kappa^{1/2}} 
\leq \frac{ \mathbb E  \bigl[ \mathbf{Tr}(A^2) \bigr]}{\kappa^{1/2} \mathbb E  \bigl[
\lVert A \rVert_{\infty}^2 \bigr]^{1/2}} \leq 
\mathbb E  \bigl[ \mathbf{Tr}(A) \bigr] = \mathbf{Tr} \bigl[ \mathbb{E} 
( A ) \bigr].
\]
Indeed, to see the first inequality it is sufficient to observe that $\lVert A \rVert_{\infty}^2 \leq \mathbf{Tr}(A^2)$. 
Moreover we have that
\[ 
\mathbb E  \bigl[ \mathbf{Tr}(A^2) \bigr] \leq \mathbb E  \bigl[ \lVert A \rVert_{\infty} 
\mathbf{Tr}(A) \bigr] \leq \mathbb E  \bigl[ \lVert A \rVert_{\infty}^2 \bigr]^{1/2} 
\mathbb E  \bigl[ \mathbf{Tr}(A)^2 \bigr]^{1/2}, 
\] 
and, denoting by $\{ e_i\}_{i=1}^d$ an orthonormal basis of $\mathbb R^d,$
\begin{multline*}
\mathbb E  \bigl[ \mathbf{Tr}(A)^2 \bigr] = 
\sum_{ \substack{
1 \leq i \leq d, \\ 
1 \leq j \leq d
}} \mathbb E  \Bigl[ \lVert A^{1/2} e_i \rVert^2 \lVert A^{1/2} e_j \rVert^2 \Bigr] 
\\ \leq 
\sum_{ \substack{
1 \leq i \leq d, \\ 
1 \leq j \leq d
}} \mathbb E  \Bigl[ \lVert A^{1/2} e_i  \rVert^4 \Bigr]^{1/2} 
\mathbb E  \Bigl[ \lVert A^{1/2} e_j \rVert^4 \Bigr]^{1/2}  
\\ \leq \kappa \sum_{ \substack{
1 \leq i \leq d, \\ 
1 \leq j \leq d
}} \mathbb E  \Bigl[ \lVert A^{1/2} e_i \rVert^2 \Bigr] 
\mathbb E  \Bigl[ \lVert A^{1/2} e_j \rVert^2 \Bigr]  = \kappa \mathbb E  \bigl[
\mathbf{Tr}(A) \bigr]^2. 
\end{multline*}
This implies that we can bound $\zeta_*$ in Proposition \ref{props1} by
\begin{equation}\label{Azeta*}
\zeta_* (t) = \sqrt{ 2.032 \, (\kappa-1) \Biggl( \frac{0.73\ \mathbb E [\mathbf{Tr}(A)]}{ t} 
+ \log(K) + \log(\epsilon^{-1}) \Biggr)} + \sqrt{ \frac{98.5\ \kappa \ \mathbb E [\mathbf{Tr}(A)]}{ t}}.
\end{equation}

\vskip2mm
\noindent
We conclude this section observing that, since the entropy terms are dominated by $\mathbb E [\mathbf{Tr}(A)]$,
the result can be generalized to the case where $A$ is a random symmetric positive semi-definite operator 
in a infinite-dimensional Hilbert space with the only additional assumption that $\mathbb E [\mathbf{Tr}(A)] <+\infty.$ 

\subsection{Covariance matrix}\label{cov_matrix_sec}
Let $X\in \mathbb R^d$ be a random vector distributed according to the unknown probability measure $\mathrm P\in \mathcal M_+^1(\mathbb R^d).$ 
The covariance matrix of $X$ is defined as 
\[
\Sigma = \mathbb E \left[ (X- \mathbb E[X]) (X- \mathbb E[X])^{\top} \right]
\]
and our goal is to estimate, uniformly in $\theta,$ the  quadratic form 
\[
N(\theta) =\theta^{\top} \Sigma \, \theta = \mathbb E \left[ \langle \theta, X-\mathbb E\left[ X \right]  \rangle^2 \right], \quad \theta \in \mathbb R^d,
\]
from an i.i.d. sample $X_1,\dots, X_n \in \mathbb R^d$ drawn according to $\mathrm P.$ 
We cannot  use the results we have proved for the Gram matrix, 
since  the quadratic form $N$ depends on the unknown quantity $\mathbb E[X]$.
However we can find a workaround, using the results 
of the previous section about symmetric random matrices. 
Indeed, we do not need to estimate $\mathbb E[X]$ in order to estimate $N$
but it is sufficient to observe that the quadratic form $N$ can be written as
\[
N(\theta) = \frac{1}{2}\ \mathbb E \left[\langle \theta , X-X'\rangle^2  \right]
\]
where $X'$ is an independent copy of $X.$ 
More generally, given $q \in \mathbb N,$ we may consider $q$ independent copies $X^{(1)}, \dots, X^{(q)}$ of $X$ and 
the random matrix 
\[ 
A = \frac{1}{q(q-1)} \sum_{1 \leq j < k \leq q} 
\bigl( X^{(j)} - X^{(k)}\bigr) 
\bigl( X^{(j)} - X^{(k)}\bigr)^{\top}
\] 
so that 
\[
N(\theta) = 
\frac{1}{q(q-1)}\ \mathbb E \left[ \sum_{1\leq j < k \leq q} 
\langle \theta , X^{(j)}-X^{(k)} \rangle^2  \right] = 
\mathbb E  \bigl[ \, \theta^{\top} \!\! A \, \theta \, \bigr].
\]
We will discuss later how to choose $q$. 
In the following we use a robust block estimate which consists in 
dividing the sample $X_1, \dots, X_n$ in blocks of size $q$ and then in 
considering the original sample
as a "new" sample of $\lfloor n / q \rfloor$ symmetric matrices
$A_1, \dots, A_{\lfloor n / q \rfloor}$ 
(of independent copies of $A$) 
defined as 
\[
A_i = \frac{1}{q(q-1)} \sum_{(i-1) q < j < k \leq i q} 
(X_i - X_j) (X_i - X_j)^{\top}
\]
that thus correspond to the empirical covariance estimates on each block.
We can use the results of the previous section to define a robust 
estimator of $N(\theta)$. 
\\[1mm]
Let us introduce 
\begin{align*}
\kappa' & = \sup_{\substack{\theta \in \mathbb{R}^d, \\ 
\mathbb E (\lVert A^{1/2} \theta \rVert^2) > 0
}} \frac{\mathbb E  \bigl[ \lVert A^{1/2} \theta \rVert^4 \bigr]}{ 
\mathbb E  \bigl[ \lVert A^{1/2} \theta \rVert^2 \bigr]^2}, \\ 
\text{and } \kappa & = \sup_{\substack {\theta \in \mathbb{R}^d\\\mathbb E  \bigl[
\langle \theta, X - \mathbb E (X) \rangle^2 \bigr]>0}} \frac{\mathbb E  \bigl[ 
\langle \theta, X - \mathbb E (X) \rangle^4 \bigr]}{\mathbb E  \bigl[
\langle \theta, X - \mathbb E (X) \rangle^2 \bigr]^2}. 
\end{align*}
\begin{lem}\label{lem137}
The two kurtosis coefficients introduced above are related by the 
relation 
\[ 
\kappa' \leq 1 + \tau_q(\kappa)/q,
\] 
where $\displaystyle\tau_q(\kappa) = \kappa - 1 + \frac{2}{q-1}$. 
\end{lem}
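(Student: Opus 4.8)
The plan is to fix a direction $\theta \in \mathbb R^d$ with
$N(\theta) = \mathbb E\bigl[ \langle \theta, X - \mathbb E(X) \rangle^2 \bigr] > 0$
(directions with $N(\theta) = 0$ contribute nothing to either supremum)
and to recognize $\theta^{\top} A \, \theta$ as an ordinary unbiased sample variance.
Introduce the real random variables $Y_i = \langle \theta, X^{(i)} - \mathbb E(X) \rangle$,
$1 \leq i \leq q$, which are i.i.d., centred, with $\mathbb E(Y_i^2) = \mu_2 := N(\theta)$
and $\mathbb E(Y_i^4) = \mu_4$, where $\mu_4/\mu_2^2 \leq \kappa$ by the definition of $\kappa$.
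Since $\langle \theta, X^{(j)} - X^{(k)} \rangle = Y_j - Y_k$, one has
\[
\lVert A^{1/2} \theta \rVert^2 = \theta^{\top} A \, \theta
= \frac{1}{q(q-1)} \sum_{1 \leq j < k \leq q} (Y_j - Y_k)^2
= \frac{1}{q-1} \sum_{j=1}^q \bigl( Y_j - \overline Y \bigr)^2 =: s^2,
\]
the unbiased empirical variance of $Y_1, \dots, Y_q$, for which $\mathbb E(s^2) = \mu_2$;
this already identifies the denominator
$\mathbb E\bigl[ \lVert A^{1/2} \theta \rVert^2 \bigr]^2 = \mu_2^2$.

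The main step is to compute $\mathbb E\bigl[ (s^2)^2 \bigr]$. I would expand
\[
(s^2)^2 = \frac{1}{q^2(q-1)^2} \sum_{j<k} \, \sum_{l<m} (Y_j - Y_k)^2 (Y_l - Y_m)^2
\]
and take expectations term by term, splitting the double sum according to whether
the two index pairs $\{ j,k \}$ and $\{ l,m \}$ are disjoint, share exactly one element,
or coincide. Using independence and $\mathbb E(Y_i) = 0$, these three types of terms
equal $4 \mu_2^2$, $\mu_4 + 3 \mu_2^2$ and $2 \mu_4 + 6 \mu_2^2$ respectively,
and they occur $\binom{q}{2} \binom{q-2}{2}$, $q(q-1)(q-2)$ and $\binom{q}{2}$ times
(a consistency check being that these add up to $\binom{q}{2}^2$). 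Collecting terms yields
\[
\mathbb E\bigl[ (s^2)^2 \bigr] = \frac{(q-1) \mu_4 + \bigl( (q-1)^2 + 2 \bigr) \mu_2^2}{q(q-1)},
\]
which is equivalent to the classical identity
$\mathbf{Var}(s^2) = q^{-1}\bigl( \mu_4 - \tfrac{q-3}{q-1} \mu_2^2 \bigr)$ and could
alternatively be invoked directly.

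To conclude, divide by $\mu_2^2$ and set $\kappa_\theta = \mu_4 / \mu_2^2 \leq \kappa$;
using $(q-1)^2 + 2 = q(q-1) - (q-1) + 2$ a short rearrangement gives
\[
\frac{\mathbb E\bigl[ \lVert A^{1/2} \theta \rVert^4 \bigr]}{\mathbb E\bigl[ \lVert A^{1/2} \theta \rVert^2 \bigr]^2}
= \frac{\mathbb E\bigl[ (s^2)^2 \bigr]}{\mu_2^2}
= 1 + \frac{1}{q} \Bigl( \kappa_\theta - 1 + \frac{2}{q-1} \Bigr)
= 1 + \frac{\tau_q(\kappa_\theta)}{q}.
\]
Since $x \mapsto \tau_q(x)$ is non-decreasing and $\kappa_\theta \leq \kappa$ for every
admissible $\theta$, taking the supremum over $\theta$ gives $\kappa' \leq 1 + \tau_q(\kappa)/q$,
as claimed. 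The only delicate point in the whole argument is keeping track of the combinatorial
multiplicities of the three types of terms in the double sum; the remaining manipulations
are elementary algebra.
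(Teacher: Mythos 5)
Your proof is correct and follows essentially the same route as the paper: both expand the square of the $U$-statistic $\frac{1}{q(q-1)}\sum_{j<k}\langle\theta, X^{(j)}-X^{(k)}\rangle^2$, classify the index pairs by overlap (disjoint, one common index, identical), and compute the same three moments $4\mu_2^2$, $\mu_4+3\mu_2^2$, $2\mu_4+6\mu_2^2$ with the same multiplicities. Your identification of $\theta^{\top} A\,\theta$ with the unbiased sample variance of the projections $Y_i$, linking the result to the classical formula for $\Var(s^2)$, is a pleasant conceptual addition but does not change the underlying computation.
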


\vskip2mm
\noindent
For the proof we refer to section \ref{pfkappa}.

\vskip2mm
\noindent
Let $\widehat{N}(\theta)$ be the estimator defined in equation \eqref{hatN_A}
and remark that 
\[ 
\mathbb{E} \bigl( \mathbf{Tr}(A) \bigr) = 
\mathbf{Tr} \bigl( \mathbb{E}(A) \bigr) 
= \mathbf{Tr}(\Sigma) = \mathbb{E} \Bigl( \lVert X - \mathbb{E}(X) \rVert^2 \Bigr). 
\]

\begin{prop}
For any energy level $\sigma \in] 0, \mathbf{Tr}(\Sigma)]$, with probability at least $1 - 
2 \epsilon$, for any $\theta \in \mathbb{R}^d$, 
\[ 
\left\lvert \frac{\max \{ N(\theta), \sigma \lVert \theta \rVert^2 \}}{
\max \{ \widehat{N}(\theta), \sigma \lVert \theta \rVert^2 \}} 
- 1 \right\rvert \leq B_* \Bigl( \lVert \theta \rVert^{-2} N(\theta) 
\Bigr),  
\] 
where 
\[ 
B_*(t) = 
\begin{cases} 
\displaystyle\frac{ \bigl( q \lfloor n / q \rfloor\bigr)^{1/2} \zeta_q \bigl( \max \{ 
t, \sigma \})}{1 - 4 \bigl( q \lfloor n / q \rfloor \bigr)^{1/2} 
\zeta_q( \max \{ t, \sigma \})}, & \text{if } \Bigl( 6 + q / \tau_q(\kappa) \Bigr) 
\zeta_q \bigl( \max \{ t, \sigma \} \bigr) \leq \bigl( q \lfloor 
n / q \rfloor \bigr)^{1/2},\\  
+ \infty, & \text{ otherwise} 
\end{cases}  
\] 
and
\[
\zeta_q(t) = \sqrt{ 2.032 \tau_q(\kappa) \biggl( \frac{0.73 \mathbf{Tr}(\Sigma) }{t } + \log(K) + \log(\epsilon^{-1}) \biggr)} + \sqrt{ \frac{98.5 \bigl( q + \tau_q(\kappa) \bigr) 
\mathbf{Tr}(\Sigma) }{t}} 
\]
with
\[
K = 1 + \left\lceil \frac{1}{2} \log \Biggl( \frac{ \lfloor n/q \rfloor}{72 (2 + c) 
\bigl( 1 + \tau_q(\kappa)/q \bigr)^{1/2}} 
\Biggr) \right\rceil.
\]
\end{prop}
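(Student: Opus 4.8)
The plan is to obtain this proposition as a specialization of the results of the preceding subsection on the expectation of a symmetric positive semi-definite random matrix, applied to the block sample $A_1, \dots, A_{\lfloor n/q \rfloor}$.

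First I would record the elementary facts underlying the block construction. Since the blocks are disjoint, the matrices $A_1, \dots, A_{\lfloor n/q\rfloor}$ are i.i.d.; each of them is symmetric positive semi-definite, being an average of rank-one projectors; and, by the identity already displayed, $\mathbb{E}\bigl( \theta^{\top} A_i \theta \bigr) = N(\theta)$ for every $\theta \in \mathbb{R}^d$, so that $N$ is exactly the quadratic form attached to $\mathbb{E}(A) = \Sigma$. In particular $\mathbb{E}\bigl( \mathbf{Tr}(A) \bigr) = \mathbf{Tr}(\Sigma) = \mathbb{E}\bigl( \lVert X - \mathbb{E}(X) \rVert^2 \bigr)$, and this is the quantity that will play the role of $\mathbb{E}\bigl( \mathbf{Tr}(A) \bigr)$ in the refined bound \eqref{Azeta*}; the estimator $\widehat{N}$ is the one defined in \eqref{hatN_A}, built on this block sample.

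Next I would apply Proposition~\ref{props1}, in the sharpened form in which $\zeta_*$ is replaced by \eqref{Azeta*}, to the sample $A_1, \dots, A_{\lfloor n/q\rfloor}$ --- that is, with the sample size $n$ there replaced by $\lfloor n/q\rfloor$ and with the kurtosis coefficient $\kappa$ there replaced by an upper bound for the coefficient $\kappa'$ associated with $A^{1/2}\theta$. By Lemma~\ref{lem137} one may take $\kappa' = 1 + \tau_q(\kappa)/q$, and, as pointed out after Proposition~\ref{props1} and Proposition~\ref{prop2}, replacing the exact kurtosis by an upper bound affects neither the feasibility of the construction nor the validity of the bound. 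This already produces an estimate of the announced shape, with an auxiliary function $\zeta_*$ and a bound $B_*$ whose constants remain to be put in the stated form.

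What is left is purely the bookkeeping of constants, which I would organize around the relation $\zeta_q(t) = \sqrt{q}\,\zeta_*(t)$. Multiplying $\zeta_*$ from \eqref{Azeta*} by $\sqrt{q}$ and using $q(\kappa'-1) = \tau_q(\kappa)$ and $q\kappa' = q + \tau_q(\kappa)$ turns its two terms into $\sqrt{2.032\,\tau_q(\kappa)\bigl( 0.73\,\mathbf{Tr}(\Sigma)/t + \log(K) + \log(\epsilon^{-1}) \bigr)}$ and $\sqrt{98.5\,\bigl( q + \tau_q(\kappa) \bigr)\mathbf{Tr}(\Sigma)/t}$, which is exactly $\zeta_q$. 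Consequently $\lfloor n/q\rfloor^{-1/2}\zeta_* = \bigl( q\lfloor n/q\rfloor \bigr)^{-1/2}\zeta_q$, so the prefactor $n^{-1/2}$ of $B_*$ becomes $\bigl( q\lfloor n/q\rfloor \bigr)^{-1/2}$; the identity $(\kappa'-1)^{-1} = q/\tau_q(\kappa)$ turns the feasibility condition $\bigl[ 6 + (\kappa'-1)^{-1} \bigr]\zeta_*(\cdot) \leq \lfloor n/q\rfloor^{1/2}$ of Proposition~\ref{props1} into $\bigl[ 6 + q/\tau_q(\kappa) \bigr]\zeta_q(\cdot) \leq \bigl( q\lfloor n/q\rfloor \bigr)^{1/2}$; and \eqref{K-cov} with $a = 1/2$, sample size $\lfloor n/q\rfloor$ and kurtosis $1 + \tau_q(\kappa)/q$ yields precisely the stated $K$. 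I expect the only genuinely delicate point to be the threshold constraint: Proposition~\ref{props1} requires $\sigma \leq s_4^2 = \mathbb{E}\bigl( \lVert A \rVert_\infty^2 \bigr)^{1/2}$, whereas here the hypothesis is $\sigma \leq \mathbf{Tr}(\Sigma)$, so one has to check, by inspecting the proof of Proposition~\ref{props1} and using the inequalities displayed after \eqref{hatN_A}, that the scale effectively governing that proof is $\mathbf{Tr}(\Sigma) = \mathbb{E}\bigl( \mathbf{Tr}(A) \bigr)$ rather than $s_4^2$. Once the constants are matched, the conclusion is the inequality of Proposition~\ref{props1} read off for the block sample, which is the assertion to be proved.
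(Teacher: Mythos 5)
Your proposal is correct and follows essentially the same route as the paper: the paper's proof is precisely an application of Proposition \ref{props1} with $n$ replaced by $\lfloor n/q \rfloor$ and $\kappa$ by $\kappa'$, bounded via Lemma \ref{lem137}, using the $\mathbb{E}[\mathbf{Tr}(A)] = \mathbf{Tr}(\Sigma)$ form of $\zeta_*$ from equation \eqref{Azeta*}. Your explicit bookkeeping through $\zeta_q = \sqrt{q}\,\zeta_*$ and your remark that the threshold condition $\sigma \leq \mathbf{Tr}(\Sigma)$ is the scale actually governing the argument (rather than $s_4^2$) are both accurate and, if anything, more careful than the paper's one-line proof.
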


\begin{proof}
The result follows from Proposition \ref{props1}, using the definition of $\zeta_*$, 
where we replace $\kappa$ by $\kappa'$ and $n$ by $\lfloor n / q \rfloor$. 
We conclude the proof according to Lemma \ref{lem137}.
\end{proof}

\vskip2mm
\noindent 
Here we have used the upper bound for the entropy factor defined 
in terms of $\mathbb E  \bigl[ \mathbf{Tr}(A) \bigr]=  \mathbf{Tr} (\Sigma)$,
as mentioned in the remarks 
following Proposition \ref{props1}. 
We can improve somehow the constants 
by evaluating more carefully $\mathbb E  \bigl[ \lVert A \theta \rVert^2 \bigr]$ 
and $\mathbb E  \bigl[ \mathbf{Tr}(A^2) \bigr]$ as shown in the next lemma proved in section \ref{pflem2_cov}.

\begin{lem}\label{lem2}
It holds true that
\begin{align}
\label{sol_eq1}
\mathbb E  \Bigl[ \lVert A \theta \rVert^2 \Bigr] & \leq \biggl( 1 - \frac{q-2}{q(q-1)} \biggr) \lVert \Sigma \rVert_{\infty} N(\theta) +  \frac{1}{q} \biggl( \kappa + \frac{1}{q-1} \biggr) \mathbf{Tr}(\Sigma) N(\theta) \\
\text{and } \quad \mathbb E  \Bigl[ \mathbf{Tr} \bigl( A^2 \bigr) \Bigr] & \leq \biggl( 1 - \frac{q-2}{q(q-1)} \biggr) \mathbf{Tr} \bigl( \Sigma^2 \bigr)  +  \frac{1}{q} \biggl( \kappa + \frac{1}{q-1} \biggr) \mathbf{Tr}(\Sigma)^2.
\end{align}
\end{lem}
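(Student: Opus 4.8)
The plan is to compute the two expectations exactly and then bound the moments that appear. First I would reduce to the centered case: $A$ depends on $X^{(1)},\dots,X^{(q)}$ only through the differences $Z_{jk}=X^{(j)}-X^{(k)}$, so we may replace each $X^{(j)}$ by $X^{(j)}-\mathbb{E}(X)$ without changing $A$. From now on $X$ is centered, $\Sigma=\mathbb{E}(XX^{\top})$, $N(\theta)=\theta^{\top}\Sigma\theta=\mathbb{E}(\langle\theta,X\rangle^{2})$ and $\mathbf{Tr}(\Sigma)=\mathbb{E}(\lVert X\rVert^{2})$.

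Using the identity $\mathbf{Tr}(uu^{\top}vv^{\top})=\langle u,v\rangle^{2}$ we have
\[
\lVert A\theta\rVert^{2}=\frac{1}{q^{2}(q-1)^{2}}\sum_{j<k}\sum_{l<m}\langle\theta,Z_{jk}\rangle\,\langle\theta,Z_{lm}\rangle\,\langle Z_{jk},Z_{lm}\rangle,
\qquad
\mathbf{Tr}(A^{2})=\frac{1}{q^{2}(q-1)^{2}}\sum_{j<k}\sum_{l<m}\langle Z_{jk},Z_{lm}\rangle^{2}.
\]
I would split each double sum according to the size of $\{j,k\}\cap\{l,m\}$, which is $2$ for $\binom{q}{2}$ ordered pairs of pairs, $1$ for $q(q-1)(q-2)$ of them, and $0$ for the remaining $q(q-1)(q-2)(q-3)/4$. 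In each class the expectation of the summand is obtained by expanding the inner products into monomials in the $X^{(i)}$ and discarding every monomial in which some index occurs exactly once (such a monomial has zero expectation by centering and independence). The class $\lvert\cap\rvert=2$ yields $\mathbb{E}(\langle\theta,X-X'\rangle^{2}\lVert X-X'\rVert^{2})$, resp.\ $\mathbb{E}(\lVert X-X'\rVert^{4})$, for an independent copy $X'$; the class $\lvert\cap\rvert=1$ yields $\mathbb{E}(\langle\theta,X\rangle^{2}\lVert X\rVert^{2})+3\,\theta^{\top}\Sigma^{2}\theta$, resp.\ $\mathbb{E}(\lVert X\rVert^{4})+3\,\mathbf{Tr}(\Sigma^{2})$; and each $\lvert\cap\rvert=0$ term contributes $4\,\theta^{\top}\Sigma^{2}\theta$, resp.\ $4\,\mathbf{Tr}(\Sigma^{2})$ (the surviving monomials being of the type $\langle\theta,X^{(j)}\rangle\langle\theta,X^{(l)}\rangle\langle X^{(j)},X^{(l)}\rangle$). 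Summing the three classes, using $\binom{q}{2}=q(q-1)/2$ and $q^{2}-2q+2=q(q-1)-(q-2)$, and dividing by $q^{2}(q-1)^{2}$, the combinatorial prefactors recombine into
\[
\mathbb{E}\bigl(\lVert A\theta\rVert^{2}\bigr)=\frac{\mathbb{E}(\langle\theta,X\rangle^{2}\lVert X\rVert^{2})}{q}+\frac{N(\theta)\,\mathbf{Tr}(\Sigma)}{q(q-1)}+\Bigl(1-\frac{q-2}{q(q-1)}\Bigr)\theta^{\top}\Sigma^{2}\theta,
\]
and, in exactly the same way,
\[
\mathbb{E}\bigl(\mathbf{Tr}(A^{2})\bigr)=\frac{\mathbb{E}(\lVert X\rVert^{4})}{q}+\Bigl(1-\frac{q-2}{q(q-1)}\Bigr)\mathbf{Tr}(\Sigma^{2})+\frac{\mathbf{Tr}(\Sigma)^{2}}{q(q-1)}.
\]

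It then remains to bound the three moments on the right. Since $\Sigma$ is positive semi-definite, $\theta^{\top}\Sigma^{2}\theta=\lVert\Sigma^{1/2}(\Sigma^{1/2}\theta)\rVert^{2}\le\lVert\Sigma\rVert_{\infty}\,\theta^{\top}\Sigma\theta=\lVert\Sigma\rVert_{\infty}N(\theta)$. Writing $\lVert X\rVert^{2}=\sum_{i}\langle e_{i},X\rangle^{2}$ in an orthonormal basis $(e_i)$, Cauchy--Schwarz together with the definition of $\kappa$ (which gives $\mathbb{E}(\langle v,X\rangle^{4})\le\kappa\,\mathbb{E}(\langle v,X\rangle^{2})^{2}$ in every direction $v$) yields
\[
\mathbb{E}\bigl(\langle\theta,X\rangle^{2}\lVert X\rVert^{2}\bigr)\le\sum_{i}\mathbb{E}(\langle\theta,X\rangle^{4})^{1/2}\,\mathbb{E}(\langle e_{i},X\rangle^{4})^{1/2}\le\kappa\,N(\theta)\,\mathbf{Tr}(\Sigma),
\]
and likewise $\mathbb{E}(\lVert X\rVert^{4})=\sum_{i,j}\mathbb{E}(\langle e_{i},X\rangle^{2}\langle e_{j},X\rangle^{2})\le\kappa\,\mathbf{Tr}(\Sigma)^{2}$. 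Substituting these three bounds into the two exact identities and merging the two terms of order $\mathbf{Tr}(\Sigma)N(\theta)/q$ (resp.\ $\mathbf{Tr}(\Sigma)^{2}/q$) into $\tfrac1q\bigl(\kappa+\tfrac1{q-1}\bigr)$ gives exactly the two claimed inequalities. The only delicate step is the middle one: correctly enumerating the surviving monomials in each of the three sharing classes and checking that the combinatorial coefficients collapse to $1-\tfrac{q-2}{q(q-1)}$; everything else is a one-line application of Cauchy--Schwarz and of the positivity of $\Sigma$.
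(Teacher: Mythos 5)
Your proof is correct and follows essentially the same route as the paper's: split the double sum over pairs of pairs according to the overlap $\lvert\{j,k\}\cap\{s,t\}\rvert\in\{0,1,2\}$, evaluate each class using independence and centering, and then bound $\theta^{\top}\Sigma^{2}\theta\le\lVert\Sigma\rVert_{\infty}N(\theta)$ and the fourth-moment terms via Cauchy--Schwarz and the definition of $\kappa$. The only cosmetic difference is that you record the exact intermediate identity before bounding, whereas the paper bounds each overlap class directly and leaves the combinatorial recombination implicit; your coefficients and counts check out.
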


\vskip2mm
\noindent
Using these tighter bounds, we can improve $\zeta_q$ to 
\begin{multline*}
\zeta_q(t) = \Biggl[ 2.032 \, \tau_q(\kappa) \Biggl( \frac{0.73 
\bigl[ \bigl( 1 - \frac{q-2}{q(q-1)} \bigr) \mathbf{Tr}(\Sigma^2) + \frac{1}{q} \bigl( 
\kappa + \frac{1}{q-1} \bigr) \mathbf{Tr}(\Sigma)^2 \bigr]}{\bigl[ \bigl( 1 -  \frac{q-2}{q(q-1)} \bigr) \lVert \Sigma \rVert_{\infty} 
+ \frac{1}{q} \bigl( \kappa + \frac{1}{q-1} \bigr) \mathbf{Tr}(\Sigma) \bigr] 
t} 
\\  \shoveright{ + \log(K) + \log(\epsilon^{-1}) \Biggr) \Biggr]^{1/2} }  \\ 
+ \sqrt{ \frac{98.5 \bigl[ q \bigl( 1 -  \frac{q-2}{q(q-1)} \bigr) \lVert \Sigma \rVert_{\infty} 
+ \bigl( \kappa + \frac{1}{q-1} \bigr) \mathbf{Tr}(\Sigma) \bigr] }{
t}}.
\end{multline*}
Therefore, in the case when  
\[ 
q \lVert \Sigma \rVert_{\infty} \leq \mathbf{Tr}(\Sigma),
\]
we have 
\[
\mathbb E  \Bigl[ \lVert A \theta \rVert^2 \Bigr]
\leq \frac{1}{q}\biggl( \kappa+1 + \frac{2}{q(q-1)} \biggr)  \mathbf{Tr}(\Sigma) N(\theta) 
\]
and hence, recalling that $\mathbf{Tr}(\Sigma^2) \leq \mathbf{Tr}(\Sigma)^2$, we can take
\[
\zeta_q(t) = \sqrt{2.032 \tau_q(\kappa) \biggl( \frac{0.73 \mathbf{Tr}(\Sigma)}{t} 
+ \log(K) + \log(\epsilon^{-1}) \biggr)} + 
\sqrt{\frac{98.5 \bigl( \kappa + 1 + \frac{2}{q(q-1)} \bigr) \mathbf{Tr}(\Sigma)}{t}}.
\]
If we compare the above result with the bound obtained in Proposition \ref{prop2} 
for the Gram matrix estimator,
we see that the first appearance of $\kappa$ 
in the definition of $\zeta_q$ has been replaced with  
\[
\tau_{q}(\kappa) + 1 = \kappa + \frac{2}{q-1}, 
\] 
and that the second appearance of $\kappa$ has been replaced with 
\[
\kappa + 1 + \frac{2}{q(q-1)}. 
\] 
Thus, when $\lVert \Sigma \rVert_{\infty} 
\leq \mathbf{Tr}(\Sigma)/ 2$, and this is not a very strong hypothesis, 
we can take at least $q = 2$, 
and obtain an improved bound for the estimation of $\Sigma$. 
This bound is not much larger than 
for the estimation of the centered Gram matrix, 
that we could have performed if we had known $\mathbb{E}(X)$, since the 
difference is just a matter of replacing $\kappa$ with $\kappa + 2$.

\newpage
\section{Proofs}

In this section we give the proofs of the results presented in the previous sections. 
More precisely, section \ref{proof_prop0} deals with Proposition \ref{prop0} (on page \pageref{prop0}), section \ref{proof_prop2} with Proposition \ref{prop2} (on page \pageref{prop2}) and section \ref{proof_propq} with Proposition \ref{propq} (on page \pageref{propq}),
some preliminary lemmas being postponed to section \ref{appx}. 

\subsection{Proof of Proposition \ref{prop0}}\label{proof_prop0}
The proof of Proposition \ref{prop0} requires a sequence of preliminary results. \\[1mm]
Our approach relies on perturbing the parameter $\theta$ with the Gaussian perturbation $\pi_{\theta} \sim \mathcal N(\theta, \beta^{-1} \mathrm I),$ 
where $\beta>0$ is a free parameter.

\begin{lem} 
\label{lem:7.1}
We have
\[
\int  \langle \theta', x \rangle^2 \, \mathrm{d} \pi_{\theta}(\theta') = \langle \theta, x \rangle^2 +\frac{\lVert x \rVert^2}{\beta}.
\]
\end{lem}

\begin{proof} Let $W \in\mathbb R^d$ be a random variable drawn according to $\pi_{\theta} \sim\mathcal N(\theta, \beta^{-1}\mathrm I).$ 
It follows that $\langle W, x\rangle$ is a one-dimensional Gaussian random variable with mean $\langle \theta, x \rangle$ and variance $\displaystyle x^{\top}(\beta^{-1}\mathrm I) x= \frac{\|x\|^2}{\beta}$. Consequently
\[
\int  \langle \theta', x \rangle^2 \, \mathrm{d} \pi_{\theta}(\theta') = \mathbb E[ \langle W, x\rangle]^2 + \mathbf{Var}[\langle W, x\rangle] = \langle \theta, x \rangle^2 +\frac{\lVert x \rVert^2}{\beta}.
\]
\end{proof}

\vskip2mm
\noindent
Accordingly we get
\begin{align*}
r_{\lambda}(\theta)  
=  \frac{1}{n} \sum_{i=1}^n \psi \biggl[ \int \biggl( \langle \theta', x \rangle^2 - \frac{\lVert x \rVert^2}{\beta} - \lambda \biggr) \, \mathrm{d} \pi_{\theta}(\theta') \biggr]. 
\end{align*}
In order to pull the expectation with respect to $\pi_{\theta}$ out of the influence function $\psi$, with a minimal loss of accuracy, we introduce the function 
\begin{equation}\label{defchi}
\chi(z) = \begin{cases} 
\psi(z) & z \leq z_1 \\
 \psi(z_1) + p_1(z-z_1) - (z-z_1)^2/8 & z_1 \leq z \leq z_1 + 4 p_1 \\
 \psi(z_1) + 2 p_1^2 & z \geq z_1 + 4 p_1
\end{cases} 
\end{equation}
where $z_1 \in [0,1]$ is such that $\psi'' (z_1) = -1/4$ and $p_1$ is defined by the condition $p_1 = \psi'(z_1).$
Using the explicit expression of the first and second derivative of $\psi$, we get
\begin{align*}
z_1 & = 1 - \sqrt{ 4 \sqrt{2} - 5}, \\ 
p_1  &= \psi'(z_1) = \frac{\sqrt{ 4 \sqrt{2} - 5}}{2 ( \sqrt{2} - 1)}
\end{align*} 
and $\ds \sup \chi =  \psi(z_1)+ 2 p_1^2 = - \log \bigl[ 2 ( \sqrt{2} - 1 )  \bigr] + \frac{1 + 2 \sqrt{2}}{2} .$

\begin{lem}
For any $z \in \mathbb R$,
\begin{equation}\label{bound_chi}
\psi(z) \leq \chi(z) \leq \log(1+z+z^2/2). 
\end{equation}
\end{lem}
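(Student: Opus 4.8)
The plan is to establish each of the two inequalities separately on the three pieces making up the definition of $\chi$, using the explicit formulas $\psi'(t) = (1-t)/(1-t+t^2/2)$ and $\psi''(t) = -t(1-t/2)/(1-t+t^2/2)^2$ on $[0,1]$ (with $\psi \equiv \log 2$, $\psi'' \equiv 0$ on $[1,\infty)$), together with the comparison function $\phi(z) = \log(1+z+z^2/2)$, which is well defined and non-decreasing on $[0,\infty)$ because $1+z+z^2/2 = \frac{1}{2}\bigl((z+1)^2+1\bigr) > 0$. I will also use that $4p_1 > 1$ (an elementary check), so that $z_1 < 1 < z_1 + 4p_1$.

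For the lower bound $\psi \le \chi$: on $(-\infty,z_1]$ it is an equality by definition. On $[z_1,z_1+4p_1]$, $\chi$ equals the parabola $g(z) = \psi(z_1) + p_1(z-z_1) - (z-z_1)^2/8$, for which $g(z_1) = \psi(z_1)$, $g'(z_1) = p_1 = \psi'(z_1)$ and $g'' = -1/4$; the decisive point is that $\psi''(t) \le -1/4$ on $[z_1,1]$. I would prove this by the substitution $s = t-1$, which turns the desired inequality $4t(1-t/2) \ge (1-t+t^2/2)^2$ into $2(1-s^2) \ge (s^2+1)^2/4$, i.e. $s^4 + 10 s^2 - 7 \le 0$, i.e. $s^2 \le 4\sqrt 2 - 5$ --- and this holds on $[z_1,1]$ exactly because $z_1 = 1 - \sqrt{4\sqrt 2 - 5}$. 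Integrating $g'' - \psi'' \ge 0$ twice from $z_1$ (where the values and first derivatives of $g$ and $\psi$ agree) yields $\psi \le g$ on $[z_1,1]$. On $[1,z_1+4p_1]$, $\psi \equiv \log 2$, whereas $g$ is non-decreasing there (its derivative $p_1 - (z-z_1)/4$ vanishes only at the endpoint $z_1+4p_1$) and $g(1) \ge \psi(1)$ by the step just done, so $\psi \le g$ there too. Finally, on $[z_1+4p_1,\infty)$, $\chi \equiv \sup\chi = \psi(z_1)+2p_1^2$, while $\psi \equiv \log 2 = \psi(z_1+4p_1) \le \chi(z_1+4p_1)$, which closes this direction.

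For the upper bound $\chi \le \phi$: on $(-\infty,z_1]$ it is the already-recorded bound $\psi(z) \le \log(1+z+z^2/2)$. On $[z_1,z_1+4p_1]$ I would compare $g$ with $\phi$: one computes $\phi''(z) = -z(1+z/2)/(1+z+z^2/2)^2$, and $\phi''(z) \ge -1/4$ is equivalent to $z^4+4z^3-8z+4 \ge 0$, which holds for every real $z$ by the identity $z^4+4z^3-8z+4 = (z^2+2z-2)^2$. Combined with $g(z_1) = \psi(z_1) \le \phi(z_1)$ and $g'(z_1) = \psi'(z_1) \le \phi'(z_1)$ --- the latter reducing, after clearing the positive denominators, to $z_1^3 \ge 0$ --- integrating $\phi'' - g'' \ge 0$ twice from $z_1$ gives $g \le \phi$ on all of $[z_1,z_1+4p_1]$. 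On $[z_1+4p_1,\infty)$, $\chi \equiv \sup\chi = g(z_1+4p_1) \le \phi(z_1+4p_1) \le \phi(z)$ by monotonicity of $\phi$.

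Almost everything here is bookkeeping of matching values and slopes at the junctions $z_1$ and $z_1+4p_1$; the only place a genuine idea is needed is the sign analysis of $\psi''$ and $\phi''$ on the quadratic branch, and since each of those collapses to a perfect-square identity --- $s^4+10s^2-7\le 0 \Leftrightarrow s^2 \le 4\sqrt 2 - 5$ and $z^4+4z^3-8z+4=(z^2+2z-2)^2$ --- I do not expect any real obstacle.
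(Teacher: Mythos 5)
Your proof is correct and follows essentially the same route as the paper's: both argue branch by branch, matching value and first derivative at the junction $z_1$ and comparing second derivatives with $-1/4$ (the paper phrases this via Taylor expansions with integral remainder, you via integrating the second-derivative inequality twice, which is the same idea). The only differences are in bookkeeping: you verify $\psi''\le -1/4$ on $[z_1,1]$ and $\inf_z \frac{d^2}{dz^2}\log(1+z+z^2/2)=-1/4$ by direct algebra (the identity $z^4+4z^3-8z+4=(z^2+2z-2)^2$) where the paper invokes $\psi'''\le 0$ and simply names the minimizer $\sqrt3-1$, and you treat the sub-interval $[1,z_1+4p_1]$, where $\psi$ is constant while the quadratic branch of $\chi$ is still increasing, explicitly rather than implicitly.
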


\begin{proof} We first prove that $\psi(z) \leq \chi(z)$. 
The inequality is trivial for $z\leq z_1,$ since  $\chi(z) = \psi(z).$ For $z \in [ z_1, z_1+4p_1],$ performing a Taylor expansion at $z_1,$ we obtain that
\begin{align*} \psi(z) & = \psi(z_1) + p_1 (z-z_1) - \frac{1}{8}(z-z_1)^2 
+ \int_{z_1}^z \frac{\psi'''(u)}{2}(z-u)^2 \, \mathrm{d} u\\
& \leq \psi(z_1) + p_1 (z-z_1) -\frac{1}{8}(z-z_1)^2  = \chi(z),
\end{align*}
since $\psi'''(u) \leq0$ for $u \in [0,1[$. 
Finally we observe that, for any $z \geq z_1+4p_1$,
\[
\chi(z) = \psi(z_1) + 2p_1^2 > \log(2) \geq \psi(z).
\]
Let us now show that $\chi(z) \leq \log \bigl( 1 + z + z^2 / 2 \bigr).$
For $z \leq z_1$, we have already seen that the inequality is satisfied since $\chi(z) = \psi(z).$ 
Moreover we observe that the function
\[
f(z) = \log \bigl( 1 + z + z^2 / 2 \bigr)
\] 
is such that $f(z_1) \geq \chi(z_1)$ and also $f'(z_1) \geq \chi'(z_1).$ Performing a Taylor expansion at $z_1,$ we get
\begin{align*} f(z) & = f(z_1) + f'(z_1) (z-z_1)  + \int_{z_1}^z f''(u) (z-u)^2\mathrm d u\\
& \geq \chi(z_1) + \chi'(z_1) (z-z_1) + \inf f'' \ \frac{(z-z_1)^2}{2}.
\end{align*}
Since for any $t\in [z_1, z_1+4p_1]$, 
\[
\inf f'' = f''(\sqrt 3-1) = -1/4 = \chi''(t),
\]
we deduce that 
\[
f(z) \geq \chi(z_1) +p_1 (z-z_1) -\frac{1}{8}(z-z_1)^2 = \chi(z).
\]
In particular, $f( z_1+4p_1) \geq \chi( z_1+4p_1)$. 
Recalling that $f$ is an increasing function while $\chi$ is  
constant on  the interval $[z_1 + 4 p_1, + \infty[$, we conclude the proof.
\end{proof}

\vskip2mm
\noindent
Next lemma allows us to pull the expectation with respect to $\pi_{\theta}$ out of the function $\chi$.

\begin{lem}\label{l1}
Let $\Theta$ be a measurable space. 
For any $\rho \in \mathcal{M}_+^1(\Theta)$ and any $h \in L^1_{\rho}(\Theta)$, 
\begin{equation}\label{eq1.lem1}
\chi \biggl( \int h \, \mathrm{d} \rho \biggr) \leq \int \chi(h) \, \mathrm{d} \rho 
+ \frac{1}{8} \mathbf{Var} \bigl( h \, \mathrm{d} \rho \bigr), 
\end{equation} 
where by definition 
\[
\mathbf{Var} \bigl( h \, \mathrm{d} \rho \bigr) = \int \biggl( h(\theta)  - \int h \, \mathrm{d} \rho \biggr)^2
 \, \mathrm{d} \rho(\theta) \quad \in \mathbb{R} \cup \{+ \infty\}.
\]
Moreover, 
\[
\psi \biggl( \int h \, \mathrm{d} \rho \biggr) \leq \int \chi(h) \, \mathrm{d} \rho + \min \Bigl\{ \log(4), \frac{1}{8} \mathbf{Var} \bigl( h \, \mathrm{d} \rho \bigr) \Bigr\}. 
\]
\end{lem}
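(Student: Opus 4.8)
The plan is to reduce the whole statement to a single application of Jensen's inequality for a carefully chosen convex function. The first observation is that, after writing $\mathbf{Var}(h \, \mathrm{d}\rho) = \int h^2 \, \mathrm{d}\rho - \bigl( \int h \, \mathrm{d}\rho \bigr)^2$, inequality \eqref{eq1.lem1} is exactly the statement that the function $g(z) := \chi(z) + z^2/8$ satisfies $g\bigl( \int h \, \mathrm{d}\rho \bigr) \le \int g(h) \, \mathrm{d}\rho$. So it is enough to prove that $g$ is convex on $\mathbb{R}$, and then Jensen does the rest.

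The core of the argument is therefore the convexity of $g$, i.e. the inequality $\chi'' \ge -1/4$ wherever $\chi$ is twice differentiable. Since $\chi$ is $C^1$ (the one-sided derivatives match at the breakpoints $z_1$ and $z_1 + 4 p_1$ by construction in \eqref{defchi}), it suffices to check $\chi'' \ge -1/4$ on each of the three pieces. On $[z_1, z_1 + 4 p_1]$ one has $\chi'' \equiv -1/4$, and on $[z_1 + 4 p_1, +\infty)$ the function $\chi$ is constant. On $(-\infty, z_1]$, where $\chi = \psi$, I would use that $\psi''' \le 0$ on $[0, 1[$ (a fact already used earlier in the excerpt), so that $\psi''$ is non-increasing on $[0,1]$ and hence $\psi'' \ge \psi''(z_1) = -1/4$ on $[0, z_1]$; for $z \le 0$ I would invoke the oddness of $\psi$ (hence of $\psi''$) together with $\psi'' \le 0$ on $[0,1]$ and $\psi'' = 0$ outside $[-1,1]$ to conclude $\psi'' \ge 0$ on $[-1,0]$ and $\psi'' = 0$ on $(-\infty, -1]$. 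This piecewise verification is the step I expect to require the most care, since it hinges on the precise definition of $z_1$ as the point where $\psi''$ attains its minimum value $-1/4$ and on elementary but non-trivial monotonicity properties of $\psi''$.

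Once $g$ is known to be convex, I would note that $\chi$ is bounded ($-\log 2 \le \psi \le \chi \le \sup \chi < +\infty$), so $g \circ h = \chi(h) + h^2/8$ is $\rho$-integrable as soon as $h^2$ is, and Jensen applies directly; when $\int h^2 \, \mathrm{d}\rho = +\infty$ the inequality is trivial since then $\mathbf{Var}(h \, \mathrm{d}\rho) = +\infty$. Rearranging the Jensen inequality yields \eqref{eq1.lem1}.

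Finally, for the bound on $\psi\bigl( \int h \, \mathrm{d}\rho \bigr)$: from \eqref{bound_chi} we have $\psi \le \chi$, so combining with \eqref{eq1.lem1} gives the bound with $\tfrac18 \mathbf{Var}(h \, \mathrm{d}\rho)$; for the $\log(4)$ bound I would use $\psi \le \log 2$ everywhere together with $\int \chi(h) \, \mathrm{d}\rho \ge \int \psi(h) \, \mathrm{d}\rho \ge -\log 2$, so that $\psi\bigl( \int h \, \mathrm{d}\rho \bigr) \le \log 2 \le \int \chi(h) \, \mathrm{d}\rho + 2 \log 2 = \int \chi(h) \, \mathrm{d}\rho + \log 4$. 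Taking the minimum of the two bounds completes the proof.
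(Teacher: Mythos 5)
Your proof is correct and is essentially the paper's argument in different clothing: the paper establishes \eqref{eq1.lem1} by a second-order Taylor expansion of $\chi$ around $\int h\,\mathrm{d}\rho$ using $\inf \chi'' = -1/4$, which is exactly your convexity-of-$\chi(z)+z^2/8$ plus Jensen, and the $\log(4)$ bound is obtained the same way from $\sup\psi - \inf\chi \leq \log 4$. The only difference is that you verify $\chi'' \geq -1/4$ piece by piece (correctly), whereas the paper asserts $\inf\chi'' = -1/4$ without detail.
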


\begin{proof}
To prove equation \eqref{eq1.lem1} we observe that performing a Taylor expansion of the function $\chi$ at $z = \int h \ \mathrm d\rho$
\[
\chi \bigl[ h(\theta) \bigr]  \geq \chi(z) + \bigl( h(\theta) - z \bigr) \chi'(z)  +\inf \chi'' \  \frac{ \bigl( h(\theta) - z \bigr)^2}{2}, 
\]
so that, recalling that $\inf \chi'' = - 1 / 4$, we get
\begin{align*}
\int \chi \bigl[ h(\theta) \bigr] \, \mathrm{d} \rho( \theta) & \geq \chi \Bigl( \int h \, \mathrm{d} \rho\Bigr) - \frac{1}{8} \int \left( h(\theta) - \int h(\theta)  \mathrm{d} \rho \right)^2 \mathrm{d} \rho(\theta)\\
& = \chi \Bigl( \int h \, \mathrm{d} \rho\Bigr) - \frac{1}{8} \mathbf{Var} \bigl( h \, \mathrm{d} \rho \bigr).
\end{align*}
Combining equation \eqref{eq1.lem1} with the fact that $\psi(z) \leq \chi(z)$, for any $z \in \mathbb R$, we obtain that 
\[
\psi \biggl( \int h \, \mathrm{d} \rho \biggr) \leq  \int \chi(h) \, \mathrm{d} \rho+\frac{1}{8} \mathbf{Var} \bigl( h \, \mathrm{d} \rho \bigr).
\]
We conclude the proof by remarking that 
\[
\psi \biggl( \int h \, \mathrm{d} \rho \biggr) - \int \chi(h) \, \mathrm{d} \rho\leq \sup \psi - \inf \chi \leq  \log(4).
\]
\end{proof}

\vskip 2mm
\noindent
Applying this result to our problem we obtain
\begin{multline*}
\psi \bigl( \langle \theta, x \rangle^2 - \lambda \bigr) 
\leq \int \chi \biggl( \langle \theta', x \rangle^2 - \frac{\lVert x \rVert^2}{ \beta} - \lambda \biggr) \, \mathrm{d} \pi_{\theta}(\theta') 
\\ + \min \Bigl\{ \log(4),  \frac{1}{8} \mathbf{Var} \bigl[ \langle \theta', x \rangle^2 \, \mathrm{d} \pi_{\theta}(\theta') \bigr] \Bigr\},
\end{multline*}
where, putting $m = \langle \theta, x \rangle$, $\displaystyle\sigma = \frac{\lVert x \rVert}{\sqrt{\beta}}$ and denoting by $W \sim \mathcal{N}(0, \sigma^2)$ a centered Gaussian random variable, 
\begin{multline}
\label{eq:7.4}
\mathbf{Var} \bigl[ \langle \theta', x \rangle^2 \, \mathrm{d} 
\pi_{\theta}(\theta') \bigr] = \mathbf{Var} \bigl[ (m + W)^2 \bigr] \\
=4 m^2 \sigma^2 + 2 \sigma^4 
= \frac{4 \langle \theta, x \rangle^2 \lVert x \rVert^2}{ \beta} + \frac{2 \lVert x \rVert^4}{\beta^2}.
\end{multline}

\vskip2mm
\noindent
Let us remark that, for any $a,b,c \in \mathbb R_+$ and $W \sim \mathcal N(0,\sigma^2)$,
\begin{equation}\label{min_eq}
\min \bigl\{ a, b m^2 + c \bigr\} \leq \min \bigl\{ a , b ( m + W)^2 + c \bigr\} + \min \bigl\{ a, b ( m  - W)^2 + c \bigr\},
\end{equation}
since $b m^2 + c \leq \max \bigl\{ b ( m + W)^2 + c,  b ( m - W)^2 + c 
\bigr\}$. Therefore, taking the expectation with respect to $W$ of this 
inequality and remarking that $W$ and $-W$ have 
the same probability distribution we get 
\[ 
\min \bigl\{ a, b m^2 + c \bigr\} \leq 2 \mathbb E \Bigl[ \min \bigl\{ a, b (m+W)^2 + c \bigr\} \Bigr] .
\] 
Thus in our context we put  $a=\log(4)$, $b= \lVert x \rVert^2 / (2 \beta)$ and $c=\lVert x \rVert^4 / (4\beta^2)$ and we obtain 
\begin{multline*}
\psi \bigl( \langle \theta, x \rangle^2 - \lambda \bigr) 
\leq \int \chi \biggl( \langle \theta', x \rangle^2 - \frac{\lVert x \rVert^2}{\beta} - \lambda \biggr) \, \mathrm{d} \pi_{\theta}(\theta')
\\ + \int \min \biggl\{ 4 \log(2), \frac{\langle \theta', x \rangle^2 \lVert x \rVert^2 }{\beta} + \frac{\lVert x \rVert^4}{2 \beta^2} \biggr\} \, \mathrm{d} \pi_{\theta}(\theta').
\end{multline*}

\begin{lem}\label{lem1.5}
For any positive constants $b,y$ and any $z \in \mathbb{R}$, 
\[
\chi(z) + \min \{ b, y \} \leq \log \left( 1 + z + \frac{z^2}{ 2} + y  \exp( \sup \chi )\frac{ \bigl( \exp(b) - 1 \bigr) }{ b} \right). 
\]
\end{lem}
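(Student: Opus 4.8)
The plan is to exponentiate both sides and reduce the statement to an elementary one-variable inequality. First I would record the two bounds on $\chi$ already available in the excerpt: from the inequality $\chi(z) \leq \log(1 + z + z^2/2)$ proved just above we get $\exp(\chi(z)) \leq 1 + z + z^2/2$, where the right-hand side is strictly positive for every real $z$ (its discriminant is $1 - 2 < 0$), so the logarithm in the statement is well defined; and trivially $\exp(\chi(z)) \leq \exp(\sup \chi)$.

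Next I would split off the contribution of $\min\{b,y\}$ multiplicatively, writing
\[
\exp\bigl( \chi(z) + \min\{b,y\} \bigr) = \exp(\chi(z)) + \exp(\chi(z)) \bigl( \exp(\min\{b,y\}) - 1 \bigr),
\]
then bound the first summand by $1 + z + z^2/2$ and, in the second summand, the factor $\exp(\chi(z))$ by $\exp(\sup \chi)$.

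The only nontrivial ingredient is the elementary fact that $t \mapsto \bigl( \exp(t) - 1 \bigr)/t$ is non-decreasing on $(0, +\infty)$ (which follows from convexity of $\exp$, or a short derivative computation). Writing $m = \min\{b,y\}$ and using $m \leq b$ gives $\exp(m) - 1 = m \cdot \frac{\exp(m)-1}{m} \leq m \cdot \frac{\exp(b)-1}{b}$, and then $m \leq y$ gives $\exp(m) - 1 \leq y \cdot \frac{\exp(b)-1}{b}$. Substituting this into the display above yields
\[
\exp\bigl( \chi(z) + \min\{b,y\} \bigr) \leq 1 + z + \frac{z^2}{2} + y \exp(\sup \chi) \frac{\exp(b) - 1}{b},
\]
and taking logarithms concludes. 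I do not expect any real obstacle: this lemma is essentially a bookkeeping device that repackages the quadratic upper bound on $\exp \circ \chi$ together with the bounded additive term $\min\{b,y\}$ into a single logarithm, in a form suitable for the exponential-moment (chi-square type) estimates applied afterwards.
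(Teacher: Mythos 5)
Your proof is correct and is essentially the paper's own argument: the paper likewise writes $\log(a)+\min\{b,y\}=\log\bigl(a\exp(\min\{b,y\})\bigr)$ with $a=\exp(\chi(z))$, uses the monotonicity of $x\mapsto(\exp(x)-1)/x$ to bound $a(\exp(\min\{b,y\})-1)$ by $ya(\exp(b)-1)/b$, and then invokes $\chi(z)\leq\log(1+z+z^2/2)$ together with $\chi(z)\leq\sup\chi$ on the two occurrences of $a$. Your write-up merely makes explicit the positivity of $1+z+z^2/2$ and the separate bounding of the two copies of $\exp(\chi(z))$, which the paper leaves implicit.
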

\begin{proof}
For any positive real constants $a, b, y$, 
\begin{align*}
\log(a) + \min \{ b, y \} 
&= \log \bigl( a \exp \bigl( \min \{ b, y\} \bigr) \bigr) \\ 
&\leq \log \biggl( a + a \min \{ b, y \} \frac{\bigl( \exp(b) - 1 \bigr)}{b}  \biggr),
\end{align*}
since the function $\displaystyle x \mapsto \frac{\exp(x)-1}{x}$ is non-decreasing for $x\geq0.$ It follows that
$$\log(a) + \min \{ b, y \}  \leq \log \bigl[ a + y a \bigl( \exp(b) - 1 \bigr) / b \bigr].$$

\noindent
Applying this inequality to $a = \exp \bigl[ \chi (z) \bigr]$ and reminding 
that $\chi(z) \leq \log\left(1 + z + z^2 / 2\right)$, we conclude the proof. 
\end{proof}

\vskip 2mm
\noindent
As a consequence, choosing 
$b = 4 \log(2),$ 
$z = \langle \theta', x \rangle^2 - {\lVert x \rVert^2}/{\beta} - \lambda$ and 
$y =  {\langle \theta', x \rangle^2 \lVert x \rVert^2 }/{\beta} + {\lVert x \rVert^4}/{2 \beta^2}$, 
we get
\begin{multline*}
\psi \bigl( \langle \theta, x \rangle^2 - \lambda \bigr) \leq 
\int \log \Biggl[  1 + \langle \theta' x \rangle^2 - \frac{\lVert x \rVert^2}{\beta} - \lambda + 
\frac{1}{2} \biggl( \langle \theta', x \rangle^2 - \frac{\lVert x \rVert^2}{
\beta} - \lambda \biggr)^2
\\ + \frac{c \lVert x \rVert^2}{\beta} \biggl( \langle \theta', x \rangle^2  
+ \frac{\lVert x \rVert^2}{2 \beta} \biggr) \Biggr] \, \mathrm{d} 
\pi_{\theta}(\theta') , 
\end{multline*}
where
$\displaystyle c = \frac{15}{8 \log(2)(\sqrt{2}-1)} \exp \biggl( \frac{1 + 2 \sqrt{2}}{2} \biggr) \leq 44.3.$\\[1mm]
We observe that the above inequality allows to compare 
$\psi \bigl( \langle \theta, x \rangle^2 - \lambda \bigr)$ 
with the expectation with respect to the Gaussian perturbation $\pi_{\theta}$. 
In terms of the empirical criterion $r_{\lambda}$ we have proved that
\begin{multline*}
r_{\lambda}(\theta) \leq \frac{1}{n} \sum_{i=1}^n \int 
\log \Biggl[ 1 + \langle \theta', X_i \rangle^2  - \frac{\lVert X_i \rVert^2}{
\beta} - \lambda + \frac{1}{2} \biggl( \langle \theta', X_i \rangle^2 - 
\frac{\lVert X_i \rVert^2}{\beta} - \lambda \biggr)^2 \\ 
+ \frac{c \lVert X_i \rVert^2}{\beta} \biggl(
\langle \theta', X_i \rangle^2 + \frac{ \lVert X_i \rVert^2}{2 \beta} 
\biggr) \Biggr] \, \mathrm{d} \pi_{\theta}(\theta').  
\end{multline*}
We are now ready to use 
the following general purpose PAC-Bayesian inequality.
\begin{prop}\label{pac}Let $\nu \in \mathcal M_+^1(\mathbb R^d)$ be a 
prior probability distribution on $\mathbb{R}^d$ and let $f: \mathbb{R}^d \times \mathbb{R}^d  
\to [a, +\infty]$ be a measurable function where $a \in \B{R}$. With probability at 
least $1-\epsilon,$ for any posterior distribution 
$\rho \in  \mathcal M_+^1(\mathbb{R}^d)$,
\[
\int \frac{1}{n} \sum_{i=1}^n f(X_i, \theta') \, \mathrm{d} \rho(\theta') 
\leq \int \log \Bigl\{ 
\mathbb E \bigl[ \exp \bigl( f(X , \theta') \bigr) \bigr] \Bigr\} 
\, \mathrm{d} \rho(\theta') + \frac{\mathcal{K}(\rho, \nu) + \log \bigl( 
\epsilon^{-1} \bigr)}{n}, 
\]
where 
\[
\mathcal{K} (\rho, \nu) = 
\begin{cases} 
\displaystyle\int \log\left(\frac{\mathrm{d} \rho}{\mathrm{d} \nu} \right) \, \mathrm{d} \rho, & \text{if } 
\rho \ll \nu, \\ +\infty, & \text{otherwise,} 
\end{cases}
\]
is the Kullback divergence of $\rho$ with respect to $\nu$. By convention, 
a non measurable event is said to happen with probability 
at least $1 - \epsilon$ when it includes a measurable event 
of probability non-smaller than $1 - \epsilon$. 
\end{prop}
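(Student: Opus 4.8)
The plan is to deduce this from the Donsker--Varadhan variational characterisation of the Kullback divergence, combined with a Chernoff--Markov argument applied to an exponential moment. First I would record the elementary inequality that for any measurable $g$ with $\int \exp(g) \, \mathrm{d} \nu < +\infty$ and any $\rho \in \mathcal{M}_+^1(\mathbb{R}^d)$,
\[
\int g \, \mathrm{d} \rho \leq \log \int \exp(g) \, \mathrm{d} \nu + \mathcal{K}(\rho, \nu).
\]
When $\rho \not\ll \nu$ this is trivial since $\mathcal{K}(\rho,\nu) = +\infty$; when $\rho \ll \nu$ it follows by writing $\int g \, \mathrm{d} \rho = \mathcal{K}(\rho, \nu) + \int \log \bigl( \exp(g) \, \tfrac{\mathrm{d} \nu}{\mathrm{d} \rho} \bigr) \, \mathrm{d} \rho$ and applying Jensen's inequality to the concave function $\log$ under the probability measure $\rho$.

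Next, for a fixed sample $X_1, \dots, X_n$, I would apply this inequality with
\[
g(\theta') = \sum_{i=1}^n \Bigl( f(X_i, \theta') - \log \mathbb{E} \bigl[ \exp( f(X, \theta') ) \bigr] \Bigr),
\]
so that $\exp(g)$ equals the (random) function $h$ whose $\nu$-integral is to be controlled. If $\mathbb{E}[\exp(f(X,\theta'))] = +\infty$ on a set of positive $\rho$-measure, the right-hand side of the claimed bound is $+\infty$ and there is nothing to prove, so one may assume this exponential moment is finite $\rho$-almost everywhere; the measurability of $\theta' \mapsto \mathbb{E}[\exp(f(X,\theta'))]$ and the nonnegativity $h \geq 0$ follow from the joint measurability of $f$ together with $f \geq a$, using Tonelli's theorem. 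By Tonelli's theorem and the independence of the $X_i$,
\[
\mathbb{E}_{\mathrm{P}^{\otimes n}} \biggl[ \int h(\theta') \, \mathrm{d} \nu(\theta') \biggr] = \int \prod_{i=1}^n \frac{\mathbb{E}[\exp(f(X_i, \theta'))]}{\mathbb{E}[\exp(f(X, \theta'))]} \, \mathrm{d} \nu(\theta') = 1.
\]

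Then Markov's inequality shows that the measurable event $\bigl\{ \int h \, \mathrm{d} \nu \leq \epsilon^{-1} \bigr\}$ has probability at least $1 - \epsilon$. On this event, for every posterior $\rho$ simultaneously --- and here no union bound over $\rho$ is needed, because the variational inequality of the first step holds deterministically --- we obtain
\[
\int \sum_{i=1}^n \Bigl( f(X_i, \theta') - \log \mathbb{E}[\exp(f(X,\theta'))] \Bigr) \, \mathrm{d} \rho(\theta') \leq \log \int h \, \mathrm{d} \nu + \mathcal{K}(\rho, \nu) \leq \log(\epsilon^{-1}) + \mathcal{K}(\rho, \nu),
\]
and dividing by $n$ and rearranging gives the statement. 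The measurability convention quoted in the proposition is then only invoked to phrase the conclusion, since the good event is itself measurable and the bound holds on it for all $\rho$. I do not expect a serious obstacle here: the only points requiring care are the possible infinite values of $f$ or of the exponential moment (disposed of by the triviality remark above) and the applicability of Tonelli's theorem, which is guaranteed by the assumed joint measurability of $f$.
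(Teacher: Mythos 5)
Your proof is correct and is precisely the standard argument: the Donsker--Varadhan variational bound for the Kullback divergence applied to the log-Laplace-normalised sum, followed by Tonelli and Markov on the exponential moment. The paper does not prove this proposition itself but refers to \cite{Catoni12}, where the proof given is essentially the same as yours, so there is nothing to add beyond your (appropriately handled) remarks on the degenerate cases where $f$ or the exponential moment is infinite.
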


\vskip 2mm
\noindent 
For the proof we refer to (page 1164 of) \cite{Catoni12}.

\vskip 2mm
\noindent
In our context, we consider as prior distribution $ \nu = \pi_0$
and  
we restrict the result to posterior distributions $\rho$ 
belonging to the family of Gaussian perturbations 
 \[
 \left\{ \pi_{\theta} \sim \mathcal N(\theta, \beta^{-1} \mathrm I) \ | \  \theta \in \mathbb R^d \right\}
 \]
 so that the Kullback divergence is given by 
\[ 
\mathcal{K}(\pi_{\theta}, \pi_0) = \frac{\beta \|\theta\|^2}{2}.
\]
We observe that since the result holds for any choice of the posterior, 
it allows us to obtain uniform results in $\theta.$
More precisely, we apply the above PAC-Bayesian inequality to 
\[
f(X_i, \theta') = \log \left[ 1 + t(X_i, \theta') + \frac{1}{2} t(X_i, \theta')^2+ \frac{c \lVert X_i \rVert^2}{\beta} \left( \langle \theta', X_i \rangle + \frac{\lVert X_i \rVert^2}{2 \beta} \right) \right],
\]
where  $\displaystyle t(x, \theta')= \langle \theta', x \rangle^2 - \frac{\|x\|^2}{\beta} - \lambda$. 
Using the fact that $\log(1+t) \leq t$, 
we get that, with probability at least $1-\epsilon,$ for any $\theta \in \mathbb R^d,$
\begin{align*}
r_{\lambda}(\theta) 
& \leq \int \mathbb E \left[ t(X, \theta')+\frac{1}{2} t(X, \theta')^2 + \frac{c \lVert X \rVert^2}{\beta} \left( \langle \theta', X \rangle^2 + \frac{\lVert X \rVert^2}{2 \beta} \right) \right] \mathrm d \pi_{\theta} (\theta')
\\ & \hspace{45mm} +  \frac{\beta \|\theta\|^2}{2n} + \frac{ \log(\epsilon^{-1})}{n}\\
& = \mathbb E \Biggl[ \langle  \theta, X \rangle^2 - \lambda+ \frac{1}{2} \biggl( \Bigl( \langle \theta, X \rangle^2 - \lambda \Bigr)^2 + \frac{4 \langle \theta, X \rangle^2 \lVert X \rVert^2}{\beta} + \frac{2 \lVert X \rVert^4}{\beta^2} \biggr) \\ 
& \hskip45mm  + \frac{c \lVert X \rVert^2}{\beta} \biggl( \langle \theta, X \rangle^2 + \frac{3 \lVert X \rVert^2}{2 \beta} \biggr) \, \Biggr] + \frac{\beta \lVert \theta \rVert^2}{2 n} + \frac{\log(\epsilon^{-1})}{n}.
\end{align*}
To obtain the last line we have used Lemma \ref{lem:7.1} and equation \myeq{eq:7.4}. 
\vskip2mm
\noindent
Let us recall the definition of $s_4$ and $\kappa$ introduced in equation \eqref{skappa}. We have defined 
\[
s_4  = \mathbb E \bigl[ \|X\|^4 \bigr]^{1/4}  \quad \text{and} \quad \kappa = \sup_{\substack{\theta \in \mathbb{R}^d \\ \mathbb E [ \langle \theta, X \rangle^2 ] > 0}} \frac{\displaystyle\mathbb E \bigl[ \langle \theta , X \rangle^4 \bigr]}{\mathbb E \bigl[ \langle \theta, X \rangle^2 \bigr]^2}.
\]
Using the Cauchy-Schwarz inequality 
and 
\begin{align*}
\mathbb E[\langle \theta, X\rangle^4] & \leq \kappa N(\theta)^2\\
\text{we deduce that } \mathbb E[\langle \theta, X\rangle^2 \|X\|^2] & \leq \kappa^{1/2} s_4^2 N(\theta),
\end{align*}
and we get that, with probability at least $1-\epsilon,$ for any $\theta \in \mathbb R^d,$
\begin{multline}
\label{eq4} 
r_{\lambda}(\theta) \leq 
\frac{\kappa}{2} \bigl[ N(\theta) - \lambda \bigr]^2 + \Biggl[
1 + (\kappa -1) \lambda + \frac{(2+c) \kappa^{1/2} s_4^2}{\beta} 
\, \Biggr] \bigl[ N(\theta) - \lambda \bigr] \\ 
+ \frac{(\kappa-1) \lambda^2}{2} + 
\frac{(2+c) \kappa^{1/2} s_4^2 \lambda}{\beta} + 
\frac{(2 + 3c) s_4^4}{2 \beta^2} + \frac{\beta \lVert \theta \rVert^2}{2n} 
+ \frac{\log(\epsilon^{-1})}{n}. 
\end{multline}

\noindent
According to the (compact) notation introduced in equation \myeq{c_notat}, the above inequality rewrites as
\begin{equation}\label{up}
\frac{r_{\lambda}(\theta)}{\lambda} \leq 
\xi \Biggl( \frac{N(\theta)}{\lambda} - 1 \biggr)^2 + (1 + \mu) \Biggl( \frac{N(\theta)}{\lambda} - 1 \Biggr) + \gamma + \delta \lVert \theta \rVert^2.
\end{equation}
Similarly, observing that 
\[
- r_{\lambda}(\theta) = \frac{1}{n} \sum_{i=1}^n \psi \bigl( \lambda - \langle \theta, X_i \rangle^2 \bigr),
\]
we obtain a lower bound for the empirical criterion $r_{\lambda}.$ Namely, with probability at least $1 - \epsilon$, for any $\theta \in \mathbb{R}^d$, any $(\lambda, \beta) \in \Lambda$, 
\begin{equation}\label{low}
\frac{r_{\lambda}(\theta)}{\lambda} \geq - \xi \Biggl( \, \frac{N(\theta)}{\lambda} - 1 \Biggr)^2 + (1 - \mu) \Biggl( \, \frac{N(\theta)}{\lambda} - 1 \Biggr) - \gamma - \delta \lVert \theta \rVert^2.
\end{equation}
We now combine the two bounds above to get the confidence region for $N(\theta)$ defined in Proposition \ref{prop0}. 
Assume that both equation \eqref{up} and equation \eqref{low} hold for any 
$\theta \in \mathbb R^d$, an event that happens with probability at least $1-2\epsilon.$\\[1mm]
Let us introduce $\displaystyle\tau(\theta) = \frac{\lambda \delta \lVert \theta \rVert^2}{N(\theta)}$ and 
\[ 
p_{\theta}(z) = - \xi z^2 + \bigl[ 1 - \mu - \tau(\theta) \bigr] \, z 
- \gamma - \tau(\theta), \qquad z \in \mathbb{R}.
\]  
We observe that $\tau( \alpha \theta ) = \tau(\theta)$, 
and consequently  $p_{\alpha \theta}(z) = p_{\theta}(z)$ for any $\alpha \in \mathbb{R}_+$.
We consider the case when 
$p_{\theta}(1) > 0$, meaning that 
\begin{equation}
\label{eq14}
\xi + \mu + \gamma + 2 \tau(\theta) < 1.
\end{equation}
In this case, the second degree polynomial $p_{\theta}$ has 
two distinct real roots, $z_{-1}$ and $z_{+1}$, where 
\[ 
z_{\sigma} = \frac{ 1 - \mu - \tau(\theta) + \sigma \sqrt{  \bigl[ 1 - \mu - \tau(\theta) \bigr]^2 - 4 \xi \bigl[ \gamma + \tau(\theta) \bigr]}}{2 \xi}, \qquad \sigma\in \{1,- 1\}. 
\] 
Since equation \eqref{eq14} can also be written as 
$-\xi > -\bigl[ 1 - \mu - \gamma - 2 \tau(\theta) \bigr] $, 
we get
\begin{multline*}
p_{\theta} \biggl( \frac{\gamma + \tau(\theta)}{1 - \mu - \gamma - 2 \tau(\theta)} \biggr) \\
> \frac{-  \bigl[\gamma + \tau(\theta) \bigr]^2}{ 1 - \mu - \gamma - 2 \tau(\theta)} + \bigl[ 1 - \mu - \tau(\theta) \bigr] \frac{\gamma + \tau(\theta)}{1 - \mu - \gamma - 2 \tau(\theta)} - \gamma - \tau(\theta) = 0, 
\end{multline*}
which implies
\[
z_{-1} < \frac{\gamma + \tau(\theta)}{1 - \mu - \gamma - 2 \tau(\theta)} < z_{+1}.
\]
Therefore, since according to equation \eqref{low}, for any $\alpha \in [0, \widehat{\alpha}(\theta)]$, 
\[
p_{\theta} \left( \frac{\alpha^2 N(\theta)}{ \lambda} - 1\right) \leq \frac{r_{\lambda}(\alpha \theta) }{ \lambda }\leq 0,
\]
it is true that
\[
\left[-1, \frac{\widehat{\alpha}(\theta)^2 N(\theta)}{\lambda} - 1 \right] \cap \bigl] z_{-1}, z_{+1} \bigr[ =  \emptyset. 
\] 
Observing that $z_{-1} \geq 0 > -1$, it follows that $\widehat{\alpha}(\theta)^2 N(\theta) / \lambda - 1 \leq z_{-1}.$
This proves that, for any $\theta \in \mathbb{R}^d$ satisfying equation \eqref{eq14}, 
\[ 
N(\theta) \leq \frac{\lambda}{\widehat{\alpha}(\theta)^2} \bigl( 1 + z_{-1} \bigr) 
\leq \frac{\lambda}{\widehat{\alpha}(\theta)^2} \biggl( 1 + \frac{\gamma + \tau(\theta)}{1 - \mu - \gamma - 2 \tau(\theta)} \biggr),
\] 
which rewrites as
\[
\Phi_{\theta,+} \bigl[ N(\theta) \bigr] \leq \frac{\lambda}{\widehat{\alpha}(\theta)^2}.
\]
Moreover, this inequality is trivially true when condition \eqref{eq14}
is not satisfied, because its left-hand side is equal to zero and 
its right-hand side is non-negative. \\[1mm]
Proving the second part of the proposition requires a new argument
and not a mere update of signs in the proof of the first part. 
Although it may seem at first sight that we are just aiming at 
a reverse inequality, the situation is more subtle than that.

\noindent
Let us first remark that in the case when 
\begin{equation}
\label{eq15}
\xi - \mu + \gamma + \delta \widehat{\alpha}(\theta)^2 \lVert \theta \rVert^2 < 1,
\end{equation}
is not satisfied,
the bound
\begin{equation}
\label{phi-bound}
\Phi_{\theta,-} \left( \frac{\lambda}{\widehat{\alpha}(\theta)^2} \right) \leq N(\theta)
\end{equation} 
is trivially satisfied because the left-hand size is equal to zero.
In the case when equation \eqref{eq15} is true, it is also true that $\widehat{\alpha}(\theta) < + \infty$, 
so that $r_{\lambda} \bigl[ \widehat{\alpha}(\theta) \theta \bigr] = 0$, 
and therefore, according to equation \eqref{up}, 
\[ 
0 \leq q_{\widehat{\alpha}(\theta) \theta} \Biggl( \frac{\widehat{\alpha}(\theta)^2 
N(\theta)}{\lambda} - 1 \Biggr), 
\] 
where 
$q_{\theta}(z) = \xi z^2 + (1 + \mu) z + \gamma + \delta \lVert \theta \rVert^2. $\\[1mm]
Since condition \eqref{eq15} can also be written as $q_{\widehat{\alpha}(\theta)
\theta}(-1) < 0$, it implies that the second order polynomial 
$q_{\widehat{\alpha}(\theta) \theta}$ has two roots and that $\frac{\widehat{\alpha}(\theta)^2 N(\theta)}{\lambda} - 1$ is on the right of its largest root, which is larger than $-1$.
On the other hand, we observe that, under condition \eqref{eq15},  
putting $\widehat{\tau}(\theta) = \delta \widehat{\alpha}(\theta)^2 \lVert \theta \rVert^2$, we get 

\[
q_{\widehat{\alpha}(\theta) \theta} \biggl( - \frac{ \gamma + \widehat{\tau}(\theta)}{1 + \mu - \gamma - \widehat{\tau}(\theta)} \biggr) 
 < \frac{ \bigl( \gamma + \widehat{\tau}(\theta) \bigr)^2}{1 + \mu - \gamma - \widehat{\tau}(\theta)} - \frac{ (1 + \mu) \bigl[ \gamma + \widehat{\tau}(\theta) \bigr]}{1 + \mu - \gamma + \widehat{\tau}(\theta)} + \gamma + \widehat{\tau}(\theta) 
= 0.
\]

\noindent
Therefore, when condition \eqref{eq15} is satisfied, 
\[ 
\frac{\widehat{\alpha}(\theta)^2 N(\theta)}{\lambda} - 1 \geq - \frac{\gamma + \widehat{\tau}(\theta)}{1 + \mu - \gamma - \widehat{\tau}(\theta)},
\] 
which rewrites as equation \eqref{phi-bound}.

\subsection{Proof of Proposition \ref{prop2}}\label{proof_prop2}
We first observe that, according to Proposition \ref{prop1}, with probability at least $1 - 2 \epsilon$, 

\begin{align*}
\left| \frac{\max\{N(\theta),\sigma\|\theta\|^2\}}{\max\{ \widehat N(\theta), \sigma\|\theta\|^2\} } - 1\right| &\leq B_{\widehat \lambda, \widehat \beta}\left[ \|\theta\|^{-2} \widehat{N} (\theta) \right]
 = \inf_{(\lambda, \beta) \in \Lambda} B_{\lambda, \beta} \left[   \|\theta\|^{-2}\widetilde N_{\lambda}(\theta) \right]
\end{align*}
since, by definition, $(\widehat \lambda, \widehat \beta)$ are the values which minimize $B_{\lambda, \beta} \Bigl[\|\theta\|^{-2} \widetilde{N}_{\lambda} (\theta) \Bigr].$
According to equation \myeq{phi+eq}, since $\Phi_+\left( \|\theta\|^{-2} N(\theta) \right) \leq \|\theta\|^{-2} \widetilde N(\theta)$
and $B_{\lambda, \beta}$ is a decreasing function,
we get 
\[ 
\Biggl\lvert \,  \frac{\max \bigl\{ N(\theta), \sigma \lVert \theta \rVert^2 
\bigr\}}{\max \bigl\{ 
\widehat{N}(\theta), \sigma \lVert \theta \rVert^2 \bigr\}} - 1 \Biggr\rvert \leq 
\inf_{(\lambda, \beta) \in \Lambda} B_{\lambda, \beta} \Biggl( \;  \frac{ \lVert \theta \rVert^{-2} 
N(\theta) }{ 1 + B_{\lambda, \beta} \Bigl[ \lVert \theta \rVert^{-2} 
N(\theta) \Bigr]} \Biggr).
\] 
With the same notation as in Proposition \ref{prop1}, we introduce the subset $\Gamma'$ of $\Gamma$ defined as
\[
\Gamma' = \Bigl\{ (\lambda, \beta, t ) \in \Lambda \times \mathbb{R}_+ \ | \  
\begin{aligned}[t]
\xi + \mu + \gamma + 4 \delta \lambda / \max \{ t, \sigma \} & < 1, \\
\mu + \gamma + 2 \delta \lambda / \max \{ t, \sigma \} & \leq 1/2, \\
\text{ \rm and }   2 \gamma + \delta \lambda / \max \{ t,  \sigma \}  & \leq 1/2 ~~ \Bigr\}
\end{aligned}
\]
and the function 
\begin{equation}\label{Btilde}
\widetilde{B}_{\lambda, \beta}(t) = \begin{cases} \displaystyle\frac{\gamma + 
\lambda \delta / \max\{ t, \sigma \}}{ 1 - \mu -2 \gamma 
- 4 \lambda \delta / \max\{ t, \sigma \}}, & (\lambda, \beta, t) \in \Gamma', \\ 
+ \infty, & \text{ otherwise.} 
\end{cases}
\end{equation}

\begin{lem}
\label{lem1.14}
For any $(\lambda, \beta) \in \Lambda$ and any $t \in \mathbb{R}_+$, 
\begin{align*}
B_{\lambda, \beta} \Biggl( \frac{t}{1 + B_{\lambda, \beta}(t)} \Biggr) 
& \leq \widetilde{B}_{\lambda, \beta}(t), \\ 
\frac{B_{\lambda, \beta}(t)}{1 - B_{\lambda, \beta}(t)} & \leq \widetilde{B}_{\lambda, 
\beta}(t).
\end{align*}
\end{lem}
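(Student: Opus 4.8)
The plan is to reduce both inequalities to elementary monotonicity of the rational function defining $B_{\lambda,\beta}$ plus one algebraic identity. Write $u = \lambda\delta/\max\{t,\sigma\}$ (strictly positive since $\lambda,\beta>0$), $p = \gamma+u$, $D = 1-\mu-\gamma-2u$, and $B = B_{\lambda,\beta}(t)$. If $(\lambda,\beta,t)\notin\Gamma'$ both bounds are trivial because the right-hand side is $+\infty$, so I assume $(\lambda,\beta,t)\in\Gamma'$. From the three defining inequalities of $\Gamma'$ I will first harvest: $\Gamma'\subset\Gamma$ (so $B=p/D$ is finite), $D\geq 1/2$, $\mu+\gamma+3u<1$ (using $\xi\geq 0$), $p<1/2$, hence $B\leq 2p<1$, and $1-\mu-2\gamma-4u\geq 0$ --- the case of equality again forcing $\widetilde{B}_{\lambda,\beta}(t)=+\infty$, so I may also assume $1-\mu-2\gamma-4u>0$.

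The second inequality is then immediate: a direct computation gives $B/(1-B) = p/(D-p) = p/(1-\mu-2\gamma-3u)$, and since $u\geq 0$ the denominator $1-\mu-2\gamma-3u$ dominates the positive quantity $1-\mu-2\gamma-4u$, so $B/(1-B)\leq p/(1-\mu-2\gamma-4u) = \widetilde{B}_{\lambda,\beta}(t)$.

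For the first inequality set $t' = t/(1+B)$. The preliminary observation is $\max\{t',\sigma\}\geq \max\{t,\sigma\}/(1+B)$ --- proved by splitting on whether $t\geq\sigma$ --- which gives $u' := \lambda\delta/\max\{t',\sigma\}\leq (1+B)u$. The map $f(v) = (\gamma+v)/(1-\mu-\gamma-2v)$ is increasing on $\{v:1-\mu-\gamma-2v>0\}$ (its derivative has numerator $1-\mu+\gamma>0$); using $B\leq 2p$ and $p<1/2$ I will check that $(1+B)u$ lies in that region, so $B_{\lambda,\beta}(t') = f(u')\leq f\bigl((1+B)u\bigr)$. Substituting $1+B = (D+p)/D$ and simplifying numerator and denominator separately yields the identity $f\bigl((1+B)u\bigr) = p(1-\mu-p)/(D^2-2pu)$ (the numerator collapses to $p(D+u)/D = p(1-\mu-p)/D$, the denominator to $(D^2-2pu)/D$). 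It then remains to show $p(1-\mu-p)/(D^2-2pu)\leq p/(1-\mu-2\gamma-4u)$; since $p>0$ and both denominators are positive, cross-multiplying and using $1-\mu-2\gamma-4u = (1-\mu)-2p-2u$ together with $D = (1-\mu)-p-u$ reduces this, after cancellation, to $p^2+2pu-u^2\leq p(1-\mu)$, equivalently $\mu+p+2u\leq 1+u^2/p$, which holds because $\mu+p+2u = \mu+\gamma+3u<1$.

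I expect the only real work to be this last chain: establishing the positivity of $D^2-2pu$ (equivalently of the denominator $1-\mu-\gamma-2(1+B)u$, which follows from $\xi+\mu+\gamma+2(1+B)u<1$) so that the cross-multiplication is legitimate, and carrying out the simplification cleanly enough to recognize the collapse to $p(1-\mu-p)/(D^2-2pu)$, which is exactly what makes the final comparison transparent. Everything else is bookkeeping with the three inequalities defining $\Gamma'$.
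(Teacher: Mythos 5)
Your proof is correct and follows essentially the same route as the paper's: the same rescaling inequality $\max\{t/(1+B),\sigma\}\ge \max\{t,\sigma\}/(1+B)$, the same verification via $B\le 1$ that the rescaled point stays in $\Gamma$, and the same intermediate expression $p(1-\mu-p)/(D^2-2pu)$ after substituting $1+B=(D+p)/D$. The only divergence is in the last line — you cross-multiply against $\widetilde{B}_{\lambda,\beta}(t)$ and reduce to $\mu+\gamma+3u<1$ (using only the first condition defining $\Gamma'$), whereas the paper bounds the correction term $(u^{2}+2\gamma u)/(1-\mu-\gamma-u)$ by $u$ using the other two conditions; both are one-step consequences of the definition of $\Gamma'$.
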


\begin{proof}
We first observe that when $(\lambda, \beta, t) \not \in \Gamma'$ then $\widetilde{B}_{\lambda, \beta}(t) = + \infty$ and hence the two inequalities are trivial. We now assume $(\lambda, \beta, t) \in \Gamma'$ and we put  $\tau = \lambda \delta / \max \{ t, \sigma \}.$ We prove the second inequality first. Since $\Gamma' \subset \Gamma$, we have 
\[ 
\frac{B_{\lambda, \beta}(t)}{1 - B_{\lambda, \beta}(t)} = \frac{\gamma + \tau}{1 - \mu - 2 \gamma - 3 \tau} \leq \widetilde{B}_{\lambda, \beta}(t).
\] 

\noindent
In order to prove the first inequality, we first check that $\left(\lambda, \beta,  \frac{t }{ 1 + B_{\lambda, \beta}(t) } \right)\in \Gamma.$
We start observing that, since 
\[
\max \left\{ t / [ 1 + B_{\lambda, \beta}(t) ], \sigma \right\} \geq \max \{ t, \sigma \} / [1 + B_{\lambda, \beta}(t) ],
\]
then
\begin{align*}
\xi + \mu + \gamma + 2 \delta \lambda / \max \left\{\frac{ t}{ 1 + B_{\lambda, \beta}(t) } , \sigma \right \} 
& \leq \xi + \mu + \gamma + 2 [1 + B_{\lambda, \beta}(t)]  \frac{\delta \lambda  }{ \max \{ t, \sigma \} } \\
& = \xi + \mu + \gamma + 2 \tau + 2 \tau B_{\lambda, \beta}(t).
\end{align*}

\noindent
Moreover, as $\left(\lambda, \beta, t \right) \in \Gamma'$, we get
$$B_{\lambda, \beta}(t) = \frac{\gamma+\tau}{ 1- \mu-\gamma-2\tau} \leq 1,$$
so that
$$\xi + \mu + \gamma + 2 \delta \lambda / \max \{ t / [ 1 + B_{\lambda, \beta}(t) ] , \sigma \}  \leq \xi + \mu + \gamma + 4 \tau < 1, $$
which proves that, indeed, $\left(\lambda, \beta,  \frac{t }{ 1 + B_{\lambda, \beta}(t) } \right) \in \Gamma.$
Therefore

\begin{align*}
B_{\lambda, \beta} \Biggl( \frac{t}{1 + B_{\lambda, \beta}(t)} \Biggr) 
& \leq \frac{\gamma + \tau \left( 1+  B_{\lambda, \beta}(t) \right)}{1 - \mu - \gamma - 2 \tau [1+ \tau B_{\lambda, \beta}(t)]} \\
& = \frac{\left(\gamma + \tau \right) \left( 1+  \tau / (1 - \mu - \gamma - 2 \tau) \right)}{1 - \mu - \gamma - 2 \tau - 2 \tau B_{\lambda, \beta}(t)} ,
\end{align*}

\noindent
where in the last line we have used the definition of $B_{\lambda, \beta}.$ Observing that 
$$ 1 - \mu - \gamma - 2 \tau - 2 \tau B_{\lambda, \beta}(t) =  \frac{(1 - \mu - \gamma - 2 \tau)^2-2 \tau \left(\gamma + \tau \right) }{1 - \mu - \gamma - 2 \tau},$$

\noindent
we obtain 
\begin{align*}
B_{\lambda, \beta} \Biggl( \frac{t}{1 + B_{\lambda, \beta}(t)} \Biggr) 
& \leq \frac{\left(\gamma + \tau \right) \left( 1 - \mu - \gamma -  \tau \right)}{(1 - \mu - \gamma - 2 \tau)^2-2 \tau \left(\gamma + \tau \right) }\\
& =  \frac{\left(\gamma + \tau \right) \left( 1 - \mu - \gamma -  \tau \right)}{(1 - \mu - \gamma - \tau)^2 + \tau^2 - 2 \tau(1 - \mu - \gamma - \tau) - 2 \tau^2 - 2 \gamma \tau}\\
& =  \frac{\gamma + \tau}{1 - \mu - \gamma - \tau  - 2 \tau -\left(  \tau^2 + 2 \gamma \tau\right)/ \left( 1 - \mu - \gamma -  \tau \right)}.
\end{align*}

\noindent
Considering that 
\begin{align*}
\left( \tau^2 + 2 \gamma \tau\right)/ \left( 1 - \mu - \gamma -  \tau \right)  \leq \tau,
\end{align*}

\noindent
since when $(\lambda, \beta, t) \in \Gamma'$, it is true that $1 - \mu - \gamma - \tau \geq 1/2$ and $ 2\gamma + \tau \leq 1/2,$ we conclude that 
\[
B_{\lambda, \beta} \Biggl( \frac{t}{1 + B_{\lambda, \beta}(t)} \Biggr)  \leq \frac{\gamma + \tau}{1 - \mu - \gamma - 4\tau } = \widetilde{B}_{\lambda, \beta} (t).
\]
\end{proof}

\vskip2mm
\noindent
Applying the above lemma to our problem we get that, with probability at least $1 - 2 \epsilon$, 
for any $\theta \in \mathbb{R}^d$, 
\begin{equation}\label{boundwt}
\left\lvert \, \frac{\max \{ N(\theta), \sigma \lVert \theta \rVert^2 \}}{\max \{ \widehat{N}(\theta), \sigma \lVert \theta \rVert^2 \}} - 1 \, \right\rvert \leq \inf_{(\lambda, \beta) \in \Lambda} \widetilde{B}_{\lambda, \beta} \bigl[ \lVert \theta \rVert^{-2} N(\theta) \bigr].
\end{equation}

\vskip 2mm
\noindent
Let us recall the definition of the finite set $\Lambda$ given in \myeq{defLambda}. 
Let $a>0$ and 
\[ 
 K  = 1 + \left\lceil a^{-1} \log \biggl( \frac{n}{72(2+c) \kappa^{1/2}} \biggr) \right\rceil.
\] 
We define
\begin{align*}
\Lambda & = \bigl\{ (\lambda_j, \beta_j) \ | \ 0 \leq j < K \bigr\}, \\
\text{where } \quad \lambda_j & = \sqrt{ \frac{2}{n (\kappa - 1)} \Biggl(\frac{(2 + 3c)}{4(2 + c) \kappa^{1/2} \exp (-j a) }  + \log(K / \epsilon) \Biggr)} \\ 
\text{and } \quad \beta_j & = \sqrt{2 (2 + c) \kappa^{1/2} s_4^4 n \exp \bigl[  - (j-1/2) a \bigr]}. 
\end{align*}

\vskip1mm
\noindent
We introduce the explicit bound 
\[
\zeta (t) = \sqrt{ 2 (\kappa-1) \Biggl( \frac{(2 + 3c) s_4^2}{4(2+c) \kappa^{1/2} t} + \log(K / \epsilon) \Biggr)} \cosh(a/4) + \sqrt{ \frac{2(2+c)\kappa^{1/2} s_4^2}{ t}} \cosh ( a/ 2)
\]
and 
\[ 
B_*(t) = \begin{cases} 
\displaystyle\frac{n^{-1/2} \zeta(\max \{ t, \sigma \} )}{1 - 4 \, n^{-1/2} \zeta( \max \{ t, \sigma \} )} &  \bigl[ 6 + (\kappa-1)^{-1} \bigr] \zeta( \max \{ t, \sigma \} ) \leq \sqrt{n} \\ 
+ \infty & \text{ otherwise.}
\end{cases} 
\]

\begin{lem}
For any $t \in \mathbb{R}_+$, we have
\begin{equation}
\label{eq1.17}
\inf_{(\lambda, \beta) \in \Lambda} \widetilde{B}_{\lambda, \beta}(t) \leq B_*
\bigl( \min \{ t, s_4^2 \} \bigr).
\end{equation}
\end{lem}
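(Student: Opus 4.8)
Since $\widetilde B_{\lambda,\beta}(t)=+\infty$ off $\Gamma'$ and $B_*(\min\{t,s_4^2\})=+\infty$ when its defining inequality fails, I may assume $\bigl[6+(\kappa-1)^{-1}\bigr]\zeta(v)\le\sqrt n$, where I write $P=(2+c)\kappa^{1/2}$ and $v=\max\{\min\{t,s_4^2\},\sigma\}=\min\{\max\{t,\sigma\},s_4^2\}$, so that $B_*(\min\{t,s_4^2\})=n^{-1/2}\zeta(v)\bigl/\bigl(1-4n^{-1/2}\zeta(v)\bigr)$. Because $v\le\max\{t,\sigma\}$, and replacing $\max\{t,\sigma\}$ by the smaller $v$ only enlarges the fraction defining $\widetilde B_{\lambda,\beta}$ while making each defining inequality of $\Gamma'$ harder, it suffices to produce one index $j$ with $(\lambda_j,\beta_j)\in\Lambda$ for which the expression obtained from $\widetilde B_{\lambda_j,\beta_j}$ with $\max\{t,\sigma\}$ replaced by $v$ is finite and at most $n^{-1/2}\zeta(v)/\bigl(1-4n^{-1/2}\zeta(v)\bigr)$; then $(\lambda_j,\beta_j,t)\in\Gamma'$ a fortiori and $\widetilde B_{\lambda_j,\beta_j}(t)$ is bounded by this same quantity.

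\textbf{Choice of $j$ and the core estimates.} Spelling out the grid, $\beta_j=s_4^2\sqrt{2Pn}\,e^{-(j-1/2)a/2}$, $\tfrac{n(\kappa-1)}{2}\lambda_j^2=\tfrac{2+3c}{4P}e^{ja}+\log(K/\epsilon)$, and $\lambda_j\delta_j=\beta_j/(2n)$. Hence the numerator $\gamma+\lambda_j\delta_j/v$ splits as (i) an entropy/variance part controlled by $\sqrt{\tfrac{2(\kappa-1)}{n}\bigl(\tfrac{2+3c}{4P}e^{ja}+\log(K/\epsilon)\bigr)}$, monotone increasing in $e^{ja}$ after one checks that the two remaining terms of $\gamma$ satisfy $\tfrac{(2+3c)s_4^4}{2\beta_j^2\lambda_j}+\tfrac{\log(K/\epsilon)}{n\lambda_j}\le\tfrac{(\kappa-1)\lambda_j}{2}$ (using $e^{(j-1/2)a}\le e^{ja}$ and $\log(K/\epsilon)\le\tfrac{n(\kappa-1)\lambda_j^2}{2}$); and (ii) a ``noise'' part $\sqrt{\tfrac{P}{2n}}\bigl(e^{(j-1/2)a/2}+\tfrac{s_4^2}{v}e^{-(j-1/2)a/2}\bigr)$, a sum of two reciprocal geometric terms balanced when $e^{ja}=s_4^2/v$. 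The hypothesis $\bigl[6+(\kappa-1)^{-1}\bigr]\zeta(v)\le\sqrt n$ forces $\zeta(v)\ge\sqrt{2Ps_4^2/v}$, hence $v\ge 72Ps_4^2/n$, i.e. $\tfrac1a\log(s_4^2/v)\le\tfrac1a\log\bigl(n/(72P)\bigr)\le K-1$; since also $s_4^2/v\ge1$, one may take for $j$ an integer in $\{0,\dots,K-1\}$ within a half–step of $\tfrac1a\log(s_4^2/v)$, chosen so that $e^{ja}$ does not overshoot the balanced value by more than the admissible amount. With such a $j$ I would bound part (i) by $n^{-1/2}\sqrt{2(\kappa-1)\bigl(\tfrac{(2+3c)s_4^2}{4Pv}+\log(K/\epsilon)\bigr)}\,\cosh(a/4)$ — the $\cosh(a/4)$ absorbing both the half–step offset and the $e^{-a/2}$ loss coming from the $e^{(j-1/2)a}$ versus $e^{ja}$ mismatch, via $\tfrac{1+e^{-a/2}}{2}=e^{-a/4}\cosh(a/4)$ and $1-e^{-a/2}=2e^{-a/4}\sinh(a/4)$ together with $\cosh(a/4)+\sinh(a/4)=e^{a/4}$ — and part (ii) by $n^{-1/2}\sqrt{2Ps_4^2/v}\,\cosh(a/2)$, using the elementary inequality $e^{y}+Qe^{-y}\le 2\sqrt Q\,\cosh\bigl|\,y-\tfrac12\log Q\,\bigr|$ with $Q=s_4^2/v$ and $|\,(j-1/2)a/2-\tfrac12\log(s_4^2/v)\,|\le a/2$. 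Adding these gives numerator $\le n^{-1/2}\zeta(v)$; the same bounds give $\mu+2\gamma+4\lambda_j\delta_j/v\le 4n^{-1/2}\zeta(v)$, so the denominator is $\ge 1-4n^{-1/2}\zeta(v)>0$, while $\xi+\mu+\gamma+4\lambda_j\delta_j/v\le\bigl[6+(\kappa-1)^{-1}\bigr]n^{-1/2}\zeta(v)\le1$ — the extra $(\kappa-1)^{-1}$ accommodating $\xi=\tfrac\kappa2\lambda_j$ against $\tfrac{\kappa-1}{2}\lambda_j$ — which places $(\lambda_j,\beta_j,v)$ in $\Gamma'$, the other two defining inequalities of $\Gamma'$ being weaker. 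Therefore $\widetilde B_{\lambda_j,\beta_j}(t)\le n^{-1/2}\zeta(v)\bigl/\bigl(1-4n^{-1/2}\zeta(v)\bigr)=B_*\bigl(\min\{t,s_4^2\}\bigr)$.

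\textbf{Main obstacle.} The delicate step is the bookkeeping in the second paragraph: one must choose the integer $j$ so that simultaneously $e^{ja}$ stays close enough to $s_4^2/v$ from below for part (i) and $(j-\tfrac12)$ lies within $1$ of $\tfrac1a\log(s_4^2/v)$ for part (ii), and verify that the slacks available from $\cosh(a/4),\cosh(a/2)\ge1$ and from $e^{ja}$ lying strictly below $s_4^2/v$ jointly cover the discretization loss for every value of the fractional part of $\tfrac1a\log(s_4^2/v)$. This is elementary but intricate, and it is precisely what makes the grid spacing $a$ enter the final bound only through the factors $\cosh(a/4)$ and $\cosh(a/2)$. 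Everything else — the reduction to $v$, the passage $\bigl[6+(\kappa-1)^{-1}\bigr]\zeta(v)\le\sqrt n\Rightarrow v\ge 72Ps_4^2/n$, and the deduction of the displayed bound from the $\Gamma'$–estimates — is routine.
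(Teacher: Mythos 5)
Your proposal follows the paper's proof essentially step for step: the reduction to $v=\min\{\max\{t,\sigma\},s_4^2\}$, the deduction $v\ge 72Ps_4^2/n$ from the finiteness of $B_*$, the choice of an index $j$ within a half-step of $a^{-1}\log(s_4^2/v)$, the split of $\gamma+\lambda\delta/v$ into an entropy part and a noise part carrying the factors $\cosh(a/4)$ and $\cosh(a/2)$, and the verification of the $\Gamma'$ conditions through $\mu\le 2\gamma$ and $\xi\le\gamma\bigl(1+(\kappa-1)^{-1}\bigr)$. The noise part (your reciprocal-geometric-terms computation), the $\Gamma'$ checks and both reductions are correct as written.

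There is, however, one concrete gap, located exactly where you flag the ``delicate bookkeeping''. The chain you state for part (i), namely bounding $\frac{(2+3c)s_4^4}{2\beta_j^2\lambda_j}+\frac{\log(K/\epsilon)}{n\lambda_j}$ by $\frac{(\kappa-1)\lambda_j}{2}$ via $e^{(j-1/2)a}\le e^{ja}$, yields the total $(\kappa-1)\lambda_j$ for part (i); since the half-step choice only guarantees $e^{ja}\le (s_4^2/v)\,e^{a/2}$, i.e. $\lambda_j\le\lambda_* e^{a/4}$, this gives $(\kappa-1)\lambda_*e^{a/4}$, and $e^{a/4}>\cosh(a/4)$, so the stated $\cosh(a/4)$ (hence the constant $2.032$ in $\zeta_*$) does not follow. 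Shifting $j$ downward so that $e^{ja}\le s_4^2/v$ does not repair this either: then $(j-\tfrac12)a-\log(s_4^2/v)$ ranges over $[-3a/2,-a/2]$ and part (ii) degrades to $\cosh(3a/4)$. The paper's resolution is not a cleverer choice of $j$ but a sharper grouping: with the nearest-integer $j$ one always has $e^{(j-1/2)a}\le s_4^2/v$ (this is what the $-\tfrac12$ shift in the exponent of $\beta_j$ buys), whence
\[
\frac{(2+3c)s_4^4}{2\beta_j^2\lambda_j}+\frac{\log(K/\epsilon)}{n\lambda_j}
\le\frac{1}{n\lambda_j}\Bigl[\tfrac{(2+3c)s_4^2}{4Pv}+\log(K/\epsilon)\Bigr]
=\frac{(\kappa-1)\lambda_*^2}{2\lambda_j},
\]
so that part (i) becomes $\tfrac{\kappa-1}{2}\bigl(\lambda_j+\lambda_*^2/\lambda_j\bigr)=(\kappa-1)\lambda_*\cosh\bigl(\log(\lambda_j/\lambda_*)\bigr)\le(\kappa-1)\lambda_*\cosh(a/4)$, which is exactly the first term of $n^{-1/2}\zeta(v)$. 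With this substitution the rest of your argument goes through unchanged.
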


\begin{proof} We recall that the function $\widetilde B_{\lambda, \beta}$ is non-increasing so that 
\[
\widetilde B_{\lambda, \beta}(t) \leq \widetilde 
B_{\lambda, \beta} \bigl( \min\{t,s_4^2\} \bigr).
\]
Moreover, since
\[
\max\bigl\{  \min \{ t, s_4^2 \}, \sigma \bigr\} =  \min\bigl\{ \max\{ t, \sigma\},  s_4^2\bigr\},
\]
it is sufficient to prove the result for $ \max\{ t, \sigma\}\in [0,s_4^2]$.\\[1mm]
As equation \eqref{eq1.17} is trivial when $B_*(t) = + \infty,$
we may assume that $B_*(t) < + \infty$,  so that 
$6 \, \zeta(\max\{t, \sigma\}) \leq \sqrt n.$ In particular, by considering only the second term in 
the definition of $\zeta,$ we obtain that 
\[ 
\sqrt{ \frac{2(2+c)\kappa^{1/2} s_4^2}{ \max \{ t, \sigma \}}}  \leq \sqrt{ \frac{2(2+c)\kappa^{1/2} s_4^2}{ \max \{ t, \sigma \} }} \cosh ( a/ 2)\leq \frac{\sqrt n}{6}, 
\]
which implies
\[ 
\frac{\max \{ t, \sigma \} }{s_4^2} \geq \frac{72 (2 + c) \kappa^{1/2}}{n} \geq \exp\left(  -a (K -1) \right). 
\] 
Therefore, since
\[
\log\left(\frac{\max \{ t, \sigma \}}{s_4^2}\right) \in \Bigl[ - a(K-1), 0 \Bigr],
\]
there exists $\widehat{\jmath} \in \{ 0 , \dots,  K-1\}$
for which
\begin{equation}
\label{eqconsut}
\Biggl\lvert \log \biggl( \frac{\max \{ t, \sigma \}}{s_4^2}  \biggr) + \widehat{\jmath} a \Biggr\rvert \leq a / 2.
\end{equation}
We recall that by equation \myeq{c_notat},
\[ 
\gamma + \delta \lambda / \max \{ t , \sigma \}  = \frac{\lambda}{2}(\kappa-1) + \frac{(2+c) \kappa^{1/2} s_4^2}{\beta} + \frac{(2 + 3 c) s_4^4}{2 \beta^2 \lambda} + \frac{\log(K / \epsilon)}{n 
\lambda} + \frac{\beta}{2 n \max \{ t , \sigma \}}
\] 
and we observe that $(\lambda_*, \beta_*)$ defined as
\begin{align}
\label{eq1.33}
\lambda_* & = \sqrt{ \frac{2}{n (\kappa - 1)} 
\Biggl(  
\frac{(2 + 3c) s_4^2}{4 (2 + c) k^{1/2} \max \{ t, \sigma \}} + \log(K / \epsilon)
\Biggr)} \\ 
\beta_* & = \sqrt{ 2(2+c) k^{1/2} s_4^2 \max \{ t, \sigma \} n }  
\end{align}
are the desired values that optimize $\gamma + \delta \lambda / \max \{ t ,\sigma \} .$ We also remark that, by equation \eqref{eqconsut},
\begin{align}
\label{eqbj}
\beta_{\widehat{\jmath}} \, \exp( - a/2) \leq & \beta_* \leq \beta_{\widehat{\jmath}} \\ 
\label{eq33}
\lambda_{\widehat{\jmath}} \, \exp( - a / 4) \leq & \lambda_* \leq 
\lambda_{\widehat{\jmath}} \, \exp ( a / 4). 
\end{align}

\noindent
Thus, evaluating $\gamma + \delta \lambda / \max \{ t , \sigma \} $ in $(\lambda_{\widehat{\jmath}}, \beta_{\widehat{\jmath}}) \in \Lambda,$ we obtain  that

\begin{multline*}
\gamma_{\widehat{\jmath}} + \delta_{\widehat{\jmath}} \lambda_{\widehat{\jmath}} / \max \{ t , \sigma \}   \\
= \frac{\lambda_*(\kappa-1)}{2} \frac{\lambda_{\widehat{\jmath}}}{\lambda_*}+ 
\frac{(2+c) \kappa^{1/2} s_4^2}{\beta_*} \frac{\beta_*}{\beta_{\widehat{\jmath}}} 
+ \frac{(2 + 3 c) s_4^4}{2 \, \beta_{\widehat{\jmath}}^2 \, \lambda_*}  
\frac{\lambda_*}{\lambda_{\widehat{\jmath}}}+ \frac{\log(K / \epsilon)}{n\lambda_*} 
\frac{\lambda_*}{\lambda_{\widehat{\jmath}}}+ \frac{\beta_*}{2 n \max \{ t 
,\sigma \}} \frac{\beta_{\widehat{\jmath}}}{\beta_*}\\
 \leq  \frac{\lambda_*(\kappa-1)}{2} \frac{\lambda_{\widehat{\jmath}}}{\lambda_*}
+ \frac{1}{n \lambda_*} \left[\frac{(2 + 3c) s_4^2}{4 (2 + c) k^{1/2} 
\max \{ t, \sigma \}} + \log(K / \epsilon) \right] \frac{\lambda_*}{
\lambda_{\widehat{\jmath}}} \\
 \shoveright{ + \sqrt{\frac{(2+c) \kappa^{1/2} s_4^2}{2 n \max \{ t, \sigma \}}} \left(  \frac{\beta_*}{\beta_{\widehat{\jmath}}} + \frac{\beta_{\widehat{\jmath}}}{\beta_*}\right)} \\
 \leq \sqrt{\frac{2(\kappa-1)}{n} \left[\frac{(2 + 3c) s_4^2}{4 (2 + c) k^{1/2} \max \{ t, \sigma \}} + \log(K / \epsilon) \right]} \cosh \Biggl[ \log\left( \frac{\lambda_{\widehat{\jmath}}}{\lambda_*} \right) \Biggr]  \\
 + \sqrt{\frac{2 (2+c) \kappa^{1/2} s_4^2}{n \max \{ t, \sigma\}}} \cosh 
\Biggl[ \log\left( \frac{\beta_{\widehat{\jmath}}}{\beta_*}\right) \Biggr].
\end{multline*}

\noindent
By equation \eqref{eqbj} we get
\begin{multline*}
\gamma_{\widehat{\jmath}} + \delta_{\widehat{\jmath}} \lambda_{\widehat{\jmath}} / 
\max \{ t , \sigma \}   \\ 
\leq \sqrt{\frac{2(\kappa-1)}{n} \left[\frac{(2 + 3c) s_4^2}{4 (2 + c) k^{1/2} \max \{ t, \sigma \}} + \log(K / \epsilon) \right]} \cosh \left( \frac{a}{4} \right) \\
 + \sqrt{\frac{2 (2+c) \kappa^{1/2} s_4^2}{n \max \{ t, \sigma \}}} \cosh \left( \frac{a}{2}\right)  .
\end{multline*}
We also observe that
$$\mu_{\widehat{\jmath}} + \gamma_{\widehat{\jmath}} + 4 \delta_{\widehat{\jmath}} \lambda_{\widehat{\jmath}} / \max \{ t, \sigma \} \leq  4 \bigl[ \gamma_{\widehat{\jmath}} + \delta_{\widehat{\jmath}} \lambda_{\widehat{\jmath}} / \max \{ t, \sigma \} \bigr] \leq 4n^{-1/2} \zeta(t),$$  
since by definition  $\mu_{\widehat{\jmath}} \leq 2 \gamma_{\widehat{\jmath}}.$
In the same way, observing that
$$ \xi_ {\widehat{\jmath}} = \frac{\kappa\lambda_{\widehat{\jmath}}}{2} \leq \gamma_{\widehat{\jmath}} \left(1+\frac{1}{\kappa-1}\right)$$
we obtain 
\begin{align*}
\xi_{\widehat{\jmath}} + \mu_{\widehat{\jmath}} + \gamma_{\widehat{\jmath}} + 4 \delta_{\widehat{\jmath}} \lambda_{\widehat{\jmath}} / \max \{ t, \sigma \} & <  \bigl[ 4 + (\kappa -1)^{-1} \bigr] \gamma_{\widehat{\jmath}} + 4 \delta_{\widehat{\jmath}} \lambda_{\widehat{\jmath}} / \max \{ t ,\sigma\} \\
&\leq  \bigl[6 + (\kappa-1)^{-1} \bigr]n^{-1/2}  \zeta( \max \{ t ,\sigma\})
\end{align*}

\noindent
and similarly,
\begin{align*}
2 \bigl[ \mu_{\widehat{\jmath}} + \gamma_{\widehat{\jmath}} + 2 \delta_{\widehat{\jmath}} \lambda_{\widehat{\jmath}} / \max \{ t, \sigma\} \bigr] & \leq 6n^{-1/2}  \zeta( \max \{ t ,\sigma\}), \\ 
2 \bigl[  2\gamma_{\widehat{\jmath}} + \delta_{\widehat{\jmath}} \lambda_{\widehat{\jmath}} / \max \{ t, \sigma \} \bigr] & \leq 4 n^{-1/2} \zeta( \max \{ t ,\sigma\}).
\end{align*}
This implies that, whenever $B_*(t) < + \infty$, then $(\lambda_{\widehat{\jmath}}, \beta_{\widehat{\jmath}}, t) \in \Gamma'.$ We have then proved that
\[ 
\inf_{(\lambda, \beta) \in \Lambda} \widetilde{B}_{\lambda, \beta}(t) 
\leq \widetilde{B}_{\displaystyle\lambda_{\widehat{\jmath}}, \beta_{\widehat{\jmath}}} (t) 
\leq \frac{n^{-1/2}\zeta( \max \{ t ,\sigma\})}{1 - 4 n^{-1/2}\zeta( \max \{ t ,\sigma\})} =  B_*(t). 
\] 
\end{proof}

\vskip 2mm
\noindent
Applying the above lemma to equation \myeq{boundwt} and observing that, for any $\theta \in \mathbb{R}^d$,  
\[ 
\lVert \theta \rVert^{-2} N(\theta) \leq \mathbb E \bigl[ \lVert X \rVert^2 \big] \leq s_4^2,
\] 
we obtain that, with probability at least $1 - 2 \epsilon$, 
for any $\theta \in \mathbb{R}^d$, 
\[ 
\left\lvert \, \frac{\max \{ N(\theta), \sigma \lVert \theta \rVert^2 \}}{\max \{ \widehat{N}(\theta), \sigma \lVert \theta \rVert^2 \}} - 1 \, \right\rvert \leq B_* \Bigl[ \lVert \theta \rVert^{-2} N(\theta) \Bigr].
\] 

\vskip 1mm
\noindent
Since by the Cauchy-Schwarz inequality 
\[
s_4^2 \leq \sqrt \kappa \mathbf{Tr}(G)
\]
we get
\begin{multline}
\label{eq:precise}
\zeta (t) \leq \sqrt{ 2 (\kappa-1) \Biggl( \frac{(2 + 3c) \; \mathbf{Tr}(G)}{4(2+c) t} 
+ \log(K / \epsilon) \Biggr)} \cosh(a/4)  
\\ + \sqrt{ \frac{2(2+c)\kappa \; 
\mathbf{Tr} (G)}{ t}} \cosh ( a/ 2).
\end{multline}
Choosing $a=1/2$ and computing explicitly numerical constants concludes the proof. 

\subsection{Proof of Proposition \ref{propq}}\label{proof_propq}
We observe that it is sufficient to prove that with probability at least $1-2\epsilon$ 
\begin{equation}\label{pf2}
\begin{aligned}
 \Phi_- \circ \Phi_+ \Bigl(  \langle \theta, \mathcal{Q} \theta \rangle_{\mathcal H}- \eta \Bigr) & \leq N \bigl( \Pi_k \theta \bigr) + \eta \\
 \Phi_- \circ \Phi_+ \Bigl( N \bigl( \Pi_k \theta \bigr) - \eta \Bigr) & \leq \langle \theta, \mathcal{Q} \theta \rangle_{\mathcal H}+\eta,
\end{aligned}
\end{equation}
where $\eta =2\delta \sqrt{ \mathbf{Tr}(\mathcal{G}^2)}$ and $N \bigl( \Pi_k \theta \bigr)  = \langle \Pi_k \theta, \mathcal{G} \Pi_k \theta \rangle_{\mathcal H}.$
Indeed, if equation \eqref{pf2} is satisfied, according to the postponed Corollary \ref{lem1A}, 
\begin{align*}
\bigl\lvert \max \bigl\{ \langle \theta, \mathcal{Q} \theta \rangle_{\mathcal H}, \sigma \bigr\} - \max \bigl\{ N \bigl( \Pi_k \theta \bigr)  , \sigma \bigr\}  \bigr\rvert 
& \leq 2 \max \bigl\{N \bigl( \Pi_k \theta \bigr) , \sigma \bigr\} B_* \bigl( N \bigl( \Pi_k \theta \bigr)  \bigr) + 5\eta/2 \\  
\bigl\lvert \max \bigl\{ \langle \theta, \mathcal{Q} \theta \rangle_{\mathcal H}, \sigma \bigr\} - \max \bigl\{ N \bigl( \Pi_k \theta \bigr) , \sigma \bigr\} \bigr\rvert 
& \leq 2 \max \bigl\{ \langle \theta, \mathcal{Q}\theta \rangle_{\mathcal H}, \sigma \bigr\} B_* \bigl( \min \{ \langle \theta, \mathcal{Q} \theta \rangle_{\mathcal H}, s_4^2 \} \bigr) \\ & \hspace{40ex} +5 \eta/2,   
\end{align*}
which is the analogous, in the infinite-dimensional setting, of Proposition \ref{prop3}. Thus, following the proof of Proposition \ref{prop4} we obtain the desired bounds.\\[1mm]
Let us now prove equation \eqref{pf2}. 
Observe that, for any $\theta \in \mathbb S_{\mathcal{H}}$, 
\[
\langle \theta, \mathcal{Q} \theta \rangle_{\mathcal H} = \langle \Pi_{V_k} \theta, 
\mathcal{Q} \Pi_{V_k} \theta \rangle_{\mathcal H} 
\leq \lVert \Pi_{V_k} \theta \rVert_{\mathcal H}^2 \bigl( \langle \xi, \mathcal{Q} \xi 
\rangle_{\mathcal H} + \eta \bigr),  
\]
where $\xi \in \Theta_\delta$ is the closest point in $\Theta_{\delta}$ 
to $\lVert \Pi_{V_k} \theta \rVert_{\mathcal H}^{-1} \Pi_{V_k} \theta$.
Since $\xi \in \mathcal{H}_k$, with probability at least $1 - \epsilon$, 
for any $(\lambda, \beta) \in \Lambda$, 
\[ 
\langle \xi, \mathcal{Q} \xi \rangle_{\mathcal H} \leq \Phi_+^{-1} \bigl( \widetilde{N}_{\lambda} 
(\xi) \bigr) 
= \Phi_+^{-1} \Bigl[ \widetilde{N}_{\lambda} \Bigl( \xi + \lVert \Pi_{V_k} 
\theta \rVert_{\mathcal H}^{-1} \bigl(\Pi_k - \Pi_{V_k} \bigr) \theta \Bigr) \Bigr].
\] 
Let us now remark that for any $a \in [0,1]$, we have
$ \Phi_+(a t) \leq a \Phi_+(t)$,
so that $a \Phi_+^{-1} (t) \leq \Phi_+^{-1}(a t)$. 
Therefore 
\begin{multline*}
\langle \theta, \mathcal{Q} \theta \rangle_{\mathcal H} \leq 
\lVert \Pi_{V_k} \theta \rVert_{\mathcal H}^2 \ \Phi_+^{-1} \Bigl\{ \widetilde{N}_{\lambda} 
\Bigl[ \lVert \Pi_{V_k} \theta \rVert_{\mathcal H}^{-1} 
\Bigl(  \lVert \Pi_{V_k} \theta 
\rVert_{\mathcal H} \xi + \bigl( \Pi_k - \Pi_{V_k} \bigr) \theta \Bigr) \Bigr]  
\Bigr\} + \eta \\
\quad \ \leq 
\lVert \Pi_{V_k} \theta \rVert_{\mathcal H}^2 \ \Phi_+^{-1} \circ \Phi_-^{-1}  
\Bigl\{ N 
\Bigl[ \lVert \Pi_{V_k} \theta \rVert_{\mathcal H}^{-1} 
\Bigl(  \lVert \Pi_{V_k} \theta 
\rVert_{\mathcal H} \xi + \bigl( \Pi_k - \Pi_{V_k} \bigr) \theta \Bigr) \Bigr]  
\Bigr\} + \eta \\ \leq 
\Phi_+^{-1} \circ \Phi_-^{-1} \Bigl\{ 
N \Bigl[ \lVert \Pi_{V_k} \theta \rVert_{\mathcal H} \xi + \bigl( 
\Pi_k - \Pi_{V_k} \bigr) \theta \Bigr) \Bigr] \Bigr\} + \eta
\\ \leq \Phi_+^{-1} \circ \Phi_-^{-1} \Bigl( N \bigl( \Pi_k \theta \bigr) 
+ \eta \Bigr) + \eta. 
\end{multline*}
Indeed, 
\[ 
\Bigl\lVert \Bigl( \lVert \Pi_{V_k} \theta \rVert_{\mathcal{H}} \xi + \bigl( 
\Pi_k - \Pi_{V_k} \bigr) \theta \Bigr) - \Pi_k \theta \Bigr\rVert \leq \delta, 
\] 
and this is a difference of two vectors belonging to the unit ball. 
In the same way 
\begin{multline*} 
\langle \theta, \mathcal{Q} \theta \rangle_{\mathcal H} \geq \lVert \Pi_{V_k} \theta 
\rVert_{\mathcal H}^2 \bigl( \langle \xi, \mathcal{Q} \xi \rangle_{\mathcal H} - \eta \bigr) 
\\ \geq \lVert \Pi_{V_k} \theta \rVert_{\mathcal H}^2 \ \Phi_- \Bigl\{ \widetilde{N}_{\lambda} 
\Bigl[ \lVert \Pi_{V_k} \theta \rVert_{\mathcal H}^{-1} \Bigl( 
\lVert \Pi_{V_k} \theta \rVert_{\mathcal H} \xi + \bigl( \Pi_k - \Pi_{V_k} \bigr) 
\theta \Bigr) \Bigr] \Bigr\} - \eta \\
\quad \ \geq 
\lVert \Pi_{V_k} \theta \rVert_{\mathcal H}^2 \ \Phi_- \circ \Phi_+ \Bigl\{ 
N \Bigl[ \lVert \Pi_k \theta \rVert_{\mathcal H}^{-1} 
\Bigl( \lVert \Pi_{V_k} \theta \rVert_{\mathcal H} \xi 
+ \bigl( \Pi_k - \Pi_{V_k} \bigr) \theta \Bigr) \Bigr] \Bigr\} - \eta
\\ \geq \Phi_- \circ \Phi_+ \Bigl( N \bigl( \Pi_k \theta \bigr) 
- \eta \Bigr) - \eta
\end{multline*}
which proves equation \eqref{pf2}.

\subsection{A technical result}\label{appx}

In all this section we use the same notation as in section \ref{sec1}.
Let $\sigma \in ]0,s_4^2]$ be such that $8\zeta_*(\sigma) \leq \sqrt n$ where $\zeta_*$ is defined in Proposition \ref{prop2}.

\begin{lem}\label{lem0A} The function
\[
t \mapsto F(t) = \max\{ t, \sigma\} B_*(\min\{t,s_4^2\}),
\]
where $B_*$ is defined in Proposition \ref{prop2},
is non-decreasing for any $t\in \mathbb R_+.$
\end{lem}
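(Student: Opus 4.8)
The plan is to establish that $F$ is continuous on $\mathbb{R}_+$ and non-decreasing separately on the three intervals $[0,\sigma]$, $[\sigma,s_4^2]$ and $[s_4^2,+\infty)$; since $F$ is continuous, patching the pieces yields the assertion. Throughout I write $g = n^{-1/2}\zeta_*$ and use that $\zeta_*$, being a sum of two positive non-increasing functions of $t$, is itself positive and non-increasing; combined with the standing hypothesis $8\zeta_*(\sigma)\le\sqrt n$ this gives $g(t)\le g(\sigma)\le 1/8$ for every $t\ge\sigma$, so that $1-4g(t)\ge 1/2>0$ there and (since moreover $6+(\kappa-1)^{-1}\le 8$ once $\kappa\ge 3/2$) the bound $B_*$ is finite, and $F$ finite and continuous, at every argument of the form $\max\{\min\{t,s_4^2\},\sigma\}$ that can occur.

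I would first dispose of the two outer intervals, which are immediate once one notes that $\max\{\min\{t,s_4^2\},\sigma\}=\sigma$ on $[0,\sigma]$ and $=s_4^2$ on $[s_4^2,+\infty)$ (using $\sigma\le s_4^2$): on $[0,\sigma]$ one has $F(t)=\sigma B_*(t)=\sigma\,g(\sigma)/(1-4g(\sigma))$, a non-negative constant, while on $[s_4^2,+\infty)$ one has $F(t)=t\,B_*(s_4^2)=t\,g(s_4^2)/(1-4g(s_4^2))$, a non-negative multiple of $t$. On the middle interval $[\sigma,s_4^2]$ both $\max\{t,\sigma\}$ and $\max\{\min\{t,s_4^2\},\sigma\}$ equal $t$, so
\[
F(t)=\frac{t\,g(t)}{1-4g(t)}=\frac{t\,H(t)}{t-4H(t)},\qquad H(t):=t\,g(t),
\]
and since $t-4H(t)=t(1-4g(t))>0$ and $g$ is smooth here, $F$ is differentiable with (after the cross terms cancel)
\[
F'(t)=\frac{t^2H'(t)-4H(t)^2}{\bigl(t-4H(t)\bigr)^2}.
\]
So the whole matter reduces to proving the inequality $t^2H'(t)\ge 4H(t)^2$ on $[\sigma,s_4^2]$.

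For that I would use two elementary properties of $H$. Reading off the definition of $\zeta_*$, one has $H(t)=n^{-1/2}\bigl(\sqrt{At+Bt^2}+\sqrt{Ct}\bigr)$ with $A=2.032(\kappa-1)\cdot 0.73\,\mathbf{Tr}(G)\ge0$, $B=2.032(\kappa-1)\bigl(\log(K)+\log(\epsilon^{-1})\bigr)\ge0$ and $C=98.5\,\kappa\,\mathbf{Tr}(G)\ge0$; hence
\[
\frac{H(t)}{\sqrt t}=n^{-1/2}\Bigl(\sqrt{A+Bt}+\sqrt C\,\Bigr)
\]
is non-decreasing in $t$, which (taking the logarithmic derivative, legitimate since $H>0$) gives $H'(t)/H(t)\ge 1/(2t)$, i.e. $t^2H'(t)\ge \frac{1}{2}\,tH(t)$. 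On the other hand $H(t)=tg(t)\le t/8$ because $g(t)\le 1/8$ on $[\sigma,s_4^2]$, whence $4H(t)^2\le 4H(t)\cdot\frac{t}{8}=\frac{1}{2}\,tH(t)$. Chaining the two inequalities gives $t^2H'(t)\ge \frac{1}{2}\,tH(t)\ge 4H(t)^2$, so $F'\ge0$ on $[\sigma,s_4^2]$, completing the proof.

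The point to watch — and the reason the claim is not obvious — is that $F$ is the product of the increasing map $t\mapsto\max\{t,\sigma\}$ with the non-increasing map $t\mapsto B_*(\min\{t,s_4^2\})$, so its monotonicity rests on a genuine balance between the two. What makes the balance tip the right way is the cancellation leaving $F'\propto t^2H'-4H^2$, after which exactly the two features of $\zeta_*$ already recorded — that $n^{-1/2}\zeta_*(\sigma)\le 1/8$, and that $\sqrt t\,\zeta_*(t)$ is non-decreasing — close the argument.
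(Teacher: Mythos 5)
Your proof is correct and follows essentially the same route as the paper's: reduce to the middle range $[\sigma,s_4^2]$, observe that $\sqrt{t}\,\zeta_*(t)$ is non-decreasing (the paper phrases this as a comparison of $\zeta_*$ with an auxiliary function proportional to $t^{-1/2}$), and combine the resulting bound $H'(t)\ge H(t)/(2t)$ with the smallness bound $n^{-1/2}\zeta_*\le 1/8$ to get a non-negative derivative. Your treatment of the two outer intervals and of the finiteness condition in $B_*$ is in fact slightly more explicit than the paper's.
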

\begin{proof} 
If $\sigma \geq s_4^2$, then $B_*(\min \{ t, s_4^2 \}) = B_*(\sigma)$, 
so that $F(t) = \max \{ t, \sigma \} B_*(\sigma)$ is obviously 
non-decreasing.
Otherwise, $\sigma \leq s_4^2,$ so that  
\[
\zeta\left( \max\left\{\min\{t,s_4^2\}, \sigma \right\} \right) = 
\zeta\left( \min\left\{\max\{t,\sigma\}, s_4^2 \right\} \right).
\]
Therefore the function $F$ is of the form 
\[
F(t) = c \frac{ u g(u) }{ (1 - g(u))},
\]
where $u = \max \{ t, \sigma \}$, 
\[
g(u) = \sqrt{a_1/u + a_2} + \sqrt{a_3/u},
\]
$g(\sigma) \leq 1/2$, 
and the constants $c, a_1, a_2$, and $a_3$ are positive. 
Let $h(u) = \sqrt{a_1/u} + \sqrt{a_3/u}$ and observe that 
\[
g'(u) = - \frac{1}{2u} \biggl( \frac{a_1/u}{\bigl( a_1/u + a_2 \bigr)^{1/2}} 
+ \sqrt{a_3/u} \biggr) \geq - \frac{1}{2u} \Bigl( \sqrt{a_1/u} + \sqrt{a_3/u}
\Bigr) = h'(u)
\]
and that $g(u) \geq h(u)$.
Therefore $h(u) \leq g(u) \leq 1/2,$ for any $u \geq \sigma$, and  
\[ 
\frac{\partial}{\partial u} \log \Biggl( \frac{u g(u)}{1 - g(u)} \Biggr) 
= \frac{1}{u} + \frac{g'(u)}{g(u) \bigl( 1 - g(u) \bigr)} \geq \frac{1}{u} 
+ \frac{h'(u)}{h(u) \bigl( 1 - h(u) \bigr)} = \frac{1}{u} 
- \frac{1}{2u\bigl(1 - h(u) \bigr)} \geq 0,
\] 
showing that $F$ is non-decreasing.
\end{proof}

\vskip2mm

\begin{lem}\label{lem2A} For any $(a, b) \in \mathbb{R}^2$ such that, 
for any $(\lambda, \beta) \in \Lambda$,  
\[ 
\Phi_- \circ \Phi_+ ( a - \eta ) \leq b + \eta, \quad \text{ and } \quad
\Phi_- \circ \Phi_+ ( b - \eta ) \leq a + \eta, 
\] 
and any threshold $\sigma \in \mathbb{R}_+$ such that 
$8 \zeta(\sigma) \leq \sqrt{n}$ and $\sigma \leq s_4^2$, we have
\begin{align}
\label{eq:1.37}
\bigl\lvert \max \{ a, \sigma \} - \max \{ b, \sigma \} \bigr\rvert
& \leq 2 \max \{a + \eta, \sigma \}   B_* \bigl( \min \{  a  + 
\eta, s^2_4 \} \bigr) + 2 \eta \\  
\label{eq:1.38}
\bigl\lvert \max \{ a, \sigma \} - \max \{ b, \sigma \} \bigr\rvert
& \leq 2 \max \{b + \eta, \sigma \}   B_* \bigl( \min \{  b  + 
\eta, s^2_4 \} \bigr) + 2 \eta.  
\end{align}
\end{lem}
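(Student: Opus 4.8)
The plan is to deduce the two displayed inequalities from the building blocks already assembled for Proposition~\ref{prop0}, namely \eqref{phi+eq}, \eqref{phi-eq}, Lemma~\ref{lem1.14}, the bound \eqref{eq1.17}, and the monotonicity from Lemma~\ref{lem0A} of the function $F(t):=\max\{t,\sigma\}\,B_*\bigl(\min\{t,s_4^2\}\bigr)$. Since the two hypotheses on $(a,b)$ are symmetric under $a\leftrightarrow b$, it suffices to establish \eqref{eq:1.37}, and for that it is enough to bound each of $\max\{a,\sigma\}-\max\{b,\sigma\}$ and $\max\{b,\sigma\}-\max\{a,\sigma\}$ separately by $2F(a+\eta)+2\eta$.

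First I would record the elementary fact $\max\{u+\eta,\sigma\}\le\max\{u,\sigma\}+\eta$ for $u\in\mathbb{R}$ and combine it with $\Phi_\pm(\sigma)\le\sigma$ to upgrade the hypotheses to
\[
\Phi_-\circ\Phi_+\bigl(\max\{a-\eta,\sigma\}\bigr)\le\max\{b+\eta,\sigma\}\quad\text{and}\quad\Phi_-\circ\Phi_+\bigl(\max\{b-\eta,\sigma\}\bigr)\le\max\{a+\eta,\sigma\},
\]
the cases where an argument collapses to $\sigma$ being immediate. The crucial step is then the claim that for every $(\lambda,\beta)\in\Lambda$ and every $z\ge 0$,
\[
\Phi_-\circ\Phi_+\bigl(\max\{z,\sigma\}\bigr)\ \ge\ \max\{z,\sigma\}\bigl(1-2\widetilde{B}_{\lambda,\beta}(z)\bigr).
\]
To see this one writes $\Phi_+(\max\{z,\sigma\})=\max\{z,\sigma\}\,(1+B_{\lambda,\beta}(z))^{-1}$ using \eqref{phi+eq}, applies \eqref{phi-eq} at this point, and controls $B_{\lambda,\beta}$ evaluated there by $\widetilde{B}_{\lambda,\beta}(z)$ via the first inequality of Lemma~\ref{lem1.14}; one concludes with $(1+x)^{-1}\ge 1-x$, the inequality $B_{\lambda,\beta}\le\widetilde{B}_{\lambda,\beta}$ (second inequality of Lemma~\ref{lem1.14}) and $(1-x)^2\ge 1-2x$. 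The one delicate point is that $\Phi_+(\max\{z,\sigma\})$ may dip below $\sigma$; this can occur only when $\widetilde{B}_{\lambda,\beta}(z)$ is not small, in which case the right-hand side above is nonpositive and there is nothing to prove.

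Feeding this into the first upgraded hypothesis and rearranging gives, for every $(\lambda,\beta)\in\Lambda$, the inequality $\max\{a,\sigma\}-\max\{b,\sigma\}\le 2\max\{a-\eta,\sigma\}\,\widetilde{B}_{\lambda,\beta}(a-\eta)+2\eta$; minimising over $\Lambda$, applying \eqref{eq1.17}, and then using $F(a-\eta)\le F(a+\eta)$ (Lemma~\ref{lem0A}) yields $\max\{a,\sigma\}-\max\{b,\sigma\}\le 2F(a+\eta)+2\eta$. For the reverse difference, if $b\le a+2\eta$ then $\max\{b,\sigma\}-\max\{a,\sigma\}\le 2\eta$ outright; otherwise I would feed the crucial step into the second upgraded hypothesis to force $b-\eta$ to lie within the admissible slack of $a+\eta$, and then bound $\max\{b,\sigma\}-\max\{a,\sigma\}$ by $2\max\{a+\eta,\sigma\}\,\widetilde{B}_{\lambda,\beta}(\,\cdot\,)+2\eta$ for a suitable $(\lambda,\beta)$, again optimising via \eqref{eq1.17}. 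I expect this last case to be the main obstacle: extracting exactly the constant $2$ (rather than something larger) hinges on the hypothesis $8\,\zeta(\sigma)\le\sqrt{n}$, which through \eqref{eq1.17} keeps $B_*$---and hence the relevant $\widetilde{B}_{\lambda,\beta}$---below $1/4$, and on carefully exploiting the matching $x\mapsto x/(1-4x)$ form of $\widetilde{B}_{\lambda,\beta}$ and $B_*$ when converting expressions of the type $\widetilde{B}_{\lambda,\beta}/(1-2\widetilde{B}_{\lambda,\beta})$ back into a bound of the form $2B_*(\,\cdot\,)$. Interchanging the roles of $a$ and $b$ throughout gives \eqref{eq:1.38}.
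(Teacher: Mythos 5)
Your toolkit is the right one (equations \eqref{phi+eq} and \eqref{phi-eq}, Lemma \ref{lem1.14}, equation \eqref{eq1.17}, and the monotonicity of $F(t)=\max\{t,\sigma\}B_*(\min\{t,s_4^2\})$ from Lemma \ref{lem0A}), and the half of the argument bounding $\max\{a,\sigma\}-\max\{b,\sigma\}$ by $2F(a+\eta)+2\eta$ would go through as you describe, modulo the following first issue. Your dismissal of the case $\Phi_+(\max\{z,\sigma\})<\sigma$ is wrong: since $\Phi_+(\max\{z,\sigma\})=\max\{z,\sigma\}\bigl(1+B_{\lambda,\beta}(z)\bigr)^{-1}$, this dip occurs whenever $z\le\sigma$ and $B_{\lambda,\beta}(\sigma)>0$, i.e.\ routinely and for arbitrarily small $\widetilde B_{\lambda,\beta}(z)$, so the right-hand side $\max\{z,\sigma\}\bigl(1-2\widetilde B_{\lambda,\beta}(z)\bigr)$ is then strictly positive and there \emph{is} something to prove. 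The claim can likely be salvaged, but not by citing the statement of Lemma \ref{lem1.14}, which controls $B_{\lambda,\beta}$ at $\max\{t/(1+B_{\lambda,\beta}(t)),\sigma\}$ rather than at the possibly smaller point you need; you would have to rerun the algebra inside its proof at the exact point $\max\{z,\sigma\}/(1+B_{\lambda,\beta}(z))$ and also check the indicator in the definition of $\Phi_-$ there.

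The more serious gap is the direction $\max\{b,\sigma\}-\max\{a,\sigma\}\le 2\max\{a+\eta,\sigma\}B_*(\min\{a+\eta,s_4^2\})+2\eta$, which you correctly flag as the main obstacle but do not prove: your key claim applied at $z=b-\eta$ only yields a bound involving $\widetilde B_{\lambda,\beta}(b-\eta)$, hence $F(b+\eta)$, and the sentence about forcing $b-\eta$ ``within the admissible slack of $a+\eta$'' is not an argument. The mechanism that makes this work is a two-case comparison of $\max\{\Phi_+(b-\eta),\sigma\}$ with $\max\{a+\eta,\sigma\}$. If $\max\{\Phi_+(b-\eta),\sigma\}\le\max\{a+\eta,\sigma\}$, equation \eqref{phi+eq} gives $\max\{b-\eta,\sigma\}\le\max\{a+\eta,\sigma\}\bigl(1+B_{\lambda,\beta}(b-\eta)\bigr)$, and $B_{\lambda,\beta}(b-\eta)$ may be replaced by $B_{\lambda,\beta}(a+\eta)$ because $B_{\lambda,\beta}$ is non-increasing and only the sub-case $\max\{b-\eta,\sigma\}>\max\{a+\eta,\sigma\}$ matters. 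In the opposite case, the hypothesis $\Phi_-\circ\Phi_+(b-\eta)\le a+\eta$ combined with \eqref{phi-eq} gives $\max\{a+\eta,\sigma\}\ge\max\{b-\eta,\sigma\}\bigl(1-B_{\lambda,\beta}(a+\eta)\bigr)/\bigl(1+B_{\lambda,\beta}(a+\eta)\bigr)$, again with everything evaluated at $a+\eta$ thanks to the ordering defining the case, and one concludes with $B/(1-B)\le\widetilde B$ from Lemma \ref{lem1.14}. Without this transfer of the evaluation point from $b-\eta$ to $a+\eta$, the bound anchored at $a+\eta$ with constant $2$ does not follow; the version anchored at $b+\eta$ is then deduced from it via Lemma \ref{lem0A}, not the other way around.
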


\begin{proof}
By symmetry of $a$ and $b$, equation \eqref{eq:1.38} is a consequence of equation \eqref{eq:1.37}.\\[1mm]
{\bf Step 1.} We will prove that 
\begin{equation}
\label{eq:1.39}
\max \{ b - \eta , \sigma \} \leq \max \{ a + \eta, \sigma \} 
\Bigl( 1 + 2 \widetilde{B}_{\lambda, \beta}  ( a + \eta ) \Bigr),  
\end{equation}
where $\widetilde{B}_{\lambda, \beta}$ is defined in equation \myeq{Btilde}.\\[1mm]
{\bf Case 1.} Assume that 
\[ 
\max \bigl\{ \Phi_+ ( b - \eta ) , \sigma \bigr\} \leq \max \{ 
a + \eta, \sigma \}, 
\] 
and remark that, since $\Phi_+$ is non-decreasing and 
$\Phi_+(\sigma) \leq \sigma$,  
\begin{multline*}
\max \bigl\{ \Phi_+ (b - \eta), \sigma \bigr\} \geq 
\max \bigl\{ \Phi_+ (b - \eta), \Phi_+(\sigma) \bigr\} \\ = 
\Phi_+ \bigl( \max \{ (b - \eta), \sigma \} \bigr) 
 = 
\frac{ \max \{ b - \eta, \sigma \}}{1 + B_{\lambda, \beta}(b - \eta)}, 
\end{multline*}
according to equation \myeq{phi+eq}, where $B_{\lambda, \beta}$ is defined in equation \eqref{defBlb}.
Therefore in this case, 
\begin{equation}
\label{eq:1.40}
\max \{ b - \eta, \sigma \} \leq \max \{ a + \eta, \sigma \} 
\Bigl( 1 + B_{\lambda, \beta} (b - \eta) \Bigr), 
\end{equation}
but when $\max \{ b- \eta, \sigma \} > \max \{ a + \eta, \sigma \}$, 
\[
B_{\lambda, \beta} ( b- \eta) \leq B_{\lambda, \beta} (a + \eta)
\]
because $B_{\lambda, \beta}(t)$ is a non-increasing 
function of $\max \{t, \sigma \}$, thus equation \eqref{eq:1.40} implies that   
\[ 
\max \{ b - \eta, \sigma \} \leq \max \{ a + \eta, \sigma \} 
\Bigl( 1 + B_{\lambda, \beta} (a + \eta) \Bigr).
\] 
Since $B_{\lambda, \beta} \leq 
\widetilde{B}_{\lambda, \beta}$, equation \eqref{eq:1.39} holds true.\\[1mm]
{\bf Case 2.} Assume now that we are not in {\bf Case 1}, 
implying that
\[
\max \{ b - \eta, \sigma \} \geq \max \bigl\{ \Phi_+(b- \eta), \sigma \bigr\} > \max \{ a + \eta, \sigma \}. 
\]
In this case 
\begin{multline*}
\max \{ a + \eta, \sigma \} 
\geq \max \bigl\{ \Phi_- \circ \Phi_+(b - \eta), \sigma \bigr\} 
\geq \max \bigl\{ \Phi_- \circ \Phi_+(b - \eta), \Phi_-(\sigma) \bigr\}
\\ \geq \Phi_- \bigl(  \max \{ \Phi_+(b - \eta), \sigma \} \bigr) 
\geq \max \bigl\{ \Phi_+(b-\eta), \sigma \bigr\} \Bigl[ 1 - B_{\lambda, 
\beta} \Bigl( \max \bigl\{ 
\Phi_+(b-\eta), \sigma \bigr\} \Bigr)   \Bigr]
\end{multline*}
according to equation \myeq{phi-eq}. Moreover, continuing the above chain of inequalities,
\begin{multline*}
\max \{ a + \eta, \sigma \}  \geq \max \bigl\{ \Phi_+(b-\eta), \Phi_+(\sigma) \bigr\}  
\Bigl[ 1 - B_{\lambda, \beta}(\max \{a + \eta, \sigma\}) \Bigr] \\
= \Phi_+ \bigl( \max \{ b- \eta, \sigma \} \bigr) 
\Bigl[ 1 - B_{\lambda, \beta}(a + \eta) \Bigr] 
\\ \quad \geq \max \{ b - \eta, \sigma \}  
\frac{1 - B_{\lambda, \beta}(a + \eta)}{ 1 + 
B_{\lambda, \beta}(\max\{ b- \eta, \sigma \} ) } 
\\ \geq \max \{ b-\eta, \sigma \} \frac{ 1 - B_{\lambda, \beta}(a + \eta)}{
1 + B_{\lambda, \beta}(a+\eta)}. 
\end{multline*}
Therefore 
\begin{multline*}
\max \{ b- \eta, \sigma \} 
\leq \max \{ a + \eta, \sigma \} \frac{1 + 
B_{\lambda, \beta}(a+\eta)}{1 - B_{\lambda, \beta}
(a + \eta)} 
\\ = \max \{ a + \eta, \sigma \} \biggl( 1 + \frac{ 2 B_{\lambda, \beta}(a + \eta)}{
1 - B_{\lambda, \beta}(a + \eta)} \biggr) \leq 
\max \{ a + \eta, \sigma \} \bigl( 1 + 2 \widetilde{B}_{\lambda, \beta}(a + \eta) 
\bigr)
\end{multline*} 
according to Lemma \ref{lem1.14}. This concludes the proof of {\bf Step 1}. \\[1mm]
{\bf Step 2} Taking the infimum in $(\lambda, \beta) \in \Lambda$ 
in equation \eqref{eq:1.39}, 
according to equation \myeq{eq1.17},
we obtain that 
\[ 
\max \{ b - \eta, \sigma \} \leq \max \{ a + \eta, \sigma \} 
\Bigl( 1 + 2 B_* \bigl( \min \{ a + \eta, s_4^2 \} ) \Bigr).
\]  
We can then use the fact that $t \mapsto \max \{t, \sigma \} 
B_*( \min \{ t, s_4^2 \})$ is non-decreasing (proved in Lemma \ref{lem0A})
to deduce that 
\[
\max \{ b - \eta, \sigma \} \leq \max \{ a + \eta, \sigma \} 
+ 2 \max \{ b + \eta, \sigma \} B_*(\min \{ b + \sigma, s_4^2\}),
\] 
since there is nothing to prove when already $\max \{ b + \eta, \sigma \} 
\leq \max \{ a + \eta, \sigma \}$. 
Remark that $\max \{ a + \eta, \sigma \} \leq 
\max \{ a + \eta, \sigma + \eta \} \leq \max \{ a, \sigma \} + \eta$ 
and that in the same way $\max \{ b - \eta, \sigma \} \geq \max \{ b, \sigma \} 
- \eta$. This proves that 
\begin{align*}
\max \{ b, \sigma \} - \max \{ a , \sigma \} & \leq 2 \max \{ a + \eta, \sigma
\} B_* \bigl( \min \{ a + \eta, s_4^2 \} \bigr) + 2 \eta \\
\text{and }  
\max \{ b, \sigma \} - \max \{ a , \sigma \} & \leq 2 \max \{ b + \eta, \sigma
\} B_* \bigl( \min \{ b + \eta, s_4^2 \} \bigr) + 2 \eta. 
\end{align*}
By symmetry, we can then exchange $a$ and $b$ to prove the same bounds 
for $\max \{ a , \sigma \} - \max \{ b, \sigma \}$, and therefore 
also for the absolute value of this quantity, which ends the proof 
of the lemma.
\end{proof}

\vskip2mm
\noindent
As a consequence the following result holds. 

\begin{cor}\label{lem1A}
For any $(a, b) \in \mathbb{R}^2$ such that, 
for any $(\lambda, \beta) \in \Lambda$,  
\[ 
\Phi_- \circ \Phi_+ ( a - \eta ) \leq b + \eta, \quad \text{ and } \quad
\Phi_- \circ \Phi_+ ( b - \eta ) \leq a + \eta, 
\] 
and any threshold $\sigma \in \mathbb{R}_+$ such that 
$8 \zeta(\sigma) \leq \sqrt{n}$ and $\sigma \leq s_4^2$, we have
\begin{align*}
\bigl\lvert \max \{ a, \sigma \} - \max \{ b, \sigma \} \bigr\rvert
& \leq 2 \max \{a, \sigma \}   B_* \bigl( \min \{  a, s^2_4 \} \bigr) 
+ 5  \eta / 2 \\  
\bigl\lvert \max \{ a, \sigma \} - \max \{ b, \sigma \} \bigr\rvert
& \leq 2 \max \{b, \sigma \}   B_* \bigl( \min \{  b, s^2_4 \} \bigr) 
+ 5 \eta / 2.  
\end{align*}
\end{cor}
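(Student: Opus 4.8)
The plan is to obtain the corollary from Lemma~\ref{lem2A} by replacing the shifted arguments $a+\eta$, $b+\eta$ occurring there with $a$, $b$, at the cost of slightly enlarging the additive constant. Introduce the function
\[
F(t) = \max\{t,\sigma\}\, B_*\!\left(\min\{t,s_4^2\}\right), \qquad t \in \mathbb{R},
\]
already considered in Lemma~\ref{lem0A}, where $B_*$ and $\zeta$ are as in the proof of Proposition~\ref{prop2}. Recalling that $\eta \ge 0$ in all our uses (it equals $2\delta\sqrt{\mathbf{Tr}(G^2)}$), it is enough to establish the one-sided bound $F(a+\eta) \le F(a) + \eta/4$, and likewise for $b$: Lemma~\ref{lem2A} provides
\[
\bigl\lvert \max\{a,\sigma\} - \max\{b,\sigma\} \bigr\rvert \le 2F(a+\eta) + 2\eta
\quad\text{and}\quad
\bigl\lvert \max\{a,\sigma\} - \max\{b,\sigma\} \bigr\rvert \le 2F(b+\eta) + 2\eta,
\]
so substituting turns $2\eta$ into $\tfrac52\eta$ and $F(a+\eta)$, $F(b+\eta)$ into $F(a)$, $F(b)$, which is precisely the claim.

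First I would prove the uniform bound $B_*(t) \le \tfrac14$. Putting $x = n^{-1/2}\zeta(\max\{t,\sigma\})$ and using that $\zeta$ is non-increasing on $\mathbb{R}_+$ together with $\max\{t,\sigma\} \ge \sigma$ and the hypothesis $8\zeta(\sigma) \le \sqrt{n}$, one gets $x \le n^{-1/2}\zeta(\sigma) \le \tfrac18$; in particular the finiteness condition $\bigl[6+(\kappa-1)^{-1}\bigr]\zeta(\max\{t,\sigma\}) \le \sqrt n$ holds (as elsewhere, assuming $\kappa \ge 3/2$), so $B_*(t) = x/(1-4x)$, and since $x\mapsto x/(1-4x)$ is increasing on $[0,\tfrac14)$ we conclude $B_*(t) \le \tfrac{1/8}{1-1/2} = \tfrac14$.

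Then I would combine this with the monotonicity used in the proof of Lemma~\ref{lem0A}: $t \mapsto \zeta(\max\{t,\sigma\})$ is non-increasing, hence so is $t \mapsto B_*(\min\{t,s_4^2\})$, while $\max\{t,\sigma\}=t$ for $t\ge\sigma$ and $F(t)=\sigma B_*(\sigma)$ for $t\le\sigma$. For $t \ge \sigma$ and $\eta \ge 0$ this gives at once
\[
F(t+\eta)-F(t) = (t+\eta)B_*\!\left(\min\{t+\eta,s_4^2\}\right) - t\,B_*\!\left(\min\{t,s_4^2\}\right) \le \eta\,B_*\!\left(\min\{t,s_4^2\}\right) \le \frac{\eta}{4},
\]
and the ranges $t+\eta\le\sigma$ (where $F$ is constant) and $t<\sigma<t+\eta$ (where $F(t+\eta) \le (t+\eta)B_*(\sigma) \le \sigma B_*(\sigma) + \tfrac{\eta}{4} = F(t) + \tfrac{\eta}{4}$) are handled in the same way. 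Plugging $F(a+\eta)\le F(a)+\eta/4$ and $F(b+\eta)\le F(b)+\eta/4$ into the two displays from Lemma~\ref{lem2A} completes the argument.

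The proof is short; the one place demanding care is the constant $\tfrac52$, which comes out exactly because of the clean bound $B_* \le \tfrac14$ — this is where the hypothesis $8\zeta(\sigma)\le\sqrt n$ is used, and it also makes the $+\infty$ branch of $B_*$ irrelevant here — together with the routine check of the one-sided Lipschitz inequality at the breakpoints $t=\sigma$ and $t=s_4^2$.
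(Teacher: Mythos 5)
Your proposal is correct and follows essentially the same route as the paper, which deduces the corollary from Lemma~\ref{lem2A} using exactly the two facts you isolate: $B_*(\min\{t,s_4^2\})\leq 1/4$ (hence the extra $\eta/2$) and $\max\{a+\eta,\sigma\}\leq\max\{a,\sigma\}+\eta$ combined with the monotonicity of $t\mapsto B_*(\min\{t,s_4^2\})$. Your packaging of these as a one-sided Lipschitz bound $F(t+\eta)\leq F(t)+\eta/4$, together with the explicit verification that $8\zeta(\sigma)\leq\sqrt n$ forces $B_*\leq 1/4$, just makes the paper's one-line justification fully explicit.
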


\begin{proof}
This is a consequence of the previous lemma, of the fact that 
$B_*(\min \{ t, s_4^2 \} ) \leq 1/4$, and of the fact that
$\max \{ a + \eta, \sigma \} \leq \max \{ a, \sigma \} + \eta$. 
\end{proof}

\subsection{Proof of Proposition \ref{prop2.8eg}}\label{sec65pf}

\noindent
Let $\Lambda \subset \bigl( \mathbb{R}_+ \setminus \{ 0 \} \bigr)^2$ be the 
finite set defined in equation \myeq{defLambda}. 
We use as a tool the family of estimators 
\[
\widetilde{N}_{\lambda}(\theta) = \frac{\lambda}{\widehat \alpha(\theta)^2}
\] 
introduced in equation \eqref{tildeN}, where $\widehat{\alpha}(\theta)$ is defined in equation \eqref{hatalpha}.
Let us put
\[ 
\tau_{\lambda}(t)  = \frac{\lambda^2  R^4}{3 \max \{ t, \sigma \}^2}, \quad t \in \mathbb R_+.
\] 
We divide the proof into 4 steps. \\[1mm]
{\bf Step 1.} The first step consists in linking the empirical estimator $\bar N$ with $\widetilde{N}_{\lambda}.$\\[1mm]
We claim that,  
with probability at least $1-\epsilon,$ 
for any $\theta \in \mathbb{S}_d$, any $(\lambda, \beta) \in \Lambda,$ such that $\Phi_+ \bigl( N(\theta) \bigr) > 0$,
\[
\frac{\bar N(\theta)}{\max \{ \widetilde{N}_{\lambda}(\theta), \sigma \} } \leq 
\Bigl[ 
1 - \tau_{\lambda} \Bigl( \widetilde{N}_{\lambda}(\theta)\Bigr) \Bigr]_+^{-1},
\]
with the convention that $\frac{1}{0}= +\infty.$ 
Moreover, with probability at least $1-\epsilon,$ for any $\theta \in \mathbb{S}_d$, 
any $(\lambda, \beta) \in \Lambda$, such that $\Phi_+ \bigl( 
N(\theta) \bigr) > 0$, 
\[
\frac{\bar N(\theta)}{\widetilde N_{\lambda}(\theta) } \geq 1 - \frac{ \lambda^2}{3}.
\]
We first observe that, according to the definition of $\widehat{\alpha}(\theta)$, for any threshold $\sigma \in \mathbb{R}_+$,  
\[ 
\frac{1}{n} \sum_{i=1}^n \psi \Bigl[ \lambda \Bigl( \max \{ \widetilde{N}_{\lambda} (\theta), \sigma \}^{-1} \langle \theta, X_i \rangle^2  - 1 \Bigr) \Bigr] 
\leq  r \Bigl( \lambda^{1/2}  \widetilde{N}_{\lambda}(\theta)^{-1/2} \theta \Bigr) 
= r_{\lambda} \Bigl( \widehat{\alpha}(\theta) \, \theta \Bigr)  \leq 0,    
\] 
where we have used the fact that the function $\psi$, introduce in equation \eqref{defnpsi}, is non-decreasing.
Moreover 
\[
 r_{\lambda} \Bigl( \widehat{\alpha}(\theta) \, \theta \Bigr)  = 0
\]
as soon as $\widehat{\alpha}(\theta)<+\infty$ and this holds true, according to Proposition \ref{prop0}, with probability at least $1 - \epsilon$, for any $\theta \in \mathbb{S}_d$ and any $(\lambda, \beta) \in \Lambda$ such that $\Phi_+ \bigl( N(\theta) \bigr) > 0$.
Indeed, by Proposition \ref{prop0},
with probability at least $1 - \epsilon$, for any $\theta \in \mathbb{S}_d$, any $(\lambda, \beta) \in \Lambda,$
\[
\widetilde{N}_{\lambda} (\theta) \geq \Phi_+ \bigl( N(\theta) \bigr).
\] 
Defining $g(z) = z -\psi(z),$ we get
\begin{multline} \label{eq2.1} 
\frac{\bar N(\theta)}{\max \{ \widetilde{N}_{\lambda}(\theta), \sigma \} }-1 
= \frac{1}{n \lambda}\sum_{i=1}^n \lambda \left( 
\langle \theta,X_i\rangle^2 \max \{ 
\widetilde{N}_{\lambda}(\theta), \sigma \}^{-1} - 1 \right) \\
\leq  \frac{1}{n \lambda}\sum_{i=1}^n g\left[  \lambda \left( 
\langle \theta,X_i\rangle^2 \max \{ 
\widetilde{N}_{\lambda}(\theta), \sigma \}^{-1} - 1 \right)\right].
\end{multline}
In the same way, with probability at least $1 - \epsilon$,
for any $\theta \in \mathbb{S}_d$, any $(\lambda, \beta) \in \Lambda$
such that $\Phi_+ \bigl( N(\theta) \bigr) > 0$, we obtain
\begin{equation}
1 - \frac{\bar{N}(\theta)}{\widetilde{N}_{\lambda}(\theta)} 
\leq \frac{1}{n \lambda} \sum_{i=1}^n g \Bigl[ \lambda \Bigl( 1 - \langle 
\theta, X_i \rangle^2 \widetilde{N}_{\lambda}(\theta)^{-1}\Bigr) 
\Bigr]. 
\end{equation}
We remark that the derivative of $g$ is
\[
g'(z) = 1- \psi'(z) =\begin{cases} 1 & \text{if}\ z \notin [-1,1]\\
\displaystyle \frac{\frac{z^2}{2}}{1+z+\frac{z^2}{2}} & \text{if} \ z \in [-1,0]\\ 
\displaystyle\frac{\frac{z^2}{2}}{1-z+\frac{z^2}{2}}  & \text{if} \ z \in [0,1],
\end{cases}
\]
showing that $0 \leq g'(z) \leq z^2$, and therefore 
that $g$ is a non-decreasing function satisfying 
\begin{equation}\label{eq2.2}
g(z) \leq \frac{1}{3}z_+^3.
\end{equation}
Applying equation \eqref{eq2.2} to equation \eqref{eq2.1} we obtain
\begin{align*}
\frac{\bar N(\theta)}{\max \{ \widetilde{N}_{\lambda}(\theta), \sigma \}} - 1 & 
\leq \frac{\lambda^2}{3n}\sum_{i=1}^n \Bigl(  
\langle \theta,X_i\rangle^2 \max \{ \widetilde{N}_{\lambda}(\theta), \sigma \}^{-1 }-1
\Bigr)^3_+\\
& \leq \frac{ \lambda^2}{3n \max \{ \widetilde{N}_{\lambda}(\theta), \sigma \}^3} \sum_{i=1}^n \langle \theta,X_i\rangle^6,
\end{align*}
where we have used the fact that $(z^2-1)_+\leq z^2.$ 
Since, by the Cauchy-Schwarz inequality, $  \langle \theta,X_i\rangle^2 \leq \|\theta\|^2 R^2 = R^2,$ we get
\begin{align*}
\frac{\bar N(\theta)}{\max \{ \widetilde{N}_{\lambda}(\theta), \sigma \} }-1 &  
\leq \frac{ \lambda^2}{3 n \max \{ \widetilde{N}_{\lambda}(\theta), \sigma \}^3} R^4 \sum_{i=1}^n  \langle \theta,X_i\rangle^2 \\
& = \frac{ \lambda^2}{3} \times \frac{ R^4}{\max \{ 
\widetilde{N}_{\lambda}(\theta), \sigma \}^2} \times \frac{ \bar N(\theta)}{\max \{ 
\widetilde{N}_{\lambda}(\theta), \sigma \}},
\end{align*} 
which proves the first inequality. 
Similarly, since $g$ in non-decreasing, 
we obtain that, with probability at least $1 - \epsilon$, 
for any $\theta \in \mathbb{S}_d$, any $(\lambda, \beta) \in \Lambda$ such that 
$\Phi_+\bigl(N(\theta) \bigr) > 0$, 
\[
1- \frac{\bar N(\theta)}{\widetilde N_{\lambda}(\theta) }\leq \frac{1}{n\lambda}  \sum_{i=1}^n  g(\lambda) \leq \frac{ \lambda^2}{3},
\]
where the last inequality follows from equation \eqref{eq2.2}.
\\[2mm]
{\bf Step 2.} This is an intermediate step. We claim that, with probability at least $1 - 2 \epsilon$, 
for any $\theta \in \mathbb{S}_d$, any $(\lambda, \beta) \in \Lambda$, 
any $\sigma > 0$, 
\begin{align*}
\max \{ \bar{N}(\theta), \sigma \} & \leq \Phi_-^{-1} \Bigl( \max \{ N(\theta), \sigma \} \Bigr) \Bigl[ 1 - \tau_{\lambda} \bigl( N(\theta) \bigr)\Bigr]_+^{-1} \\ 
\max \bigl\{ \bar{N}(\theta), \sigma \bigr\}& \leq \Phi_-^{-1} \Bigl( \max \{ N(\theta), \sigma \} \Bigr)\biggl[ 1 - \tau_{\lambda} \Bigl( 
\bar{N}(\theta) \bigl[ 1 - \tau_{\lambda}(\sigma) \bigr]_+ \Bigr) \biggr]_+^{-1} \\
\bar{N}(\theta) & \geq \biggl( 1 - \frac{\lambda^2}{3} \biggr)_+ \Phi_+\bigl( N(\theta) \bigr) 
\end{align*}
where $\Phi_+$ and $\Phi_-$ are defined in  Proposition \ref{prop0}.\\[1mm]
We consider the threshold 
\[
\sigma' = \Phi_-^{-1} \bigl( \max \{ N(\theta), \sigma \} )  \geq  \max \{ N(\theta), \sigma \},
\]
where we have used the fact that, by definition,  $\Phi_-(t)^{-1}\geq t,$ for any $t \in \mathbb R_+.$
We assume that we are in the intersection of the two events of Proposition \ref{prop0}, which holds true with probability at least $1 - 2 \epsilon,$ so that 
\begin{equation}\label{sigma'}
\sigma' \geq  \max \{ N(\theta), \sigma, \widetilde N_{\lambda}(\theta) \} .
\end{equation}
According to {\bf Step 1}, choosing as a threshold $\max\{\sigma, \sigma'\},$ we get 
\begin{align*}
\frac{\bar{N}(\theta)}{\max\{\widetilde{N}_{\lambda}(\theta), \sigma, \sigma'\}} \leq  \left[ 1 - \tau_{\lambda}\Bigl(\max \bigl\{  \widetilde N_{\lambda}(\theta), \sigma' \bigr\} \Bigr) \right]_+^{-1},
\end{align*}
(where $\tau_{\lambda}$ is still defined with respect to $\sigma$), so that, according to equation \eqref{sigma'},
\begin{equation}\label{2.6prop2.6}
\bar{N}(\theta) \leq \sigma'\left[ 1 - \tau_{\lambda}\bigl( \sigma'  \bigr) \right]_+^{-1}.
\end{equation}
As a consequence, recalling the definition of $\sigma',$ we have 
\[
\bar{N}(\theta) \leq \Phi_-^{-1} \Bigl( \max \{ N(\theta), \sigma \} \Bigr) \Bigl[ 1 - \tau_{\lambda} \bigl( N(\theta)\bigr) \Bigr]_+^{-1}.
\]
Thus, observing that
\[
\sigma \leq \Phi_-^{-1} (\sigma) \leq \Phi_-^{-1} \Bigl( \max \{ N(\theta), \sigma \} \Bigr) \Bigl[ 1 - \tau_{\lambda} \bigl( N(\theta)\bigr) \Bigr]_+^{-1},
\]
we obtain the first inequality. 
To prove the second inequality, 
we use equation \eqref{2.6prop2.6} once to see that 
\[ 
\sigma' \geq \bar{N}(\theta) \bigl[ 1 - \tau_{\lambda} ( \sigma' ) \bigr]_+\geq \bar{N}(\theta) \bigl[ 1 - \tau_{\lambda} ( \sigma ) \bigr]_+,
\] 
and we use it again to get 
\begin{align*}
\bar{N}(\theta) & \leq \Phi_-^{-1} \bigl( \max \{ N(\theta), \sigma \} \bigr) \Bigl[ 	1 - \tau_{\lambda} ( \sigma') \Bigr]_+^{-1}\\
& \leq \Phi_-^{-1} \bigl( \max \{ N(\theta), \sigma \} \bigr) \Bigl[ 	1 - \tau_{\lambda} \Bigl( \bar{N}(\theta) [ 1 - \tau_{\lambda}(\sigma) ]_+\Bigr) \Bigr]_+^{-1}.
\end{align*}
To complete the proof of the second inequality, it is enough to remark 
that  
\[
\sigma \leq \Phi_-^{-1} (\sigma) \leq \Phi_-^{-1} \Bigl( \max \{ N(\theta), \sigma \} \Bigr) \Bigl[ 1 - \tau_{\lambda} \Bigl( \bar{N}(\theta) \bigl[ 1 - \tau_{\lambda}(\sigma) \bigr]_+ \Bigr) \Bigr]_+^{-1}.
\]
To prove the last inequality, it is sufficient to remark that $ \widetilde{N}_{\lambda}(\theta) \geq \Phi_+ \bigl( N(\theta) \bigr)$ by Proposition \ref{prop0} and hence, when $\Phi_+ \bigl( N(\theta) \bigr) > 0,$ 
\[ 
\bar{N}(\theta) \geq \biggl( 1 - \frac{\lambda^2}{3} \bigg)_+\Phi_+ \bigl( N(\theta) \bigr).
\] 
On the other hand, when $\Phi_+ \bigl( N(\theta) \bigr) = 0,$ this inequality is also obviously satisfied.
\\[2mm]
{\bf Step 3.} We now prove that, with probability at least $1 - 2 \epsilon$, for any $\theta \in \mathbb{S}_d$, 
any $(\lambda, \beta) \in \Lambda$, any $\sigma > 0$,  
\begin{align*}
\frac{\max \{ \bar{N}(\theta),  \sigma \} }{\max \{ N(\theta), \sigma \} } - 1 & \leq \widetilde{B}_{\lambda, \beta} \bigl( N(\theta) \bigr) + \frac{\tau_{\lambda}\bigl( N(\theta) \bigr) }{\bigl[ 1 - \tau_{\lambda} \bigl( N(\theta) \bigr) \bigr]_+ \bigl[ 1 - B_{\lambda, \beta} \bigl( N(\theta) \bigr) \bigr]_+ }, \\ 
1 - \frac{\max \{ \bar{N}(\theta),  \sigma \} }{\max \{ N(\theta), \sigma \} } & \leq B_{\lambda, \beta} \bigl(N(\theta) \bigr) +\frac{\lambda^2}{3},
\end{align*}
where $B_{\lambda, \beta}$ is defined in equation \myeq{defBlb} 
and $\widetilde{B}_{\lambda, \beta}$ in equation \myeq{Btilde}.\\[1mm]
We observe that, according to {\bf Step 2}, 
\begin{align*}
\max \{\bar N(\theta), \sigma  \} &\leq \Phi_-^{-1}\bigl( \max \{ N(\theta), \sigma  \} \bigr) \bigl[ 1 - \tau_{\lambda} \bigl( N(\theta) \bigr) \bigr]_+ ^{-1} \\
&\leq \frac{\max \{ N(\theta), \sigma  \} }{\bigl[ 1 - \tau_{\lambda} \bigl( N(\theta) \bigr) \bigr]_+ \bigl[ 1 -B_{\lambda, \beta} \bigl( N(\theta) \bigr) \bigr]_+ },
\end{align*}
which implies
\[
\frac{\max \{ \bar N(\theta), \sigma  \} }{\max \{ N(\theta), \sigma  \} }-1 \leq \frac{B_{\lambda, \beta} \bigl( N(\theta) \bigr)}{\bigl[ 1 -B_{\lambda, \beta} \bigl( N(\theta) \bigr) \bigr]_+ } + \frac{ \tau_{\lambda} \bigl( N(\theta) \bigr)}{\bigl[ 1 - \tau_{\lambda} \bigl( N(\theta) \bigr) \bigr]_+ \bigl[ 1 -B_{\lambda, \beta} \bigl( N(\theta) \bigr) \bigr]_+ }.
\]
Applying Lemma \ref{lem1.14} we obtain the first inequality. \\[1mm]
To prove the second inequality we observe that, using again {\bf Step 2},
\begin{align*}
\max \{\bar N(\theta), \sigma  \} & \geq \biggl( 1 - \frac{\lambda^2}{3} \bigg)_+\Phi_+ \bigl( \max \{ N(\theta), \sigma  \} \bigr)\\
& = \biggl( 1 - \frac{\lambda^2}{3} \bigg)_+ \max \{ N(\theta), \sigma  \} \bigl[ 1 + B_{\lambda, \beta} \bigl( N(\theta) \bigr) \bigr]^{-1},
\end{align*}
where we have used the fact that 
$
\Phi_+\bigl( \max \{ z, \sigma \} \bigr) = \max \{ z, \sigma \} \Bigl( 1 + B_{\lambda, \beta} (z) \Bigr)^{-1}
$
as shown in equation \myeq{phi+eq}. Thus we conclude that 
\begin{align*}
1- \frac{\max \{ \bar N(\theta), \sigma  \} }{\max \{ N(\theta), \sigma  \} } \leq \frac{B_{\lambda, \beta} \bigl( N(\theta) \bigr) +\lambda^2/3 }{ 1 + B_{\lambda, \beta} \bigl( N(\theta) \bigr) }  \leq B_{\lambda, \beta} \bigl(N(\theta) \bigr) +\frac{\lambda^2}{3}.
\end{align*}
\\[2mm]
{\bf Step 4.} From {\bf Step 3} we deduce that 
\[
\biggl\lvert \, \frac{\max \{ \bar{N}(\theta),  \sigma \} }{\max \{ N(\theta), \sigma \} } - 1 \biggr\rvert\leq \widetilde{B}_{\lambda, \beta} \bigl( 
N(\theta) \bigr) + \frac{\tau_{\lambda}\bigl( N(\theta) \bigr) }{\bigl[ 1 - \tau_{\lambda} \bigl( N(\theta) \bigr) \bigr]_+ \bigl[ 1 -B_{\lambda, \beta} \bigl( N(\theta) \bigr) \bigr]_+ }. 
\]
To conclude the proof it is sufficient to apply {\bf Step 3} to 
$(\lambda_{\widehat{\jmath}}, \beta_{\widehat{\jmath}}) \in \Lambda$ 
defined in equation \myeq{defLambda}. Indeed, by equation \myeq{eq1.17}, for any $t \in \mathbb R_+,$ 
\[
B_{\lambda_{\widehat{\jmath}}, \beta_{\widehat{\jmath}}}(t)\leq B_*(t)
\]
and, by equation \myeq{eq33}, we have $\lambda_{\widehat{\jmath}}  \leq  \lambda_*(\theta) \exp ( a/ 4).$

\subsection{Proof of Proposition \ref{prop52empg}}\label{pf111}

\noindent
We observe that another way to take advantage of equation \myeq{eq2.1} is to write 

\[ 
\frac{\bar{N}(\theta)}{\max \{ \widetilde{N}_{\lambda}(\theta), \sigma \}} - 1 \leq \frac{\lambda^2 \lVert \theta \rVert^6}{3 \max \{ \widetilde{N}_{\lambda} (\theta), \sigma \}^3}\frac{1}{ n} \sum_{i=1}^n \lVert X_i \rVert^6.
\] 
Thus, putting 
\[
\zeta_{\lambda} (t) = \frac{\lambda^2 \widetilde{R}^6}{3 \max \{ t, \sigma \}^3}, \quad t \in \mathbb R_+,
\] 
we get that, for any $\theta \in \mathbb{S}_d$,  
\[
\frac{\bar{N}(\theta)}{\max \{ \widetilde{N}_{\lambda}(\theta), 
\sigma \}} \leq 1 + \zeta_{\lambda} \bigl( \widetilde{N}_{\lambda}(\theta) \bigr).
\]
The same reasoning used to prove {\bf Step 2} of Proposition \ref{prop2.8eg} shows that, 
with probability at least $1 - \epsilon$, for any $\theta \in \mathbb{S}_d$, 
any $(\lambda, \beta) \in \Lambda$, any $\sigma > 0$, 
\[ 
\max \{ \bar{N}(\theta), \sigma \} \leq \Phi_-^{-1} 
\bigl( \max \{ N(\theta), \sigma \} \bigr) \bigl[ 1 + \zeta_{\lambda} 
\bigl( N(\theta) \bigr) \bigr]  .
\] 
As a consequence, with probability at least $1 - 2 \epsilon$, for any 
$\theta \in \mathbb{S}_d$, any $(\lambda, \beta) \in \Lambda$,
\[ 
\biggl\lvert \frac{\max \{ \bar{N}(\theta), \sigma \}}{\max \{ 
N(\theta), \sigma \}} - 1 \biggr\rvert \leq \widetilde{B}_{\lambda, \beta}
\bigl( N(\theta) \bigr) + \frac{ \zeta_{\lambda} \bigl( N(\theta) \bigr)}{ 
\bigl[ 1 - B_{\lambda, \beta} \bigl( N(\theta) \bigr) \bigr]_+}. 
\]

\subsection{Proof of Proposition \ref{props1}}\label{pfsym1}

\noindent
To prove Proposition \ref{props1}, we use many results already proved in the case of the Gram matrix (with the necessary adjustments).\\[1mm]
We first observe that, denoting by $W \in \mathbb R^d$ a gaussian random vector with mean $A^{1/2}\theta$ and covariance matrix $\beta^{-1}A,$ we have
\begin{align*}
\mathbb E \Bigl[ \| A^{1/2} \theta'\|^2 \mathrm d \pi_{\theta}(\theta')  
\Bigr] 
& = \sum_{i=1}^d \mathbb E \bigl( \langle W, e_i\rangle^2 \bigr)\\
& =  \| A^{1/2} \theta\|^2+\frac{\mathbf{Tr}(A)}{\beta}
\end{align*}
where $\{e_i\}_{i=1}^d$ is the canonical basis of $\mathbb R^d$ (and $ \langle W, e_i\rangle$ is a one-dimensional Gaussian random  variable with mean $ \langle A^{1/2}\theta, e_i\rangle$ and variance $\beta^{-1} e_i^{\top} A e_i$). 
Therefore the empirical criterion rewrites as
\[
r_{\lambda} (\theta) 
= \frac{1}{n} \sum_{i=1}^n \psi \left[ \int \left(\| A_i^{1/2} \theta'\|^2-\frac{\mathbf{Tr}(A_i)}{\beta}-\lambda\right) d \pi_{\theta}(\theta') \right].
\]
We now use Lemma \ref{l1} to pull the expectation outside the influence function $\psi$. 
We decompose $A$ into $A = U D U^{\top}$, where $U U^{\top} = \mathrm{I}$ 
and $D = \mathrm{diag}(\lambda_1, \dots, \lambda_d)$
and we observe that $U^{\top} \theta'$ has the same distribution as $U^{\top} \theta 
+ W$, where $W \sim \mathcal{N}(0,\beta^{-1} \mathrm{I})$ so that 
\begin{multline*}
\mathbf{Var} \bigl[ \lVert A^{1/2} \theta' \rVert^2 \mathrm{d} \pi_{\theta}(\theta') \bigr] 
= \mathbf{Var} \biggl( \sum_{i=1}^d \bigl( (U^{\top} \theta)_i 
+ W_i \bigr)^2 \lambda_i \biggr) \\ = \sum_{i=1}^d \lambda_i^2 
\mathbf{Var} \bigl[ \bigl( (U^{\top} \theta)_i + W_i \bigr)^2 \bigr] 
= \sum_{i=1}^d \biggl( 
\frac{2}{\beta^2} + \frac{4}{\beta} (U^{\top} \theta))_i^2 \biggr) 
\lambda_i^2 \\ = \frac{2}{\beta^2} \mathbf{Tr}(A^2) + \frac{4}{\beta} \lVert A \theta 
\rVert^2.
\end{multline*}

\noindent
As a consequence we get
\begin{multline*}
r_{\lambda} (\theta)   \leq  \frac{1}{n} \sum_{i=1}^n  \Bigg[ \int \chi \left(\| A_i^{1/2} \theta'\|^2-\frac{\mathbf{Tr}(A_i)}{\beta}-\lambda\right) \, \mathrm{d} \pi_{\theta}(\theta')\\ 
+ \min \biggl\{ \log(4), \frac{1}{2\beta} \| A_i \theta\|^2 + \frac{\mathbf{Tr}(A_i^2)}{4\beta^2}\biggr\} \Bigg]
\end{multline*}
where the function $\chi$ is defined in equation \myeq{defchi}. 
We then apply equation \myeq{min_eq} with 
$m=  \| A \theta\|,$ $a=\log(4),$ 
$b= 1/(2\beta)$ and $c= \mathbf{Tr}(A^2)/(4\beta^2)$ to obtain 
\begin{multline*}
r_{\lambda} (\theta)   \leq  \frac{1}{n} \sum_{i=1}^n  \Bigg[ \int \chi \left(\| A^{1/2} \theta'\|^2-\frac{\mathbf{Tr}(A)}{\beta}-\lambda\right) \, \mathrm{d} \pi_{\theta}(\theta')\\ 
+ \int \min \biggl\{4 \log(2), \frac{1}{\beta} \| A \theta'\|^2 + \frac{\mathbf{Tr}(A^2)}{2\beta^2}\biggr\} \, \mathrm{d} \pi_{\theta}(\theta')\Bigg]
\end{multline*}
and we conclude, by Lemma \ref{lem1.5}, that
\begin{multline*}
r_{\lambda}(\theta) \leq \frac{1}{n} \sum_{i=1}^n \int 
\log \Biggl[ 1 + \| A_i^{1/2} \theta'\|^2  - \frac{\mathbf{Tr}(A_i)}{
\beta} - \lambda + \frac{1}{2} \biggl( \| A_i^{1/2} \theta'\|^2 - 
\frac{\mathbf{Tr}(A_i)}{\beta} - \lambda \biggr)^2 \\ 
+ \frac{c}{\beta} \biggl(\| A_i \theta'\|^2 + \frac{ \mathbf{Tr}(A_i^2)}{2 \beta} \biggr) \Biggr] \, \mathrm{d} \pi_{\theta}(\theta'),
\end{multline*}
where $\ds c = \frac{15}{8 \log(2)(\sqrt{2}-1)} \exp \biggl( \frac{1 + 2 \sqrt{2}}{2} \biggr).$
We then apply the PAC-Bayesian inequality (Proposition \ref{pac}) to 
\[
f(A_i, \theta') = \log \left[ 1 + t(A_i, \theta') + \frac{1}{2} t(A_i, \theta')^2+ \frac{c}{\beta} \biggl(\| A_i \theta'\|^2 + \frac{ \mathbf{Tr}(A_i^2)}{2 \beta} \biggr) \right]
\]
where  $\displaystyle t(A, \theta')=  \| A^{1/2} \theta'\|^2  - \frac{\mathbf{Tr}(A)}{\beta} - \lambda $ 
and we choose as posterior distributions the family of Gaussian perturbations $\pi_{\theta}.$
We obtain that, with probability at least $1-\epsilon$, for any $\theta \in \mathbb R^d$,
\begin{multline*}
r_{\lambda}(\theta) \leq \int \mathbb E 
\left[ t(A, \theta')+\frac{1}{2} t(A, \theta')^2 + \frac{c}{\beta} \left(\| 
A \theta'\|^2 + \frac{\mathbf{Tr}(A^2)}{2 \beta} \right) \right] \mathrm d \pi_{\theta} (\theta')
\\ \shoveright{+  \frac{\beta \|\theta\|^2}{2n} 
+ \frac{ \log(\epsilon^{-1})}{n} } 
\\ = 
\mathbb{E} \Biggl[ \lVert A^{1/2} \theta \rVert^2 - \lambda + 
\frac{1}{2} \Bigl( \lVert A^{1/2} \theta \rVert^2 - \lambda \Bigr)^2 
+ \frac{(c+2)\lVert A \theta \rVert^2}{\beta} + 
\frac{(2 + 3c) \mathbf{Tr}(A^2)}{2 \beta^2} \Biggr] \\ + 
\frac{\beta \lVert \theta \rVert^2}{2 n } + \frac{\log(\epsilon^{-1})}{n}.  
\end{multline*}
Using the Cauchy-Schwarz inequality, we remark that
\[
\mathbb E  \bigl[ \lVert A \theta \rVert^2 \bigr] \leq \mathbb E  \bigl[
\lVert A \rVert_{\infty} \lVert A^{1/2} \theta \rVert^2 \big] 
\leq \mathbb E  \bigl[ \lVert A \rVert_{\infty}^2 \bigr]^{1/2} 
\mathbb E  \bigl[ \lVert A^{1/2} \theta \rVert^4 \bigr]^{1/2}
\leq \mathbb E  \bigl[ \lVert A \rVert_{\infty}^2 \bigr]^{1/2} 
\kappa^{1/2} N(\theta),
\]
where $\kappa$ is defined in equation \myeq{skappa_A}.
Thus
\begin{multline*}
\label{eq4} 
r_{\lambda}(\theta) \leq 
\frac{\kappa}{2} \bigl[ N(\theta) - \lambda \bigr]^2 + \Biggl[
1 + (\kappa -1) \lambda + \frac{(2+c) \kappa^{1/2} 
\mathbb E  \bigl[ \lVert A \rVert_{\infty}^2 \bigr]^{1/2}
 }{\beta} 
\, \Biggr] \bigl[ N(\theta) - \lambda \bigr] \\ 
+ \frac{(\kappa-1) \lambda^2}{2} + 
\frac{(2+c) \kappa^{1/2} 
\mathbb E  \bigl[ \lVert A \rVert_{\infty}^2 \bigr]^{1/2}
\lambda}{\beta} + 
\frac{(2 + 3c) \mathbb E  \bigl[ \mathbf{Tr}(A^2) \bigr]}{2 \beta^2} + \frac{\beta \lVert \theta \rVert^2}{2n} 
+ \frac{\log(\epsilon^{-1})}{n}. 
\end{multline*}
This is the analogous of equation \myeq{eq4}. The end of the proof 
is similar to the case of the Gram matrix. 

\subsection{Proof of Lemma \ref{lem137}}\label{pfkappa}

\noindent
Replacing $X$ with $X - \mathbb E [X]$ we may assume during the proof that 
$\mathbb E [X] = 0$. It is true that
\begin{multline*}
\mathbb E  \bigl[ \lVert A^{1/2} \theta \rVert^4 \bigr] =
\mathbb E  \Biggl[ \biggl( \frac{1}{q(q-1)} \sum_{1 \leq j < k \leq q} \langle \theta, 
X^{(j)} - X^{(k)} \rangle^2 \biggr)^2 \Biggr]  
\\ = \frac{1}{q^2(q-1)^2} 
\sum_{\substack{
1 \leq j < k \leq q\\
1 \leq s < t \leq q
}}  
\mathbb E  \Bigl[ 
\langle \theta, X^{(j)} - X^{(k)} \rangle^2 
\langle \theta, X^{(s)} - X^{(t)} \rangle^2 
\Bigr].
\end{multline*}
Recalling the definition of the covariance, we have
\begin{multline*}
\mathbb E  \bigl[ \lVert A^{1/2} \theta \rVert^4 \bigr] =
\frac{1}{q^2(q-1)^2}  
\Biggl\{ \sum_{\substack{
1 \leq j < k \leq q\\
1 \leq s < t \leq q
}}  
  \mathbb E  \Bigl[ 
\langle \theta, X^{(j)} - X^{(k)} \rangle^2 \Bigr]
\mathbb E  \Bigl[ \langle \theta, X^{(s)} - X^{(t)} \rangle^2 
\Bigr] \\ 
+ 
\sum_{1 \leq j < k \leq q} 
\mathbb E  \Bigl[ \langle \theta, X^{(j)} - X^{(k)} 
\rangle^4 \Bigr] - \mathbb E  \Bigl[ \langle \theta, X^{(j)} - X^{(k)} \rangle^2 
\Bigr]^2 
\\ + 
\sum_{\substack{
1 \leq j < k \leq q\\
1 \leq s < t \leq q\\
\lvert \{ j, k \} \cap \{s, t \} \rvert = 1
}} 
\biggl( \mathbb E  \Bigl[ 
\langle \theta, X^{(j)} - X^{(k)} \rangle^2 
\langle \theta, X^{(s)} - X^{(t)} \rangle^2 
\Bigr] 
\\ -  \mathbb E  \Bigl[ 
\langle \theta, X^{(j)} - X^{(k)} \rangle^2 \Bigr] 
\mathbb E  \Bigl[ \langle \theta, X^{(s)} - X^{(t)} \rangle^2 
\Bigr] \biggr) \Biggr\} 
\\ = 
\frac{1}{4} \mathbb E  \Bigl[ \langle \theta, X^{(2)} - X^{(1)} \rangle^2 
\Bigr]^2 + \frac{1}{2q(q-1)} 
\mathbb E  \Bigl[ \langle \theta, X^{(2)} - X^{(1)} 
\rangle^4 \Bigr] - \mathbb E  \Bigl[ \langle \theta, X^{(2)} - X^{(1)} \rangle^2 
\Bigr]^2 
\\ + \frac{q-2}{q(q-1)}
\biggl( \mathbb E  \Bigl[ 
\langle \theta, X^{(1)} - X^{(2)} \rangle^2 
\langle \theta, X^{(1)} - X^{(3)} \rangle^2 
\Bigr]
-  \mathbb E  \Bigl[
\langle \theta, X^{(1)} - X^{(2)} \rangle^2 \Bigr]^2 \biggr).
\end{multline*}
Define $W_j = \langle \theta, X^{(j)} \rangle$ and observe that
\begin{align*}
\mathbb E  \bigl[ (W_1 - W_2)^2 \bigr]^2 & = 4 N(\theta)^2, \\ 
\mathbb E  \bigl[ (W_1 - W_2)^4 \bigr] &  
= \mathbb E  \bigl[ W_1^4 \bigr] 
+ 6 \mathbb E  \bigl[ W_1^2 \bigr] \mathbb E  \bigl[  W_2^2 \bigr]
+ \mathbb E  \bigl[ W_2^4 \bigr] \\ 
& = 2 \mathbb E  \bigl[ W_1^4 \bigr] 
+ 6 \mathbb E  \bigl[ W_1^2 \bigr]^2 
\leq (2 \kappa + 6) N(\theta)^2, \\
\mathbb E  \Bigl[(W_1 - W_2)^2 (W_1 - W_3)^2 \Bigr] & = 
\mathbb E  \bigl[ W_1^4 \bigr] + 3 \mathbb E  \bigl[ W_2^2]^2 
\leq (\kappa + 3) N(\theta)^2.
\end{align*}
Therefore
\[ 
\mathbb E  \Bigl[ \lVert A^{1/2} \theta \rVert^4 \Bigr] \leq 
\biggl( 1 + \frac{(q-2)(\kappa-1)}{q(q-1)} + \frac{(\kappa +1)}{q(q-1)}  
\biggr) N(\theta)^2 = \biggl( 1 + \frac{\tau_q(\kappa)}{q} \biggr) 
N(\theta)^2,
\] 
and hence $\kappa' \leq 1 + \tau_q(\kappa)/q$, since $
\mathbb E  \bigl[ \lVert A^{1/2} \theta \rVert^2 \bigr] = N(\theta)$.

\subsection{Proof of Lemma \ref{lem2}}\label{pflem2_cov}

\noindent
Replacing $X$ with $X - \mathbb E [X]$ we may assume that $\mathbb E [X] = 0$. 
Recall that 
\[
\mathbb E[ \| X \|^4 ] \leq \kappa \mathbb E[ \| X \|^2 ]^2 = \kappa \mathbf{Tr}(\Sigma)^2
\]
and $\mathbb E [  \langle X^{(1)}, X^{(2)} \rangle^2] = \mathbf{Tr}(\Sigma^2).$
We observe that
\[ 
\mathbb E  \bigl[ \lVert A \theta \rVert^2 \bigr] = 
\mathbb E  \Biggl[  \frac{1}{q^2(q-1)^2} \sum_{\substack{1 \leq j < k \leq q
\\ 1 \leq s < t \leq q
}} 
\langle \theta, X^{(j)} - X^{(k)} \rangle \langle X^{(j)} - X^{(k)}, 
X^{(s)} - X^{(t)} \rangle \langle X^{(s)} - X^{(t)}, \theta \rangle  
 \Biggr]
\] 
and 
\begin{align*}
\mathbb E  \Bigl[ 
\langle \theta, X^{(1)} - X^{(2)} \rangle & \langle X^{(1)} - X^{(2)}, 
 X^{(3)} - X^{(4)} \rangle \langle X^{(3)} - X^{(4)}, \theta \rangle  \Bigr] 
\\ & = 4 \mathbb E  \Bigl[ \langle \theta, X^{(1)} \rangle \langle X^{(1)}, X^{(2)} 
\rangle \langle X^{(2)}, \theta \rangle \Bigr] = 4 \lVert \Sigma \theta \rVert^2
\leq 4 \lVert \Sigma \rVert_{\infty} N(\theta),
\\ 
\mathbb E  \Bigl[ \langle \theta, X^{(1)} - X^{(2)} \rangle & \langle X^{(1)} - X^{(2)}, 
X^{(1)} - X^{(3)} \rangle \langle X^{(1)} - X^{(3)}, \theta \rangle  \Bigr] 
\\ & = \mathbb E  \Bigl[ \langle \theta, X^{(1)} \rangle^2 \lVert X^{(1)} \rVert^2 
\Bigr] + 3 
\mathbb E  \Bigl[ \langle \theta, X^{(1)} \rangle \langle X^{(1)}, X^{(2)} 
\rangle \langle X^{(2)}, \theta \rangle \Bigr] \\ 
& \leq \kappa \mathbf{Tr}(\Sigma) N(\theta) + 3 \lVert \Sigma \rVert_{\infty} N(\theta), \\ 
\mathbb E  \Bigl[ \langle \theta, X^{(1)} - X^{(2)} \rangle & \langle X^{(1)} - X^{(2)}, 
 X^{(1)} - X^{(2)} \rangle \langle X^{(1)} - X^{(2)}, \theta \rangle  \Bigr] 
\\ & = 2 \mathbb E  \Bigl[ \langle \theta, X^{(1)} \rangle^2 \lVert X^{(1)} \rVert^2 
\Bigr] + 2 \mathbb E  \Bigl[ \langle \theta, X^{(1)} \rangle^2 \Bigr]
\mathbb E  \Bigl[ \lVert X^{(1)} \rVert^2 \Bigr] \\ & \quad + 4 
\mathbb E  \Bigl[ \langle \theta, X^{(1)} \rangle \langle X^{(1)}, X^{(2)} 
\rangle \langle X^{(2)}, \theta \rangle \Bigr] 
\\ & \leq 2(\kappa+1) \mathbf{Tr}(\Sigma) N(\theta) + 4 \lVert \Sigma \rVert_{\infty} N(\theta),
\end{align*}
which proves the first inequality.
In the same way, 
\[ 
\mathbb E  \Bigl[ \mathbf{Tr}(A^2) \Bigr] = \mathbb E  \Biggl[ 
\frac{1}{q^2(q-1)^2} \sum_{\substack{1 \leq j < k \leq q
\\ 1 \leq s < t \leq q}} 
\langle X^{(j)} - X^{(k)}, X^{(s)} - X^{(t)} \rangle^2 
\Biggr], 
\]
and 
\begin{align*}
\mathbb E  \Bigl[ \langle X^{(1)} - X^{(2)}, X^{(3)} - X^{(4)} \rangle^2 \Big] 
& = 4 \mathbb E  \Bigl[ \langle X^{(1)}, X^{(2)} \rangle^2 \Bigr] 
\\ & = 4 \mathbf{Tr} \bigl( \Sigma^2 \bigr), \\ 
\mathbb E  \Bigl[ \langle X^{(1)} - X^{(2)}, X^{(1)} - X^{(3)} \rangle^2 \Bigr] 
& = \mathbb E  \Bigl[ \lVert X^{(1)} \rVert^4 \Bigr] + 3 
\mathbb E  \Bigl[ \langle X^{(1)}, X^{(2)} \rangle^2 \Bigr] 
\\ & \leq 
\kappa \mathbf{Tr}(\Sigma)^2 + 3 \mathbf{Tr} \bigl( \Sigma^2 \bigr), \\
\mathbb E  \Bigl[ \langle X^{(1)} - X^{(2)}, X^{(1)} - X^{(2)} \rangle^2 \Bigr] 
& = 2 \mathbb E  \Bigl[ \lVert X^{(1)} \rVert^4 \Bigr] 
+ 2 \mathbb E  \Bigl[ \lVert X^{(1)} \rVert^2 
\Bigr]^2 + 4 \mathbb E  \Bigl[ \langle X^{(1)}, X^{(2)} \rangle^2 \Bigr] \\ 
& \leq 2 (\kappa + 1) \mathbf{Tr}(\Sigma)^2 + 4 \mathbf{Tr} \bigl( \Sigma^2 \bigr),
\end{align*}
which concludes the proof.

\end{document}